\newcommand{\titlename}{One-way Matching of Datasets with Low Rank Signals}
\newcommand{\x}{\mathtt{x}}
\newcommand{\y}{\mathtt{y}}
\newcommand{\gap}{\mathtt{gap}}
\newcommand{\glob}{\mathtt{glob}}
\newcommand{\unif}{\mathtt{unif}}
\newcommand{\loco}{\mathtt{loco}}
\newcommand{\act}{\mathtt{active}}
\newcommand{\sha}{\mathtt{shared}}
\newcommand{\vect}{\textnormal{Vec}}
\begin{document} 
\title{\titlename}

\author[a]{Shuxiao Chen\thanks{Email:
\href{mailto:shuxiaoc@wharton.upenn.edu}{shuxiaoc@wharton.upenn.edu}}}
\author[b]{Sizun Jiang}
\author[a]{Zongming Ma\thanks{Email: 
\href{mailto:zongming@wharton.upenn.edu}{zongming@wharton.upenn.edu}}}
\author[c]{Garry P. Nolan}
\author[c]{Bokai Zhu}
\affil[a]{\textit{University of Pennsylvania}}
\affil[b]{\textit{Beth Israel Deaconess Medical Center}}
\affil[c]{\textit{Stanford University}}

\date{}
\maketitle

\abstract{
We study one-way matching of a pair of datasets with low rank signals. 
Under a stylized model, we first derive information-theoretic limits of matching {under a mismatch proportion loss.} 
We then show that linear assignment with projected data achieves fast rates of convergence and sometimes even minimax rate optimality for this task.
The theoretical error bounds are corroborated by simulated examples.
Furthermore, we illustrate practical use of the matching procedure on two single-cell data examples.
\\~\\
\textbf{Keywords} Data alignment, Linear assignment, Record linkage, Single-cell transcriptomics, Spatial proteomics.
}

\addtocontents{toc}{\protect\setcounter{tocdepth}{2}}
\tableofcontents 
 
 

\section{Introduction} 
Data matching, also referred to as data alignment or record linkage in some fields, has played an increasingly important role and an integrative part in cleaning, pre-processing, exploratory, and inference stages of many modern data analysis pipelines.
A major motivation of the present work is the prevalence of data matching in analyzing \emph{single-cell multi-omics data}.
In single-cell biology research, it is routine to compile datasets obtained in different batches but with similar measurement protocols or under similar experiment conditions.
When handling such datasets, matching similar cells in different datasets is often a critical step for the correction of technical variations and batch effects \cite{tran2020benchmark}.
As another common practice, 
cell biologists routinely integrate datasets with (partially) overlapping biological (e.g., transcriptomic and proteomic) information collected from different experiment conditions, profiling technologies, tissues, or species (e.g., \cite{stuart2019comprehensive,welch2019single,korsunsky2019fast}) to better understand and define cell states.
To achieve such goals, it is necessary to (identify and) align cells in comparable states across related datasets.
Yet another important application is the transfer and integration of complementary biological information across datasets:
for example,
if one dataset contains individual cells' spatial information within a tissue, matching it with a non-spatial single-cell dataset bears the potential of transferring spatial information to a different measurement modality 
(e.g., \cite{zhu2021robust,kriebel2022uinmf}).

The need to match entities in different datasets also arises in other fields.
In computer vision tasks such as motion tracking and object recognition, the processes usually involves a \emph{feature matching} step where features (local patches, features found by convolutional neural networks, etc.) are first computed for each image, and then a matching algorithm is applied to link features between two or across multiple images for downstream analyses.
See the survey \cite{ma2021image} and the references therein.
In health care system and business intelligence applications, \emph{record linkage} methods are routinely used for data cleaning and for generating insights to inform further medical and business decisions \cite{gu2003record,sayers2016probabilistic}. 
In these applications, algorithms are deployed to match identical or similar records in databases from different sources.

\begin{figure}[t] 
\centering
		\includegraphics[width=0.48\linewidth]{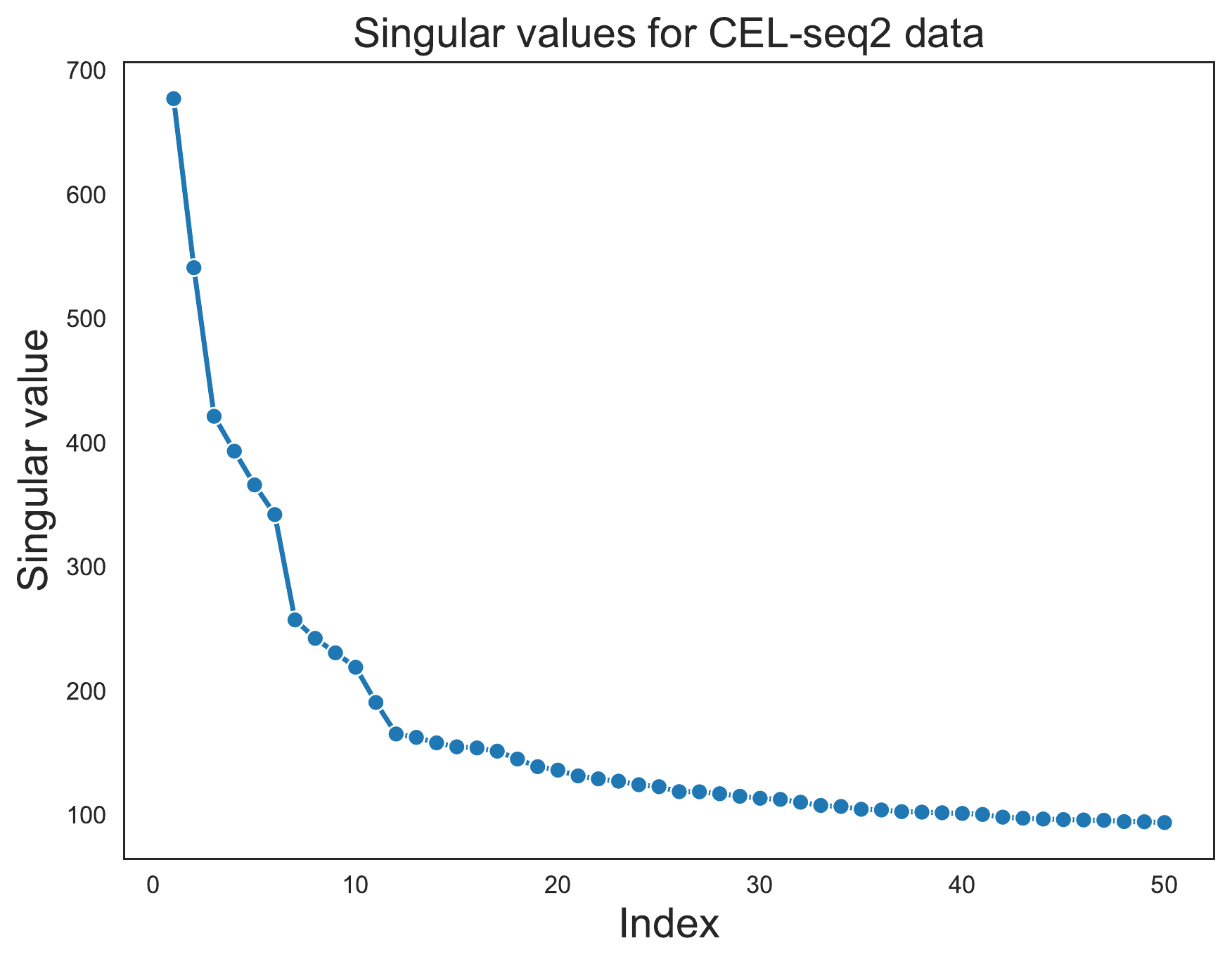}
       \caption{\small Leading singular values of a CEL-seq2 dataset \cite{grun2016novo}. 
       After pre-processing, it contains measurements on the expression levels of $2508$ RNAs in $1935$ cells. See Section \ref{subsec:rna} for details.
	   }
	\label{fig:scree_plot}
\end{figure}

Suppose we have two datasets, represented by two matrices $X\in \mathbb{R}^{n\times p}$ and $Y\in \mathbb{R}^{m\times p}$.
Without loss of generality, assume $n\leq m$.
In the scenario of matching single-cell datasets, rows of the two datasets correspond to cells and each column represents a shared feature (transcript, protein, etc.) measured by both datasets\footnote{In reality, either dataset may contain features that are not measured in the other. See, for instance, the spatial proteomics data example in Section \ref{sec:proteomics}. 
In such cases, we assume that common features shared by the two datasets have been identified and $X$ and $Y$ represent the datasets with only shared features.}. 
In feature matching, each row of $X$ and $Y$ corresponds to a $p$-dimensional feature. 
We assume that both data matrices admit a ``signal $+$ noise'' structure
\begin{equation*}
	X = M_\x + Z_\x 
	\quad \mbox{and}\quad 
	Y = M_\y + Z_y,
\end{equation*}
where $M_\x$, $M_\y$ are deterministic signals and $Z_\x$, $Z_\y$ are additive noise components.
In this paper,
we focus on the case where both $M_\x$ and $M_\y$ are of low rank.
This is usually the case for single-cell datasets. 
In Figure \ref{fig:scree_plot}, we plot leading singular values of a pre-processed single-cell transcriptomic dataset \cite{grun2016novo} obtained via CEL-seq2 technology.
The fast decay of leading singular values indicates that the signal in this dataset is (at least approximately) of low rank.
Similar patterns are also observed on other single-cell datasets with different measurement modalities and technologies.
In an ideal situation, $M_\y$ contains $M_\x$ as a submatrix up to an unknown permutation of rows. 
In other words, the rows in $X$ can be matched to a subset of rows in $Y$.
Our goal is to recover this unknown matching based on observing $X$ and $Y$ only.

\subsection{One-way matching as estimation}

In this paper, we further restrict our attention to the simplified case of $m=n$ and so both dimensions of $X$ and $Y$ agree.
As we shall see, investigation in this special case is already challenging and we leave theoretical study of the general case $n\leq m$ for future work.
In motivating single cell data examples, such an assumption is not overly restrictive if the two datasets are obtained on homogeneous samples from identical or comparable cell populations as we could first aggregate similar cells (or downsample) within respective datasets to align sample sizes before matching.

When $m=n$, we could specialize the foregoing general model to 
\begin{equation}
\label{eq:model}
\begin{aligned}
	X & = UDV^\top + \sigma_\x N_\x,\\
\Pi^\star Y & =	UDV^\top + \sigma_\y N_\y.
\end{aligned}
\end{equation}
Here, $U \in O_{n, r}, V \in O_{p, r}$ are two orthonormal matrices, $D = \diag(d_1, \hdots, d_r)$ is a diagonal matrix with descending positive diagonal entries, $N_\x, N_\y$ are two independent random matrices with independent standard Gaussian entries, and $\sigma_\x$ and $\sigma_\y$ are noise standard deviations.
Furthermore, $\Pi^\star$ is an unknown permutation matrix that encodes the matching between rows in two data matrices.
In other words, after row permutation by $\Pi^\star$, the rows in the signal component of $Y$ are identical to those in the signal component of $X$, while 
the noise components are independent.
For brevity, we write $(X,Y)\sim \calM(U,D,V, \sigma_\x,\sigma_\y, \Pi^\star)$ when $X$ and $Y$ are generated by model \eqref{eq:model}.

Each permutation matrix $\Pi$ {has an one-to-one correspondence to a vector} $\pi\in S_n$ where $S_n$ collects all permutations of the set $\{1,\dots, n\}$.
With slight abuse of notation, for any permutation matrix $\Pi$, we also write $\Pi\in S_n$.
Denote the representations of the true matrix $\Pi^\star$ and any estimator $\widehat{\Pi}$ by $\pi^\star$ and $\widehat{\pi}$, respectively. 
The rest of this manuscript focuses on minimax estimation of $\Pi^\star$ under the following normalized Hamming loss 
{
\begin{equation}
\label{eq:loss}
	\frac{1}{n}d(\hat\pi, \pi^\star) = \frac{1}{n}\sum_{i\in[n]} \indc{\hat\pi_i \neq \pi^\star_i}
	= \frac{1}{2n}\|\widehat\Pi - \Pi^\star\|_F^2 
	= \ell(\hat\Pi, \Pi^\star).
\end{equation}
}
{The loss function $\ell(\hat\Pi, \Pi^\star)$ can be understood as the mismatch proportion of an estimator $\hat\Pi$ for the ground truth permutation matrix $\Pi^\star$.}
Finally, to cast the estimation problem in a decision-theoretic framework, we shall focus on uniform error bounds over the following class of parameter spaces:
\begin{equation}
\label{eq:para-space}
\calP_n(U, D, V, \sigma_\x, \sigma_\y)	= \{
(X,Y)\sim \calM(U,D,V, \sigma_\x,\sigma_\y, \Pi^\star): \Pi^\star \in S_n
\}.
\end{equation} 
We consider the asymptotic regime where $n$ tends to infinity and all other parameters are allowed to scale with $n$.

\subsection{Main results}

The theoretical contribution of this paper has three aspects:
\begin{enumerate} 
\item Under the foregoing decision-theoretic framework, we derive minimax lower bounds for estimating $\Pi^\star$.
These lower bounds are governed by pairwise separation of rows in the signal component.

\item We consider a highly intuitive matching algorithm based on solving a linear assignment problem on projected data. 
The projection directions are estimated from data, and so the method is completely data-driven.
This method has been used as a critical step in a full pipeline for integrating single-cell datasets in the companion paper \cite{zhu2021robust}.
With minimal assumption, we derive a uniform high probability error rate that is polynomial in minimum pairwise separation of rows in the signal component.

\item Under a more stringent ``weak symmetry condition'', we further improve the error bounds of the same algorithm to have exponential decay in minimum pairwise separation of rows. 
Under an even stronger ``strong symmetry condition'', the constant in the exponent can match that in the lower bound and hence is sharp.

\end{enumerate} 

With the foregoing three aspects covered, we achieve the goal of showing that an empirically well-performing matching algorithm has guaranteed generality, at least under a class of stylized models. 
This shall facilitate further theoretical justification for more complex model classes and related matching methods.

\subsection{Related works}

\citet{collier2016minimax} studied estimation of $\Pi^\star$ under model \eqref{eq:model} with full rank signal components, i.e., when $r = \min(n,p)$.
The primary focus of \cite{collier2016minimax} is on minimax rate of separation for exact recovery of $\Pi^\star$ with full rank signals. 
See \cite{galstyan2021optimal} for a nontrivial extension to the case of unequal sizes.
In contrast, the present work focuses on minimax nearly-exact recovery rate of $\Pi^\star$ with low rank signals.
Under a different correlated Gaussian feature model, \citet{dai2019database} and \citet{dai2020achievability} studied information-theoretic limits for both exact and nearly-exact recovery and their achievability, which is built upon the investigation in \cite{cullina2018fundamental} on exact recovery when correlated features are randomly drawn from finite-alphabets.
See also \citet{kunisky2022strong} and \citet{wang2022random} for more refined analyses under such correlated Gaussian feature models.
In both lines of literature, linear assignment problem plays a critical role in achievability of respective information-theoretic limits. 

In single-cell data analysis literature, a majority of popular matching approaches rely on the concept of mutual nearest neighbor (MNN), e.g., \cite{haghverdi2018batch,barkas2019joint,stuart2019comprehensive}.
Although methods differ in details, the overall spirit is the same: 
One first finds for each data entry in one dataset its $k$-nearest neighbors in the other dataset under some distance measure, which is feasible as long as the two datasets have column correspondences. 
A pair of data entries from two datasets are mutual nearest neighbors if they appear in each other's neighborhood and are hence matched.
A major drawback of MNN-based approaches is easy to perceive: It is suitable only when signal-to-noise ratio is relatively high as otherwise noise can significantly blur local neighborhoods and there could exist very few or even no MNNs.
There exists an alternative approach based on non-negative matrix factorization \cite{welch2019single} which is closer in spirit to the approach in the present work. However, it lacks theoretical justification.

Estimation of permutation also appears in other contexts. 
For example, unlabeled linear regression (e.g., \cite{pananjady2017linear,pananjady2017denoising,zhang2020optimal}) and graph matching (e.g., \cite{ding2021efficient,fan2019spectral_I,fan2019spectral_II,ganassali2020tree,mao2021exact}). 
However, in these settings, the matching problem is of a different nature: there are unknown parameters governing both dimensions of the data matrices to be matched. Hence, the matching problem is ``two-way'' in nature whereas the present work focuses exclusively on the ``one-way'' setting: the columns of datasets are already aligned and only permutation of rows is to be estimated.

\subsection{Notation}
Throughout the paper,
for any positive integer $l$, let $[l] = \{1,\dots,l\}$.
For permutation $\pi$ any ordered sequence of distinct indices $(i_1,\dots,i_k)$ and $(\pi_{i_1},\dots,\pi_{i_k})$, we abbreviate them as $i_{1:k}$ and $\pi_{i_{1:k}}$, respectively.
For any real numbers $a$ and $b$, we let $a\wedge b = \min(a,b)$ and $a\vee b = \max(a,b)$.
For any two sequences of positive numbers $a_n$ and $b_n$, we write $a_n = \calO(b_n)$, $a_n\lesssim b_n$ or $b_n\gtrsim a_n$ if $\limsup a_n/b_n < \infty$ and $a_n = o(b_n)$, $a_n \ll b_n$ or $b_n \gg a_n$ if $\limsup a_n/b_n = 0$. We write $a_n = \Omega(b_n)$ when $b_n =\calO(a_n)$.
Furthermore, we write $a_n\asymp b_n$ if $a_n \lesssim b_n$ and $b_n\lesssim a_n$ hold simultaneously.
For any vector $v$, $\|v\|$ denotes its Euclidean norm.
For any matrix $A$, $\|A\|_F$ denotes it Frobenius norm, $\|A\|$ denotes its spectral norm, and $\mathrm{tr}(A) = \sum_i A_{ii}$ denotes its trace.
For a pair of matrices $A$ and $B$ with identical dimensions, $\langle A, B\rangle = \mathrm{tr}(A^\top B)$ denotes their trace inner product.
For any subset $I$ of row indices and $J$ of column indices of a $s\times t$ matrix $A$, we use $A_{I,J}$ to denote the submatrix indexed by $I$ and $J$.
We also let $A_{I,\cdot} = A_{I,[t]}$, $A_{\cdot,J} = A_{[s],J}$, $A_{-I,J} = A_{[s]\backslash I,J}$ and $A_{I,-J} = A_{I,[t]\backslash J}$.
For any integers $s\geq t>0$, $O_{s,t}$ denotes the collection of all $s\times t$ orthogonal matrices.
Additional notation will be defined at first occurrence.
 

\subsection{Paper organization}
We derive information-theoretic lower bounds in Section \ref{sec:lb}. The algorithm and its theoretical analysis are given in Section \ref{sec:ub}. Section \ref{sec:sim} presents simulation results that corroborate the theory developed. 
The proposed algorithm is applied to two single-cell data examples in Section \ref{sec:data}.

\section{Fundamental limits of one-way matching} 
\label{sec:lb}
In this section, we present a minimax lower bound under the model specified in \eqref{eq:model}. To start with, we present a lemma that decomposes the expected number of mismatches into errors coming from different sources.

\begin{lemma}[Cycle decomposition of expected mismatches]
\label{lemma:cyc_decomp}
Let $\hat\pi = \hat \pi(X, Y)$ be any estimator of $\pi^\star$. Then we have
\begin{equation}
	\label{eq:cyc_decomp}
	\bbE[d(\hat\pi, \pi^\star)] = \sum_{k=2}^n \sum_{i_1\neq i_2 \neq \cdots \neq i_k} 
	 \bbP(\hat\pi_{i_1} = \pi^\star_{i_2}, \hat\pi_{i_2} = \pi^\star_{i_3}, \hdots, \hat\pi_{i_{k-1}} = \pi^\star_{i_{k}}, \hat\pi_{i_k} = \pi^\star_{i_1}),
\end{equation} 
where the second summation on the right-hand side above is taken over all collections of $k$ distinct indices $1\leq i_1 \neq i_2 \neq \cdots \neq i_k \leq n$.
\end{lemma}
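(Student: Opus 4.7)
The plan is to reduce the claim to a deterministic pointwise identity and then take expectations. Introduce the composition $\sigma := (\pi^\star)^{-1} \circ \hat\pi \in S_n$. The key observation is that $\hat\pi_i = \pi^\star_i$ if and only if $\sigma(i) = i$, so the mismatch count $d(\hat\pi, \pi^\star)$ equals the number of non-fixed points of $\sigma$. More generally, the event appearing inside the probability on the right-hand side, $\{\hat\pi_{i_1} = \pi^\star_{i_2}, \hat\pi_{i_2} = \pi^\star_{i_3}, \ldots, \hat\pi_{i_k} = \pi^\star_{i_1}\}$, translates into $\{\sigma(i_1) = i_2, \sigma(i_2) = i_3, \ldots, \sigma(i_k) = i_1\}$, i.e., the ordered tuple $(i_1, \ldots, i_k)$ traces out a $k$-cycle of $\sigma$.

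The next step is a double-counting argument. Recall that every permutation admits a unique decomposition into disjoint cycles, and the number of non-fixed points equals the total length of all cycles of length at least $2$. For a fixed $k \geq 2$, each $k$-cycle of $\sigma$ can be represented as an ordered tuple of $k$ distinct indices in exactly $k$ ways (by choosing the starting element; the cyclic order is forced). Conversely, any ordered tuple of distinct indices $(i_1,\ldots,i_k)$ satisfying the event is, by definition, one such representation. Letting $C_k(\sigma)$ denote the number of $k$-cycles of $\sigma$, this gives the deterministic identity
\begin{equation*}
	\sum_{k=2}^n \sum_{i_1\neq \cdots \neq i_k} \indc{\sigma(i_1)=i_2,\ldots,\sigma(i_k)=i_1} \;=\; \sum_{k=2}^n k\, C_k(\sigma) \;=\; |\{i:\sigma(i)\neq i\}| \;=\; d(\hat\pi,\pi^\star).
\end{equation*}
The proof then concludes by taking expectations on both sides and interchanging the (finite) sums with expectation, which yields \eqref{eq:cyc_decomp}.

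There is no real obstacle here; the only care-point is making the bijection between ordered tuples satisfying the event and (cycle, starting element) pairs completely explicit, so that the factor of $k$ appears correctly. Once that bookkeeping is written out, the rest is linearity of expectation.
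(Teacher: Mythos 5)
Your proof is correct, and it takes a genuinely different route from the paper's. You reduce to the clean deterministic identity that, for the relative permutation $\sigma := (\pi^\star)^{-1}\circ\hat\pi$, the number of non-fixed points of $\sigma$ equals $\sum_{k\geq 2} k\,C_k(\sigma)$, which follows directly from the disjoint-cycle decomposition of $\sigma$ together with the observation that the event indexed by $(i_1,\ldots,i_k)$ is exactly the event that this ordered tuple traces a $k$-cycle of $\sigma$; taking expectations and applying linearity then finishes. The paper instead proves the identity from scratch by a recursive peeling argument: it starts from $\sum_i \bbP(\hat\pi_i\neq\pi^\star_i)$, splits each probability according to whether a 2-cycle closes or the chain continues, and iterates until cycles of every length are extracted. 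Your approach is shorter and more conceptual — it makes it transparent \emph{why} the result is a ``cycle decomposition'' by appealing to the known cycle structure of permutations and isolates the probabilistic content (linearity of expectation) from the combinatorics — whereas the paper's approach is more elementary in that it never invokes the cycle decomposition theorem explicitly, essentially re-deriving it through the telescoping sum. Both are complete proofs; yours is the one I would expect a reader to prefer once the change of variables $\sigma = (\pi^\star)^{-1}\circ\hat\pi$ is pointed out.
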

\begin{proof}
	See Appendix \ref{prf:lemma:cyc_decomp}.
\end{proof}
The above lemma is useful in that it makes clear where the error, as measured by expected mismatches, comes from: each summand in \eqref{eq:cyc_decomp} is precisely the error resulting from {mis-estimating} the matching on $\pi^\star_{i_{1:k}}$, and the error takes a cyclic form.
In graph-theoretic terminologies, the above lemma decomposes the expected mismatches into errors resulting from all possible cycles on a complete graph with $n$ nodes.

If we only retain the cycles of length two, we arrive at the following lower bound on minimax risk {by observing \eqref{eq:loss}}:
{
\begin{equation}
	\label{eq:cyc_decomp_two_cycle}
	\inf_{\hat \Pi(X, Y)} \sup_{\calP_n(U, D, V, \sigma_\x, \sigma_\y)}\bbE[\ell(\hat\Pi, \Pi^\star)] 
	\geq 
	\inf_{\hat \Pi(X, Y)} \sup_{\calP_n(U, D, V, \sigma_\x, \sigma_\y)}\frac{1}{n}\sum_{i \neq i'}
	 \bbP(\hat\pi_{i} = \pi^\star_{i'}, \hat\pi_{i'} = \pi^\star_{i}),
\end{equation} 
where we recall that $\hat\pi$ (resp. $\pi^\star$) is the vector representation of $\hat\Pi$ (resp. $\Pi^\star$).
}
Using some reduction arguments, one can further lower bound each summand on the right-hand side of \eqref{eq:cyc_decomp_two_cycle} by the optimal testing error of the following binary hypothesis testing problem:
\begin{equation}
	\label{eq:testing_problem}
	H_0: \pi^\star_i = j, \pi^\star_{i'} = j' 
	\qquad \textnormal{v.s.}
	\qquad
	H_1: \pi^\star_i = j', \pi^\star_{i'} = j,
\end{equation}
where one knows all the values of $\pi^\star$ other than $(\pi^\star_i, \pi^\star_{i'})$, and is trying to differentiate the two possibilities specified by $H_0$ and $H_1$ from the data $(X, Y)$ at hand. By Neyman--Pearson lemma, the optimal testing procedure is the likelihood ratio test, and a careful calculation gives the following minimax lower bound.

\begin{theorem}[Minimax lower bound]
Let $\sigma_{\max} := \sigma_\x \lor \sigma_\y$. We have
	\label{thm:lb}
	{
\begin{align}
	\label{eq:lb}
	\inf_{\hat\Pi(X , Y )} \sup_{\calP_n(U, D, V, \sigma_\x, \sigma_\y)} \bbE[\ell(\hat \Pi, \Pi^\star)] 
	\geq \frac{1}{n} \sum_{i\neq i'} \Phi\left(- \frac{\|(U_{i, \bigcdot} - U_{i', \bigcdot})D\|}{\sqrt{2}\cdot\sigma_{\max}}\right),
	\end{align}	
	}
where the infimum is taken over all randomized measurable functions of $(X, Y)$ and $\Phi(\cdot)$ is the cumulative distribution function of standard Gaussian random variables.
In particular, as long as the minimum pairwise separation
\begin{equation}
	\label{eq:beta}
	\beta^2:= \min_{i\neq i'} \frac{\|(U_{i, \bigcdot} - U_{i', \bigcdot})D\|^2}{\sigma_{\max}^2} \gg 1,
\end{equation}
there exists a sequence $\underline{\delta}_n = o(1)$ such that
{
	\begin{align}
		\label{eq:lb_clear_form}
		\inf_{\hat\Pi(X , Y )} \sup_{\calP_n(U, D, V, \sigma_\x, \sigma_\y)} \bbE[\ell(\hat \Pi, \Pi^\star)]
		\geq \frac{1}{n} \sum_{i\neq i'} \exp\left\{-\frac{(1+\underline{\delta}_n)\|(U_{i, \bigcdot} - U_{i', \bigcdot})D\|^2}{4\sigma_{\max}^2}\right\}.
	\end{align}
	}
\end{theorem}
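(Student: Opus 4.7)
The plan is to follow the roadmap already sketched in the excerpt: (i) exploit the cycle decomposition of Lemma \ref{lemma:cyc_decomp}, retaining only the $2$-cycle contributions; (ii) reduce each remaining $2$-cycle term to a binary hypothesis test via a two-point prior; (iii) evaluate the resulting Bayes testing error through an explicit likelihood-ratio computation; and (iv) pass to the exponential form via Mills' ratio.

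Keeping only the $k=2$ layer of Lemma \ref{lemma:cyc_decomp}, together with the trivial inequality $\inf_{\hat\Pi}\sup_{\Pi^\star}\sum_{i\neq i'}a_{ii'}\ge \sum_{i\neq i'}\inf_{\hat\Pi}\sup_{\Pi^\star}a_{ii'}$, reduces the task to lower bounding, for every fixed pair $i\neq i'$, the quantity $\inf_{\hat\Pi}\sup_{\Pi^\star\in S_n}\bbP_{\Pi^\star}(\hat\pi_i=\pi^\star_{i'},\,\hat\pi_{i'}=\pi^\star_i)$. Fixing any two distinct auxiliary indices $(j,j')$ and two permutations $\pi_0,\pi_1\in S_n$ that agree off $\{i,i'\}$ with $(\pi_0(i),\pi_0(i'))=(j,j')$ and $\pi_1=\pi_0\circ(i\;i')$, I would restrict the sup to $\{\pi_0,\pi_1\}$, pass to the average under the uniform prior, and convert the $2$-cycle event---which becomes $\{(\hat\pi_i,\hat\pi_{i'})=(j',j)\}$ under $\pi_0$ and $\{(\hat\pi_i,\hat\pi_{i'})=(j,j')\}$ under $\pi_1$---into a binary test for $H_0:\Pi^\star=\Pi_0$ vs.\ $H_1:\Pi^\star=\Pi_1$ by randomly mapping outputs $(\hat\pi_i,\hat\pi_{i'})$ outside $\{(j,j'),(j',j)\}$ uniformly onto these two patterns. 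The Neyman--Pearson lemma then lower bounds the per-pair $2$-cycle probability by the Bayes risk of the test.

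Under either hypothesis the marginal law of $X$ is identical (the permutation acts only on $Y$), so the log-likelihood ratio collapses to a function of $(Y_{j,\bigcdot},Y_{j',\bigcdot})$. Expanding the Gaussian quadratic forms and using $V^\top V=I_r$, I expect $\log(p_1/p_0)=-\sigma_\y^{-2}\langle Y_{j,\bigcdot}-Y_{j',\bigcdot},(U_{i,\bigcdot}-U_{i',\bigcdot})DV^\top\rangle$, whose law under $H_0$ is $\mathcal N\!\bigl(-\|(U_{i,\bigcdot}-U_{i',\bigcdot})D\|^2/\sigma_\y^2,\,2\|(U_{i,\bigcdot}-U_{i',\bigcdot})D\|^2/\sigma_\y^2\bigr)$. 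The optimal test error evaluates to $\Phi\bigl(-\|(U_{i,\bigcdot}-U_{i',\bigcdot})D\|/(\sqrt 2\,\sigma_\y)\bigr)$; an analogous reduction with the roles of the two datasets interchanged yields the twin bound with $\sigma_\x$, and adopting the weaker of the two produces \eqref{eq:lb} with $\sigma_{\max}$.

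For the exponential form \eqref{eq:lb_clear_form}, I would apply Mills' ratio in the form $\Phi(-x)=\exp(-(1+o(1))x^2/2)$ as $x\to\infty$, with the $o(1)$ correction of order $\log(x)/x^2$. Setting $x_{ii'}=\|(U_{i,\bigcdot}-U_{i',\bigcdot})D\|/(\sqrt 2\,\sigma_{\max})$, the hypothesis $\beta^2\gg 1$ forces $x_{ii'}\ge \beta/\sqrt 2\to\infty$, so a common $\underline\delta_n=O(\log\beta/\beta^2)\to 0$ suffices for $\Phi(-x_{ii'})\ge\exp(-(1+\underline\delta_n)x_{ii'}^2/2)$ to hold uniformly over pairs; substituting back yields the display. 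The step I expect to be most delicate is the test reduction in the second paragraph: since a $2$-cycle error is the specific output pattern $(\hat\pi_i,\hat\pi_{i'})\in\{(j',j),(j,j')\}$ and the permutation-valued estimator may well place $\hat\pi_i$ outside $\{j,j'\}$, the projection/randomization has to be arranged so that its slack is absorbed exactly into the factor of $\tfrac12$ appearing in the Bayes-risk bound without losing any constant---this bookkeeping is the main technical hurdle.
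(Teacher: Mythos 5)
Your outline matches the high-level architecture of the paper's argument (cycle decomposition, keep only $k=2$, reduce to a binary test, Neyman--Pearson likelihood ratio, then Mills' ratio), but the reduction from the minimax risk to the per-pair testing problem contains an error that breaks the chain.

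The inequality you invoke, $\inf_{\hat\Pi}\sup_{\Pi^\star}\sum_{i\neq i'}a_{ii'}\ge\sum_{i\neq i'}\inf_{\hat\Pi}\sup_{\Pi^\star}a_{ii'}$, is not a trivial fact and is in fact false in general: take $a_1$ depending only on $\Pi^\star\in\{\pi_0,\pi_1\}$ with $a_1(\pi_0)=1$, $a_1(\pi_1)=0$, and $a_2$ the reverse, with no dependence on $\hat\Pi$; then the left side is $\sup_{\Pi^\star}(a_1+a_2)=1$ but the right side is $1+1=2$. What is true is $\inf_x\sum_i g_i(x)\ge\sum_i\inf_x g_i(x)$, and the paper uses it only \emph{after} replacing the supremum by the Bayes average under a \emph{single} prior (uniform on $S_n$) that is shared by every pair $(i,i')$: that is, $\inf\sup\sum\ge\inf\bbE_{\text{prior}}\sum=\inf\sum\bbE_{\text{prior}}\ge\sum\inf\bbE_{\text{prior}}$. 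Your version instead wants to use a different two-point ``prior'' $\{\pi_0,\pi_1\}$ per pair before moving the inf inside, and the interchange that would make that legitimate is precisely the missing ingredient. The paper's enumeration of $S_n$ (pick $j_{1:k}$, then $\pi^\star$ off $i_{1:k}$, then the cyclic shifts on $i_{1:k}$) is exactly the bookkeeping that packages the uniform prior into a family of two-point subproblems in a way compatible with moving the infimum inside.

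The second issue, which you correctly flag as the ``delicate step,'' is that after this reduction the infimum $\inf_{\hat\pi_{i,i'}}\tfrac12\bigl[\bbP_{H_0}((\hat\pi_i,\hat\pi_{i'})=(j',j))+\bbP_{H_1}((\hat\pi_i,\hat\pi_{i'})=(j,j'))\bigr]$ involves a probability of a \emph{specific} wrong output, not of deciding the wrong hypothesis. The randomization you propose runs the wrong way: it shows that the average error of the constructed test equals the target quantity \emph{plus} $\tfrac14[\bbP_{H_0}(\text{outside})+\bbP_{H_1}(\text{outside})]$, so lower-bounding the test error via Neyman--Pearson does not lower-bound the target unless you can control the ``outside'' mass. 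So even granting the first step, this reduction would need a separate argument. Your likelihood-ratio computation and the Mills'-ratio passage to the exponential form both agree with the paper and are sound; the gap is concentrated in the two reduction steps above.
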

\begin{proof}
	See Appendix \ref{prf:thm:lb}.
\end{proof}

The above minimax lower bound is derived by only retaining the errors from cycles of length two, and its tightness depends on whether such errors constitute the dominant part in the full cycle decomposition given in \eqref{eq:cyc_decomp}. In fact, we are to see in Section \ref{sec:ub} that this is indeed the case under certain symmetry conditions.

As long as the signal matrix $UD$ is configured such that there exist at least $\Omega(n)$ pairs of indices $i\neq i$ (among all $n(n-1)$ pairs) with $\|(U_{i, \bigcdot} - U_{i', \bigcdot})D\|^2/\sigma_{\max}^2 = \beta^2$, then the lower bound in \eqref{eq:lb} can be further lower bounded by
\begin{align}
	\label{eq:lb_in_beta}
	\inf_{\hat\Pi(X , Y )} \sup_{\calP_n(U, D, V, \sigma_\x, \sigma_\y)} \bbE[\ell(\hat \Pi, \Pi^\star)] 
		\geq \exp\left\{-\frac{-(1+o(1))\beta^2}{4}\right\}.
\end{align}		
In this sense, $\beta \gg 1$ becomes a necessary condition for consistent estimation of $\pi^\star$.



\section{Linear assignment with projected signals}
\label{sec:ub}

We describe an intuitive algorithm, \ul{l}inear \ul{a}ssignment with \ul{p}roject \ul{s}ignals (LAPS), that solves a linear assignment problem after projecting the potentially high-dimensional signals onto a lower-dimensional subspace. We then theoretically characterize the performance of LAPS under the model given in \eqref{eq:model}.

Suppose we know $r$ and we know which dataset is less noisy\footnote{If unknown, the knowledge could be relatively easily obtained by investigating the singular values of both datasets \cite{benaych2012singular}.
}, i.e., which one of $\sigma_\x$ and $\sigma_\y$ is smaller. 
In this case, we can perform a truncated singular value decomposition (SVD) on the less noisy dataset and collect its top $r$ right singular vectors in a matrix $\hat V\in O_{p, r}$. 
For example, if $\sigma_\x \leq \sigma_\y$, we perform SVD on $X$ and let columns of $\hat V$ be the top $r$ right singular vectors. 
The LAPS algorithm solves the following linear assignment problem after projecting the two datasets using $\hat V$: 
{
\begin{align}
	\label{eq:linear_assignment}
	\hat \Pi  \in \argmax_{\Pi \in S_n} \la X \hat V , \Pi Y \hat V \ra = \argmin_{\Pi\in S_n} \| X \hat V - \Pi Y \hat V \|_F^2.
\end{align}
}
This optimization problem can be solved in polynomial time by, e.g., the Hungarian algorithm \cite{kuhn1955hungarian}.

If we observe $(XV, YV)$, then maximizing the inner product between $XV$ and $\Pi YV$ would give rise to the maximum likelihood estimator. In this regard, LAPS can be interpreted as an approximate maximum likelihood estimator where the nuisance parameter $V$ is estimated from data.

\subsection{Polynomial rate and consistency}
In this subsection, we show LAPS achieves consistency (i.e., $o(1)$ mismatch proportion) with high probability under mild conditions.

To start with, we present a proposition that bounds the estimation error of the right singular space under a spectral gap assumption. 
Recall that $d_r$ is the smallest non-zero singular value of the signal component in model \eqref{eq:model}.

\begin{proposition}[Estimation error of right singular subspace]
	\label{prop:est_err_of_V}
	Let $\sigma_{\min} = \sigma_{\x} \land \sigma_{\y}$.
	Suppose
	\begin{equation}
		\label{eq:eigen_gap_assump}
		\frac{d_r}{\sigma_{\min}} \geq C_\gap  (\sqrt{n} + \sqrt{p}),
	\end{equation}
	for some sufficiently large constant $C_\gap > 0$ and let $c > 0$ be any constant.
	There exists another constant $C$ only depending on $C_\gap$ and $c$, such that with probability at least $1 - n^{-c}$, we have
	\begin{equation}
		\label{eq:est_err_of_V}
		\|\hat V \hat V^\top - V V^\top\|_F\leq C \cdot \calE_\unif
		\quad\mbox{with}\quad
		\calE_\unif :=  \frac{\sqrt{rp\log n}}{d_r/\sigma_{\min}},
	\end{equation}
	where we recall that $\hat V \in O_{p, r}$ collects the top $r$ right singular vectors of the less noisy data matrix.
\end{proposition}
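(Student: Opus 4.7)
The plan is a textbook application of subspace perturbation theory combined with Gaussian concentration. Without loss of generality assume $\sigma_\x \leq \sigma_\y$ so that $\hat V$ is computed from $X = UDV^\top + E$ with $E = \sigma_\x N_\x$ having iid $N(0,\sigma_{\min}^2)$ entries.

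First, I would control the operator norm of the noise. By the Davidson--Szarek inequality (or any standard non-asymptotic bound on the largest singular value of a Gaussian ensemble), with probability at least $1 - \tfrac{1}{2}n^{-c}$,
\[
\|E\| \leq C_1 \sigma_{\min}\bigl(\sqrt{n} + \sqrt{p} + \sqrt{\log n}\bigr).
\]
Choosing the constant $C_\gap$ in \eqref{eq:eigen_gap_assump} sufficiently large then forces $\|E\| \leq d_r/2$, so Wedin's gap condition is satisfied on this event.

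Next, I would apply a Frobenius refinement of Wedin's $\sin\Theta$ theorem for the right singular subspace:
\[
\|\sin\Theta(\hat V, V)\|_F \leq \frac{C_2 \max\bigl(\|EV\|_F,\, \|E^\top U\|_F\bigr)}{d_r - \|E\|} \leq \frac{C_3 \max\bigl(\|EV\|_F,\, \|E^\top U\|_F\bigr)}{d_r}.
\]
The identity $\|\hat V \hat V^\top - V V^\top\|_F = \sqrt{2}\,\|\sin\Theta(\hat V, V)\|_F$ then converts this into the desired projector bound. Because $U$ and $V$ are deterministic orthonormal matrices, orthogonal invariance shows that $EV$ and $E^\top U$ themselves have iid $N(0,\sigma_{\min}^2)$ entries, so $\|EV\|_F^2$ and $\|E^\top U\|_F^2$ are $\sigma_{\min}^2$ times $\chi^2$ variables with $nr$ and $pr$ degrees of freedom respectively. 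A Laurent--Massart tail bound then yields
\[
\max\bigl(\|EV\|_F,\, \|E^\top U\|_F\bigr) \leq C_4\, \sigma_{\min}\sqrt{r p \log n}
\]
with probability at least $1 - \tfrac{1}{2}n^{-c}$ (the log factor absorbs the smaller $\sqrt{nr}$ contribution in the relevant regime, or absorbs the linear deviation term otherwise). A union bound with the first step delivers the claim.

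The main obstacle is invoking the right version of Wedin's theorem. The cruder estimate $\|\sin\Theta\|_F \lesssim \|E\|_F/d_r$ would force a prohibitive $\sigma_{\min}\sqrt{np}$ factor into the bound; only the sharper refinement in terms of $\|EV\|_F$ and $\|E^\top U\|_F$, which concentrate at the $\sqrt{nr}$ and $\sqrt{pr}$ scales, yields the advertised rate. Once the correct Wedin variant is in hand, the remaining ingredients are routine: Davidson--Szarek controls $\|E\|$ so that the gap condition holds, and $\chi^2$ concentration handles the signal-aligned noise $EV$ and $E^\top U$.
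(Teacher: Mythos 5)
Your proof takes a genuinely different route, but there is a gap that makes it fail in a regime the proposition does not exclude. The coupled Frobenius-norm Wedin $\sin\Theta$ theorem has the form
\[
\|\sin\Theta(\hat V, V)\|_F \lesssim \frac{\sqrt{\|EV\|_F^2 + \|E^\top U\|_F^2}}{d_r - \|E\|},
\]
and it inevitably charges for the \emph{left} residual $\|EV\|_F$ even though only the right singular subspace is of interest. Since $EV \in \bbR^{n\times r}$ has i.i.d.\ $N(0,\sigma_{\min}^2)$ entries, $\|EV\|_F \asymp \sigma_{\min}\sqrt{nr}$ with high probability, so your argument delivers at best
\[
\|\hat V \hat V^\top - V V^\top\|_F \lesssim \frac{\sigma_{\min}\bigl(\sqrt{nr}+\sqrt{pr}\bigr)}{d_r},
\]
which exceeds $\calE_\unif = \sigma_{\min}\sqrt{rp\log n}/d_r$ by a factor of order $\sqrt{n/(p\log n)}$ whenever $n \gg p\log n$. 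The proposition imposes no relationship between $n$ and $p$ (and indeed the proteomics example has $p = 28 \ll n = 2545$), so the claim does not follow from the coupled Wedin bound. Your parenthetical remark that ``the log factor absorbs the smaller $\sqrt{nr}$ contribution in the relevant regime'' is precisely where the argument breaks: $\sqrt{nr}$ is not smaller when $p \ll n/\log n$.

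The paper's proof avoids this by invoking the one-sided (asymmetric) $\sin\Theta$ inequality of Cai and Zhang (their Proposition 1), which bounds $\|\hat V\hat V^\top - VV^\top\|$ by $\sigma_r(XV)\,\|P_{XV}XV_\perp\|/\bigl(\sigma_r^2(XV)-\sigma_{r+1}^2(XV)\bigr)$. The decisive feature is that the perturbation term is the projection of $XV_\perp = EV_\perp$ onto the $r$-dimensional column space of $XV$, and their Lemma~4 shows this concentrates at scale $\sigma_{\min}\sqrt{p\log n}$ --- with no $n$ dependence --- precisely because of the low-dimensional projection. Converting from spectral to Frobenius norm via the rank-$2r$ structure of the projector difference then yields $\calE_\unif$. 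Your remaining ingredients (a Davidson--Szarek bound to verify the gap condition, $\chi^2$ concentration for the residuals) are sound; it is the choice of perturbation inequality that must change to remove the unwanted $n$-dependence.
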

\begin{proof}
	See Appendix \ref{prf:prop:est_err_of_V}.
\end{proof}

The above proposition is a consequence of the version of sin-theta theorem proved in \cite{cai2018rate}. The dependence of $\calE_\unif$ on $\sigma_{\min}$ demonstrates the benefit of estimating $V$ from the less noisy data matrix. As long as $V$ can be consistently estimated, one would expect that $\la X\hat V, \Pi Y \hat V \ra$ is a good proxy of the ground truth maximum likelihood objective $\la X V, \Pi Y V\ra$, the maximization of which would give a reasonable estimate of $\pi^\star$. The following theorem makes this point clear.

\begin{theorem}[Polynomial error rate of LAPS]
\label{thm:poly_rate}
Assume \eqref{eq:eigen_gap_assump} holds and $\calE_\unif \ll 1$.
Then \textcolor{black}{uniformly over $\calP_n(U, D, V, \sigma_\x, \sigma_\y)$} we have
{
\begin{equation}
	\label{eq:poly_rate}
	\ell(\hat \Pi, \Pi^\star) \lesssim \frac{p}{\beta^2} \cdot \calE_\unif \left(\frac{1}{\sqrt{n}}+ \frac{1}{\sqrt{p}}\right) + \frac{r}{\beta^2}.
\end{equation}
}
with probability at least $1-n^{-c}$, where $c>0$ is some absolute constant. In particular, LAPS achieves $o(1)$ error with high probability uniformly over the parameter space as long as 
\begin{equation}
	\label{eq:poly_rate_snr_assump}
	\frac{\beta^2}{r} \gg1 \qquad\textnormal{and}\qquad \frac{\beta^2}{p} \gg \calE_\unif \left(\frac{1}{\sqrt{n}}+ \frac{1}{\sqrt{p}}\right).
\end{equation}
\end{theorem}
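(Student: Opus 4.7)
The plan is to start from the optimality inequality that comes directly from \eqref{eq:linear_assignment}: $\|X\hat V - \hat\Pi Y\hat V\|_F^2 \le \|X\hat V - \Pi^\star Y\hat V\|_F^2$, which after cancelling the permutation-invariant term $\|Y\hat V\|_F^2$ reduces to $\langle X\hat V, \hat\Pi Y\hat V\rangle \ge \langle X\hat V, \Pi^\star Y\hat V\rangle$. The central algebraic observation is that setting $R := V^\top\hat V\in\mathbb{R}^{r\times r}$ yields the clean decompositions $X\hat V = UDR + \sigma_\x N_\x\hat V$ and $\Pi^\star Y\hat V = UDR + \sigma_\y N_\y\hat V$, because $V^\top(I-VV^\top)\hat V = 0$. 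Writing $T = UDR$, $Q = \hat\Pi\Pi^{\star\top} - I$, and $W := QT$, expanding the inner product and using $\langle T, QT\rangle = -\tfrac{1}{2}\|W\|_F^2$ gives the master inequality
\[
\tfrac{1}{2}\|W\|_F^2 \;\le\; \sigma_\x\langle N_\x\hat V, W\rangle \;+\; \sigma_\y\langle T, QN_\y\hat V\rangle \;+\; \sigma_\x\sigma_\y\langle N_\x\hat V, QN_\y\hat V\rangle.
\]

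Next, I would lower bound $\|W\|_F^2$ by the number of mismatches $k := n\,\ell(\hat\Pi,\Pi^\star)$. If $\sigma$ denotes the permutation corresponding to $\hat\Pi\Pi^{\star\top}$, then $\|W\|_F^2 = \sum_{i}\|(U_{i,\bigcdot} - U_{\sigma(i),\bigcdot})DR\|^2$. The identity $I_r - RR^\top = V^\top(VV^\top - \hat V\hat V^\top)V$, combined with Proposition~\ref{prop:est_err_of_V}, gives $\|I_r - RR^\top\| \lesssim \calE_\unif$. Under $\calE_\unif \ll 1$, each summand with $\sigma(i)\neq i$ is thus at least $(1-\calE_\unif)\|(U_{i,\bigcdot} - U_{\sigma(i),\bigcdot})D\|^2 \gtrsim \beta^2\sigma_{\max}^2$ by the definition of $\beta$ in \eqref{eq:beta}, yielding $\|W\|_F^2 \gtrsim k\beta^2\sigma_{\max}^2$.

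Upper bounding the three noise terms is where most of the work lies. WLOG assume $\sigma_\x \le \sigma_\y$, so $\hat V$ is built from $X$ and is independent of $N_\y$; then conditionally on $\hat V$, the matrix $N_\y\hat V$ has i.i.d.\ standard Gaussian entries and $\|N_\y\hat V\| \lesssim \sqrt n$ with high probability. The factor $N_\x\hat V$ is the delicate one because $\hat V$ depends on $N_\x$; the key trick is the decomposition $\hat V = VR + E$ with $E = (I-VV^\top)\hat V$, $V^\top E = 0$, and $\|E\|_F \lesssim \calE_\unif$. Standard Gaussian operator norm estimates give $\|N_\x V\| \lesssim \sqrt n$ (using that $V$ is deterministic) and $\|N_\x\| \lesssim \sqrt n + \sqrt p$, so $\|N_\x\hat V\| \lesssim \sqrt n + (\sqrt n + \sqrt p)\calE_\unif$. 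Using the rank-$r$ inequality $|\langle A, B\rangle| \le \sqrt r\,\|A\|\,\|B\|_F$ whenever $\mathrm{rank}(B)\le r$, and noting that both $W$ and $Q^\top T$ have rank at most $r$, the two signal-noise terms are controlled by $\sigma_{\max}\sqrt r\,[\sqrt n + (\sqrt n+\sqrt p)\calE_\unif]\,\|W\|_F$. For the noise-noise term, expanding $\hat V = VR + E$ on both sides splits it into four pieces; the dominant piece $\langle N_\x V R, Q N_\y V R\rangle$ is bounded by $\sqrt r\,\|N_\x V R\|\,\|Q N_\y V R\|_F \lesssim rn$ via the same rank-$r$ trick, while the three $E$-involving cross-pieces contribute corrections of smaller order in $\calE_\unif$ that are responsible for the $\calE_\unif$-dependent terms in \eqref{eq:poly_rate}.

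Combining these bounds into the master inequality gives a quadratic inequality in $\|W\|_F$. Solving for $\|W\|_F^2$ and dividing by the lower bound $\|W\|_F^2 \gtrsim k\beta^2\sigma_{\max}^2$ and by $n$ yields a bound on $k/n = \ell(\hat\Pi,\Pi^\star)$ of the form claimed in \eqref{eq:poly_rate}. The main obstacle throughout is the data-dependence of $\hat V$ on one of the noise matrices; once the $\hat V = VR + E$ split resolves this, the remainder is careful bookkeeping of the noise terms together with standard Gaussian concentration applied at the high-probability level asserted in the theorem. The consistency claim then follows immediately by inspecting when the right-hand side of \eqref{eq:poly_rate} is $o(1)$, which is precisely \eqref{eq:poly_rate_snr_assump}.
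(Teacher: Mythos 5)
Your proposal is correct and reaches the same bound, but it takes a somewhat different algebraic route from the paper's proof, so it is worth comparing.

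Both proofs start from the optimality inequality for $\hat\Pi$, both lower-bound the signal term $\|(I-\hat\Pi\Pi^{\star\top})UDV^\top\hat V\|_F^2 = \|W\|_F^2$ by $(1-O(\calE_\unif))\,n\,\ell(\hat\Pi,\Pi^\star)\,\beta^2\sigma_{\max}^2$ via the cycle decomposition and the identity $I_r - RR^\top = V^\top(VV^\top - \hat V\hat V^\top)V$, and both end by solving a quadratic inequality in $\|W\|_F$. The difference is in how the noise contribution is controlled. The paper stays in the squared-distance form: after cancelling $\|\sigma_\x N_\x\hat V - \sigma_\y N_\y\hat V\|_F^2$ it discards the negative $\|\sigma_\x N_\x\hat V - P\sigma_\y N_\y\hat V\|_F^2$ term, applies Cauchy--Schwarz to land on $\sigma_\x^2\|N_\x\hat V\|_F^2 + \sigma_\y^2\|N_\y\hat V\|_F^2$, and then bounds $\|N\hat V\|_F^2$ via the trace trick $\tr(N(\hat V\hat V^\top - VV^\top)N^\top) + \|NV\|_F^2$. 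You instead complete the square to extract $\langle T, QT\rangle = -\tfrac12\|W\|_F^2$ and then expand each of the three remaining noise terms with the orthogonal split $\hat V = VR + E$, using the rank-$r$ nuclear-norm bound $|\langle A,B\rangle|\le\sqrt r\,\|A\|\,\|B\|_F$. Your split is a genuine alternative to the paper's $\hat V\hat V^\top - VV^\top$ trace argument; the resulting intermediate terms look slightly different (e.g.\ $\sqrt{rn}(\sqrt n+\sqrt p)\calE_\unif$ and $\sqrt r(\sqrt n+\sqrt p)^2\calE_\unif^2$ in place of $\sqrt{np}(\sqrt n+\sqrt p)\calE_\unif$), but since $r\le\min(n,p)$ and $\calE_\unif\ll1$ these are all dominated by the paper's terms, so \eqref{eq:poly_rate} still follows. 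One small thing worth making explicit: in the term $\sigma_\y\langle T, QN_\y\hat V\rangle = \langle Q^\top T, N_\y\hat V\rangle$ you need $\|Q^\top T\|_F = \|W\|_F$, which holds because $Q^\top T = P^{-1}T - T$ and $W = PT - T$ differ only by a relabelling of the summation index, but it is not the same matrix as $W$ and a reader could trip on it.
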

\begin{proof}
	See Appendix \ref{prf:thm:poly_rate}. 
\end{proof}

The error bound given in the above theorem is inverse proportional to the minimum pairwise separation $\beta^2$. The proof is based on the basic inequality
\begin{equation}
	\label{eq:basic_ineq}
	\|X \hat V -  \hat \Pi Y \hat V\|_F^2 \leq \|X \hat V - \Pi^\star Y \hat V\|_F^2.
\end{equation}

\subsection{Exponential rate and minimax optimality}
Theorem \ref{thm:poly_rate} is not entirely satisfactory as the error rate is a polynomial function of the minimum pairwise separation $\beta^2$. In contrast, the lower bound in Theorem \ref{thm:lb} is exponential in the pairwise separations. In this section, we show LAPS is capable of achieving exponential error rate and sometimes even minimax optimality under additional symmetry conditions.

Instead of invoking the basic inequality \eqref{eq:basic_ineq}, the key step in establishing exponential error rate is to invoke the cycle decomposition in Lemma \ref{lemma:cyc_decomp} and carefully compute the errors coming from cycles of all possible lengths. One can show that each summand in \eqref{eq:cyc_decomp} can be upper bounded by 
\begin{equation}
	\label{eq:upper_bound_on_each_term_in_cyc_decomp}
	\bbP\left( \left\la \sigma_\x^{-1}{(X \hat V)_{i_{1:k}, \bigcdot}}, ~ \sigma_\y^{-1}{(I_k^\leftarrow - I_k) (Y \hat V)_{\pi^\star_{i_{1:k}},\bigcdot}} \right\ra \geq 0\right),
\end{equation}	
where $I_k^\leftarrow$ is a $k\times k$ matrix defined as
$$
	I_k^\leftarrow := \begin{pmatrix}
		0 & 1 & 0 & \cdots & 0 \\
		0 & 0 & 1 & \cdots & 0 \\
		\vdots & \vdots & \vdots & \ddots & \vdots \\
		0 & 0 & 0 & \hdots & 1\\
		1 & 0 & 0 & \hdots & 0
	\end{pmatrix},
	\qquad 
	\textnormal{so that } \forall v \in \bbR^{k},
		I_k^\leftarrow
	\begin{pmatrix}
		v_1 \\
		v_2 \\
		\vdots\\
		v_{k-1}\\
		v_k
	\end{pmatrix}
	= 
	\begin{pmatrix}
		v_2 \\
		v_3\\
		\vdots\\
		v_k\\
		v_1
	\end{pmatrix}.
$$
If $\hat V$ is independent of the data $(X, Y)$, then by Markov's inequality, upper bounding \eqref{eq:upper_bound_on_each_term_in_cyc_decomp} can be done by calculating moment generating function of a quadratic function of Gaussian matrices. 
However, the fact that $V$ is estimated from the data introduces non-trivial dependence structure that complicates the proof. 

One remedy to the issue of reusing the data is to argue that $\hat V$ is close to $V$ (e.g., by invoking Proposition \ref{prop:est_err_of_V}), so that \eqref{eq:upper_bound_on_each_term_in_cyc_decomp} remains close to the probability when $\hat V$ is replaced by $V$ uniformly over all possible choices of $1\leq i_1\neq i_2 \cdots \neq i_k\leq n$. Such a uniform strategy is clearly sub-optimal when the length of the cycle is small. 

Without loss of generality, let us assume $X$ is less noisy (i.e., $\sigma_\x \leq \sigma_\y$). When $k$ is small, one would expect that $\hat V$ to be close to $\hat V^{(-i_{1:k})}$, the \emph{leave-one-cycle-out} (LOCO) estimate of $V$ that collect top $r$ right singular vectors of $X^{(-i_{1:k})}\in\bbR^{n\times p}$, the matrix with the noise component $N_{i_{1:k}, \bigcdot}$ removed. That is,
\begin{equation}
	\label{eq:loco_data_matrix}
	X^{(-i_{1:k})} := X - N_\x^{(-i_{1:k})}, \qquad [N_{\x}^{(-i_{1:k})}]_{i,\bigcdot} := \indc{i\in i_{1:k}} \cdot (N_{\x})_{i,\bigcdot}.
\end{equation}	
Since $X_{i_{1:k},\bigcdot}$ is independent of $\hat V^{(-i_{1:k})}$, one can condition on the value of $\hat V^{(-i_{1:k})}$ and apply Markov's inequality and compute the moment generating function to upper bound \eqref{eq:upper_bound_on_each_term_in_cyc_decomp} with $\hat V$ replaced by $\hat V^{(-i_{1:k})}$. 
Such a LOCO strategy will yield better result when $\hat V$ is closer to $\hat V^{(-i_{1:k})}$ than $V$, which can occur when $k$ is relatively small. 
The LOCO strategy is in spirit similar to the leave-one-out strategy \cite{el2013robust} that has been successfully employed in many theoretical studies to address statistical dependence (see Section 4 of the monograph \cite{chen2021spectral} and references therein).

It turns out that the distance between $\hat V$ and $\hat V^{(-i_{1:k})}$ depends crucially on how the entries of $U$ are spread across its rows.
Let us introduce the incoherence parameter $\mu \in [1, n]$, defined as the smallest number that satisfies
\begin{equation}
	\max_{i\in[n]} \|U_{i, \bigcdot}\| \leq \sqrt{\frac{\mu}{n}} \|U\|_F = \sqrt{\frac{\mu r}{n}}.
\end{equation}
A small $\mu$ means the entries of $U$ are spread out, so that deleting a small fraction of rows will not significantly affect the right singular space.
The following proposition bounds the distance between $\hat V$ and $\hat V^{(-i_{1:k})}$.

\begin{proposition}[LOCO error of right singular subspace]
\label{prop:loco_err_of_V}
Let $\calC_k = \{i_1, ..., i_k\} \subseteq[n]$ be a collection of $k$ distinct indices 
and let columns of $\hat V^{(-\calC_k)}$ collect top $r$ right singular vectors of either $X^{(-\calC_k)}$ or $Y^{(-\calC_k)}$ defined in \eqref{eq:loco_data_matrix}, whichever has a lower noise level.
If \eqref{eq:eigen_gap_assump} holds
for some sufficiently large constant $C_\gap > 0$ 
then for any $k^\star \in[n]$ and $c \geq 0$, we have
\textcolor{black}{uniformly over $\calP_n(U, D, V, \sigma_\x, \sigma_\y)$},
\begin{align*}
	\bbP\left[ \max_{k\leq k^\star}\max_{\calC_k} \|\hat V^{(-\calC_k)} (\hat V^{(-\calC_k)})^\top - \hat V \hat V^\top \|_F \leq Ck^\star \cdot \calE_\loco \right] \geq 1- n^{-c},\\
\end{align*}
where
\begin{equation}
	\label{eq:loco_err_of_V}
	\calE_\loco := 
	\frac{\sqrt{\mu r} (\sqrt{p} + \sqrt{\log n}) d_1/d_r}{\sqrt{n} d_r/\sigma_{\min}}  + \frac{(\sqrt{p} + \sqrt{\log n})^2}{d_r^2/\sigma_{\min}^2}
\end{equation}
and $C$ is an absolute constant only depending on $C_\gap$ and $c$.
\end{proposition}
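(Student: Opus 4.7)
The plan is to compare $\hat V$ and $\hat V^{(-\calC_k)}$ via a Wedin-type sin-$\Theta$ theorem applied to $X^{(-\calC_k)}$ with perturbation $E := X - X^{(-\calC_k)}$, exploiting two key structural features: $E$ is supported on only $k \leq k^\star$ rows (indexed by $\calC_k$), and those nonzero rows are \emph{independent of everything computed from} $X^{(-\calC_k)}$. Without loss of generality assume $\sigma_\x \leq \sigma_\y$, so that $\hat V, \hat U$ are the top-$r$ singular vectors of $X$ and $\hat V^{(-\calC_k)}, \hat U^{(-\calC_k)}$ those of $X^{(-\calC_k)}$; the eigengap condition \eqref{eq:eigen_gap_assump} and Weyl's inequality give $\hat\sigma_r^{(-\calC_k)} \asymp d_r$ and $\hat\sigma_{r+1}^{(-\calC_k)} \lesssim \sigma_\x(\sqrt{n}+\sqrt{p})$. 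The sin-$\Theta$ theorem of Cai--Zhang (the same tool invoked in Proposition~\ref{prop:est_err_of_V}) then yields
\begin{equation*}
\bigl\|\hat V \hat V^\top - \hat V^{(-\calC_k)}(\hat V^{(-\calC_k)})^\top\bigr\|_F \lesssim \frac{\hat\sigma_r^{(-\calC_k)} \|E\hat V^{(-\calC_k)}\|_F + \hat\sigma_{r+1}^{(-\calC_k)}\|E^\top \hat U^{(-\calC_k)}\|_F + \|E\|^2}{d_r^2}.
\end{equation*}

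I then bound the three Gaussian quantities in the numerator together with a union bound over the $\sum_{k \leq k^\star}\binom{n}{k} \leq n^{k^\star+1}$ choices of $\calC_k$. Standard Gaussian matrix concentration gives $\|E\| \lesssim \sigma_\x(\sqrt{k^\star}+\sqrt{p}+\sqrt{\log n})$, whose contribution produces the second summand of $\calE_\loco$. Conditionally on $\hat V^{(-\calC_k)}$, rotational invariance makes $E\hat V^{(-\calC_k)}$ a $k\times r$ matrix with i.i.d.\ $\calN(0,\sigma_\x^2)$ entries, so $\chi^2$ tail bounds yield $\|E \hat V^{(-\calC_k)}\|_F \lesssim \sigma_\x(\sqrt{kr}+\sqrt{\log n})$ uniformly in $\calC_k$. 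For the cross term, $E^\top \hat U^{(-\calC_k)} = \sum_{j\in \calC_k} g_j (\hat U^{(-\calC_k)}_{j,\bigcdot})^\top$ with $g_j \sim \calN(0,\sigma_\x^2 I_p)$ independent of $\hat U^{(-\calC_k)}$, and conditional Gaussian concentration bounds this by $\sigma_\x(\sqrt{p}+\sqrt{\log n})\,\|\hat U^{(-\calC_k)}_{\calC_k, \bigcdot}\|_F$.

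The main obstacle, and the source of both the $\mu$ and $d_1/d_r$ factors in $\calE_\loco$, is controlling $\|\hat U^{(-\calC_k)}_{\calC_k, \bigcdot}\|_F$, since the crude bound $\|\hat U^{(-\calC_k)}\|_F = \sqrt{r}$ is far too weak. The plan is to exploit the defining feature of $X^{(-\calC_k)}$ — that the noise has been removed precisely from the rows in $\calC_k$ — so $X^{(-\calC_k)}_{\calC_k,\bigcdot} = U_{\calC_k,\bigcdot}DV^\top$ is pure signal. Combining this with the exact identity $\hat U^{(-\calC_k)} = X^{(-\calC_k)} \hat V^{(-\calC_k)} (\hat\Sigma^{(-\calC_k)})^{-1}$ (valid because $\hat V^{(-\calC_k)}$ is orthogonal to the trailing singular directions of $X^{(-\calC_k)}$) produces
\begin{equation*}
\hat U^{(-\calC_k)}_{\calC_k,\bigcdot} = U_{\calC_k, \bigcdot} D V^\top \hat V^{(-\calC_k)} (\hat\Sigma^{(-\calC_k)})^{-1},
\end{equation*}
so that $\|\hat U^{(-\calC_k)}_{\calC_k, \bigcdot}\|_F \leq \|U_{\calC_k,\bigcdot}\|_F \cdot d_1/\hat\sigma_r^{(-\calC_k)} \lesssim \sqrt{\mu k r /n}\,(d_1/d_r)$ by the definition of $\mu$. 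Substituting all bounds into the sin-$\Theta$ inequality and using $\hat\sigma_{r+1}^{(-\calC_k)} \lesssim \sigma_\x(\sqrt{n}+\sqrt{p})$ to produce the first summand of $\calE_\loco$, then absorbing the resulting $\sqrt{k}$ into a single $k$ and maximizing over $k \leq k^\star$, delivers the claimed uniform bound $Ck^\star \calE_\loco$ with probability at least $1 - n^{-c}$.
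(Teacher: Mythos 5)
Your high-level plan — compare $\hat V$ and $\hat V^{(-\calC_k)}$ via a singular-subspace perturbation bound, exploit that $E := X - X^{(-\calC_k)}$ is supported on the $k$ rows $\calC_k$ and is independent of $\hat V^{(-\calC_k)}$, and exploit that $X^{(-\calC_k)}_{\calC_k,\bigcdot} = U_{\calC_k,\bigcdot}DV^\top$ is pure signal to harvest the $\sqrt{\mu r / n}\, d_1/d_r$ factor — is on the right conceptual track. But the paper takes a cleaner route and, more importantly, your proposal has a real gap.

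\textbf{What the paper does differently.} The paper does not invoke a Wedin-type bound on the rectangular matrices $X$ and $X^{(-\calC_k)}$. Instead it applies the symmetric Davis--Kahan theorem to the Gram matrices $X^\top X$ and $(X^{(-\calC_k)})^\top X^{(-\calC_k)}$, with the perturbation being their Frobenius-norm difference. Expanding
$
X^\top X - (X^{(-\calC_k)})^\top X^{(-\calC_k)} = \sum_{i\in\calC_k}\bigl[VDU_{i,\bigcdot}^\top(N_\x)_{i,\bigcdot} + (N_\x)_{i,\bigcdot}^\top U_{i,\bigcdot}DV^\top + (N_\x)_{i,\bigcdot}^\top(N_\x)_{i,\bigcdot}\bigr],
$
one bounds this Frobenius norm termwise (each summand is rank one or two) by
$
k\bigl[\,d_1\sqrt{\mu r/n}\,\max_i\|(N_\x)_{i,\bigcdot}\| + \max_i\|(N_\x)_{i,\bigcdot}\|^2\,\bigr],
$
a quantity that depends on $\calC_k$ only through the cycle length $k$. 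This means there is \emph{no union bound over the $\binom{n}{k}$ choices of $\calC_k$ at all} — only a union bound over $i\in[n]$ for $\max_i\|(N_\x)_{i,\bigcdot}\|$. This is both cleaner and tighter than your plan.

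\textbf{Where your proposal fails.} After writing your perturbation inequality, you compute $\|E\hat V^{(-\calC_k)}\|_F$, $\|E^\top\hat U^{(-\calC_k)}\|_F$, and $\|E\|$, but you only describe how the last two produce the two summands of $\calE_\loco$; you never say what becomes of the term $\hat\sigma_r^{(-\calC_k)}\|E\hat V^{(-\calC_k)}\|_F / d_r^2$. Chasing it through: $\hat\sigma_r^{(-\calC_k)}\asymp d_r$ and $\|E\hat V^{(-\calC_k)}\|_F\lesssim \sigma_\x\sqrt{k}(\sqrt{r}+\sqrt{\log n})$ give a contribution of order $\sqrt{k}(\sqrt{r}+\sqrt{\log n})/(d_r/\sigma_{\min})$. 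Under the assumed gap $d_r/\sigma_{\min}\geq C_\gap(\sqrt{n}+\sqrt{p})$ this is roughly $\sqrt{k}(\sqrt{r}+\sqrt{\log n})/(\sqrt{n}+\sqrt{p})$, whereas the second summand of $k\calE_\loco$ is $k(\sqrt{p}+\sqrt{\log n})^2/(d_r^2/\sigma_{\min}^2)\lesssim k(\sqrt{p}+\sqrt{\log n})^2/(\sqrt{n}+\sqrt{p})^2$. For $n\gg p,\log n, r, k$ the first quantity can be polynomially larger than $k\calE_\loco$, so this term is \emph{not} controlled by the claimed bound. (The same problem persists if one uses the standard Wedin factor $\hat\sigma_{r+1}^{(-\calC_k)}$ in place of $\hat\sigma_r^{(-\calC_k)}$: $\hat\sigma_{r+1}^{(-\calC_k)}\asymp\sigma_{\min}(\sqrt{n}+\sqrt{p})$ is still far too large.) The root cause is that bounding the cross term $(X^{(-\calC_k)})^\top E$ by $\hat\sigma^{(-\calC_k)}\cdot\|E\hat V^{(-\calC_k)}\|_F$ discards the crucial fact that this product only touches the pure-signal rows $X^{(-\calC_k)}_{\calC_k,\bigcdot}=U_{\calC_k,\bigcdot}DV^\top$; the correct bound is $\|(X^{(-\calC_k)})^\top E\|_F\leq\sum_{i\in\calC_k}\|U_{i,\bigcdot}D\|\,\|E_{i,\bigcdot}\|\lesssim k\,d_1\sqrt{\mu r/n}\,\sigma_{\min}(\sqrt{p}+\sqrt{\log n})$, which is exactly what the paper obtains. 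You do apply this pure-signal structure to bound $\|\hat U^{(-\calC_k)}_{\calC_k,\bigcdot}\|_F$, but you fail to apply the analogous observation to the other cross term. As a secondary point, the $\sqrt{\log n}$ factors in your claimed uniform bounds over all $\calC_k$ with $k\leq k^\star$ should actually be $\sqrt{k\log n}$, since $\log\binom{n}{k}\asymp k\log(n/k)$; this would matter in a finished proof, though it is not the main obstruction.
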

\begin{proof}
	See Appendix \ref{prf:prop:loco_err_of_V}.
\end{proof}

Now the strategy is clear: we seek to find the best cutoff $k^\star$, such that when $k> k^\star$, we use uniform arguments (which invokes Proposition \ref{prop:est_err_of_V}), and when $k\leq k^\star$, we use LOCO arguments (which invokes Proposition \ref{prop:loco_err_of_V}). 

Before proceeding, we pause to do some simple calculations to understand when LOCO bounds can improve upon uniform bounds, namely when $\calE_\loco \ll \calE_\unif$. Note that
$$
	\frac{\calE_\loco}{\calE_\unif} = \left(\frac{d_1}{d_r}\sqrt{\frac{\mu}{n}} + \frac{\sqrt{p} + \sqrt{\log n}}{\sqrt{r}d_r/\sigma_{\min}}\right) \left(\frac{1}{\sqrt{\log n}} + \frac{1}{\sqrt{p}}\right) \leq \frac{d_1}{d_r}\sqrt{\frac{\mu}{n}} \left(\frac{1}{\sqrt{\log n}} + \frac{1}{\sqrt{p}}\right) + o(1)
$$
where the inequality holds when $\calE_\unif\ll 1$.
Thus, $\calE_\loco \ll \calE_\unif$ holds provided
$$
	\mu \ll n(\log n \land p) \cdot \frac{d_r^2}{d_1^2}.
$$
Since the maximum possible value of $\mu$ is $n$, the above condition holds as long as $p$ tends to infinity and $d_1\asymp d_r$.
 
We are now ready to state the theorem that gives the exponential error rate for LAPS.
\begin{theorem}[Exponential rate of LAPS]
	\label{thm:ub_simplified}
	Recall the definition of $\calE_\unif$ in \eqref{eq:est_err_of_V} and $\calE_\loco$ in \eqref{eq:loco_err_of_V}.
	Assume \eqref{eq:eigen_gap_assump} holds for some sufficiently large $C_\gap > 0$ and $\sigma_\x \asymp \sigma_\y, d_1 \asymp d_r$.
	In addition, assume $\calE_\unif \ll 1, \beta^2 \gg r$ and 
		\begin{align*}
			\label{eq:ub_simplified_unif_and_loco_bound_condition}
			\frac{\beta^2}{p}
			& \gg
			r\log n \cdot \frac{\calE_\loco}{\calE_\unif} 
			+  (r\log n)^{3/5} \cdot \calE_\loco^{4/5} 
			+  (r\log n)^{1/3} \cdot \calE_\loco^{2/3}
			+ \calE_\unif^{1/2} \calE_\loco^{1/2}. \numberthis
		\end{align*}
	Now, if for any $o(1)$ sequence $\delta_n$, we have
	\begin{equation}
	\label{eq:ub_vanishing_err}
	\frac{1}{n} \sum_{k=2}^n \sum_{i_1\neq \cdots\neq i_k} \exp\bigg\{\frac{-(1-\delta_n)C_k\left\|(I_k^\leftarrow -I_k)U_{i_{1:k},\bigcdot} D\right\|_F^2}{\sigma_{\max}^2}\bigg\} = o(1),
	\end{equation}	
	where
	\begin{align}
		\label{eq:Ck}
		C_k = 
		\begin{cases}
			1/8 = 0.1250 & \textnormal{if }  k = 2\\
			1/12 \approx 0.0833 & \textnormal{if } k = 3\\
			1/14 \approx 0.0714 & \textnormal{if } k = 4 \\
			11/181 \approx 0.0608 & \textnormal{if } k = 5\\
			(3-2\sqrt{2})/4 \approx 0.0429 & \textnormal{if } k\geq 6,
		\end{cases}
	\end{align}
	then there exists two sequences $\overline{\delta}_n, \overline{\delta}_n' = o(1)$ such that \textcolor{black}{uniformly over $\calP_n(U, D, V, \sigma_\x, \sigma_\y)$} with probability $1-o(1)$, we have
	{
	\begin{align}
	\label{eq:ub_mismatch_proportion}
	\ell(\hat \Pi, \Pi^\star)  \leq \left(\frac{1}{n} \sum_{k=2}^n \sum_{i_1\neq \cdots\neq i_k} \exp\left\{\frac{-(1-\overline{\delta}_n)C_k\left\|(I_k^\leftarrow -I_k)U_{i_{1:k},\bigcdot} D\right\|_F^2}{\sigma_{\max}^2}\right\}\right)^{1-\overline{\delta}_n'}.
	\end{align}
	}
\end{theorem}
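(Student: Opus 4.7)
Rather than relying only on the basic inequality~\eqref{eq:basic_ineq}, I would start from the cycle decomposition in Lemma~\ref{lemma:cyc_decomp}: by Markov's inequality it suffices to control each summand in~\eqref{eq:cyc_decomp}. For a fixed cycle $(i_1,\ldots,i_k)$, optimality of $\hat\Pi$ in~\eqref{eq:linear_assignment} gives $\la X\hat V,\hat\Pi Y\hat V\ra\geq\la X\hat V,\Pi^\star Y\hat V\ra$; on the cyclic-error event $\{\hat\pi_{i_j}=\pi^\star_{i_{j+1}}\text{ for all }j\}$ (with $i_{k+1}=i_1$), $\hat\Pi$ and $\Pi^\star$ differ on rows $\pi^\star_{i_{1:k}}$ of $Y\hat V$ by exactly the cyclic shift $I_k^\leftarrow-I_k$, so the optimality inequality reduces to~\eqref{eq:upper_bound_on_each_term_in_cyc_decomp}. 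A direct computation shows that the deterministic part of this inner product equals $-\|(I_k^\leftarrow-I_k)U_{i_{1:k},\bigcdot}DV^\top\hat V\|_F^2/(2\sigma_\x\sigma_\y)$, which is the source of the Frobenius norm appearing in the exponent of~\eqref{eq:ub_mismatch_proportion}.

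To upper bound the probability~\eqref{eq:upper_bound_on_each_term_in_cyc_decomp}, I would split cycles by length at a cutoff $k^\star$ chosen later. For long cycles $k>k^\star$, I apply the uniform bound of Proposition~\ref{prop:est_err_of_V} to replace $\hat V\hat V^\top$ by $VV^\top$ at a cost of order $\calE_\unif$; since $V$ is deterministic the inner product becomes a bilinear form in two independent Gaussian matrices $N_{\x,i_{1:k}}V$ and $N_{\y,\pi^\star_{i_{1:k}}}V$, and optimization of the Chernoff parameter $t$ in the MGF $\bbE e^{t\la N_1,(I_k^\leftarrow-I_k)N_2\ra}$ yields the universal constant $C_\infty=(3-2\sqrt{2})/4$. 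For short cycles $k\le k^\star$, I would invoke Proposition~\ref{prop:loco_err_of_V} to replace $\hat V\hat V^\top$ by $\hat V^{(-\calC_k)}(\hat V^{(-\calC_k)})^\top$ at a cost of order $k^\star\calE_\loco$; because $\hat V^{(-\calC_k)}$ is independent of the rows of $N_\x,N_\y$ indexed by $\calC_k$ and $\pi^\star_{i_{1:k}}$, the conditional MGF can be computed exactly using the eigenvalues $e^{2\pi ij/k}-1$, $j=0,\ldots,k-1$, of $(I_k^\leftarrow-I_k)$. The finer spectrum for $k\in\{2,3,4,5\}$ sharpens the Chernoff optimum to $1/8,1/12,1/14,11/181$ respectively, and saturates at $C_\infty$ for $k\ge 6$.

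Combining these pieces, each cycle contributes the exponential factor $\exp\{-(1-\delta_n)C_k\|(I_k^\leftarrow-I_k)U_{i_{1:k},\bigcdot}D\|_F^2/\sigma_{\max}^2\}$ plus cross-terms arising from the substitutions $\hat V\to V$ and $\hat V\to\hat V^{(-\calC_k)}$. These cross-terms interact with the main exponent through the Chernoff parameter $t$, and requiring them to be dominated uniformly in $k$ by the main exponent, combined with the optimal choice of $k^\star$, reproduces the four-term condition~\eqref{eq:ub_simplified_unif_and_loco_bound_condition}: the first term sets the break-even between LOCO and uniform regimes, while the remaining three come from the interaction of the Chernoff exponents with the LOCO and uniform perturbations. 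Summing over all cycles and invoking~\eqref{eq:ub_vanishing_err} to ensure the total expectation is $o(1)$, a final Markov step, reflected in raising the sum to the $(1-\overline\delta_n')$ power, converts this expectation bound into the high-probability statement~\eqref{eq:ub_mismatch_proportion}.

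The main obstacle is the sharp MGF computation: for each small $k$, identifying the optimal Chernoff parameter that exactly matches the spectral structure of $(I_k^\leftarrow-I_k)^\top(I_k^\leftarrow-I_k)$ while simultaneously absorbing the residual perturbations from $\hat V\ne V$ and $\hat V\ne\hat V^{(-\calC_k)}$ is delicate, and it is this competition that forces the step-function form of $C_k$. A secondary obstacle is the choice of the cutoff $k^\star$, which must interpolate between the short-cycle LOCO regime (requiring $k^\star\calE_\loco\ll1$) and the long-cycle uniform regime (requiring $k^\star$ large enough that the combinatorial count $\binom{n}{k}k!$ is beaten by the exponential at $k=k^\star$); this balancing is what yields the mixed-exponent structure of~\eqref{eq:ub_simplified_unif_and_loco_bound_condition}.
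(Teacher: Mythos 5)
Your proposal follows essentially the same route as the paper's proof: cycle decomposition (Lemma~\ref{lemma:cyc_decomp}), reduction via the linear-assignment optimality condition to the event in~\eqref{eq:upper_bound_on_each_term_in_cyc_decomp}, a uniform/LOCO split at a cutoff $k^\star$ using Propositions~\ref{prop:est_err_of_V} and~\ref{prop:loco_err_of_V}, a Chernoff--MGF computation exploiting the circulant structure of $I_k^\leftarrow$ via the DFT (Proposition~\ref{prop:core_upper_bound}), and a final Markov-inequality step to pass from an expectation bound to the high-probability bound~\eqref{eq:ub_mismatch_proportion}. Two points in your outline are slightly off the mark, though neither is fatal. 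First, the constant $(3-2\sqrt{2})/4$ for $k\geq 6$ does not come from a single Chernoff optimization of $\bbE\,e^{t\la N_1,(I_k^\leftarrow-I_k)N_2\ra}$: the paper first splits the deviation event into a linear-in-Gaussian tail and a pure bilinear-in-noise tail, controlled by a free parameter $\gamma$, applies a Gaussian tail bound to the former and a Chernoff/MGF bound only to the latter, and then solves $(1/2-\gamma)^2/4=\gamma/2$ to balance them. A direct Chernoff on the full inner product would be harder to control because of the mean-shift structure, so the split is genuinely load-bearing. Second, the lower constraint on $k^\star$ is not driven by beating the combinatorial count $\binom{n}{k}k!$ (that is the job of condition~\eqref{eq:ub_vanishing_err}); rather, $k^\star$ must be large enough that the worst-case main exponent $\|(I_k^\leftarrow-I_k)U_{i_{1:k},\bigcdot}D\|_F^2/\sigma_{\max}^2\geq k\beta^2$ for $k\geq k^\star$ dominates the uniform substitution error $\calO(\calE_\unif\,\xi_k)$, where $\xi_k$ is itself a random Frobenius norm requiring its own concentration bound (Proposition~\ref{prop:frob_norm_concentration}) --- a step your sketch elides. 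The upper constraint $k^\star\calE_\loco\ll\cdots$ and this lower constraint are then shown to be simultaneously satisfiable under~\eqref{eq:ub_simplified_unif_and_loco_bound_condition} (Lemmas~\ref{lemma:negligible_exponent_uniform_bound} and~\ref{lemma:ub_uniform_and_loco_condition}). With these two refinements your plan matches the paper's.
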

\begin{proof}
	See Appendix \ref{prf:thm:ub_simplified}.
\end{proof}

The bound in \eqref{eq:ub_mismatch_proportion} takes an exponential form, and each summand comes from the error incurred by a cycle $(i_1, \hdots, i_k)$.

In the above theorem, we have assumed $\sigma_\x \asymp \sigma_\y$ and $d_1 \asymp d_r$ for ease of exposition, and we refer the readers to Theorem \ref{thm:ub} for a general version with those two assumptions removed. The assumption $\calE_\unif \ll 1$ ensures the right singular subspace can be consistently estimated. The condition $\beta^2 \gg r$ is almost necessary for consistent estimation of {$\Pi^\star$} in view of \eqref{eq:lb_in_beta}.

The condition in \eqref{eq:ub_simplified_unif_and_loco_bound_condition} arises from the hybrid strategy that combines uniform arguments and LOCO arguments.
In fact, an adapted version of the proof will give the same exponential rate \eqref{eq:ub_mismatch_proportion_under_futher_assumption_2} for the naive algorithm (i.e., linear assignment without projection) provided $\beta^2 \gg p$, and this assumption is 
usually stronger than \eqref{eq:ub_simplified_unif_and_loco_bound_condition} when $r$ does not grow too fast with $n$ (and $p$).



The next corollary states that under some additional symmetry conditions, the upper bound in \eqref{eq:ub_mismatch_proportion} can be simplified to match the lower bound in Theorem \ref{thm:lb}.

\begin{corollary}[Minimax optimality of LAPS]
\label{cor:ub_mismatch_proportion_under_further_assumptions}
Let the assumptions of Theorem \ref{thm:ub_simplified} hold. We extend the definition of $C_k$ such that $C_1 = 1/4$ and $C_k$ is given in \eqref{eq:Ck} for $2\leq k \leq n$.
\begin{enumerate}
	\item Suppose the following \emph{weak symmetry condition} holds: 
	for any sequence $\delta_n = o(1)$, there exists another sequence $\delta_n' = o(1)$ such that
	\begin{align*}
		& \max_{i\in[n]} \sum_{i'\in[n]\setminus\{i\}} \exp\left\{ \frac{-(1-\delta_n)C_k \|(U_{i, \bigcdot} - U_{i', \bigcdot})D\|^2}{\sigma_{\max}^2} \right\} \\
		\label{eq:ub_further_assumption_1}
		& \qquad \leq
		\frac{1}{n}\sum_{i\neq i'}  \exp\left\{ \frac{-(1-\delta_n')C_k \|(U_{i, \bigcdot} - U_{i', \bigcdot})D\|^2}{\sigma_{\max}^2} \right\}, 
		\qquad \forall 1\leq k \leq n. \numberthis
	\end{align*}
	Now, if for any sequence $\delta_n=o(1)$, we have
	$
		\frac{1}{n} \sum_{i\neq i'}  \exp\left\{ \frac{-(1-\delta_n)C_6\|(U_{i, \bigcdot} - U_{i', \bigcdot})D\|^2}{\sigma_{\max}^2} \right\}= o(1),
	$
	then there exists two sequences $\overline{\delta}_n, \overline{\delta}_n' = o(1)$ such that \textcolor{black}{uniformly over $\calP_n(U, D, V, \sigma_\x, \sigma_\y)$} with probability $1-o(1)$, we have
	{
		\begin{align}
		\label{eq:ub_mismatch_proportion_under_futher_assumption_1}
		\ell(\hat\Pi, \Pi^\star)  \leq \left(\frac{1}{n} \sum_{i\neq i'}  \exp\left\{ \frac{-(1-\overline{\delta}_n)C_6\|(U_{i, \bigcdot} - U_{i', \bigcdot})D\|^2}{\sigma_{\max}^2} \right\} \right)^{1-\overline{\delta}_n'}.
		\end{align}
		}
	\item Suppose in addition to the weak symmetry condition, the following \emph{strong symmetry condition} also holds:
	for any sequence $\delta_n = o(1)$, there exists another sequence $\delta_n' = o(1)$, such that
	\begin{align*}
		& \left[\sum_{i'\in[n]\setminus\{i\}} \exp\left\{ \frac{-(1-\delta_n)C_k \|(U_{i, \bigcdot} - U_{i', \bigcdot})D\|^2}{\sigma_{\max}^2} \right\}\right]^k\\
		\label{eq:ub_further_assumption_2}
		& \qquad \leq 
		\sum_{i'\in[n]\setminus\{i\}} \exp\left\{ \frac{-(1-\delta_n')kC_k \|(U_{i, \bigcdot} - U_{i', \bigcdot})D\|^2}{\sigma_{\max}^2} \right\}, \qquad \forall i\in[n], 2\leq k \leq 6.\numberthis
	\end{align*}
	Now, if for any sequence $\delta_n= o(1)$, we have
	$
		\frac{1}{n} \sum_{i\neq i'}  \exp\left\{ \frac{-(1-\delta_n)\|(U_{i, \bigcdot} - U_{i', \bigcdot})D\|^2}{4\sigma_{\max}^2} \right\}= o(1),
	$
	then there exists two sequences $\overline{\delta}_n, \overline{\delta}_n' = o(1)$ such that \textcolor{black}{uniformly over $\calP_n(U, D, V, \sigma_\x, \sigma_\y)$} with probability $1-o(1)$, we have
	{
		\begin{align}
		\label{eq:ub_mismatch_proportion_under_futher_assumption_2}
		\ell(\hat\Pi, \Pi^\star) \leq \left(\frac{1}{n} \sum_{i\neq i'}  \exp\left\{ \frac{-(1-\overline{\delta}_n)\|(U_{i, \bigcdot} - U_{i', \bigcdot})D\|^2}{4\sigma_{\max}^2} \right\} \right)^{1-\overline{\delta}_n'}.
		\end{align}
		}
\end{enumerate}
\end{corollary}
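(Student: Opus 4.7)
The plan is to start from the high-probability inner-sum bound \eqref{eq:ub_mismatch_proportion} of Theorem \ref{thm:ub_simplified} and simplify its right-hand side using the symmetry conditions. The algebraic identity
\[
\|(I_k^\leftarrow - I_k)\, U_{i_{1:k}, \bigcdot}\, D\|_F^2 \;=\; \sum_{j=1}^k \|(U_{i_j, \bigcdot} - U_{i_{j+1}, \bigcdot})D\|^2 \quad (i_{k+1} := i_1)
\]
factorizes the exponentiated summand into $\prod_{j=1}^k f_k^{\delta}(i_j, i_{j+1})$ with $f_k^{\delta}(i,i') := \exp\{-(1-\delta)C_k\|(U_{i,\bigcdot}-U_{i',\bigcdot})D\|^2/\sigma_{\max}^2\}$. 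Let $F_k$ denote the $n\times n$ symmetric nonnegative matrix with zero diagonal and off-diagonal entries $f_k^\delta(i,i')$. The workhorse estimate is the trace/operator-norm bound
\[
\sum_{i_1 \neq \cdots \neq i_k} \prod_{j=1}^k f_k^\delta(i_j, i_{j+1}) \;\leq\; \mathrm{tr}(F_k^k) \;\leq\; n \|F_k\|^k \;\leq\; n (M_k^\delta)^k, \quad M_k^\delta := \max_i \sum_{i'\neq i} f_k^\delta(i,i'),
\]
using $\|F_k\| \leq \|F_k\|_\infty$ for symmetric nonnegative matrices.

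For Part 1, I would apply weak symmetry to replace each $M_k^\delta$ by $\bar B_k^{\delta'} := (1/n)\sum_{i\neq i'} f_k^{\delta'}(i,i')$ with a uniform $\delta' = o(1)$. Since $C_k > C_6$ for $2 \leq k \leq 5$ and $C_k = C_6$ for $k \geq 6$, the pointwise inequality gives $\bar B_k^{\delta'} \leq B := \bar B_6^{\delta'}$ for every $k \geq 2$. The full inner sum, after division by $n$, is therefore bounded by the geometric series $\sum_{k=2}^n B^k \leq B^2/(1-B) \lesssim B^2$ under the hypothesis $B = o(1)$. Raising to the power $1 - \overline\delta_n'$ and absorbing the $O(1)$ constant and the extra factor of $B$ into a slightly enlarged exponent perturbation (via the elementary fact $C \cdot B^{2(1-\overline\delta_n')} \leq B^{1 - \overline\delta_n''}$ whenever $B = o(1)$ for suitable $\overline\delta_n'' = o(1)$ with $\overline\delta_n'' - \overline\delta_n'$ eventually positive) delivers \eqref{eq:ub_mismatch_proportion_under_futher_assumption_1}. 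The identical estimate, without the final exponentiation, verifies the vanishing-prefactor hypothesis \eqref{eq:ub_vanishing_err} needed to invoke Theorem \ref{thm:ub_simplified}.

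For Part 2, I would further exploit strong symmetry to upgrade $C_6$ to the optimal constant $C_1 = 1/4$. For each $2 \leq k \leq 6$, strong symmetry pulls the $k$-th power inside the maximum,
\[
(M_k^\delta)^k \;\leq\; \max_i \sum_{i'\neq i} \exp\bigl\{-(1-\delta')\,kC_k\,\|(U_{i,\bigcdot} - U_{i',\bigcdot})D\|^2/\sigma_{\max}^2\bigr\}.
\]
A numerical check on \eqref{eq:Ck} confirms $kC_k \geq 1/4 = C_1$ for each $2 \leq k \leq 6$ (with equality at $k = 2, 3$), so the right-hand summand is dominated by $f_1^{\delta'}(i,i')$; applying weak symmetry at $k = 1$ then replaces the max row sum with the target average $T := (1/n)\sum_{i\neq i'}\exp\{-(1-\delta'')\|(U_{i,\bigcdot}-U_{i',\bigcdot})D\|^2/(4\sigma_{\max}^2)\}$. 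Hence each of the five terms $k = 2, \dots, 6$ contributes at most $T$. For $k \geq 7$, since $C_k = C_6$, factorize $(M_6^\delta)^k = (M_6^\delta)^6 \cdot (M_6^\delta)^{k-6}$; the strong symmetry bound at $k = 6$ gives $(M_6^\delta)^6 \leq T$, while the tail factor is summable geometrically because $M_6^\delta \leq T^{1/6} = o(1)$. The total inner sum is therefore $O(T)$, and raising to the appropriate power yields \eqref{eq:ub_mismatch_proportion_under_futher_assumption_2}.

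The main conceptual crux of Part 2 is the numerical observation $kC_k \geq 1/4 = C_1$ for all $2 \leq k \leq 6$ under the specific constants in \eqref{eq:Ck}; this alignment is what allows strong symmetry to match the sharp constant from the two-cycle lower bound in Theorem \ref{thm:lb}. The main bookkeeping challenge, present in both parts, is threading the $o(1)$ sequences consistently through several applications of weak and strong symmetry (each transmuting $\delta$ into a slightly worse but still $o(1)$ quantity) and absorbing the $O(1)$ multiplicative prefactors from the geometric series into the final exponent perturbation via $C \cdot T^{1-\epsilon} \leq T^{1-\epsilon'}$ whenever $T = o(1)$ and $\epsilon' > \epsilon$ are both $o(1)$.
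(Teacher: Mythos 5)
Your proof is correct and follows the same overall strategy as the paper's, but with a cleaner intermediate formalism. The paper reaches the per-$k$ bound $n\,\calE_k^k$ (where $\calE_k$ is the $n^{-1}\sum_{i\neq i'}$ average) via an iterated telescoping argument that peels off one cycle edge at a time and applies weak symmetry at each step; you obtain the equivalent bound $n\,(M_k^\delta)^k$ (max row sum) directly from the chain $\sum_{i_1\neq\cdots\neq i_k}\prod f \leq \mathrm{tr}(F_k^k)\leq n\|F_k\|^k \leq n\|F_k\|_\infty^k$, and then apply weak symmetry once at the end to convert $M_k^\delta$ into the desired average. The two routes give the same quantity; yours is more compact and avoids the bookkeeping of an induction. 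Your Part 2 also handles the tail $k\geq 7$ a bit more cleanly: the paper nominally invokes the strong symmetry condition \eqref{eq:ub_further_assumption_2} for all $2\leq k\leq n$ before effectively undoing it for $k\geq 7$ via the reverse (trivial) power inequality, whereas you only use strong symmetry inside its stated range $2\leq k\leq 6$ and sum the geometric tail $(M_6^\delta)^{k-6}$ directly. The numerical check $kC_k \geq 1/4$ for $2\leq k\leq 6$ and the absorption of $O(1)$ prefactors into the exponent via $T=o(1)$ match the paper exactly, and you correctly note that the same bound verifies \eqref{eq:ub_vanishing_err} before Theorem \ref{thm:ub_simplified} can be invoked.
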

\begin{proof}
	See Appendix \ref{prf:cor:ub_mismatch_proportion_under_further_assumptions}.
\end{proof}
Under weak symmetry condition, we get an exponential error rate \eqref{eq:ub_mismatch_proportion_under_futher_assumption_1} that nearly matches the lower bound in Theorem \ref{thm:lb}, but the constant on the exponent is $C_6 = (3-2\sqrt{2})/4\approx 0.0429$, which is not sharp. Under the additional strong symmetry condition, the error rate \eqref{eq:ub_mismatch_proportion_under_futher_assumption_2} exactly matches the lower bound in Theorem \ref{thm:lb} with a sharp constant $1/4 = 0.25$ on the exponent.

The weak symmetry condition states that a certain notion of energy, as measured by
$$
	E_{i, k} := \sum_{i'\in[n]\setminus\{i\}} \exp\left\{ \frac{-(1-\delta_n)C_k \|(U_{i, \bigcdot} - U_{i', \bigcdot})D\|^2}{\sigma_{\max}^2} \right\},
$$
is spread out across all indices $i\in[n]$ for each fixed $1\leq k\leq n$. 
The strong symmetry condition further asserts that for any fixed $i$ and any fixed $2\leq k\leq 6$, the summands in $E_{i,k}$, after proper reordering, decay at a sufficiently fast rate.

\begin{example}
\label{ex:sym-cond}
To conclude this section, we consider a specific configuration of the signal matrix $UD$ and work out sufficient conditions for the weak and strong symmetry conditions. 
Let 
$$
	\beta_i^2:=  \min_{i'\in[n]\setminus\{i\}} \frac{\|(U_{i, \bigcdot} - U_{i', \bigcdot})D\|^2}{\sigma_{\max}^2}
$$
be the minimum separation between the $i$-th row and the other rows.
Suppose rows of the signal component are well-separated in the sense that there exists some $\alpha\in (0,\infty)$ such that for each fixed $i$, after a potential proper relabeling of the other rows\footnote{This relabeling can change for different fixed row index $i$. For instance, given $i$, we could relabel all other rows according to their Euclidean distances from the $i$-th row.}, we have
\begin{equation}
	\label{eq:alpha_separation}
	\frac{\|(U_{i, \bigcdot} - U_{i', \bigcdot})D\|^2}{\sigma_{\max}^2} \geq \beta_i^2 |i - i'|^{1/\alpha}.
\end{equation}	

If $\alpha \in (0, 1]$, we have
\begin{align*}
	E_{i, k} & \leq \sum_{i'\in[n]\setminus\{i\}} \exp\left\{ -(1-\delta_n)C_k \beta_i^2 |i-i'|\right\} \\
	& \leq 2\sum_{m=1}^{\infty} \exp\{-(1-\delta_n)C_k \beta_i^2 m\} 
	= \exp\left\{ -(1-\delta_n') C_k\beta_i^2\right\},
\end{align*}
where $\delta_n'=o(1)$ and the last equality is by summing over geometric series and the assumption that $\beta_i^2\geq \beta^2\gg1$. 
If $\alpha > 1$, we proceed by
\begin{align*}
	E_{i, k} & \leq \sum_{i'\in[n]\setminus\{i\}} \exp\left\{ -(1-\delta_n)C_k \beta_i^2 |i-i'|^{1/\alpha}\right\} \leq 2\sum_{m=1}^{\infty} \exp\{-(1-\delta_n)C_k \beta_i^2 m^{1/\alpha}\}
	\\
	& =  2  \left[\exp\left\{-(1-\delta_n) C_k \beta_i^2 1^{1/\alpha}\right\} + \cdots + \exp\left\{-(1-\delta_n) C_k \beta_i^2 (\lceil 2^\alpha \rceil - 1)^{1/\alpha}\right\}\right] + \\
	& \quad + 2\left[\exp\left\{-(1-\delta_n) C_k \beta_i^2 (\lceil2^\alpha\rceil)^{1/\alpha}\right\} + \cdots + \exp\left\{-(1-\delta_n) C_k \beta_i^2 (\lceil 3^\alpha \rceil - 1)^{1/\alpha}\right\}\right] + \cdots\\ 
	& \leq 2\sum_{m=1}^\infty \left(\lceil (m+1)^\alpha\rceil - \lceil m^\alpha\rceil\right) \cdot \exp\left\{-(1-\delta_n)C_k \beta_i^2 (\lceil m^\alpha \rceil)^{1/\alpha}\right\} \\
	& \leq 2 \sum_{m=1}^\infty (m+1)^\alpha \cdot \exp\left\{-(1-\delta_n)C_k \beta_i^2 m\right\} \\
	& = \sum_{m=1}^\infty \exp\left\{-(1-\delta_n)C_k \beta_i^2 m + \alpha \log(m+1) + \log 2\right\}.
\end{align*}
Since $\beta_i^2 \geq \beta^2 \gg 1$, as long as $\alpha \ll \beta^2$, we get
$$
	E_{i, k} \leq \sum_{m=1}^\infty \exp\left\{-(1-\delta_n') C_k \beta_i^2 m \right\}
	\leq \exp\left\{-(1-\delta_n'') C_k \beta_i^2 \right\},
$$
where $\delta_n', \delta_n'' = o(1)$ and the last inequality is by summing over geometric series. In summary, as long as $\alpha \ll \beta^2$, we have
\begin{equation}
	\label{eq:eik-upper-example}
	E_{i, k} \leq \exp\left\{-(1-\overline{\delta}_n) C_k \beta_i^2\right\}	
\end{equation}
for some $\overline{\delta}_{n} = o(1)$.
On the other hand, we have from the definition of $\beta_i^2$ that
\begin{equation}
	\label{eq:eik-lower-example}
	E_{i, k} \geq \exp\left\{-(1-\delta_n)C_k \beta_i^2\right\}.
\end{equation}
Thus, the weak symmetry condition would hold if
\begin{equation}
	\label{eq:weak_symmetry_under_alpha_separation}
	\max_{i\in[n]}\exp\{-(1-\overline{\delta}_{n})C_k \beta_i^2\} \leq \frac{1}{n} \sum_{i\in[n]} \exp\{-(1-\overline{\delta}_{n}')C_k \beta_i^2\}
\end{equation}	
for some $\overline{\delta}_{n}'=o(1)$.
In particular, \eqref{eq:weak_symmetry_under_alpha_separation} holds when there is a positive fraction of rows with $\beta_i = \beta$. 
Fix a proportion $a\in (0,1]$, this is achievable by using the coordinates of all elements in the intersection of the integer lattice in $\mathbb{R}^r$ with an $r$-dimensional ball with radius $c_r(an)^{1/r}$ as the first $an$ rows of a matrix $L\in \mathbb{R}^{n\times r}$. 
Here $c_r$ is a constant depending only on $r$.
We could fill the remaining rows of $L$ sequentially under the constraint that all elements are integers and that the Euclidean distance of a new row is at least $1$ away from all existing rows.
Finally, we take $U$ and $D$ as the left singular vectors and singular values of $\beta \sigma_{\max} L$.
In this case, $\beta_i = \beta$ for $1\leq i\leq an$ and $\beta_i\geq \beta$ for all other rows.
In addition, \eqref{eq:alpha_separation} holds with $\alpha \asymp r$ for this particular configuration.

On the other hand, \eqref{eq:eik-upper-example} and \eqref{eq:eik-lower-example} imply that the strong symmetry condition would hold if for all $i$ and for $2\leq k\leq 6$,
$$
	\left(\exp\{-(1-\overline{\delta}_{n})C_k \beta_i^2\}\right)^k \leq \exp\{-(1-\overline{\delta}_{n}'')kC_k \beta_i^2\},
$$
for some $\overline{\delta}_{n}''=o(1)$, which is trivially true.
In summary, under the condition in \eqref{eq:alpha_separation}, the weak and strong symmetry conditions both hold as long as $\beta^2\gg \alpha \lor 1$ and \eqref{eq:weak_symmetry_under_alpha_separation} holds.
\end{example}

\section{Simulation}\label{sec:sim}

\subsection{Effects of signal strength}

\begin{figure}[t!]
\centering
	\begin{subfigure}{.45\textwidth}
		\centering
		\includegraphics[width=1\linewidth]{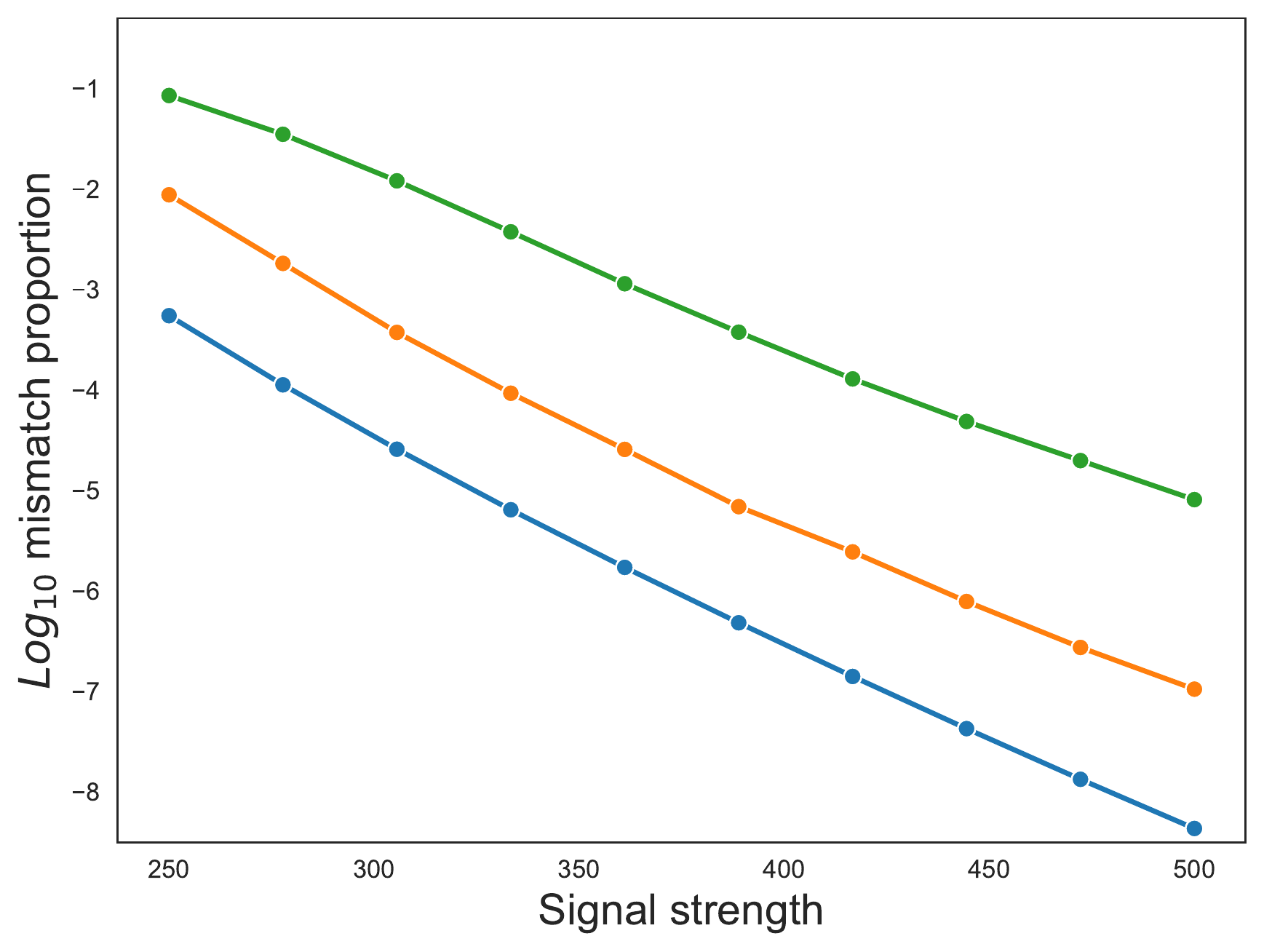}
	\end{subfigure}
	\begin{subfigure}{.45\textwidth}
		\centering 
		\includegraphics[width=1\linewidth]{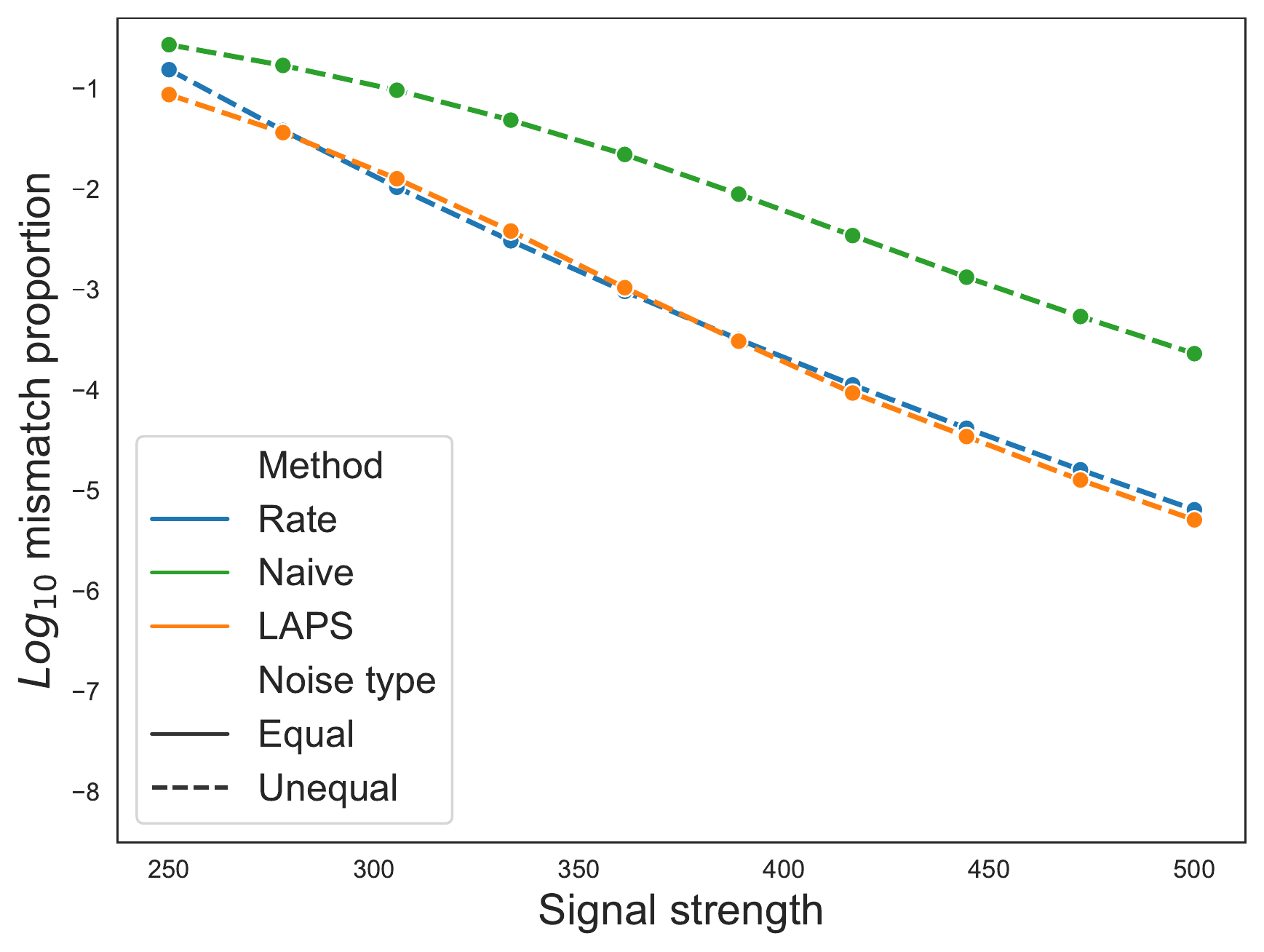}
	\end{subfigure}
       \caption{\small Average mismatch proportions {over 1000 repetitions} (in $\log_{10}$ scale) of LAPS, naive linear assignment without projection, and the theoretical prediction (i.e., the minimax rate) as the signal strength varies.}
	\label{fig:vary_d}
\end{figure}

To start with, we present a simulation study that examines the effect of signal strength, as measured by the magnitude of $(d_1, \hdots, d_r)$. Here, we set $n = 1000, p=50, r=10$.
We generate Haar distributed random orthogonal matrices $U\in O_{n,r}$ and $V\in O_{p, r}$. 
{The ground truth permutation $\Pi^\star$ is randomly sampled from $S_n$.} 
Those parameters are generated once and then fixed throughout the simulation.
The diagonal entries of $D$ are generated by first sampling a random vector $w \in \bbR^{r}$ with i.i.d.~$\textnormal{unif}(0, 1)$ entries and let $\diag(D) = \texttt{signal} \times w$, where $\texttt{signal}$ is a scalar and we vary it in $\{250(1 + i/9): 0\leq i\leq 9\}$.
We either let $\sigma_\x = \sigma_\y = 1$ or $\sigma_\x =1, \sigma_\y = 1.5$. 
The former is called ``equal noise'' case and the latter is called ``unequal noise'' case. 
For each configuration of $\texttt{signal}$ and $(\sigma_\x, \sigma_\y)$, we generate the datasets $X$ and $Y$ according to model \eqref{eq:model} for $1000$ times, and we record the {mismatch proportions \eqref{eq:loss} averaged over 1000 simulations} for both LAPS and the naive algorithm that solves linear assignment on the raw data without projection. 

Figure \ref{fig:vary_d} plots the $\log_{10}$-transformed average mismatch proportion versus the value of $\texttt{signal}$. 
To verify our theory, we also plot the minimax rate given in \eqref{eq:ub_mismatch_proportion_under_futher_assumption_2} with $o(1)$ terms omitted. 
Figure \ref{fig:vary_d} shows that LAPS uniformly outperforms its naive counterpart regardless of signal strength and noise configuration. 
When the noise levels become unequal, both methods perform worse. 
The minimax rate \eqref{eq:ub_mismatch_proportion_under_futher_assumption_2} aligns reasonably well with the empirical error made by LAPS.

\subsection{Effects of the projection step}

\begin{figure}[t] 
\centering
	\begin{subfigure}{.6\textwidth}
		\centering
		\includegraphics[width=1\linewidth]{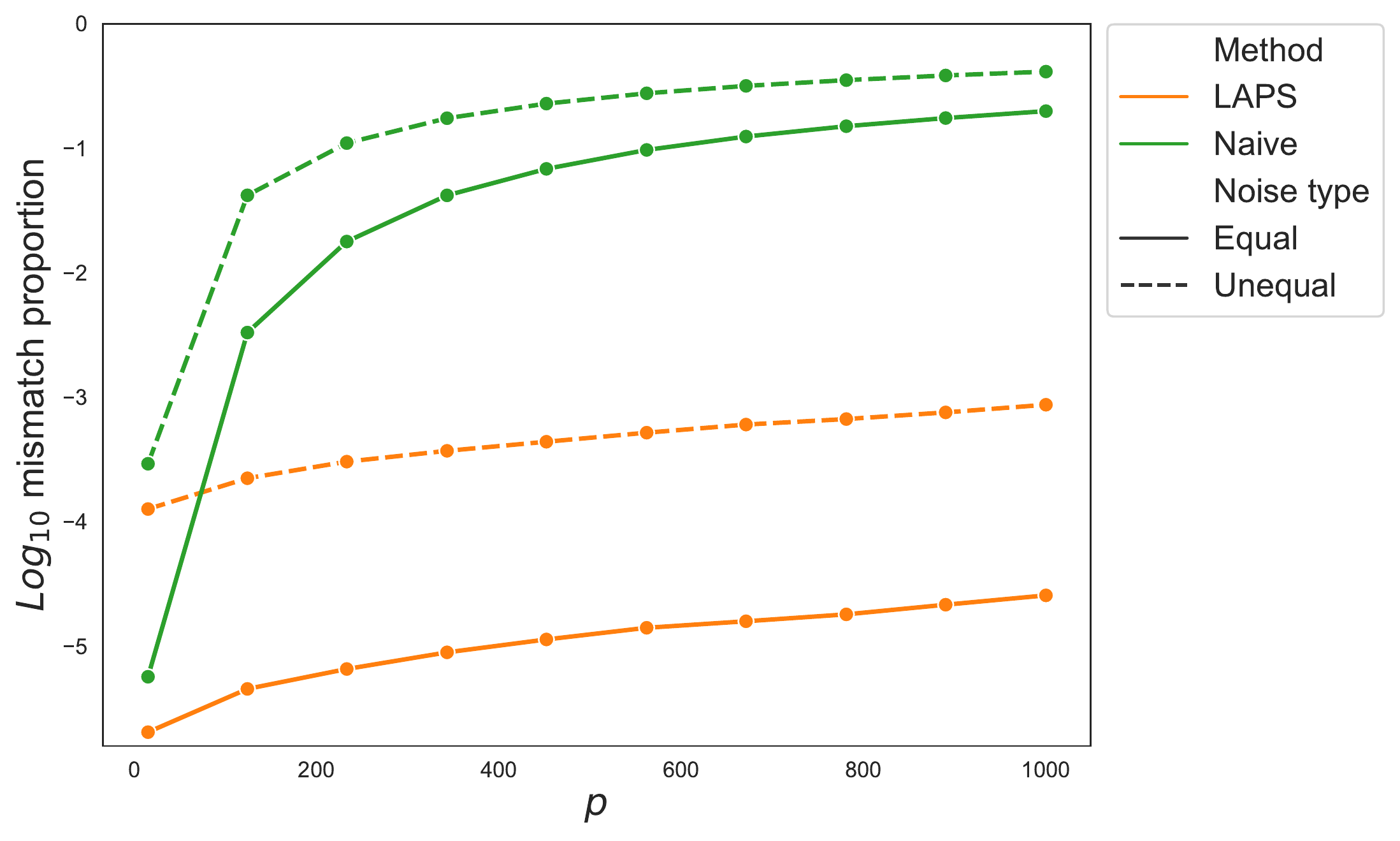}
	\end{subfigure}
       \caption{\small Average mismatch proportions {over 1000 repetitions} (in $\log_{10}$ scale) of LAPS and naive linear assignment without projection as $p$ varies.} 
	\label{fig:vary_r_and_p}
\end{figure}

We now proceed to examining the effectiveness of SVD-based projection.
We consider a similar data-generating process as in the previous simulation, but with $\texttt{signal}$ fixed at $400$. 
We fix $n=1000, r = 10$ and vary $p$ from $15$ to $1000$. Note that since $UD$ remains constant, the minimax rate remains unchanged, and the only factor that can potentially affect performance is that uncovering the low-dimensional signals becomes harder as $p$ grows.

The results are given in Figure \ref{fig:vary_r_and_p}. We again see that LAPS outperforms its naive counterpart in all scenarios considered. 
The naive method performs worse as $p$ grows, which is intuitive as the signal-to-noise ratio decays as $p$ grows.
The performance of LAPS also degrades as $p$ grows, as estimating $V$ becomes harder. However, the drop in accuracy is not as much compared to the naive method, illustrating the effectiveness of the projection step.

\section{Real data examples from single-cell biology}
\label{sec:data}

\subsection{Matching single-cell RNA-seq data} 
\label{subsec:rna}
We apply LAPS to integrate two single-cell RNA-seq datasets collected from human pancreatic islets across different technologies. 
The first dataset first appeared in \cite{grun2016novo} and was obtained using the CEL-seq2 technology \cite{hashimshony2016cel}. 
The second dataset appeared in \cite{segerstolpe2016single} and was measured using the Smart-seq2 technology \cite{picelli2013smart}. 
The raw CEL-seq2 data contain measurements on $34363$ RNAs in $2285$ cells, and the raw Smart-seq2 data contain measurements on $34363$ RNAs in $2394$ cells. 
The RNAs measured in the two datasets only partially overlap though the total number of features are identical. Human annotations of cell types are available for both datasets. 

We apply standard pre-processing pipelines provided by Python package \texttt{scanpy} \cite{wolf2018scanpy} to select top $5000$ active RNAs for both datasets. 
To ensure the datasets fit into our current model, we manually balance the two datasets as follows: for each cell type, we randomly down-sample cells of that type in one dataset, so that the numbers of cells of that type are the same in both datasets. 
After balancing, we get two data matrices $X_\act, Y_\act\in \bbR^{1935\times 5000}$ for CEL-seq2 data and Smart-seq2 data, respectively. The cell type composition is shown in the left panel of Figure \ref{fig:rna_celltype_freq_and_acc}.
Among all the active RNAs ($5000$ in each dataset), $2508$ appeared in both datasets, thus giving two feature-wise aligned data matrices $X_\sha, Y_\sha \in \bbR^{1935\times 2508}$. 
We then apply LAPS to the pair $(X_\sha, Y_\sha)$ with $r=30$ and $\hat{V}$ estimated from $X_\sha$.

\begin{figure}[t]
\centering
	\begin{subfigure}[t]{.48\textwidth}
		\vspace{0pt}
		\centering
		\includegraphics[width=1\linewidth]{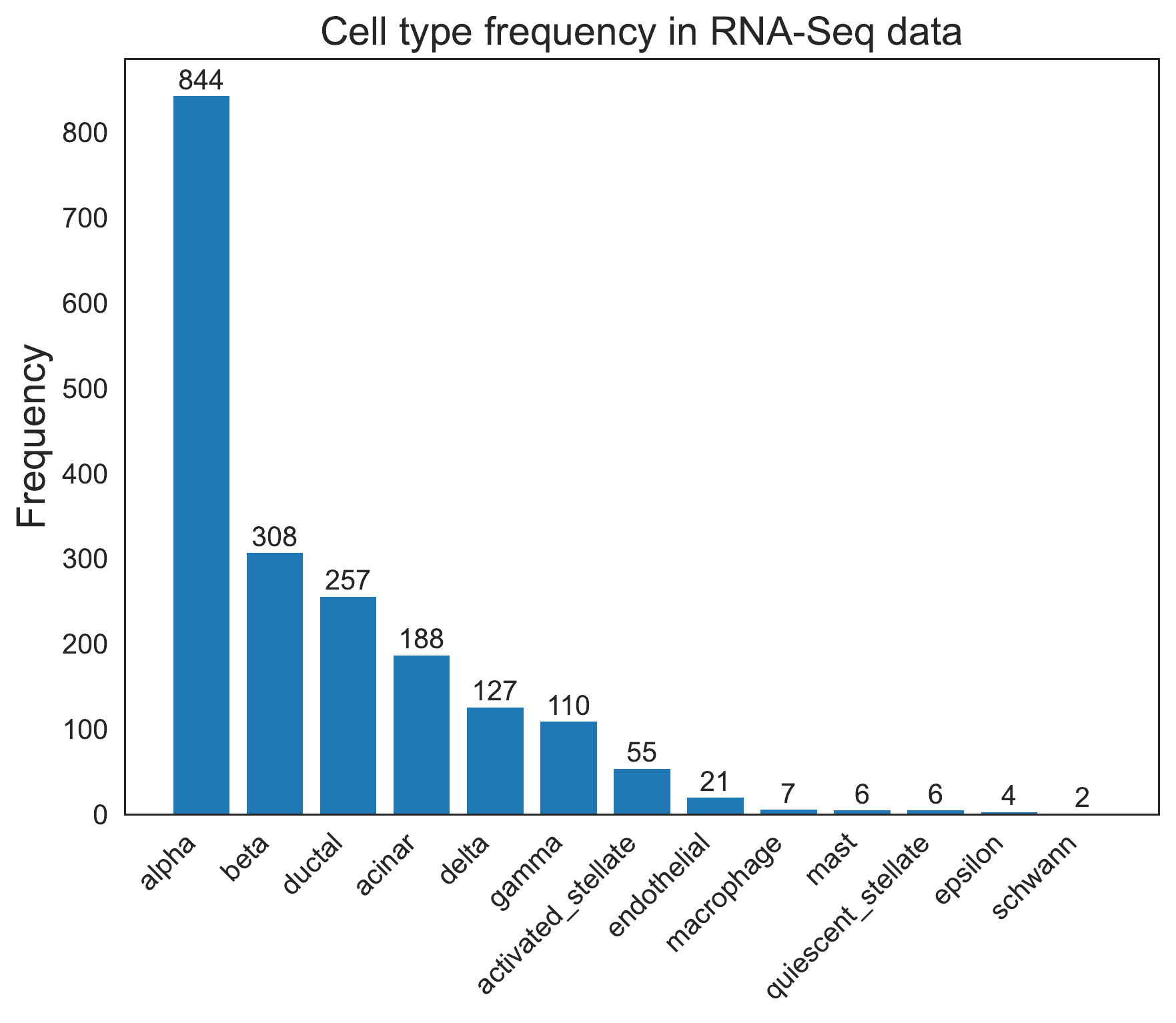}
	\end{subfigure}
	\begin{subfigure}[t]{.48\textwidth}
		\vspace{0pt}
		\centering 
		\includegraphics[width=1.04\linewidth]{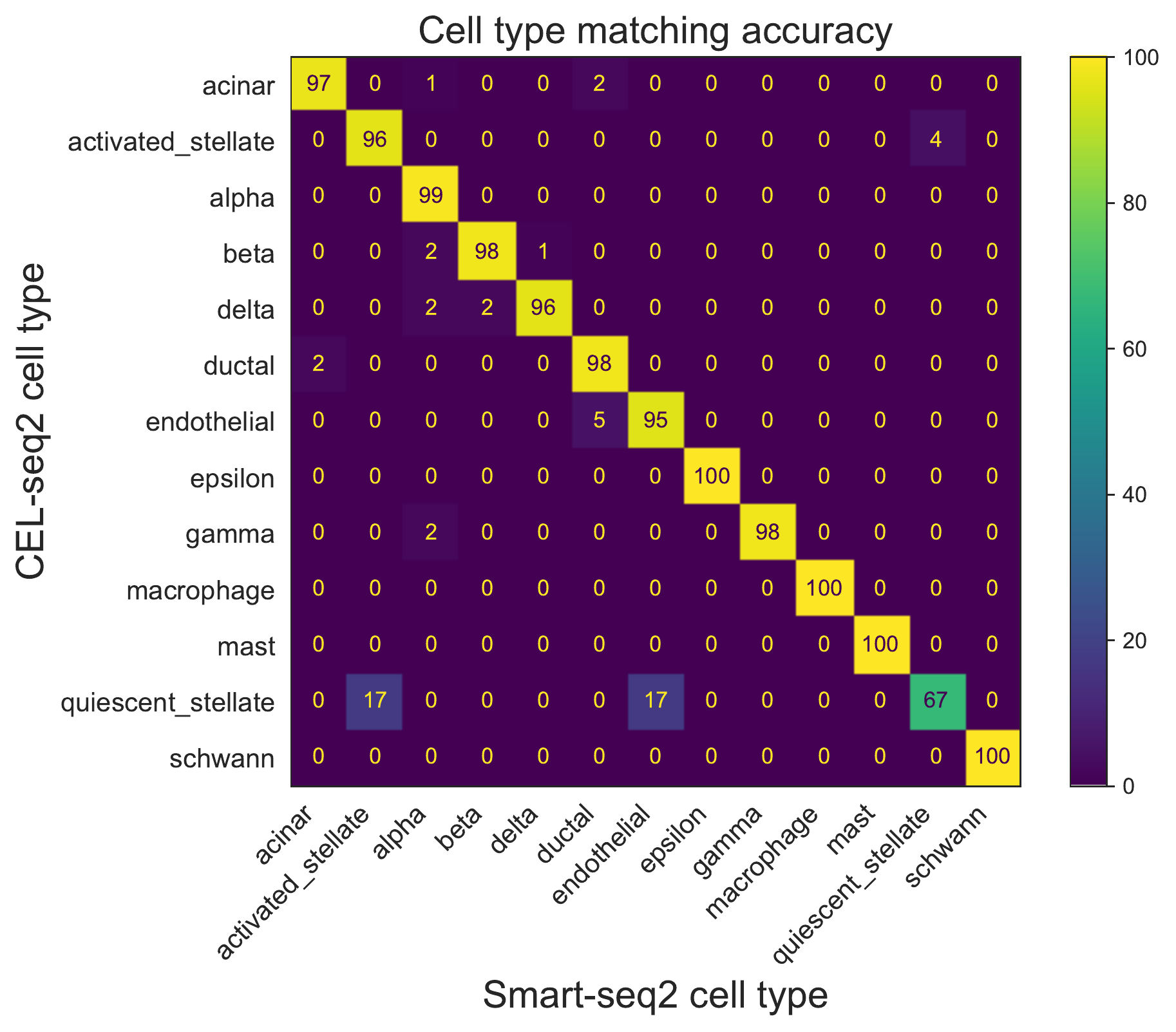}
	\end{subfigure}
       \caption{\small The cell type composition of single-cell RNA-seq data (left panel) and cell-type level matching accuracy.}
	\label{fig:rna_celltype_freq_and_acc}
\end{figure}

Since there is no ground truth matching available, we evaluate the performance of LAPS by computing the cell type level matching accuracy, i.e., we claim $\hat \pi_i$ is correct if $X_{i,\bigcdot}$ and $Y_{\hat\pi_i, \bigcdot}$ are of the same cell type. LAPS achieves $97.93\%$ overall accuracy and the right-panel of Figure \ref{fig:rna_celltype_freq_and_acc} displays the confusion matrix, from which we see that LAPS achieves high accuracy even for infrequent cell types.

In order to project the two datasets into a common subspace, we fit canonical correlation analysis (CCA) on $(X_\act, \hat \Pi Y_\act)$, obtain top $30$ CCA scores, and plot the two-dimensional UMAP embeddings in {the bottom two panels of} Figure \ref{fig:rna_umap}. 
Note that fitting CCA on {all active features (5000 in either dataset) as opposed to the shared features (2508 shared between two datasets) retains more biological information useful for downstream analyses. 
{{For} comparison, the top two panels display the UMAP embeddings obtained from $(X_\sha\hat V, Y_\sha \hat V)$.}
From the left two panels of Figure \ref{fig:rna_umap}, {we see the embeddings {of} two datasets are better mixed after LAPS-based integration}, illustrating successful correction of technological differences between CEL-seq2 and Smart-seq2. 
The right two panels of Figure \ref{fig:rna_umap} {shows different cell types are better separated after LAPS-based integration}, which indicates that biological signals are preserved.

\begin{figure}[t]
\centering
	\begin{subfigure}[t]{.49\textwidth}
		\vspace{0pt}
		\centering
		\includegraphics[width=1\linewidth]{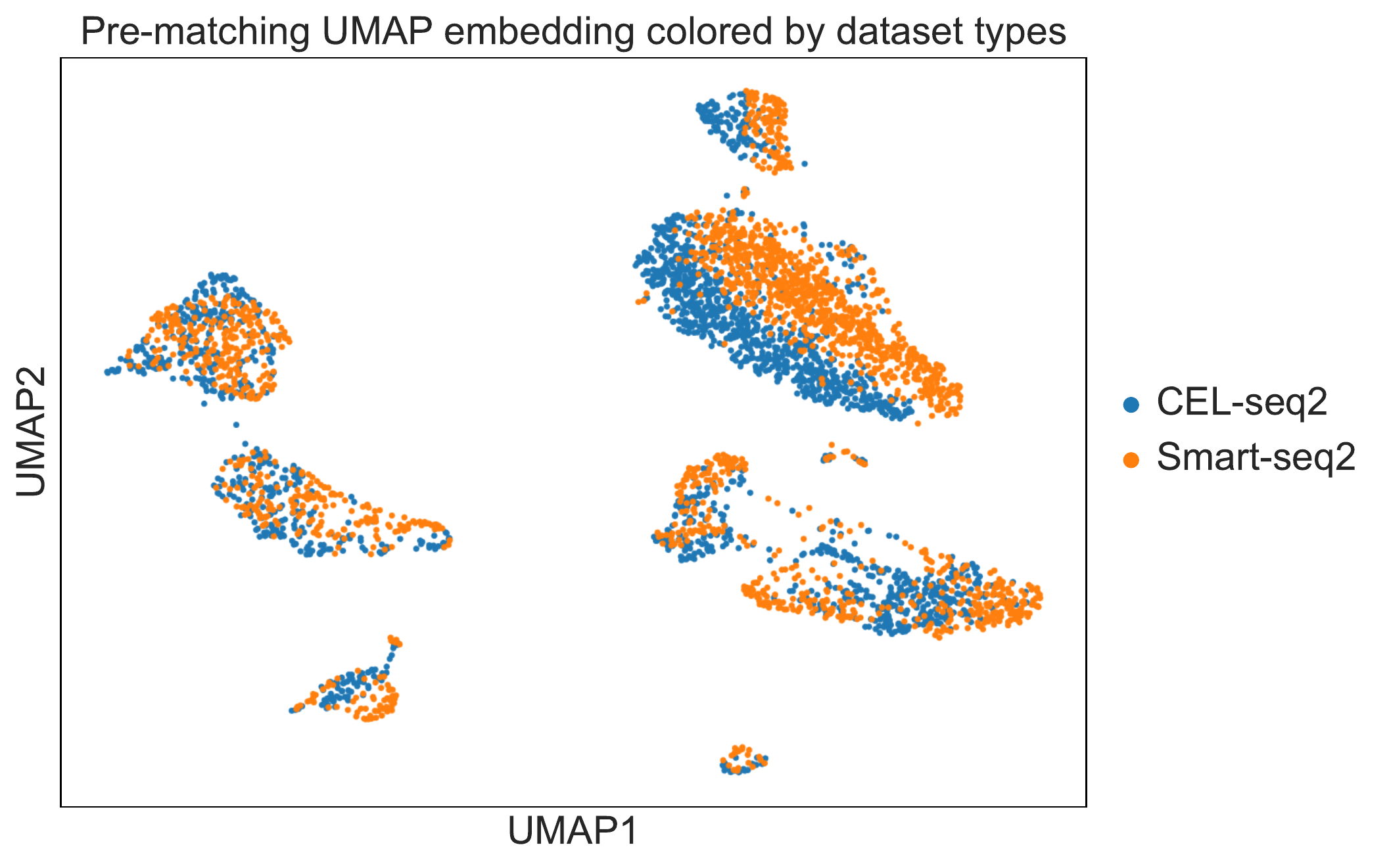}
	\end{subfigure}
	\begin{subfigure}[t]{.49\textwidth}
		\vspace{0pt}
		\centering 
		\includegraphics[width=1.09\linewidth]{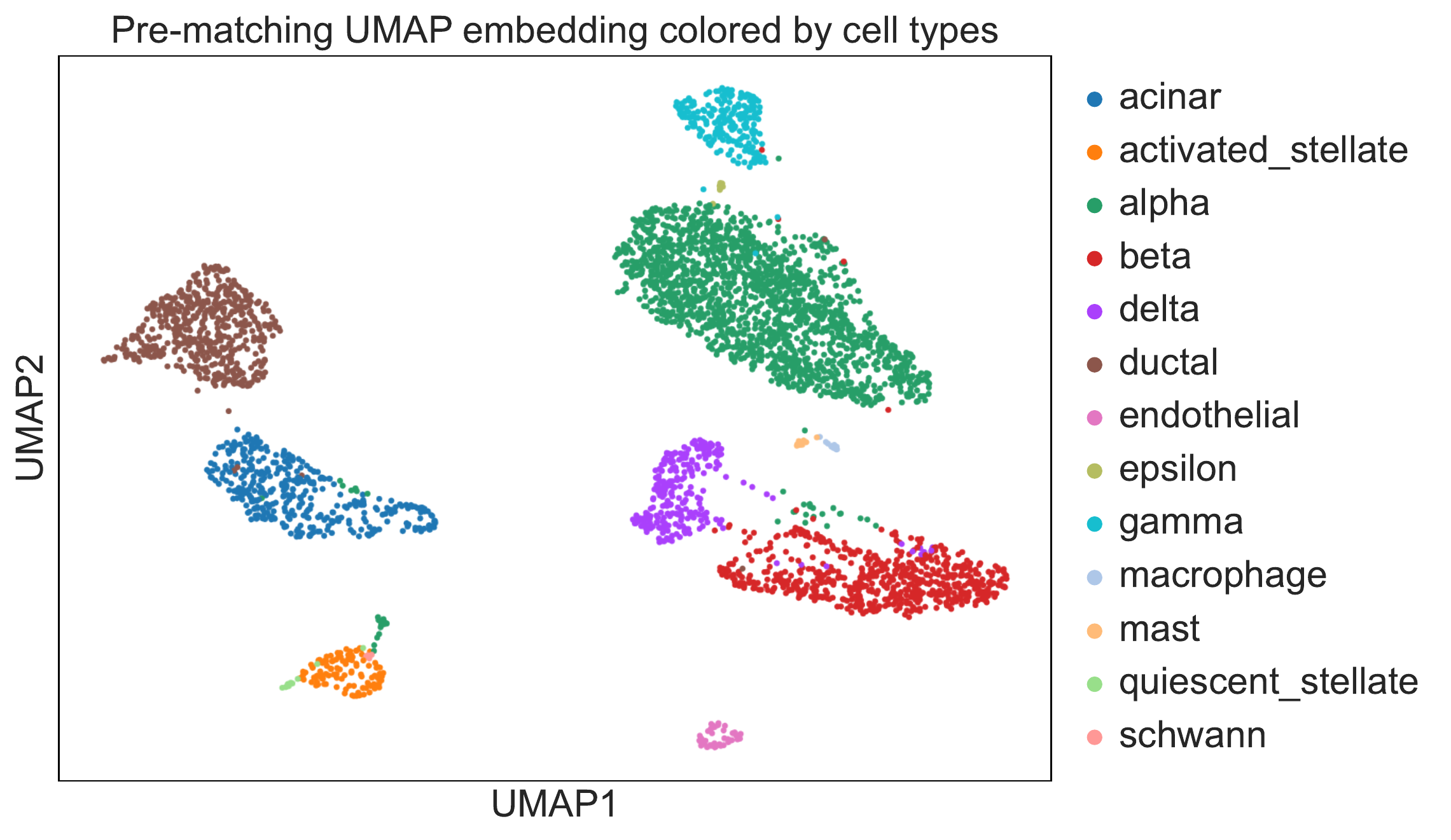}
	\end{subfigure}
	\begin{subfigure}[t]{.49\textwidth}
		\vspace{0pt}
		\centering
		\includegraphics[width=1\linewidth]{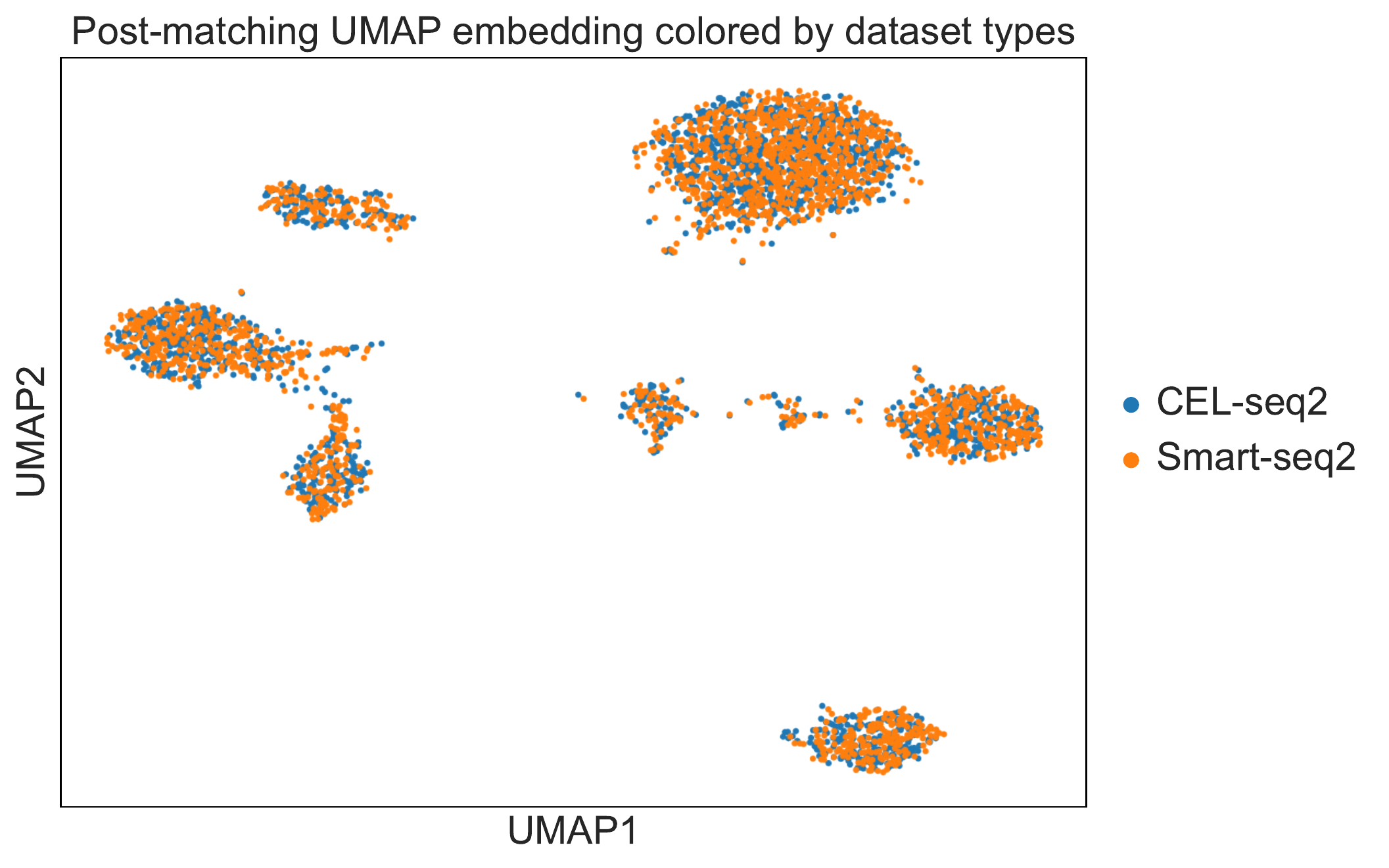}
	\end{subfigure}
	\begin{subfigure}[t]{.49\textwidth}
		\vspace{0pt}
		\centering 
		\includegraphics[width=1.09\linewidth]{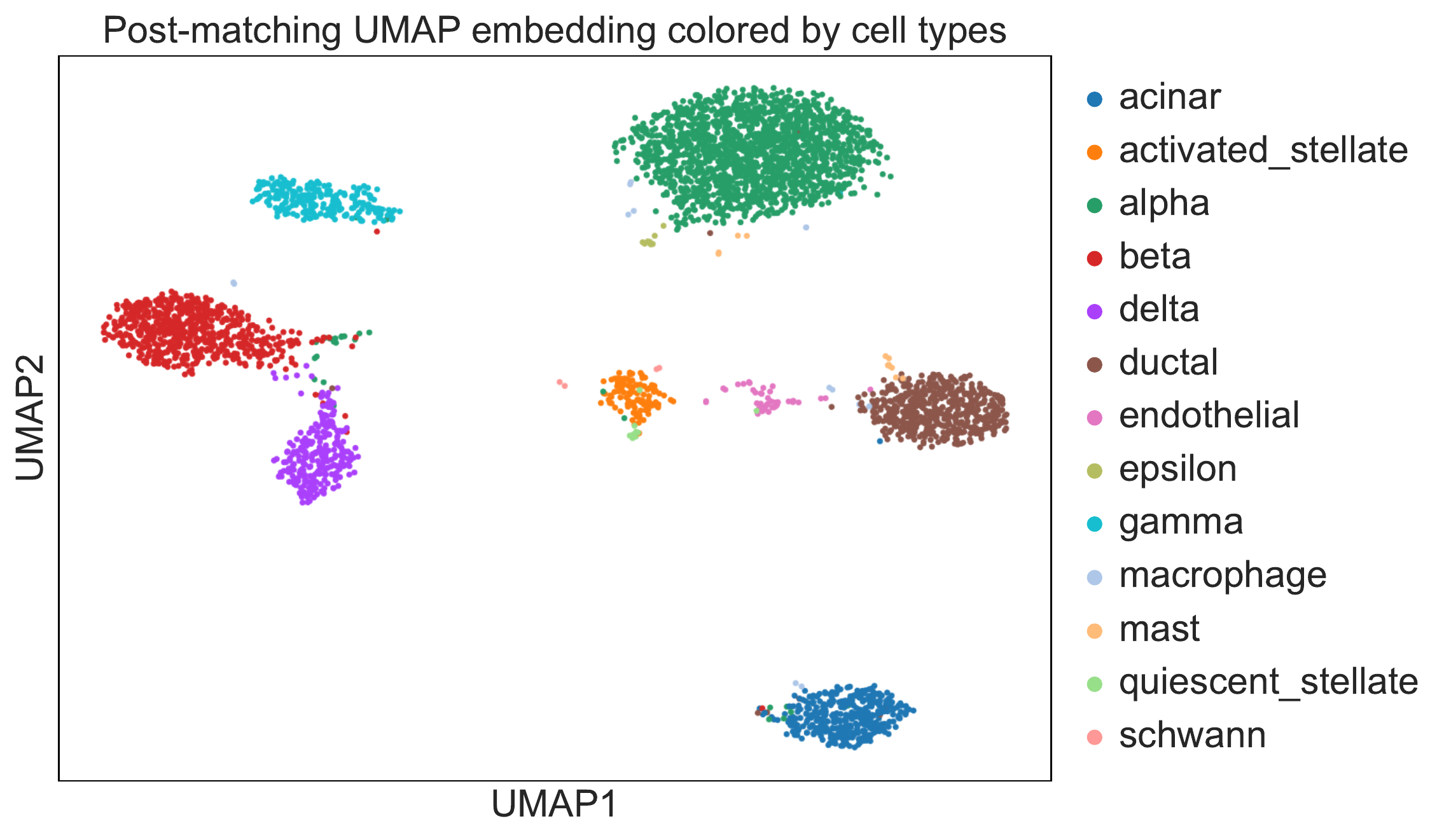}
	\end{subfigure}
       \caption{\small Two-dimensional UMAP embeddings of CEL-seq2 and Smart-seq2 data, colored by dataset type (left two panels) and cell type (right two panels). {The top two panels represent UMAP embeddings obtained from shared features (which do not rely on matching), whereas the bottom two panels represent UMAP embeddings obtained from CCA embeddings of all active features (which rely on matching by LAPS).}}
	\label{fig:rna_umap}
\end{figure}

\subsection{Matching spatial and non-spatial proteomics datasets}
\label{sec:proteomics}

We now apply LAPS to match two single-cell proteomics datasets. 
The first dataset appeared in \cite{gayoso2021joint} and was obtained from murine spleen using CITE-seq technology \cite{stoeckius2017simultaneous}. 
The second dataset appeared in \cite{zhu2021robust} and was obtained from BALBc murine spleen with CODEX multiplexed imaging technology (a.k.a.~PhenoCycler) \cite{goltsev2018deep}.
The raw CITE-seq data measures expression levels of $208$ proteins in $15202$ cells, and the raw CODEX data contains measurements of expression levels of $31$ proteins in $48332$ cells. 
In addition to protein measurements, CITE-seq is capable of measuring RNA expression levels in the same collection of cells, and CODEX provides spatial coordinates of the same collection of cells on the slice of tissue cut by the experimenter. 
If one can accurately match CITE-seq data and CODEX data using their protein measurements, then one can transfer the spatial information in CODEX data to the CITE-seq data and examine the spatial distributions of RNA expression levels. 
While spatial transcriptomics technology is only recently emerging and remains very costly \cite{marx2021method}, matching and transferring information between CITE-seq and CODEX datasets provides an economical alternative to investigating the spatial patterns of RNA expression levels and to making use of the rich archive of non-spatial single-cell transcriptomics data collected over the years.

Since signal-to-noise ratios in matching proteomics data are significantly lower than those in matching transcriptomics data, 
we first perform several additional pre-processing steps. 
For CITE-seq data, we first calculate the median size of six cell type clusters, and down-sample the data such that the size of all cell type clusters are no greater than the median size. 
After down-sampling, we get a data matrix of size $5091\times 208$.
We then aggregate the data into ``meta-cells'' by clustering the data into $2545\approx 5091/2$ clusters (using \texttt{scanpy} clustering pipeline) and taking the cluster centroids as new data. The cell type of each meta-cell is determined by majority voting.
Thus, we obtain a data matrix of dimension $2545\times 208$. 
We apply similar pre-processing steps on CODEX data, except that we aggregate it into meta-cells as averages of around four cells each. 
This gives a data matrix of dimension $6014\times 31$. 
Similar to what we have done for CEL-seq2 and Smart-seq2 data in Section \ref{subsec:rna}, we then down-sample the two meta-cell data matrices to balance their cell type composition, and the result is shown in the left panel of Figure \ref{fig:protein_celltype_freq_and_acc}. 
The balanced data matrices are denoted as $X_\act \in \bbR^{2545\times 208}, Y_\act \in \bbR^{2545\times 31}$ for CITE-seq and CODEX, respectively. We take the proteins that appear in both datasets and form two feature-wise aligned matrices $X_\sha, Y_\sha \in \bbR^{2545\times 28}$. 

\begin{figure}[t]
\centering
	\begin{subfigure}[t]{.48\textwidth}
		\vspace{0pt}
		\centering
		\includegraphics[width=1\linewidth]{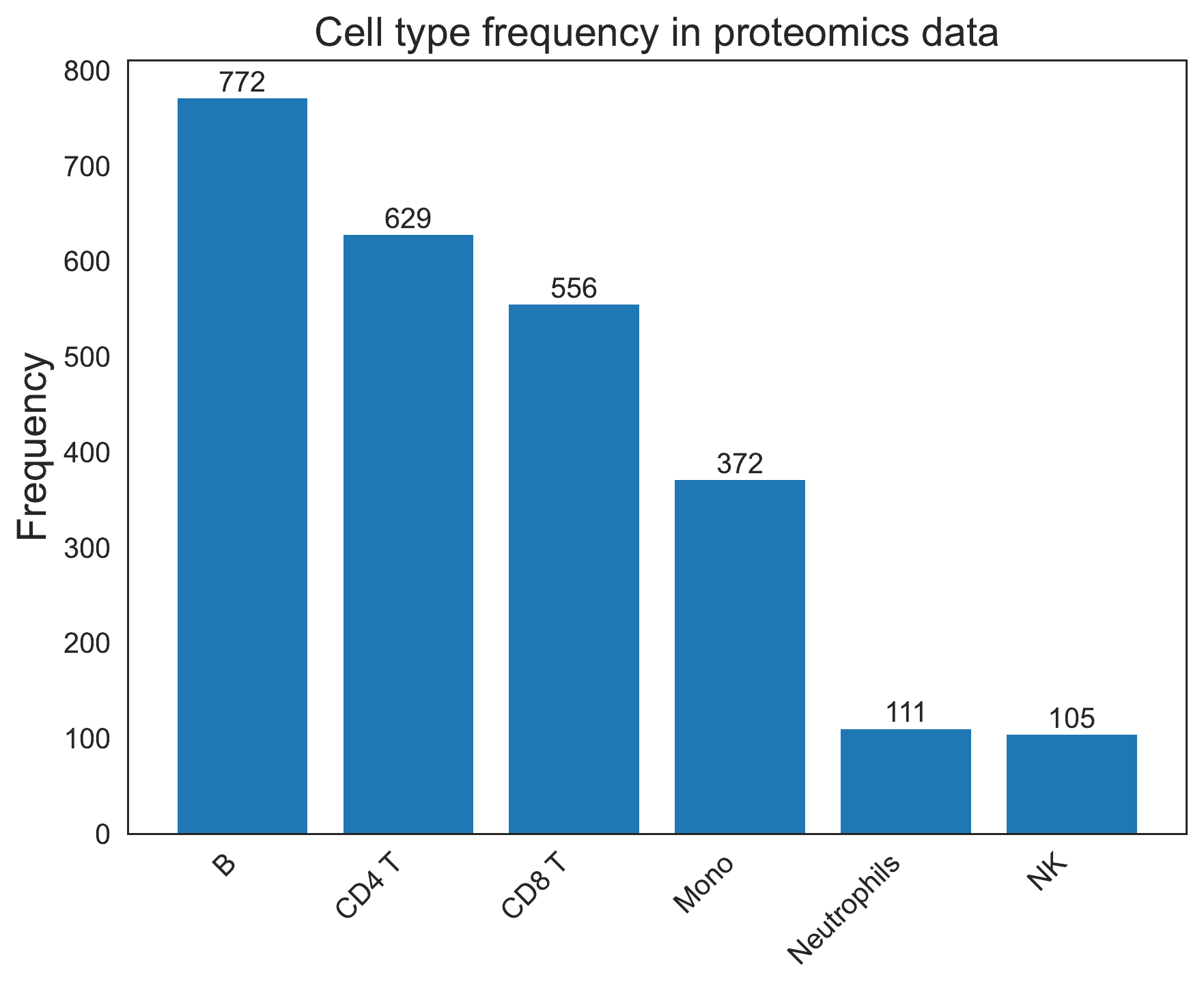}
	\end{subfigure}
	\begin{subfigure}[t]{.48\textwidth}
		\vspace{0pt}
		\centering 
		\includegraphics[width=0.96\linewidth]{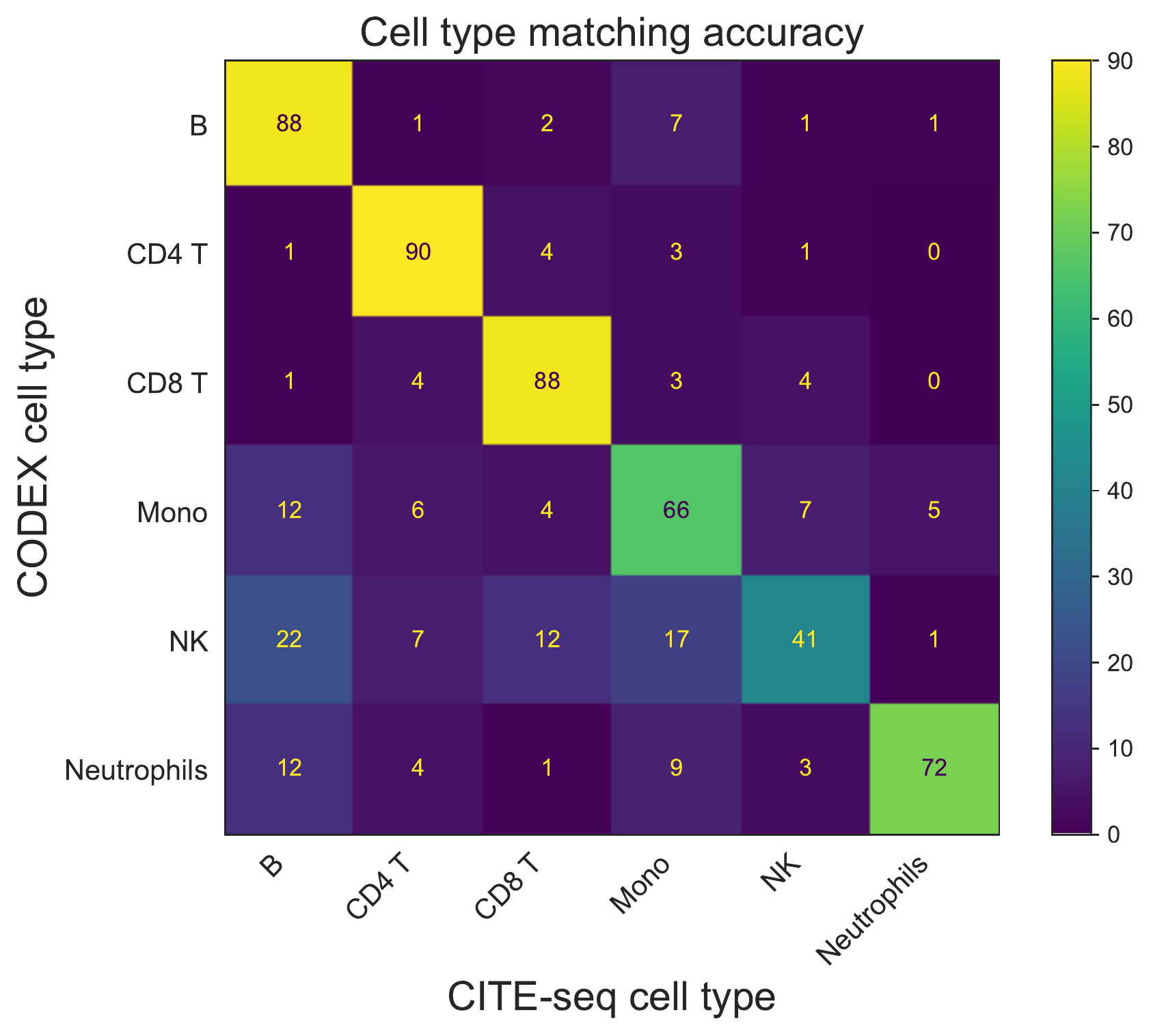}
	\end{subfigure}
       \caption{\small The cell type composition of single-cell RNA-seq data (left panel) and cell-type level matching accuracy.}
	\label{fig:protein_celltype_freq_and_acc}
\end{figure}

The rest of the analysis is similar to that appeared in Section \ref{subsec:rna}. 
LAPS is applied to this pair of matrices with $r=15$ and $\hat V$ estimated on $X_\sha$. 
The overall cell type level matching accuracy is $84.27\%$, and the confusion matrix is shown in the right panel of Figure \ref{fig:protein_celltype_freq_and_acc}. The accuracy is lower than that in RNA-seq matching, especially for minority cell types. 
This is expected, as the signal-to-noise ratio is lower. We refer the readers to the companion paper \cite{zhu2021robust} for a full pipeline with additional post-processing steps that establishes state-of-the-art performance with over $90\%$ accuracy in cell type correspondence.

\begin{figure}[t]
\centering
	\begin{subfigure}[t]{.49\textwidth}
		\vspace{0pt}
		\centering
		\includegraphics[width=1\linewidth]{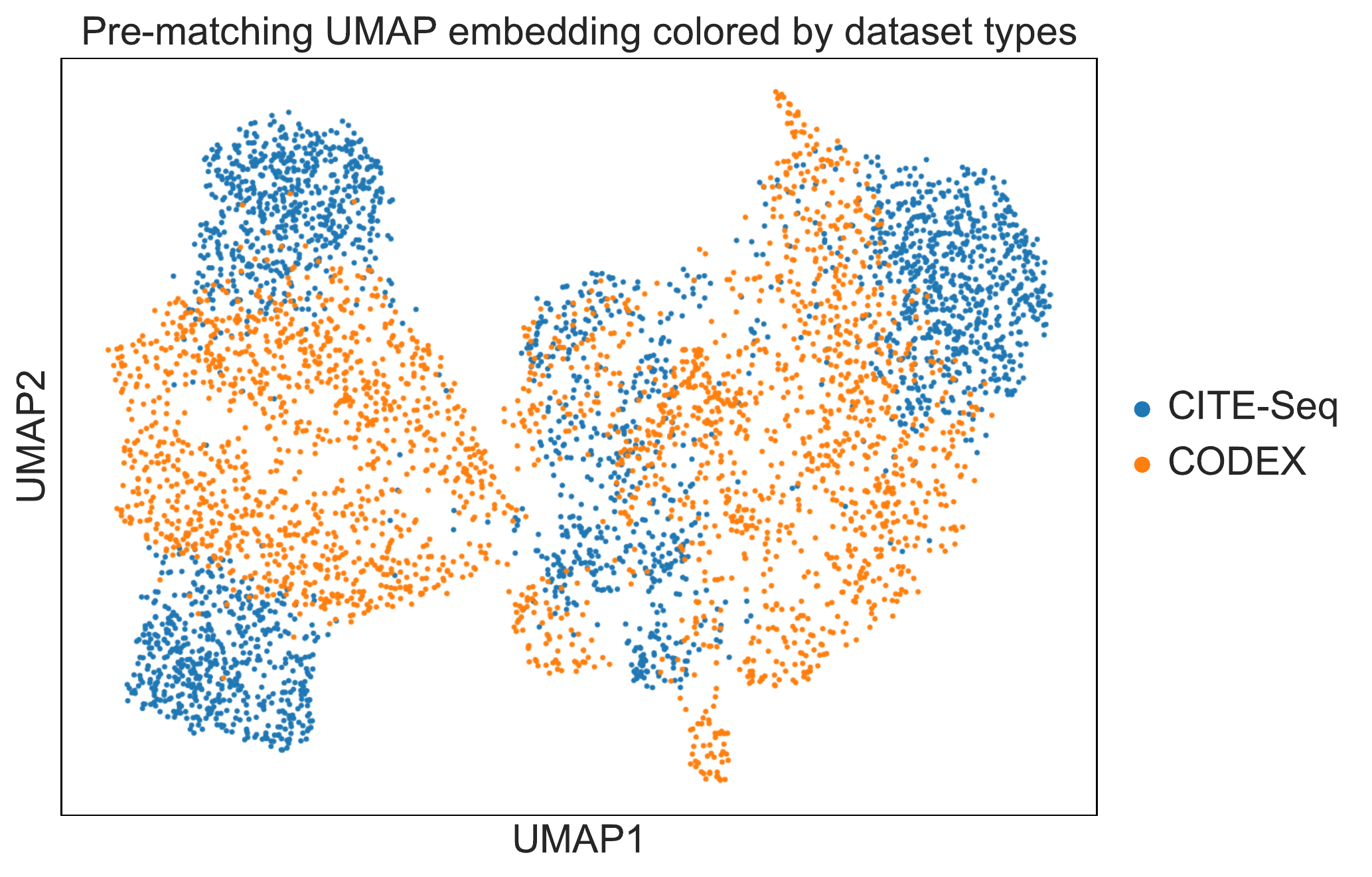}
	\end{subfigure}
	\begin{subfigure}[t]{.49\textwidth}
		\vspace{0pt}
		\centering 
		\includegraphics[width=1.045\linewidth]{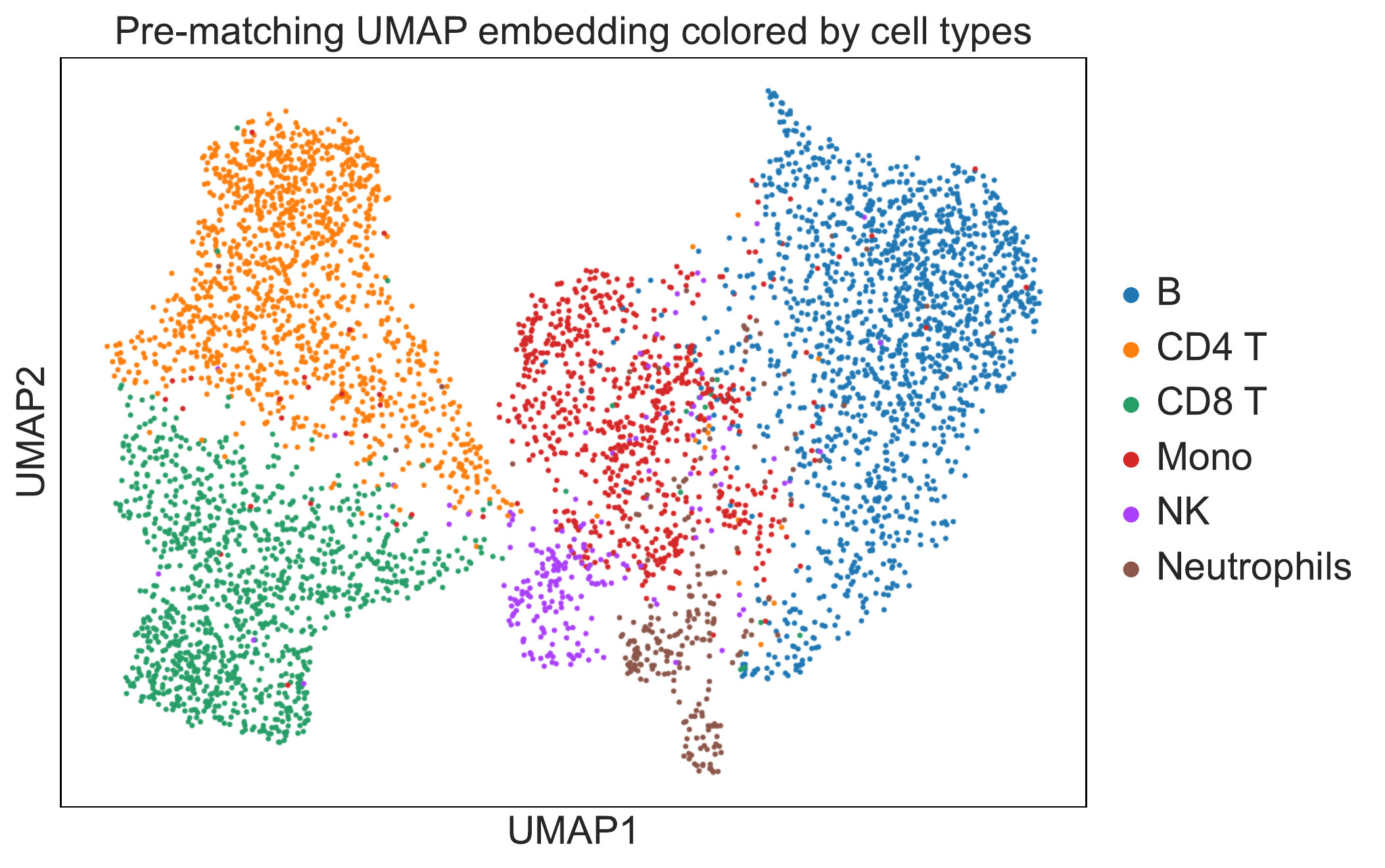}
	\end{subfigure}
	\begin{subfigure}[t]{.49\textwidth}
		\vspace{0pt}
		\centering
		\includegraphics[width=1\linewidth]{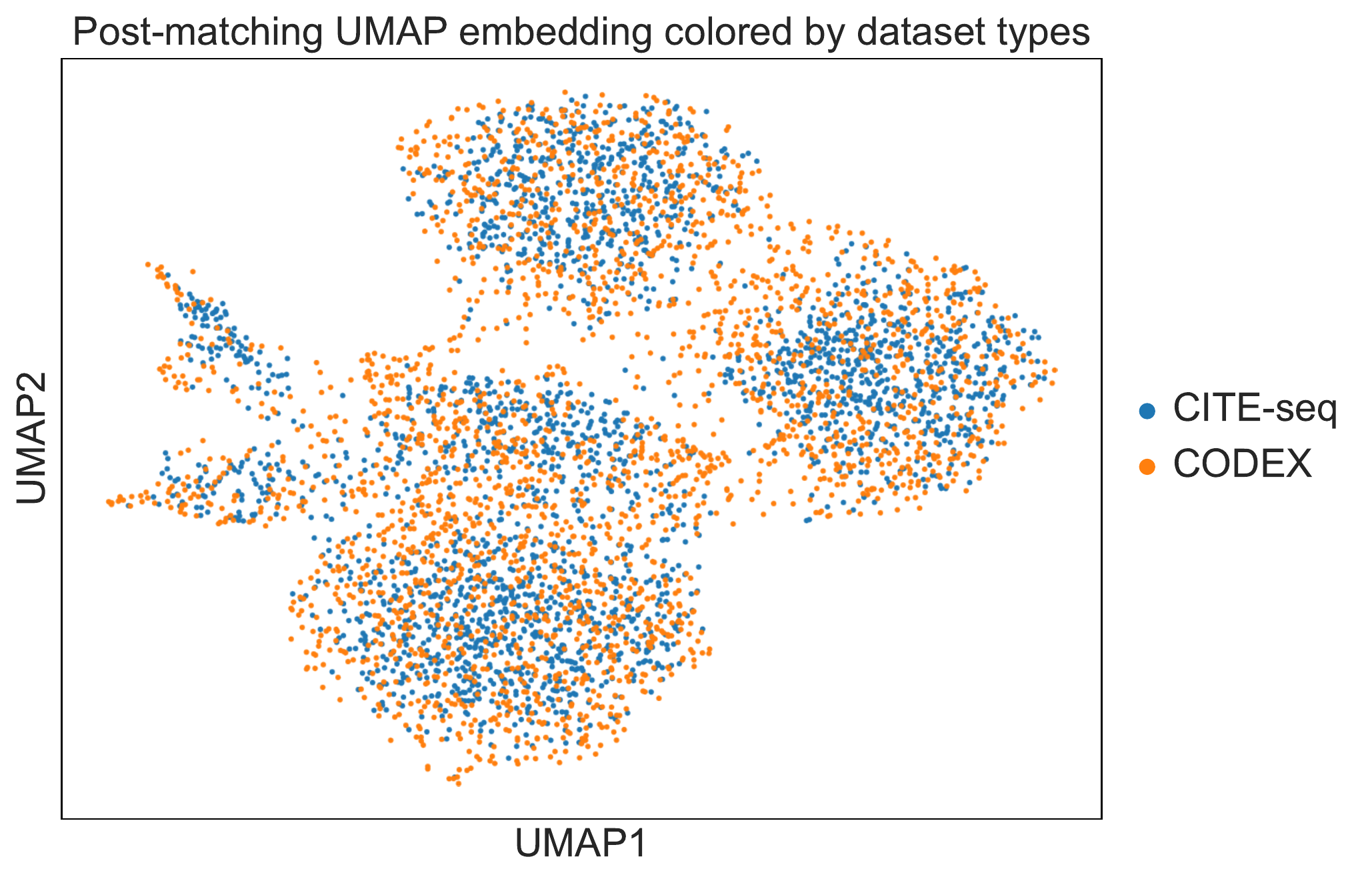}
	\end{subfigure}
	\begin{subfigure}[t]{.49\textwidth}
		\vspace{0pt}
		\centering 
		\includegraphics[width=1.045\linewidth]{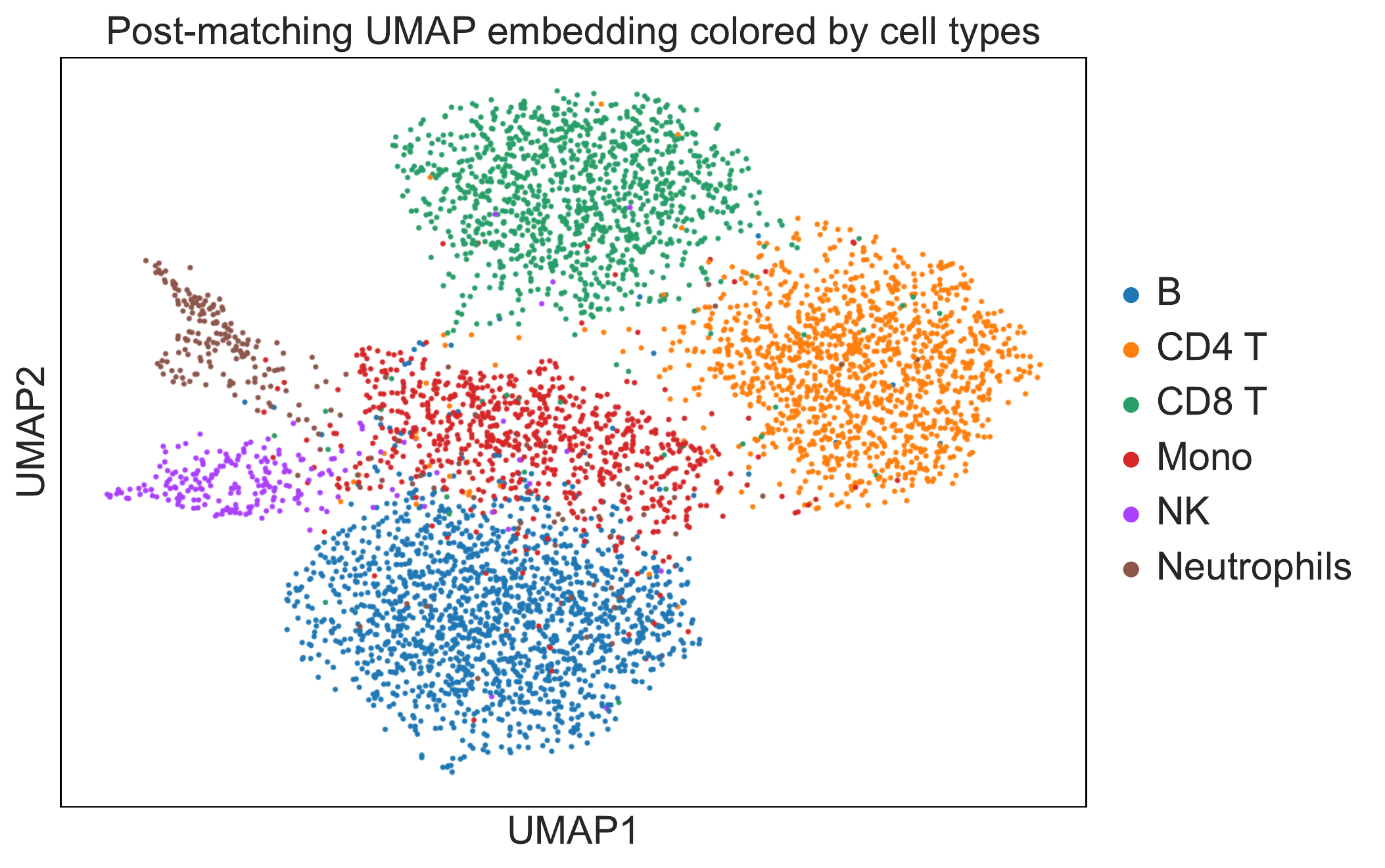}
	\end{subfigure}
       \caption{\small Two-dimensional UMAP embedding of CITE-seq and CODEX data, colored by dataset type (left two panels) and cell type (right two panels). {Similar to Figure \ref{fig:rna_umap}, the top two panels represent UMAP embeddings obtained from shared features (which do not rely on matching), whereas the bottom two panels represent UMAP embeddings obtained from CCA embeddings of all active features (which rely on matching by LAPS).}}
	\label{fig:protein_umap}
\end{figure}

{Figure \ref{fig:protein_umap} shows the UMAP embeddings of $(X_\sha\hat V, Y_\sha\hat V)$ (top two panels)} and those after performing CCA on $(X_\act, \hat \Pi Y_\act)$ (bottom two panels). Similar to what was shown in Figure \ref{fig:rna_umap}, we see a better mixing between two technologies (the left two panels) and a better separation among different cell types (the right two panels).
We refer interested readers to \cite{zhu2021robust} for more downstream analyses performed on these spleen datasets and for additional data examples, including 
transfer of {spatial information and transcriptomic information} on matched cells.
}

\section*{Acknowledgments}
S.C.~and Z.M.~are supported in part by NSF DMS-2210104.
G.P.N.~is supported in part by Hope Realized Medical Foundation 209477, Bill \& Melinda Gates Foundation INV-002704, and Rachford and Carlota A. Harris Endowed Professorship.

\small{
\setlength{\bibsep}{0.2pt plus 0.3ex}
\bibliographystyle{abbrvnat}
\bibliography{references}}

\newpage
\appendixtitleon
\appendixtitletocon
\begin{appendices}

\section{Proofs of lower bounds}

\subsection{Proof of Lemma \ref{lemma:cyc_decomp}} \label{prf:lemma:cyc_decomp}
By definition, we have
\begin{align*}
	\bbE[d(\hat\pi, \pi^\star)] & = \sum_{i_1\in[n]}\bbP(\hat \pi_{i_1} \neq \pi^\star_{i_1}) \\
	& = \sum_{i_1\in[n]} \sum_{i_2\neq i_1} \bbP(\hat \pi_{i_1} = \pi^\star_{i_2})\\
	& = \sum_{i_1\in[n]} \sum_{i_2 \neq i_1} 
	\left[
		\bbP(\hat\pi_{i_1} = \pi^\star_{i_2}, \hat \pi_{i_2} = \pi^\star_{i_1})
		+ 
		\bbP(\hat \pi_{i_1} = \pi^\star_{i_2}, \hat\pi_{i_2} \neq \pi^\star_{i_1})
	\right] \\
	& = \sum_{i_1\neq i_2}	
		\bbP(\hat\pi_{i_1} = \pi^\star_{i_2}, \hat \pi_{i_2} = \pi^\star_{i_1}) 
		+
		\sum_{i_1\in[n]} \sum_{i_2\neq i_1}
		\bbP(\hat \pi_{i_1} = \pi^\star_{i_2}, \hat\pi_{i_2} \neq \pi^\star_{i_1}).
\end{align*}
Note that we can decompose the second term in the right-hand side above as
\begin{align*}
	& \sum_{i_1\in[n]} \sum_{i_2\neq i_1}
		\bbP(\hat \pi_{i_1} = \pi^\star_{i_2}, \hat\pi_{i_2} \neq \pi^\star_{i_1})\\
	& = \sum_{i_1\in[n]} \sum_{i_2\neq i_1} \sum_{i_3\notin i_{1:2}}
		\bbP(\hat \pi_{i_1} = \pi^\star_{i_2}, \hat\pi_{i_2} = \pi^\star_{i_3}) \\
	& = \sum_{i\in[n]} \sum_{i_2 \neq i_1} \sum_{i_3 \notin i_{1:2}} 
		\left[
		\bbP(\hat \pi_{i_1} = \pi^\star_{i_2}, \hat\pi_{i_2} = \pi^\star_{i_3}, \hat\pi_{i_3} = \pi^\star_{i_1}) +  
		\bbP(\hat \pi_{i_1} = \pi^\star_{i_2}, \hat\pi_{i_2} = \pi^\star_{i_3}, \hat\pi_{i_3} \neq \pi^\star_{i_1}) 
		\right]\\
	& = \sum_{i_1\neq i_2 \neq i_3} 
		\bbP(\hat \pi_{i_1} = \pi^\star_{i_2}, \hat\pi_{i_2} = \pi^\star_{i_3}, \hat\pi_{i_3} = \pi^\star_{i_1}) +  
		\sum_{i_1\in[n]}\sum_{i_2\neq i_1} \sum_{i_3\notin i_{1:2}} \bbP(\hat \pi_{i_1} = \pi^\star_{i_2}, \hat\pi_{i_2} = \pi^\star_{i_3}, \hat\pi_{i_3} \neq \pi^\star_{i_1}).
\end{align*}
Thus, we have shown that
\begin{align*}
	\bbE[d(\hat\pi, \pi^\star)] & = \sum_{i_1\neq i_2}	
		\bbP(\hat\pi_{i_1} = \pi^\star_{i_2}, \hat \pi_{i_2} = \pi^\star_{i_1}) 
		+ \sum_{i_1\neq i_2 \neq i_3} 
		\bbP(\hat \pi_{i_1} = \pi^\star_{i_2}, \hat\pi_{i_2} = \pi^\star_{i_3}, \hat\pi_{i_3} = \pi^\star_{i_1}) \\  
		& \qquad + \sum_{i_1\in[n]}\sum_{i_2\neq i_1} \sum_{i_3\notin i_{1:2}} \bbP(\hat \pi_{i_1} = \pi^\star_{i_2}, \hat\pi_{i_2} = \pi^\star_{i_3}, \hat\pi_{i_3} \neq \pi^\star_{i_1}).
\end{align*}	
Recursively applying the above arguments, we get
\begin{align*}
	\bbE[d(\hat\pi, \pi^\star)]
	& = \sum_{i_1\neq i_2} 
		\bbP(\hat\pi_{i_1} = \pi^\star_{i_2}, \hat \pi_{i_2} = \pi^\star_{i_1})+ 
		\sum_{i_1\neq i_2 \neq i_3} 
		\bbP(\hat \pi_{i_1} = \pi^\star_{i_2}, \hat\pi_{i_2} = \pi^\star_{i_3}, \hat\pi_{i_3} = \pi^\star_{i_1})\\
		& \qquad + \cdots + 
		\sum_{i_1\neq i_2 \neq \cdots \neq i_n} \bbP(\hat\pi_{i_1} = \pi^\star_{i_2}, \hat\pi_{i_2} = \pi^\star_{i_3}, \hdots, \hat\pi_{i_{n-1}} = \pi^\star_{i_{n}}, \hat\pi_{i_n} = \pi^\star_{i_1}) \\
	& = \sum_{k=2}^n \sum_{i_1\neq i_2 \neq \cdots \neq i_k} 
	 \bbP(\hat\pi_{i_1} = \pi^\star_{i_2}, \hat\pi_{i_2} = \pi^\star_{i_3}, \hdots, \hat\pi_{i_{k-1}} = \pi^\star_{i_{k}}, \hat\pi_{i_k} = \pi^\star_{i_1}),
\end{align*}
which is the desired result.

\subsection{Proof of Theorem \ref{thm:lb}} \label{prf:thm:lb}
Recall that there is an one-to-one correspondence between a permutation matrix $\Pi$ and its vector representation $\pi$.
For notational simplicity, we omit the dependence of $\hat\Pi$ and $\hat \pi$ on $X $ and $Y $ when there is no ambiguity. For a discrete set $A$, we let $S(A)$ denotes the symmetric group on $A$. As a special case, we let $S_n = S([n])$ denote the set of all permutations on $[n]$.

Invoking Lemma \ref{lemma:cyc_decomp}, we have
\begin{align*}
	\inf_{\hat \Pi} \sup_{\Pi^\star} \bbE[\ell(\hat\Pi, \Pi^\star)] 
	& = \inf_{\hat \pi} \sup_{\pi^\star} \frac{1}{n} \sum_{k=2}^n \sum_{i_1\neq i_2 \neq \cdots \neq i_k} \bbP(\hat\pi_{i_1} = \pi^\star_{i_2}, \hat\pi_{i_2} = \pi^\star_{i_3}, \hdots, \hat\pi_{i_k} = \pi^\star_{i_1}) \\
	& \geq \inf_{\hat \pi} \bbE_{\pi^\star \sim \textnormal{Unif}(S_n)} \frac{1}{n} \sum_{k=2}^n \sum_{i_1\neq i_2 \neq \cdots \neq i_k} \bbP(\hat\pi_{i_1} = \pi^\star_{i_2}, \hat\pi_{i_2} = \pi^\star_{i_3}, \hdots, \hat\pi_{i_k} = \pi^\star_{i_1}),
\end{align*}
where $\textnormal{Unif}(S_n)$ is the uniform distribution on the set of all permutations $S_n$, and the last inequality is by the fact that the minimax error is lower bounded by the Bayes error. 
Fix any $k\geq 2$ and any indices $i_1\neq \cdots \neq i_k$, we can enumerate every $\pi^\star \in S_n$ as follows: 
\begin{enumerate}
	\item 
	Fix any $k$ indices $1\leq j_1 < j_2 < \cdots < j_k\leq n$; 
	\item 
	Set $\pi^\star_{-i_{1:k}}$ (i.e., the vector formed by collecting $\pi^\star_i$ for $i\notin i_{1:k}$) to be a certain element in $S([n]\setminus j_{1:k})$; 
	\item 
	Set $\pi^\star_{i_{1:k}}$ to be a certain element in $S(j_{1:k})$. 
\end{enumerate}
Note that this procedure indeed enumerates all elements in $S_n$, because it produces $\binom{n}{k} \cdot (n-k)! \cdot k! = n!$ many distinct permutations. Such an enumeration procedure gives the following lower bound
\begin{align*}
	& \inf_{\hat \Pi} \sup_{\Pi^\star} \bbE[\ell(\hat\Pi, \Pi^\star)] \\
	& \geq \inf_{\hat \pi}
	\frac{1}{n\cdot n!} \sum_{k=2}^n \sum_{i_1\neq i_2 \neq \cdots \neq i_k} \sum_{j_1 < j_2 < \cdots < j_k} \sum_{\pi^\star_{-i_{1:k}} \in S([n]\setminus j_{1:k})} \sum_{\gamma\in S_k} 
	\bbP(\hat\pi_{i_1} = \pi^\star_{i_2}, \hat\pi_{i_2} = \pi^\star_{i_3}, \hdots, \hat\pi_{i_k} = \pi^\star_{i_1}\mid \pi^\star_{i_{1:k}} = j_{\gamma}).
\end{align*}
Now we consider another procedure that enumerates the set of all permutations. In particular, we enumerate $S_k$ from $S_{k-1}$ as follows: 
\begin{enumerate}
	\item 
	Generate $\gamma \in S_{k-1}$; 
	\item 
	Let $\gamma^{(k)} = (\gamma, k)$, the concatenation of $\gamma$ and $k$; 
	\item Apply cyclic shifts to get $I^{\leftarrow}_k \gamma^{(k)}, (I^{\leftarrow}_k)^2 \gamma^{(k)}, \hdots, (I^{\leftarrow}_k)^{k-1} \gamma^{(k)}$. 
\end{enumerate}
In the end, we have
$$
	S_{k} = \bigg\{(I_{k}^\leftarrow)^{\ell} \gamma^{(k)}: \ell \in\{0, 1, \hdots, k-1\}, \gamma \in S_{k-1}\bigg\},
$$
which again follows from the fact that the above procedure produces $(k-1)! \cdot k = k!$ many distinct permutations.

Thus, we have
\begin{align*}
	& \inf_{\hat \Pi} \sup_{\Pi^\star} \bbE[\ell(\hat\Pi, \Pi^\star)] \\
	& \geq \inf_{\hat \pi}
	\frac{1}{n\cdot n!} \sum_{k=2}^n \sum_{i_1\neq i_2 \neq \cdots \neq i_k} \sum_{j_1 < j_2 < \cdots < j_k} \sum_{\pi^\star_{-i_{1:k}} \in S([n]\setminus j_{1:k})} \\
	& \qquad \sum_{\gamma\in S_{k-1}} 
	\bigg[
		\bbP\bigg(\hat\pi_{i_{1:k}} = I^\leftarrow_k j_{\gamma^{(k)}} ~\bigg|~ \pi^\star_{i_{1:k}} = j_{\gamma^{(k)}}\bigg) 
		+ 
		\bbP\bigg(\hat\pi_{i_{1:k}} = (I^\leftarrow_k)^2 j_{\gamma^{(k)}} ~\bigg|~ \pi^\star_{i_{1:k}} = (I^{\leftarrow}_k) j_{\gamma^{(k)}}\bigg)  \\
		& \qquad\qquad\qquad\qquad + \cdots + 
		\bbP\bigg(\hat\pi_{i_{1:k}} = (I^\leftarrow_k)^k j_{\gamma^{(k)}} ~\bigg|~ \pi^\star_{i_{1:k}} = ((I^{\leftarrow}_k))^{k-1} j_{\gamma^{(k)}}\bigg)
	\bigg] \\
	& \overset{(*)}{\geq}
		\frac{1}{n\cdot n!} \sum_{k=2}^n \sum_{i_1\neq i_2 \neq \cdots \neq i_k} \sum_{j_1 < j_2 < \cdots < j_k} \sum_{\pi^\star_{-i_{1:k}} \in S([n]\setminus j_{1:k})} \sum_{\gamma\in S_{k-1}} \\
	& \qquad
	\bigg\{
	\inf_{\hat \pi_{i_{1:k}} }
		\frac{1}{2}
		\bigg[\bbP\bigg(\hat\pi_{i_{1:k}} = I^\leftarrow_k j_{\gamma^{(k)}} ~\bigg|~ \pi^\star_{i_{1:k}} = j_{\gamma^{(k)}}\bigg) 
		+ 
		\bbP\bigg(\hat\pi_{i_{1:k}} = (I^\leftarrow_k)^2 j_{\gamma^{(k)}} ~\bigg|~ \pi^\star_{i_{1:k}} = I^{\leftarrow}_k j_{\gamma^{(k)}}\bigg)\bigg]  \\
	& \qquad + \inf_{\hat\pi_{i_{1:k}}}
	\frac{1}{2}
		\bigg[\bbP\bigg(\hat\pi_{i_{1:k}} = (I^\leftarrow_k)^2 j_{\gamma^{(k)}} ~\bigg|~ \pi^\star_{i_{1:k}} = I^\leftarrow_k j_{\gamma^{(k)}}\bigg) 
		+ 
		\bbP\bigg(\hat\pi_{i_{1:k}} = (I^\leftarrow_k)^3 j_{\gamma^{(k)}} ~\bigg|~ \pi^\star_{i_{1:k}} = (I^{\leftarrow}_k)^2 j_{\gamma^{(k)}}\bigg)\bigg]  \\
	& \qquad + \cdots \\
	& \qquad + \inf_{\hat\pi_{i_{1:k}}}
	\frac{1}{2}
		\bigg[\bbP\bigg(\hat\pi_{i_{1:k}} = (I^\leftarrow_k)^k j_{\gamma^{(k)}} ~\bigg|~ \pi^\star_{i_{1:k}} = (I^\leftarrow_k)^{k-1} j_{\gamma^{(k)}}\bigg) 
		+ 
		\bbP\bigg(\hat\pi_{i_{1:k}} = I^\leftarrow_k j_{\gamma^{(k)}} ~\bigg|~ \pi^\star_{i_{1:k}} = j_{\gamma^{(k)}}\bigg)\bigg] 
	\bigg\} \\
	& \geq \frac{2}{n\cdot n!} \sum_{i_1\neq i_2} \sum_{j_1 < j_2} \sum_{\pi^\star_{-i_{1:2}} \in S([n]\setminus j_{1:k})}  
	\inf_{\hat\pi_{i_{1:2}}}\frac{1}{2} 
	\bigg[\bbP\bigg(\hat\pi_{i_{1}} = j_2, \hat\pi_{i_2} =j_1 ~\bigg|~ \pi^\star_{i_{1}} = j_1, \pi^\star_{i_2} = j_2\bigg) \\
	& \qquad\qquad\qquad\qquad\qquad\qquad\qquad\qquad\qquad	+ 
	\bbP\bigg(\hat\pi_{i_{1}} = j_1, \hat\pi_{i_2} =j_2 ~\bigg|~ \pi^\star_{i_{1}} = j_2, \pi^\star_{i_2} = j_1\bigg)  \bigg]
\end{align*}
where $(*)$ is by $(I_k^\leftarrow)^k = I_k$ and the fact that the infimum of the sum is lower bounded by the sum of the infimum, and $(**)$ follows from only retaining the $k=2$ term.
Note that each summand in the right-hand side above is the average type-\RN{1} and type-\RN{2} error of a hypothesis testing problem that tries to differentiate $H_0$ from $H_{1}$, where 
$$
	H_0: \pi^\star_{i_1} = j_1, \pi^\star_{i_2} =j_2,
	\qquad
	H_1: \pi^\star_{i_1} = j_2, \pi^\star_{i_2} =j_1,
$$
The likelihood function under $H_\ell$ is given by
\begin{align*}
	& L_\ell(X , Y )\\
	& = (2\pi \sigma_\x^2)^{-np /2} \exp\bigg\{-\frac{1}{2\sigma_\x^2}\|X  - U  D  V ^\top \|_F^2\bigg\} 
		\cdot (2\pi \sigma_\y^2)^{-np /2} \exp\bigg\{-\frac{1}{2\sigma_\y^2}\|\Pi^\star Y  - U  D  V ^\top \|_F^2\bigg\} \\
	& = (2\pi \sigma_\x^2)^{-np /2} (2\pi \sigma_\y^2)^{-np /2} \exp\bigg\{-\frac{1}{2\sigma_\x^2}\|X  - U  D  V ^\top \|_F^2\bigg\}  \\
	& \qquad \times \exp\bigg\{-\frac{1}{2\sigma_\y^2}\|Y_{-\pi^\star_{i_{1:2}}, \bigcdot} - (U  D  V ^\top)_{-i_{1:2}, \bigcdot}\|_F^2\bigg\}
		\cdot \exp\bigg\{- \frac{1}{2\sigma_\y^2} \|Y_{j_{1:2}, \bigcdot} - (U  D  V ^\top)_{i_{1:2}, \bigcdot} \|_F^2\bigg\} \\
	& = (2\pi \sigma_\x^2)^{-np /2} (2\pi \sigma_\y^2)^{-np /2} \exp\bigg\{-\frac{1}{2\sigma_\x^2}\|X  - U  D  V ^\top \|_F^2\bigg\} \\
	& \qquad \times \exp\bigg\{-\frac{1}{2\sigma_\y^2}\|Y_{-\pi^\star_{i_{1:2}}, \bigcdot} - (U  D  V ^\top)_{-i_{1:2}, \bigcdot}\|_F^2\bigg\} \\
	& \qquad \times \exp\bigg\{- \frac{1}{2\sigma_\y^2} \bigg( \|Y_{j_{1:2}, \bigcdot}\|_F^2 +  \|(U  D  V ^\top)_{i_{1:2}, \bigcdot}\|_F^2\bigg) + \frac{1}{\sigma_\y^2} \bigg\la (U  D  V ^\top)_{i_{1:2}, \bigcdot}, (I_2^\leftarrow)^\ell Y_{j_{\gamma^{(k)}}, \bigcdot} \bigg\ra\bigg\}. 
\end{align*}
By Neyman--Pearson lemma, the optimal test for $H_0$ v.s. $H_1$ that minimizes the average type-\RN{1} and type-\RN{2} error is given by rejecting $H_0$ when $L_0(X , Y ) \leq L_{1}(X , Y )$, which is equivalent to
\begin{align*}
	 \left\la (U  D  V ^\top)_{i_{1:2}, \bigcdot}, [(I_2^\leftarrow)^\ell - (I_2^\leftarrow)^{\ell+1}] Y_{j_{1:2}, \bigcdot}  \right\ra \leq 0
	& \iff 
	\left\la (U  D  V ^\top)_{i_{1:2}, \bigcdot}, (I_2 - I_2^\leftarrow) Y_{ (I_2^\leftarrow)^\ell j_{1:2}, \bigcdot}  \right\ra \leq 0.
\end{align*}
Note that under $H_\ell$, we can write $Y_{ (I_2^\leftarrow)^\ell j_{1:2}, \bigcdot} = (U  D   V ^\top)_{i_{1:2}, \bigcdot} + \sigma_\y \calE$, where $\calE\in\bbR^{k\times p }$ is a matrix with i.i.d.~$N(0, 1)$ entries. 
Thus, the type-\RN{1} error of the optimal test is given by
\begin{align*}
	& \bbP\left( \left\la (U  D   V ^\top)_{i_{1:2}, \bigcdot}, (I_2 - I_2^\leftarrow) (U  D   V ^\top)_{i_{1:2}, \bigcdot} + \sigma_\y \calE \right\ra \leq 0\right) \\
	& = \bbP\left(
			\left\la (U  D   V ^\top)_{i_{1:2}, \bigcdot}, (I_2 - I_2^\leftarrow) (U  D   V ^\top)_{i_{1:2}, \bigcdot} \right\ra
			+   N\left(0, \sigma_\y^2\| (I_2 - I_2^\leftarrow) (U  D   V ^\top)_{i_{1:2}, \bigcdot}\|_F^2 \right)
			\leq 0
		\right)\\
	& = \bbP\left(
			\frac{1}{2} \| (I_2 - I_2^\leftarrow)(U  D  V ^\top)_{i_{1:2}, \bigcdot}\|_F^2 + 
			N\left(0, \sigma_\y^2\| (I_2 - I_2^\leftarrow) (U  D   V ^\top)_{i_{1:2}, \bigcdot}\|_F^2 \right)
			\leq 0
		\right) \\
	& = \Phi\left(
			-\frac{ \| (I_2 - I_2^\leftarrow)(U  D  V ^\top)_{i_{1:2}, \bigcdot}\|_F}{2\sigma_\y}
		\right) \\
	& = \Phi\left(
			-\frac{ \| (I_2 - I_2^\leftarrow)U_{i_{1:2}, \bigcdot} D \|_F}{2\sigma_\y}
		\right), 
\end{align*}
where the second equality is by Lemma \ref{lemma:inner_prod_and_norm} and the last equality is by $V \in O(p , r )$.
A symmetric argument shows that the right-hand above is also the type-\RN{2} error of the optimal test. Thus, we have
\begin{align*}
	& \inf_{\hat\pi_{i_{1:2}}}\frac{1}{2} 
	\bigg[\bbP\bigg(\hat\pi_{i_{1}} = j_2, \hat\pi_{i_2} =j_1 ~\bigg|~ \pi^\star_{i_{1}} = j_1, \pi^\star_{i_2} = j_2\bigg)
	+ 
	\bbP\bigg(\hat\pi_{i_{1}} = j_1, \hat\pi_{i_2} =j_2 ~\bigg|~ \pi^\star_{i_{1}} = j_2, \pi^\star_{i_2} = j_1\bigg)  \bigg] \\
	& \geq \Phi\left(
			-\frac{ \| (I_2 - I_2^\leftarrow)U_{i_{1:2}, \bigcdot} D \|_F}{2\sigma_\y}
		\right),
\end{align*}
and hence
\begin{equation}
\label{eq:lb_1}
	\inf_{\hat \Pi} \sup_{\Pi^\star} \bbE[\ell(\hat\Pi, \Pi^\star)] 
	\geq \frac{1}{n} \sum_{i_1\neq i_2}
	\Phi\bigg(- \frac{\|(I_2 - I_2^\leftarrow)U_{i_{1:2}, \bigcdot} D \|_F }{2 \sigma_\y}\bigg)
	= \frac{1}{n} \sum_{i\neq i'} \Phi\left(- \frac{\|(U_{i, \bigcdot} - U_{i', \bigcdot})D\|}{\sqrt{2}\cdot\sigma_\y}\right).
\end{equation}
The above lower bound was derived under the data generating process
$$
	X  = U  D  V ^\top + \sigma_\x N_\x, \qquad \Pi^\star Y  = U  D  V ^\top + \sigma_\y N_\y,
$$
where $N_\x, N_\y \in \bbR^{n\times p }$ have i.i.d.~$N(0, 1)$ entries. Note that we can alternatively write the data generating process as
$$
	\tilde\Pi^\star X  = \tilde U  D  V ^\top + \sigma_\x \tilde N_\x, \qquad Y  = \tilde U  D  V ^\top + \sigma_\y \tilde N_\y,
$$
where $\tilde \Pi^\star = (\Pi^\star)^\top, \tilde U  = (\Pi^\star)^\top U $, and $\tilde N_\x, \tilde N_\y \in \bbR^{n\times p }$ again have i.i.d.~$N(0, 1)$ entries. 
Repeating the arguments that led to \eqref{eq:lb_1}, we get
\begin{align*}
	\inf_{\hat \Pi} \sup_{\Pi^\star} \bbE[\ell(\hat\pi, \pi^\star)] 
	& \geq \frac{1}{n} \sum_{i\neq i'} \Phi\left(- \frac{\|(\tilde U_{i, \bigcdot} - \tilde U_{i', \bigcdot})D\|}{\sqrt{2}\cdot\sigma_\x}\right).\\
	\label{eq:lb_2}
	& = \frac{1}{n} \sum_{i\neq i'} \Phi\left(- \frac{\|(U_{i, \bigcdot} - U_{i', \bigcdot})D\|}{\sqrt{2}\cdot\sigma_\x}\right). \numberthis
\end{align*}
Summarizing \eqref{eq:lb_1} and \eqref{eq:lb_2}, we get
\begin{align*}
	\inf_{\hat\Pi(X , Y )} \sup_{\Pi^\star \in S_n} \bbE[\ell(\hat \Pi, \Pi^\star)] 
	\geq \frac{1}{n} \sum_{i\neq i'} \Phi\left(- \frac{\|(U_{i, \bigcdot} - U_{i', \bigcdot})D\|}{\sqrt{2}\cdot\sigma_{\max}}\right).
\end{align*}	
Invoking Lemma \ref{lemma:gaussian_tail}, the right-hand side above can be further lower bounded by
\begin{align*}
	& \frac{1}{\sqrt{2\pi} \cdot n} \sum_{i\neq i'} \left( \frac{\sqrt{2}\cdot \sigma_{\max}}{\|(U_{i, \bigcdot} - U_{i', \bigcdot})D\|} - \frac{2\sigma_{\max}^2}{\|(U_{i, \bigcdot} - U_{i', \bigcdot})D\|^2}\right) \exp\left\{-\frac{\|(U_{i, \bigcdot} - U_{i', \bigcdot})D\|^2}{4\sigma_{\max}^2}\right\} \\
	& \geq \frac{1}{n} \sum_{i\neq i'} \exp\left\{-\frac{(1+o(1))\|(U_{i, \bigcdot} - U_{i', \bigcdot})D\|^2}{4\sigma_{\max}^2}\right\},
\end{align*}
where the last inequality is by $\|(U_{i, \bigcdot} - U_{i',\bigcdot})D\|^2/\sigma_{\max}^2 = \beta^2 \gg 1$.

\section{Proofs of upper bounds}

\subsection{Proof of Proposition \ref{prop:est_err_of_V}} \label{prf:prop:est_err_of_V}
Without loss of generality, we assume $\sigma_{\x} \leq \sigma_\y$, and we write $\sigma_\x = \sigma$.
We first invoke a version of $\sin\Theta$ theorem proved in Proposition 1 of \cite{cai2018rate}, which states that 
\begin{align}
\label{eq:sintheta}
	\|\hat V \hat V^\top - V V^\top\| \leq \frac{\sigma_r(XV) \|P_{XV} X V_\perp\|}{\sigma_r^2(XV) - \sigma_{r+1}^2(XV)},
\end{align}
provided $\sigma_r(XV) > \sigma_{r+1}(X)$.
In the above display, $\sigma_r(XV)$ is the $r$-th singular value of $XV$, $P_{XV}$ is the projection matrix on the column space of $XV$, and $V_\perp \in O_{p, p-r}$ collect the bottom $(p-r)$ right singular vectors of $X$. Consider the event
$$
	E_{C_1, \delta} := \{\|NV\|\leq C (\sqrt{n} + \sqrt{r} + \sqrt{\log (1/\delta)})\},
$$
where the values of $C_1 > 0, \delta\in(0, 1)$ are to be determined later. 
By Lemma \ref{lemma:gaussian_wigner_op_norm}, we know that
$$
	\bbP(E_{C_1, \delta}^c) \lesssim \delta.
$$
Under $E_{C_1, \delta}$, we have
$$
	-C_1\sigma(\sqrt{n} + \sqrt{r} + \sqrt{\log(1/\delta)}) + d_r \leq \sigma_r(XV) \leq d_r + C_1\sigma(\sqrt{n} + \sqrt{r} + \sqrt{\log(1/\delta)}).
$$
Meanwhile, consider another event
$$
	F_{C_1, \delta} := \{\|N\|\leq C_1 (\sqrt{n} + \sqrt{p} + \sqrt{\log(1/\delta)})\}.
$$
Invoking Lemma \ref{lemma:gaussian_wigner_op_norm} again, we have
$$
	\bbP(F_{C_1, \delta}^c) \lesssim \delta.
$$
Choosing $\delta = e^{-n}$, under $E_{C_1, \delta}\cap F_{C_1, \delta}$, we have
\begin{align*}
	\sigma_r(XV) - \sigma_{r+1}(X) 
	& \geq d_r - 2C_1\sigma (\sqrt{n} + \sqrt{p} + \sqrt{\log(1/\delta)}) \\
	& \geq d_r - C_2 \sigma(\sqrt{n} + \sqrt{p}) \\
	& \geq \left(1 - \frac{C_2}{C_\gap}\right) d_r,
\end{align*}
where $C_2$ is another absolute constant and the last inequality is by our assumption that $d_r/\sigma \geq C_\gap (\sqrt{n} + \sqrt{p})$. For a sufficiently large $C_\gap$, the right-hand side above is strictly positive, and thus \eqref{eq:sintheta} holds under $E_{C_1, \delta}\cap F_{C_1, \delta}$. To further upper bound the right-hand side of \eqref{eq:sintheta}, we begin by noting that under $E_{C_1, \delta}$ with $\delta = e^{-n}$, 
\begin{align*}
	\sigma_r^2(XV) - \sigma_{r+1}^2(XV) 
	& \geq \left(d_r - C_2\sigma(\sqrt{n} + \sqrt{r})\right)^2 - \left(C_2\sigma(\sqrt{n} + \sqrt{r})\right)^2\\
	& \geq \left(1 - \frac{C_2}{C_\gap}\right)^2 d_r^2 - \left(\frac{C_2}{C_\gap}\right)^2 d_r^2 \\
	& \gtrsim d_r^2
\end{align*}
for sufficiently large $C_\gap$. Meanwhile, we have
$$
	\sigma_r(XV) \leq d_r + C_1\sigma(\sqrt{n} + \sqrt{r} + \sqrt{\log(1/\delta)}) \lesssim d_r.
$$
Thus, under $E_{C_1, \delta}\cap F_{C_1, \delta}$, which happens with probability at least $1- \calO(e^{-n})$, we have
$$
	\|\hat V \hat V^\top - V V^\top\| \lesssim \frac{\|P_{XV} XV_\perp\|}{d_r}. 
$$
We now invoke Lemma 4 in \cite{cai2018rate}, which states that for any $x>0$,
\begin{align*}
	\bbP\left(\|P_{XV/\sigma} XV_\perp/\sigma\|\geq x\right) \lesssim \exp\left\{C_3 p - C_4 \min(x^2, x \sqrt{n + d_r^2/\sigma^2})\right\} + \exp\left\{-C_4 (n + d_r^2/\sigma^2)\right\},
\end{align*}
where $C_3, C_4 > 0$ are absolute constants. We choose $x = C_5 \sqrt{p \log n}$ for some absolute constant $C_5$. Then
\begin{align*}
	\min\left(x^2 , x \sqrt{n + d_r^2/\sigma^2}\right)
	& = p \cdot \min \left(C_5^2 \log n, C_5 \sqrt{(n + d_r^2/\sigma^2) (\log n)/p }  \right),
\end{align*}
and thus
\begin{align*}
	C_3 p - C_4 \min\left(x^2 , x \sqrt{n + d_r^2/\sigma^2}\right)
	& \leq -p \left( C_4 \min \left(C_5^2 \log n, C_5 \sqrt{(n + d_r^2/\sigma^2) (\log n)/p }  \right) - C_3\right).
\end{align*}
When $p \leq n / \log n$, we get
\begin{align*}
	C_3 p - C_4 \min\left(x^2 , x \sqrt{n + d_r^2/\sigma^2}\right)
	& \leq -p \left( C_4 \min \left(C_5^2 \log n, C_5 \sqrt{(n\log n)/p }  \right) - C_3\right) \\
	& \leq -p \left( C_4 \min \left(C_5^2 \log n, C_5 \log n \right) - C_3\right) \\
	& \leq - C_6 p\log n,
\end{align*}
where $C_6$ is another absolute constant. On the other hand, if $p\geq n/\log n$, we get
\begin{align*}
	C_3 p - C_4 \min\left(x^2 , x \sqrt{n + d_r^2/\sigma^2}\right)
	& \leq -p \left( C_4 \min \left(C_5^2 \log n, C_5 \sqrt{\log n}  \right) - C_3\right) \\
	& \leq -C_7 p \\
	& \leq -\frac{ C_7 n }{\log n},
\end{align*}
where the first inequality is by $d_r/\sigma \geq C_\gap(\sqrt{n} + \sqrt{p})$, and $C_7> 0$ is an absolute constant. In summary, it is possible to choose $C_5$ such that
$$
	\|P_{XV/\sigma} X V_\perp/\sigma\| \lesssim \sqrt{p\log n}
$$
with probability at least $1 - n^{-C_8}$, where $C_8$ can be arbitrarily large. This is equivalent to
$$
	\|P_{XV} XV_\perp\| \lesssim \sigma \sqrt{p\log n}
$$
with probability at least $1-n^{-C_8}$. Invoking a union bound, we conclude that 
$$
	\|\hat V \hat V^\top - VV^\top\| \lesssim \frac{\sqrt{p\log n}}{d_r/\sigma}
$$
with probability at least $1-n^{-c}$, and hence
$$
	\|\hat V \hat V^\top - V V^\top\|_F \lesssim \frac{\sqrt{rp\log n}}{d_r/\sigma}
$$
with probability at least $1-n^{-c}$.

\subsection{Proof of Proposition \ref{prop:loco_err_of_V}} \label{prf:prop:loco_err_of_V}
Without loss of generality, we assume $\sigma_\x\leq \sigma_\y$, and we write $\sigma_\x = \sigma$.
Note that $\hat V$ collects the top $r$ eigenvectors of $X^\top X = \sum_{i\in[n]} X_{i, \bigcdot}^\top, X_{i, \bigcdot}$ and $\hat V^{(-\calC_k)}$ collects the top $r$ eigenvectors of 
$$
(X^{(-\calC_k)})^\top X^{(-\calC_k)} = \sum_{i\notin\calC_k} X_{i, \bigcdot}^\top X_{i, \bigcdot} + \sum_{i\in \calC_k} \bbE[X_{i, \bigcdot}]^\top \bbE[X_{i, \bigcdot}].
$$
The Frobenius norm of the between $X^\top X$ and $(X^{(-\calC_k)})^\top X^{(-\calC_k)}$ is
\begin{align*}
 	\left\|X^\top X - (X^{(-\calC_k)})^\top X^{(-\calC_k)} \right\|_F
 	& = \left\|\sum_{i\in\calC_k} \left[\left(V D U_{i, \bigcdot}^\top + (N_{\x})_{i,\bigcdot}^\top\right) \left(V D U_{i, \bigcdot}^\top + (N_{\x})_{i,\bigcdot}^\top\right)^\top  - \left(V D U_{i, \bigcdot}^\top\right) \left(V D U_{i, \bigcdot}^\top\right)^\top\right]\right\|_F \\
 	& = \left\|\sum_{i\in \calC_k} \left[VDU_{i,\bigcdot}^\top (N_{\x})_{i, \bigcdot} + (N_{\x})_{i,\bigcdot}^\top  U_{i, \bigcdot} D V^\top + (N_\x)_{i,\bigcdot}^\top (N_\x)_{i,\bigcdot}\right]\right\|_F\\
 	& \leq \sum_{i\in\calC_k}  \left(2\|D U_{i,\bigcdot}^\top (N_\x)_{i,\bigcdot}\|_F +  \|(N_\x)_{i, \bigcdot}^\top (N_\x)_{i, \bigcdot}\|_F\right)\\
 	& = \sum_{i\in\calC_k} \left( 2 \|(N_\x)_{i,\bigcdot}\|\|U_{i,\bigcdot} D\| + \|(N_\x)_{i,\bigcdot}\|^2 \right)\\
 	& \lesssim k \cdot \left[\max_{i\in[n]} \|(N_{\x})_{i,\bigcdot}\| \cdot d_1 \max_{i\in[n]} \|U_{i,\bigcdot}\| + \max_{i\in[n]} \|(N_{\x})_{i,\bigcdot}\|^2 \|\right] \\
 	& \leq k \cdot \left[ d_1 \sqrt{\frac{\mu r}{n}} \cdot \max_{i\in[n]} \|(N_{\x})_{i,\bigcdot}\| + \max_{i\in[n]} \|(N_{\x})_{i,\bigcdot}\|^2 \|\right]
 \end{align*}	
Invoking Davis-Kahan $\sin\Theta$ theorem \citep{davis1970rotation}, we get
$$
	\|\hat V^{(-\calC_k)} (\hat V^{(-\calC_k)})^\top - \hat V \hat V^\top \|_F
	\lesssim \frac{k \cdot \left[ d_1 \sqrt{\frac{\mu r}{n}} \cdot \max_{i\in[n]} \|(N_{\x})_{i,\bigcdot}\| + \max_{i\in[n]} \|(N_{\x})_{i,\bigcdot}\|^2 \|\right]}{\sigma_r^2(X) - \sigma_{r+1}^2(X)},
$$
where $\sigma_r(X)$ is the $r$-th singular value of $X$.
Note that the right-hand side of the above inequality is independent of $\calC_k$. Hence, we have
$$
	\max_{k\leq k^\star} \max_{\calC_k}\|\hat V^{(-\calC_k)} (\hat V^{(-\calC_k)})^\top - \hat V \hat V^\top \|_F
	\lesssim \frac{k^\star \cdot \left[ d_1 \sqrt{\frac{\mu r}{n}} \cdot \max_{i\in[n]} \|(N_{\x})_{i,\bigcdot}\| + \max_{i\in[n]} \|(N_{\x})_{i,\bigcdot}\|^2 \|\right]}{\sigma_r^2(X) - \sigma_{r+1}^2(X)},
	$$
We have shown in the proof of Proposition \ref{prop:est_err_of_V} that if $\frac{d_r}{\sigma} \geq C_\gap (\sqrt{n} + \sqrt{p})$, then $\sigma_r^2(X) - \sigma_{r+1}^2(X)\gtrsim d_r^2$ with probability at least $1-e^{-n}$. 
For a fixed $i\in[n]$, by Lemma \ref{lemma:chisq_tail}, we have
$$
	\|(N_\x)_{i, \bigcdot}\|^2 \lesssim \sigma^2 \left(p + \sqrt{p\log n} + \log n\right) \asymp  \sigma^2 \left(\sqrt{p} +\sqrt{\log n}\right)^2
$$
with probability at least $1-n^{-c'}$, where $c'$ can be chosen arbitrarily large. Invoking a union bound over $[n]$, we get
$$
	\max_{i\in[n]} \|(N_\x)_{i, \bigcdot}\|^2 \lesssim \sigma^2 \left(\sqrt{p} +\sqrt{\log n}\right)^2
$$
with probability $1 - n^{-(c'-1)}$. 
Thus, with probability $1 - n^{-(c'-1)} - e^{-n}\geq n^{-c}$, we have
$$
	\max_{k\leq k^\star}\max_{\calC_k}\|\hat V^{(-\calC_k)} (\hat V^{(-\calC_k)})^\top - \hat V \hat V^\top \|_F \lesssim 
	k^\star \cdot \left[ \frac{\sqrt{\mu r} (\sqrt{p} + \sqrt{\log n}) d_1/d_r}{\sqrt{n} d_r/\sigma}  + \frac{(\sqrt{p} + \sqrt{\log n})^2}{d_r^2/\sigma^2}\right]
$$
which is the desired result.

\subsection{Proof of Theorem \ref{thm:poly_rate}} \label{prf:thm:poly_rate}
Recall that there is an one-to-one correspondence between a permutation matrix $\Pi$ and its vector representation $\pi$.
LAPS solves
$$
	\hat\Pi \in \argmin_{\Pi\in S_n} \| X \hat V - \Pi Y \hat V\|_F^2
$$
and that $N_\x, N_\y \in \bbR^{n\times p}$ are matrices with i.i.d. standard Gaussian entries.
For a fixed $\Pi$, we have
\begin{align*}
	\| X \hat V - \Pi Y \hat V\|_F^2
	& = \| U D V^\top \hat V + \sigma_\x N_\x \hat V - \Pi (\Pi^\star)^\top (U D V \hat V) - \Pi (\Pi^\star)^\top \sigma_\y N_\y \hat V\|_F^2 \\
	& = \| (I_n - \Pi (\Pi^\star)^\top) U D V^\top \hat V + \sigma_\x N_\x \hat V - \Pi (\Pi^\star)^\top \sigma_\y N_\y \hat V\|_F^2.
\end{align*}
By construction, we have $\|X \hat V - \hat\Pi Y \hat V\|_F^2 \leq \|X \hat V - \Pi^\star Y \hat V\|_F^2$, which gives
$$
	\| (I_n - \hat \Pi (\Pi^\star)^\top) U D V^\top \hat V + \sigma_\x N_\x \hat V - \hat \Pi (\Pi^\star)^\top \sigma_\y N_\y \hat V\|_F^2 \leq \| \sigma_\x N_\x \hat V - \sigma_\y N_\y \hat V\|_F^2,
$$
and it implies that
\begin{align*}
	& \|(I_n - \hat \Pi (\Pi^\star)^\top) U D V^\top \hat V\|_F^2 \\
	& \leq 
	- \| \sigma_\x N_\x \hat V - \hat \Pi (\Pi^\star)^\top \sigma_\y N_\y \hat V\|_F^2 
	+ 2 \bigg\la (I_n - \hat \Pi (\Pi^\star)^\top) U D V^\top \hat V,~ \hat \Pi (\Pi^\star)^\top \sigma_\y N_\y \hat V - \sigma_\x N_\x \hat V \bigg\ra \\
	& \qquad + \|(\sigma_\x N_\x - \sigma_\y N_\y) \hat V\|_F^2\\
	& \leq 2 \|(I_n - \hat \Pi (\Pi^\star)^\top) U D V^\top \hat V\|_F \|\hat \Pi (\hat \Pi^\star)^\top  \sigma_\y N_\y \hat V - \sigma_\x N_\x \hat V \|_F + \|(\sigma_\x N_\x - \sigma_\y N_\y) \hat V\|_F^2\\
	& \leq 2 \|(I_n - \hat \Pi (\Pi^\star)^\top) U D V^\top \hat V\|_F \cdot \bigg(\|\sigma_\x N_\x \hat V\|_F + \|\sigma_\y N_\y \hat V\|_F\bigg) + \bigg(\|\sigma_\x N_\x \hat V\|_F + \|\sigma_\y N_\y \hat V\|_F\bigg)^2.
\end{align*}	
Solving the above quadratic inequality, we get
$$
	\|(I_n - \hat \Pi (\Pi^\star)^\top) U D V^\top \hat V\|_F^2 \lesssim \|\sigma_\x N_\x \hat V\|_F^2 + \|\sigma_\y N_\y \hat V\|_F^2.
$$
Note that
\begin{align*}
	\|N_\x \hat V\|_F^2 
	& = \tr(N_\x \hat V \hat V^\top N_\x^\top ) \\
	& = \tr(N_\x (\hat V \hat V^\top -  V V^\top)N_\x^\top ) + \|N_\x V\|_F^2 \\
	& \leq \|N_\x^\top N_\x\|_F \|\hat V \hat V^\top - V V^\top\|_F  + \|N_\x V\|_F^2 \\
	& \leq \|N_\x\|_F \|N_\x\|_2\|\hat V\hat V^\top - VV^\top\|_F + \|N_\x V\|_F^2
\end{align*}	
and a similar bound holds for $\|N_\x \hat V\|_F^2$. Thus,
$$
	\|(I_n - \hat \Pi (\Pi^\star)^\top) U D V^\top \hat V\|_F^2 \lesssim 
	\|\hat V \hat V^\top - V V^\top\|_F \bigg( \sigma_\x^2 \|N_\x\|_F\|N_\x\|_2 + \sigma_\y^2\|N_\y\|_F\|N_\y\|_2 \bigg) + \sigma_\x^2\|N_\x V\|_F^2 + \sigma_\y^2 \|N_\y V\|_F^2.
$$
We now lower bound the left-hand side above. Note that
\begin{align*}
	& \|(I_n - \hat \Pi (\Pi^\star)^\top) U D V^\top \hat V\|_F^2 \\
	& = \tr\bigg(
		(I_n - \hat\Pi (\Pi^\star)^\top) U D V^\top \hat V \hat V^\top V D U^\top (I_n - \hat\Pi (\Pi^\star)^\top)^\top
	\bigg)\\
	& = \tr\bigg(
		(I_n - \hat \Pi (\Pi^\star)^\top) U D V^\top  (\hat V \hat V^\top - V V^\top) V D U^\top (I_n - \hat \Pi (\Pi^\star)^\top)^\top
	\bigg)\\
	& \qquad 
	+ \tr\bigg(
		(I_n - \hat \Pi (\Pi^\star)^\top) U D^2 U^\top (I_n - \hat \Pi (\Pi^\star)^\top)^\top
	\bigg)\\
	& \geq - \|(I_n - \hat \Pi (\Pi^\star)^\top) U D V^\top\|_F^2 \cdot  \|\hat V \hat V^\top - V V^\top\|_F + \|(I_n - \hat \Pi (\Pi^\star)^\top) U D\|_F^2 \\
	& = \bigg(1 - \|\hat V \hat V^\top - V V^\top\|_F\bigg) \|(I_n - \hat \Pi (\Pi^\star)^\top)U D\|_F^2 .
\end{align*}
Without loss of generality, we assume $\Pi^\star$ is the identity permutation. Let $\calC_k(\hat \pi)$ be the set of all length-$k$ cycles of $\hat\pi$, so that for a specific cycle $\{i_1, \hdots, i_k\}\in\calC_k (\hat\pi)$, we have $\hat\pi_{i_1} =\pi^\star_{i_2} = i_2, \hat\pi_{i_2}= \pi^\star_{i_3} = i_3, \hdots, \hat\pi_{i_k} = \pi^\star_{i_1}=i_1$. We can then write
\begin{align*}
	\|(I_n - \hat \Pi (\Pi^\star)^\top)U D\|_F^2
	& = \sum_{k=2}^n \sum_{i_{1:k}\in\calC_k(\hat\pi)} \| (UD)_{i_{1:k},\bigcdot} - I_k^\leftarrow (U D)_{i_{1:k},\bigcdot}\|_F^2 \\
	& \geq \sum_{k=2}^n \sum_{i_{1:k}\in \calC_k(\hat\pi)} k \cdot \min_{i\neq i'}  \|(U D)_{i, \bigcdot} - (U D)_{i', \bigcdot} \|^2 \\
	& = \min_{i\neq i'}  \|(U D)_{i, \bigcdot} - (U D)_{i', \bigcdot} \|^2  \cdot \sum_{k=2}^n k \cdot |\calC_k(\hat \pi )| \\
	& = \min_{i\neq i'}  \|(U D)_{i, \bigcdot} - (U D)_{i', \bigcdot} \|^2 \cdot d(\hat\pi, \pi^\star).
\end{align*}	
Hence, we have
\begin{align*}
	& d(\hat\pi, \pi^\star) \\
	& \lesssim \frac{\sigma_{\max}^2}{ \min_{i\neq i'}  \|(U D)_{i, \bigcdot} - (U D)_{i', \bigcdot} \|^2} \cdot \frac{1}{1 - \|\hat V \hat V^\top - V V^\top\|_F} \\
	& \qquad \times \bigg[\|\hat V \hat V^\top - V V^\top\|_F \bigg(  \|N_\x\|_F\|N_\x\|_2 +\|N_\y\|_F\|N_\y\|_2 \bigg) + \|N_\x V\|_F^2 + \|N_\y V\|_F^2\bigg] \\
	& = \frac{1}{\beta^2} \cdot\frac{1}{1 - \|\hat V \hat V^\top - V V^\top\|_F}\cdot \bigg[\|\hat V \hat V^\top - V V^\top\|_F \bigg(  \|N_\x\|_F\|N_\x\|_2 +\|N_\y\|_F\|N_\y\|_2 \bigg) + \|N_\x V\|_F^2 + \|N_\y V\|_F^2\bigg].
\end{align*}
By Proposition \ref{prop:est_err_of_V}, we have
$$
	\|\hat V \hat V^\top - VV^\top\|_F \lesssim \frac{\sqrt{rp\log n}}{d_r/\sigma_{\min}} = \calE_\unif
$$
with probability at least $1-n^{-c}$ for some absolute constant $c$. Meanwhile, note that $\|N_\x\|_F^2$ and $\|N_\y\|^2_F$ are both $\chi^2$ random variables with $np$ degrees of freedom, so by Lemma \ref{lemma:chisq_tail}, we have
$$
\|N_\x\|_F^2 \lor \|N_\y\|_F^2 \lesssim np + \sqrt{np\log n} + \log n \lesssim np
$$
with probability at least $1-n^{-c}$. Similarly
$$
	\|N_\x V\|_F^2 \lor \|N_\y V\|_F^2 \lesssim nr
$$
with probability at least $1-n^{-c}$.
Moreover, by Lemma \ref{lemma:gaussian_wigner_op_norm}, 
$$
	\|N_\x\|_2 \lor \|N_\y\|_2 \lesssim \sqrt{n} + \sqrt{p} + \sqrt{\log n} \lesssim \sqrt{n} + \sqrt{p}
$$
with probability at least $1-n^{-c}$. Invoking a union bound and using $\calE_\unif \ll 1$, we get
\begin{align*}
	\ell(\hat\Pi, \Pi^\star) = \frac{1}{n}d(\hat \pi, \pi^\star) \lesssim \frac{1}{\beta^2} \cdot \frac{\calE_\unif \sqrt{np}(\sqrt{n}+\sqrt{p}) + nr}{n} \lesssim \frac{p}{\beta^2} \cdot \calE_\unif \left(\frac{1}{\sqrt{n}}+ \frac{1}{\sqrt{p}}\right) + \frac{r}{\beta^2}
\end{align*}
with probability $1-n^{-c'}$ for another constant $c'$, which is the desired result.

\subsection{Proof of Theorem \ref{thm:ub_simplified}} \label{prf:thm:ub_simplified}
We first state a generalization of Theorem \ref{thm:ub_simplified}.

\begin{theorem}
	\label{thm:ub}
	Recall the definitions of $\calE_\unif$ in \eqref{eq:est_err_of_V} and $\calE_\loco$ in \eqref{eq:loco_err_of_V}. Define
	$$
		\omega_n := \frac{d_1^2/\sigma_{\max}^2}{p} \cdot \calE_\unif^2 = \frac{r d_1^2 \sigma_{\min}^2\log n}{d_r^2 \sigma_{\max}^2}.
	$$
	Assume $\beta^2 \gg r, \frac{d_r}{\sigma_{\min}} \geq C_\gap (\sqrt{n} + \sqrt{p})$.
	In addition, assume either one of the following conditions hold:
	\begin{enumerate}
		\item either $\calE_\unif \ll 1$ and
		\begin{equation}
		\label{eq:ub_full_unif_bound_condition}
		\frac{\beta^2}{p} \gg \omega_n \lor \calE_\unif^2 \lor \frac{\calE_\unif}{\sigma_{\max}/\sigma_{\min}},
		\end{equation}
		\item or $\calE_\unif \lor \calE_\loco \ll 1$ and
		\begin{align*}
			\frac{\beta^2}{p}
			& \gg 
			\left( \omega_n \cdot \frac{\calE_\loco}{\calE_\unif}\right)
			\lor \left( \omega_n \cdot \frac{\calE_\loco^2}{\calE_\unif^2}\right)
			\lor \left( \omega_n \cdot \frac{1}{n}\right)
			\lor \left( \omega_n^{3/4} \cdot \calE_\loco \right)
			\lor \left( \omega_n^{2/3} \cdot \calE_\loco^{8/9} \cdot \frac{\sigma_{\min}^{8/9}}{\sigma_{\max}^{8/9}}\right)
			\lor \left( \omega_n^{3/5} \cdot \calE_\loco^{4/5} \cdot \frac{\sigma_{\min}^{4/5}}{\sigma_{\max}^{4/5}}\right)\\
			& \qquad
			\lor \left( \omega_n^{1/2} \cdot \calE_\unif\calE_\loco\right)
			\lor \left( \omega_n^{2/5} \cdot \calE_\unif^{4/5}\calE_\loco^{4/5} \cdot \frac{\sigma_{\min}^{8/5}}{\sigma_{\max}^{8/5}}\right)
			\lor \left( \omega_n^{1/3} \cdot \calE_\loco^{2/3}\cdot \frac{\sigma_{\min}^{4/3}}{\sigma_{\max}^{4/3}}\right)\\
			& \qquad
			\lor \left(\frac{\calE_\unif^2}{n}\right)
			\lor \left(\calE_\unif^{4/3} \calE_\loco^{2/3} \cdot \frac{\sigma_{\min}^{2/3}}{\sigma_{\max}^{2/3}}\right) 
			\lor \left(\calE_\unif^{6/5} \calE_\loco^{4/5} \cdot \frac{\sigma_{\min}^{4/5}}{\sigma_{\max}^{4/5}}\right)
			\lor \left(\calE_\unif \calE_\loco\right) \\
			& \qquad
			\lor \left(\frac{\calE_\unif}{\sqrt{n}} \cdot \frac{\sigma_{\min}}{\sigma_{\max}}\right)
			\lor \left(\calE_\unif^{6/7} \calE_\loco^{4/7} \cdot \frac{\sigma_{\min}^{6/7}}{\sigma_{\max}^{6/7}}\right)
			\lor \left(\calE_\unif^{1/2} \calE_\loco^{1/2} \cdot \frac{\sigma_{\min}}{\sigma_{\max}}\right) 
			\lor \left(\calE_\loco^2\right)
			\label{eq:ub_full_unif_and_loco_bound_condition}
			\lor \left(\calE_\loco \cdot \frac{\sigma_{\min}}{\sigma_{\max}}\right). \numberthis
		\end{align*}
	\end{enumerate}
	If for any $o(1)$ sequence $\delta_n$, \eqref{eq:ub_vanishing_err} holds,
	then there exists two sequences $\overline{\delta}_n, \overline{\delta}_n' = o(1)$ such that with probability $1-o(1)$, \eqref{eq:ub_mismatch_proportion} holds.
\end{theorem}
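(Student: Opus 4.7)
The plan is to bound the expected mismatch proportion via the cycle decomposition in Lemma \ref{lemma:cyc_decomp}, control each cycle term by \eqref{eq:upper_bound_on_each_term_in_cyc_decomp}, and then use a hybrid strategy: uniform control for long cycles and leave-one-cycle-out (LOCO) control for short cycles. A Markov-type inequality at the end converts the expected bound into a high-probability bound of the form \eqref{eq:ub_mismatch_proportion}.

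First, I will derive the basic cycle inequality. By optimality of $\hat\Pi$ and a rearrangement along each cycle of $\hat\pi\circ(\pi^\star)^{-1}$, the event $\{\hat\pi_{i_1}=\pi^\star_{i_2},\dots,\hat\pi_{i_k}=\pi^\star_{i_1}\}$ forces an inequality of the form
\[
\bigl\langle (X\hat V)_{i_{1:k},\bigcdot},\, (I_k^\leftarrow-I_k)(Y\hat V)_{\pi^\star_{i_{1:k}},\bigcdot}\bigr\rangle \geq 0,
\]
which after substituting $X=UDV^\top+\sigma_\x N_\x$ and $\Pi^\star Y=UDV^\top+\sigma_\y N_\y$ rewrites (up to sign conventions) as \eqref{eq:upper_bound_on_each_term_in_cyc_decomp}. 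Expanding the inner product produces a deterministic signal term $-\|(I_k^\leftarrow-I_k)U_{i_{1:k},\bigcdot}DV^\top\hat V\|_F^2$ plus cross terms linear in $N_\x,N_\y$ and a quadratic noise-noise term.

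Second, for cycles with $k>k^\star$ I will deploy the uniform strategy. Using Proposition \ref{prop:est_err_of_V}, I replace $V^\top\hat V$ by the identity up to an error of order $\calE_\unif$ on a $1-n^{-c}$ event; the signal term then becomes $-(1-\calO(\calE_\unif))\|(I_k^\leftarrow-I_k)U_{i_{1:k},\bigcdot}D\|_F^2$. Conditioning on $\hat V$ and applying Chernoff/MGF bounds for the Gaussian linear and chi-square quadratic forms, each cycle probability is bounded by an exponential in the signal term with a leading constant; optimizing the tilt parameter gives exactly $C_k=(3-2\sqrt{2})/4$ for $k\geq 6$. The residual error from $\calE_\unif$ and from the noise-noise term $\|N_\x\hat V - \Pi N_\y\hat V\|_F^2$ is absorbed by the conditions in \eqref{eq:ub_full_unif_bound_condition} (or the corresponding terms in \eqref{eq:ub_full_unif_and_loco_bound_condition}), because the signal term has magnitude at least $\beta^2$ and dominates the residuals when $\beta^2/p$ exceeds the stated thresholds.

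Third, for $k\leq k^\star$ I will deploy the LOCO strategy. Writing $\hat V = \hat V^{(-\calC_k)} + (\hat V-\hat V^{(-\calC_k)})$ and using Proposition \ref{prop:loco_err_of_V} to control the second piece by $Ck^\star\calE_\loco$, the cross terms become conditionally Gaussian given $\hat V^{(-\calC_k)}$ because $N_{\x,i_{1:k}\bigcdot}$ and $N_{\y,\pi^\star_{i_{1:k}}\bigcdot}$ are independent of $\hat V^{(-\calC_k)}$. Another conditional MGF calculation then yields a per-cycle bound with the improved constants $C_k$ given in \eqref{eq:Ck} for $2\leq k\leq 5$ (the constant improves with small $k$ because the quadratic form has fewer degrees of freedom and a tighter tilt is available). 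The perturbation from swapping $\hat V$ for $\hat V^{(-\calC_k)}$ contributes an error controlled by $k^\star\calE_\loco$, which is summed across all cycles and absorbed by the mixed terms in \eqref{eq:ub_full_unif_and_loco_bound_condition}.

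Fourth, I will pick $k^\star$ to balance the two regimes: roughly, $k^\star$ should be large enough that the $k>k^\star$ tail of the uniform bound is small, but small enough that $k^\star\calE_\loco\ll\calE_\unif$ so LOCO genuinely beats uniform for $k\leq k^\star$. Equating the two regime sizes produces the various fractional-exponent terms in \eqref{eq:ub_full_unif_and_loco_bound_condition} (they are essentially $k^\star\calE_\loco$, $(k^\star)^{1/2}\calE_\loco^{1/2}$, etc., raised to powers dictated by the MGF tilt parameter that also depends on $k^\star$). Summing the per-cycle exponential bounds with constants $C_k$ over all cycles reproduces the right-hand side of \eqref{eq:ub_vanishing_err} up to a $1-\overline\delta_n$ factor, so the expected mismatch proportion vanishes; finally, applying Markov's inequality with a slowly growing factor converts the expectation bound into the high-probability bound \eqref{eq:ub_mismatch_proportion} at the cost of the extra $1-\overline\delta_n'$ exponent.

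The hardest step is the third one: getting the sharp constants $C_k$ in \eqref{eq:Ck} requires a delicate optimization of the conditional MGF, tracking the interplay between the linear cross terms $\langle UD,\,N\rangle$ and the quadratic noise-noise remainder $\|(I_k^\leftarrow-I_k)N\hat V^{(-\calC_k)}\|_F^2$, and then uniformly controlling the LOCO residual $\hat V-\hat V^{(-\calC_k)}$ across all $\binom{n}{k}k!$ cycles of length $k\leq k^\star$ via the uniform-in-$\calC_k$ bound of Proposition \ref{prop:loco_err_of_V}. Bookkeeping the various error sources so that they all collapse into the conditions in \eqref{eq:ub_full_unif_and_loco_bound_condition} is the main technical burden.
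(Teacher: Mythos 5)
Your proposal correctly identifies the overall scaffolding — cycle decomposition via Lemma \ref{lemma:cyc_decomp}, per-cycle probability bounds via \eqref{eq:upper_bound_on_each_term_in_cyc_decomp}, a uniform/LOCO hybrid with a cutoff $k^\star$, and a Markov-inequality finish — but two of the crucial links are misdrawn.

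First, you conflate the uniform/LOCO cutoff $k^\star$ with the $C_k$ dichotomy between $k\leq 5$ and $k\geq 6$ in \eqref{eq:Ck}. These are orthogonal. In the paper's proof, \emph{both} method (a) (uniform) and method (b) (LOCO) invoke the same Proposition \ref{prop:core_upper_bound}, which supplies the constant $C_k$; the value of $C_k$ depends only on $k$, not on which method was used. The reason $C_k$ jumps from the sharp circulant-matrix values for $k\leq 5$ to $(3-2\sqrt2)/4$ for $k\geq 6$ is that Proposition \ref{prop:core_upper_bound} switches its \emph{internal} proof technique: for $k\leq 5$ it diagonalizes the circulant quadratic form via the DFT and gets explicit constants; for $k\geq 6$ it falls back to a generic split into a Gaussian linear part (bounded by a Gaussian tail) plus a noise--noise quadratic part (bounded by a Chernoff argument), and the two pieces are then balanced to give $(3-2\sqrt2)/4$. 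Your explanation that ``the constant improves with small $k$ because the quadratic form has fewer degrees of freedom and a tighter tilt is available'' is not the mechanism and would not produce the correct numbers. Meanwhile $k^\star$ is a free parameter chosen in Lemma \ref{lemma:ub_uniform_and_loco_condition} to balance the remainder errors $\xi\xi_k$ (uniform) against $k^\star\xi_k\calE_\loco$ (LOCO); it has no fixed relationship to $5$ or $6$.

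Second, the uniform strategy as you describe it — ``Conditioning on $\hat V$ and applying Chernoff/MGF bounds'' — does not work, precisely because $\hat V$ depends on the rows $i_{1:k}$ through $X$. Resolving this dependence is the entire reason the LOCO construction exists. In method (a), the paper instead splits the inner product into a term involving the true $V$ (to which Proposition \ref{prop:core_upper_bound} applies unconditionally, since $N_\x V$ and $N_\y V$ have the required Gaussian structure) plus a remainder bounded deterministically on the good events $E_\glob\cap E_{i_{1:k}}$ by $\xi\,\xi_k$, where $\xi_k$ is the Frobenius-norm quantity controlled by Proposition \ref{prop:frob_norm_concentration}. You do not introduce the local events $E_{i_{1:k}}$ or the concentration bound that underlies them, so this remainder would not be controlled in your plan; without it, the ``absorption'' into the conditions \eqref{eq:ub_full_unif_bound_condition}/\eqref{eq:ub_full_unif_and_loco_bound_condition} cannot be carried out. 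Finally, the fractional exponents in \eqref{eq:ub_full_unif_and_loco_bound_condition} arise from eliminating $k^\star$ between the two constraints of Lemma \ref{lemma:negligible_exponent_uniform_bound} rather than from ``the MGF tilt parameter that also depends on $k^\star$''; the tilt is fixed inside Proposition \ref{prop:core_upper_bound} and does not see $k^\star$ at all.
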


Theorem \ref{thm:ub_simplified} directly follows from Theorem \ref{thm:ub}.
We will only use condition \eqref{eq:ub_full_unif_and_loco_bound_condition}.
When $\sigma_\x \asymp \sigma_\y, d_1 \asymp d_r$, and $\calE_\unif \ll 1$, we have $\omega_n \asymp r \log n, \sigma_{\min}/\sigma_{\max}\asymp 1$, and  
$$
	\frac{\calE_\loco}{\calE_\unif} = \left(\frac{d_1}{d_r}\sqrt{\frac{\mu}{n}} + \frac{\sqrt{p} + \sqrt{\log n}}{\sqrt{r}d_r/\sigma_{\min}}\right) \left(\frac{1}{\sqrt{\log n}} + \frac{1}{\sqrt{p}}\right) 
	\lesssim \sqrt{\frac{\mu}{n}} \left(\frac{1}{\sqrt{\log n}} + \frac{1}{\sqrt{p}}\right) + o(1) \lesssim 1.
$$
Thus, \eqref{eq:ub_full_unif_and_loco_bound_condition} reduces to 
\begin{align*}
	\frac{\beta^2}{p}
	& \gg 
	\left( \omega_n \cdot \frac{\calE_\loco}{\calE_\unif}\right)
	\lor \left( \omega_n \cdot \frac{\calE_\loco^2}{\calE_\unif^2}\right)
	\lor \left( \omega_n \cdot \frac{1}{n}\right)
	\lor \left( \omega_n^{3/4} \cdot \calE_\loco \right)
	\lor \left( \omega_n^{2/3} \cdot \calE_\loco^{8/9} \right)
	\lor \left( \omega_n^{3/5} \cdot \calE_\loco^{4/5} \right)\\
	& \qquad
	\lor \left( \omega_n^{1/2} \cdot \calE_\unif\calE_\loco\right)
	\lor \left( \omega_n^{2/5} \cdot \calE_\unif^{4/5}\calE_\loco^{4/5} \right)
	\lor \left( \omega_n^{1/3} \cdot \calE_\loco^{2/3}\right)\\
	& \qquad
	\lor \left(\frac{\calE_\unif^2}{n}\right)
	\lor \left(\calE_\unif^{4/3} \calE_\loco^{2/3} \right) 
	\lor \left(\calE_\unif^{6/5} \calE_\loco^{4/5} \right)
	\lor \left(\calE_\unif \calE_\loco\right) \\
	& \qquad
	\lor \left(\frac{\calE_\unif}{\sqrt{n}} \right)
	\lor \left(\calE_\unif^{6/7} \calE_\loco^{4/7}\right)
	\lor \left(\calE_\unif^{1/2} \calE_\loco^{1/2} \right) 
	\lor \left(\calE_\loco^2\right)
	\lor \left(\calE_\loco \right)\\
	& \overset{(*)}{=}
	\left( \omega_n \cdot \frac{\calE_\loco}{\calE_\unif}\right)
	\lor \left( \omega_n \cdot \frac{\calE_\loco^2}{\calE_\unif^2}\right)
	\lor \left( \omega_n \cdot \frac{1}{n}\right)
	\lor \left( \omega_n^{3/4} \cdot \calE_\loco \right)
	\lor \left( \omega_n^{3/5} \cdot \calE_\loco^{4/5} \right)\\
	& \qquad
	\lor \left( \omega_n^{1/2} \cdot \calE_\unif\calE_\loco\right)
	\lor \left( \omega_n^{2/5} \cdot \calE_\unif^{4/5}\calE_\loco^{4/5} \right)
	\lor \left( \omega_n^{1/3} \cdot \calE_\loco^{2/3}\right)\\
	& \qquad
	\lor \left(\frac{\calE_\unif^2}{n}\right)
	\lor \left(\calE_\unif^{4/3} \calE_\loco^{2/3} \right) 
	\lor \left(\calE_\unif^{6/5} \calE_\loco^{4/5} \right)
	\lor \left(\calE_\unif \calE_\loco\right) \\
	& \qquad
	\lor \left(\frac{\calE_\unif}{\sqrt{n}} \right)
	\lor \left(\calE_\unif^{6/7} \calE_\loco^{4/7}\right)
	\lor \left(\calE_\unif^{1/2} \calE_\loco^{1/2} \right) 
	\lor \left(\calE_\loco^2\right)
	\lor \left(\calE_\loco \right)\\
	& \overset{(**)}{=}
	\left( \omega_n \cdot \frac{\calE_\loco}{\calE_\unif}\right)
	\lor \left( \omega_n \cdot \frac{\calE_\loco^2}{\calE_\unif^2}\right)
	\lor \left( \omega_n \cdot \frac{1}{n}\right)
	\lor \left( \omega_n^{3/5} \cdot \calE_\loco^{4/5} \right)\\
	& \qquad
	\lor \left( \omega_n^{1/3} \cdot \calE_\loco^{2/3}\right)\\
	& \qquad
	\lor \left(\frac{\calE_\unif}{\sqrt{n}} \right)
	\lor \left(\calE_\unif^{1/2} \calE_\loco^{1/2} \right) 
	\\
	& \overset{(***)}{=}
	\left( \omega_n \cdot \frac{\calE_\loco}{\calE_\unif}\right)
	\lor \left( \omega_n^{3/5} \cdot \calE_\loco^{4/5} \right)\\
	& \qquad
	\lor \left( \omega_n^{1/3} \cdot \calE_\loco^{2/3}\right)\\
	& \qquad
	\lor \left(\calE_\unif^{1/2} \calE_\loco^{1/2} \right)
\end{align*}
where equality $(*)$ is by 
\begin{align*}
\left( \omega_n^{3/4} \cdot \calE_\loco \right) \lor \left( \omega_n^{2/3} \cdot \calE_\loco^{8/9} \right)\lor \left( \omega_n^{3/5} \cdot \calE_\loco^{4/5} \right)
 & = \left( \omega_n^{3/4} \cdot \calE_\loco \right) \lor \left( \omega_n^{3/4} \cdot \calE_\loco \right)^{8/9} \lor \left( \omega_n^{3/4} \cdot \calE_\loco \right)^{4/5}
 \\
 & = \left( \omega_n^{3/4} \cdot \calE_\loco \right) \lor \left( \omega_n^{3/5} \cdot \calE_\loco^{4/5} \right),
\end{align*}
equality $(**)$ is by $\omega_n = r \log n \geq 1, \calE_\unif\lor \calE_\loco \ll 1$ and equality $(***)$ is by $\calE_\loco \lesssim \calE_\unif \ll 1$ and
$$
	\frac{\calE_\loco}{\calE_\unif} \geq \frac{\sqrt{\mu}}{\sqrt{n \log n}} \geq \frac{1}{n}.
$$
Thus, Theorem \ref{thm:ub_simplified} is proved.

In the rest of this section, we give a proof of Theorem \ref{thm:ub}.
Without loss of generality, we assume $\sigma_\x \leq \sigma_\y$, so that $\sigma_{\min} = \sigma_x$ and $\sigma_{\max} = \sigma_\y$.
Under this assumption, the columns of matrix $\hat V$ are the top $r$ right singular vectors of $X$. 
For any $k$ distinct indices $1 \leq i_1\neq i_2 \neq \cdots \neq i_k\leq n$, we let $\hat V^{(-i_{1:k})}\in O_{p, r}$ be the orthonormal whose columns are the top $r$ right singular vectors of $X_{-i_{1:k}, \bigcdot}$. 
For notational simplicity, we interpret $i_0=i_k, i_1 = i_{k+1}$ and so on.
We define the following ``globally good'' event
\begin{align*}
	E_\glob := \left\{ \|\hat V \hat V^\top - V V^\top\|_F \leq \xi \right\} \bigcap \left\{ \max_{k\leq k^\star}\max_{i_{1:k}}\|\hat V \hat V^\top - \hat V^{(-i_{1:k})} (\hat V^{(-i_{1:k})})^\top\|_F \leq \xi' \right\},
\end{align*}
where the exact values of $k^\star\in[n]$, $\xi=o(1)$ and $\xi'=o(1)$ will be determined later.
In the meantime, for any specific configuration of $i_{1:k}$, we define the following ``locally good'' event
\begin{align*}
	E_{i_{1:k}} := \left\{ \left\| \left( \frac{U_{i_{1:k}, \bigcdot} D V^\top }{\sigma_\x}  + (N_\x)_{i_{1:k}, \bigcdot}\right)^\top (I_k^\leftarrow - I_k) \left( \frac{U_{i_{1:k}, \bigcdot} D V^\top }{\sigma_\y}  + (N_\y)_{i_{1:k}, \bigcdot}\right) \right\|_F \leq \xi_k \right\},
\end{align*}
where the quantity $\xi_k$ depends on $k$ (but not the specific configuration of $i_{1:k}$). The exact value of $\xi_k$ will be determined later. 
We can compute the expected mismatch proportion by
\begin{align*}
	\bbE[\ell(\hat\Pi, \Pi^\star)] & = \frac{1}{n} \sum_{i\in[n]}  \bbE\left[d(\hat \pi, \pi^\star)\right] \\
	& = \frac{1}{n} \sum_{i\in[n]} \bbP\left(\hat\pi_i \neq \pi^\star_i\right) \\
	& \leq \frac{1}{n} \sum_{i\in[n]} \bbP \left( \hat \pi_i \neq \pi^\star_i \textnormal{ and } E_\glob \right) + \bbP(E_\glob^c)\\
	& \overset{(*)}{=} \left[\frac{1}{n}\sum_{k=2}^n \sum_{i_1 \neq \cdots \neq i_k} \bbP\left( \hat \pi_{i_1} = \pi^\star_{i_2}, \hat \pi_{i_2} = \pi^\star_{i_3}, \hdots, \hat\pi_{i_k} = \pi^\star_{i_1} \textnormal{ and } E_\glob \right)\right] + \bbP(E_\glob^c) \\
	\label{eq:ub_expected_mismatch}
	& \leq \left[\frac{1}{n}\sum_{k=2}^n \sum_{i_1 \neq \cdots \neq i_k} \left( \scrP_{i_{1:k}} + \bbP(E_{i_{1:k}}^c) \right)\right] + \bbP(E_\glob^c), \numberthis
\end{align*}
where $(*)$ is by Lemma \ref{lemma:cyc_decomp} and
$$
	\scrP_{i_{1:k}} := \bbP\left( \hat \pi_{i_1} = \pi^\star_{i_2}, \hat \pi_{i_2} = \pi^\star_{i_3}, \hdots, \hat\pi_{i_k} = \pi^\star_{i_1} \textnormal{ and } E_\glob\cap E_{i_{1:k}} \right).
$$
To upper bound $\scrP_{i_{1:k}}$, note that by construction,
$$
	\left\la X \hat V , \hat \Pi Y \hat V \right\ra \geq \left\la X \hat V , \Pi Y \hat V \right\ra 
$$
for any permutation matrix $\Pi$. The event $\{\hat \pi_{i_1} = \pi^\star_{i_2} , \hdots, \hat\pi_{i_k} = \pi^\star_{i_1}\}$ implies that
\begin{align*}
	& (X \hat V)_{i_1, \bigcdot}^\top  (Y \hat V)_{\pi^\star_{i_2}} 
	+ (X \hat V)_{i_2, \bigcdot}^\top  (Y \hat V)_{\pi^\star_{i_3}}
	+ \cdots 
	+ (X \hat V)_{i_k, \bigcdot}^\top  (Y \hat V)_{\pi^\star_{i_1}}
	+ \sum_{\ell \notin i_{1:k}} (X \hat V)_{\ell, \bigcdot}^\top  (Y \hat V)_{\hat\pi_{\ell}} \\
	& \geq 
	\sum_{\ell\in i_{1:k}} (X \hat V)_{\ell, \bigcdot}^\top  (Y \hat V)_{\pi_{\ell}}
	+ 
	\sum_{\ell\notin i_{1:k}} (X \hat V)_{\ell, \bigcdot}^\top  (Y \hat V)_{\pi_{\ell}},
	\qquad \forall \pi \in S_n.
\end{align*}
We choose $\pi$ such that $\pi_\ell = \pi^\star_\ell$ for any $\ell \in i_{1:k}$ and $\pi_\ell = \hat \pi_\ell$ for any $\ell \notin i_{1:k}$. Note that such a choice is feasible because under the event $\{\hat \pi_{i_1} = \pi^\star_{i_2} , \hdots, \hat\pi_{i_k} = \pi^\star_{i_1}\}$, we have  
$
	\{\hat\pi_\ell: \ell \in i_{1:k}\} = \{\pi^\star_\ell: \ell \in i_{1:k}\}.
$
Under this choice of $\pi$, the event $\{\hat \pi_{i_1} = \pi^\star_{i_2} , \hdots, \hat\pi_{i_k} = \pi^\star_{i_1}\}$ implies that
\begin{align*}
	& (X \hat V)_{i_1, \bigcdot}^\top  (Y \hat V)_{\pi^\star_{i_2}} 
	+ (X \hat V)_{i_2, \bigcdot}^\top  (Y \hat V)_{\pi^\star_{i_3}}
	+ \cdots 
	+ (X \hat V)_{i_k, \bigcdot}^\top  (Y \hat V)_{\pi^\star_{i_1}} \\
	& \geq 
	(X \hat V)_{i_1, \bigcdot}^\top  (Y \hat V)_{\pi^\star_{i_1}} 
	+ (X \hat V)_{i_2, \bigcdot}^\top  (Y \hat V)_{\pi^\star_{i_2}}
	+ \cdots 
	+ (X \hat V)_{i_k, \bigcdot}^\top  (Y \hat V)_{\pi^\star_{i_k}},
\end{align*}
or equivalently
$$
	\left\la (X \hat V)_{i_{1:k}, \bigcdot}, (I_k^\leftarrow - I_k) (Y\hat V)_{\pi^\star_{i_{1:k}},\bigcdot} \right\ra \geq 0 
	\iff
	\left\la \frac{(X \hat V)_{i_{1:k}, \bigcdot}}{\sigma_\x}, \frac{(I_k^\leftarrow - I_k) (Y\hat V)_{\pi^\star_{i_{1:k}},\bigcdot}}{\sigma_\y} \right\ra \geq 0 .
$$
As the result, we have
\begin{align*}
	\scrP_{i_{1:k}} \leq 
	\bbP\left( \left\la \frac{(X \hat V)_{i_{1:k}, \bigcdot}}{\sigma_\x}, \frac{(I_k^\leftarrow - I_k) (Y\hat V)_{\pi^\star_{i_{1:k}},\bigcdot}}{\sigma_\y} \right\ra \geq 0 \textnormal{ and } E_\glob \cap E_{i_{1:k}} \right).
\end{align*}
To proceed further, we need the following proposition.
\begin{proposition}
\label{prop:core_upper_bound}
Let $W, Z\in \bbR^{n\times r}$ be two independent random matrices whose entries are $W_{i,j}\overset{ind}{\sim} N(\mu_{i, j}, 1), Z_{i, j} \overset{ind}{\sim} N(\nu_{i, j}, 1)$, respectively. Let $\Theta = \diag(\theta_1, \hdots, \theta_r)$ be a diagonal matrix where $0 < \theta_j\leq 1$ for any $j\in[r]$ and let $\xi>0$ be an arbitrary constant. Suppose we can partition $[r]$ into two disjoint sets $J_1\cup J_2$, where
$$
	\theta_j \mu_{\bigcdot, j} = \nu_{\bigcdot, j} , \forall j\in J_1,
	\qquad
	\theta_j \nu_{\bigcdot, j} = \mu_{\bigcdot, j} , \forall j\in J_2.
$$
Assume
$$
	\tilde \beta := \left(\min_{i\neq i'} \|\mu_{i, \bigcdot} - \mu_{i', \bigcdot}\|^2\right) \land \left(\min_{i\neq i'} \|\nu_{i, \bigcdot} - \nu_{i', \bigcdot}\|^2\right) \gg r.
$$
There exists a sequence $\ep_n = o(1)$ such that for any $k$ distinct indices $1\leq i_1 \neq i_2 \neq \cdots \neq i_k\leq n$, we have
\begin{align*}
	& \bbP\left(\left\la (W)_{i_{1:k}, \bigcdot} \Theta, (I_k^\leftarrow - I_k)Z_{i_{1:k},\bigcdot}\right\ra + \xi \geq 0\right) \\
	& \leq 
	\exp\left\{ -(1-\ep_n) C_k \left(\sum_{j\in J_1} \|(I_k^\leftarrow - I_k)\nu_{i_{1:k}, j}\|^2 + \sum_{j\in J_2} \|(I_k^\leftarrow - I_k) \mu_{i_{1:k}, j}\|^2 \right) + C' \xi\right\},
\end{align*}
where $C_k$ is defined in \eqref{eq:Ck}
and $C'>0$ is another absolute constant. 
\end{proposition}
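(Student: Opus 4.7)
The plan is to reduce the claim to a moment generating function bound via the Chernoff--Markov inequality, using exponential tilting by a parameter $t > 0$:
\[
	\bbP\left(\la W_{i_{1:k},\bigcdot}\Theta, (I_k^\leftarrow - I_k) Z_{i_{1:k},\bigcdot}\ra + \xi \geq 0\right) \leq e^{t\xi}\cdot \bbE\left[e^{t\la W_{i_{1:k},\bigcdot}\Theta, (I_k^\leftarrow-I_k)Z_{i_{1:k},\bigcdot}\ra}\right].
\]
Writing $A := I_k^\leftarrow - I_k$, the bilinear form inside the expectation decomposes column-wise as $\sum_{j=1}^r \theta_j W_{i_{1:k}, j}^\top A Z_{i_{1:k}, j}$, and by the independence of the columns of $W$ and $Z$ the MGF factorizes into $r$ factors of identical shape. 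For each fixed $j$ I would first integrate out $W_{i_{1:k}, j}\sim N(\mu_{i_{1:k},j}, I_k)$ conditionally on $Z_{i_{1:k},j}$ (a linear-Gaussian computation), and then integrate out $Z_{i_{1:k},j}\sim N(\nu_{i_{1:k},j}, I_k)$ using the Gaussian quadratic identity $\bbE_{g\sim N(0,I_k)}[e^{a^\top g + \frac{1}{2}g^\top B g}] = \det(I_k - B)^{-1/2}\exp(\frac{1}{2}a^\top (I_k-B)^{-1}a)$, valid whenever $t\theta_j\|A\| < 1$.

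A crucial algebraic identity to invoke is $A^\top A = -(A + A^\top)$, which follows from expanding $(I_k^\leftarrow - I_k)^\top(I_k^\leftarrow - I_k)$ and using $(I_k^\leftarrow)^\top I_k^\leftarrow = I_k$. This identity gives $\mu^\top A\mu = -\tfrac{1}{2}\|A\mu\|^2$ for any vector $\mu$, which dramatically simplifies the cross terms coming out of the Gaussian calculation. Combined with the partition hypothesis---substituting $\nu_{\bigcdot,j} = \theta_j\mu_{\bigcdot,j}$ for $j \in J_1$ and the symmetric relation for $j \in J_2$---all mixed $\mu$-$\nu$ contributions can be expressed as scalar multiples of $\|A\nu_{i_{1:k},j}\|^2$ or $\|A\mu_{i_{1:k},j}\|^2$. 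The per-coordinate MGF then takes the form $\det(I_k - t^2\theta_j^2 A^\top A)^{-1/2}\exp\bigl(-\phi_k(t,\theta_j)\cdot \|A u_j\|^2\bigr)$, where $u_j$ is $\nu_{i_{1:k},j}$ or $\mu_{i_{1:k},j}$ as appropriate and $\phi_k(t,\theta_j)$ is an explicit rational function of $t$, positive on an interior interval of $(0, 1/(\|A\|\theta_j))$.

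The next step is to choose $t$ so as to maximize $\phi_k(t,\theta_j)$ uniformly over $\theta_j \in (0, 1]$. The spectral structure of $A$ enters decisively: since $A$ is circulant, its singular values are $2|\sin(\pi\ell/k)|$ for $\ell\in\{0,\ldots,k-1\}$, so $\|A\|^2 \leq 4$ with the supremum attained in the limit $k \to \infty$. For the small cycles $k\in\{2,3,4,5\}$ the explicit spectrum allows the one-dimensional optimization to be solved exactly and produces the fractions $1/8, 1/12, 1/14, 11/181$ of \eqref{eq:Ck}. For $k\geq 6$, the uniform bound $\|A\|^2 \leq 4$ reduces the problem to a single optimization whose optimizer and optimal value $(3-2\sqrt{2})/4 = (\sqrt{2}-1)^2/4$ can be verified by direct differentiation.

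I expect the main technical obstacle to be the interplay between the determinantal prefactor and the absorbing factor $(1-\epsilon_n)$. Two sources of slack must be tracked carefully: (i) the $r$-fold product $\prod_j \det(I_k - t^2\theta_j^2 A^\top A)^{-1/2}$ contributes an additive $O(r)$ term to the log-MGF, which is absorbed into $(1-\epsilon_n)$ using the hypothesis $\tilde\beta \gg r$ so that the leading term $C_k\|A u_j\|^2 \gtrsim C_k\tilde\beta$ dominates; (ii) the $e^{t\xi}$ Markov factor becomes the additive $C'\xi$ in the conclusion, with $C'$ equal to the optimal tilt $t^\star$ in each regime of $k$. Beyond these bookkeeping concerns, the remaining arguments are algebraic manipulations driven by $A^\top A = -(A+A^\top)$ and standard Gaussian MGF calculus.
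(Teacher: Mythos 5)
The overall strategy you describe — Chernoff--Markov tilting, column-wise factorization of the MGF, sequential Gaussian integration, and separate treatment of small versus large cycles — does match the paper's broad outline, and the algebraic identity $A^\top A = -(A+A^\top)$ for $A = I_k^\leftarrow - I_k$ is correct and is indeed Lemma~\ref{lemma:inner_prod_and_norm}. However, there is a substantive gap in your step that collapses the per-coordinate MGF to the form $\det(I_k-t^2\theta_j^2 A^\top A)^{-1/2}\exp(-\phi_k(t,\theta_j)\|A u_j\|^2)$ with a \emph{scalar} $\phi_k(t,\theta_j)$.

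Carrying out the sequential Gaussian integration you propose, the exponent one obtains is a circulant quadratic form in $u_j$ whose DFT eigenvalues $v_{j,\ell}-1$ depend on the frequency index $\ell$ through $\cos\frac{2\pi(\ell-1)}{k}$ (this is exactly the vector $v_j$ in Eq.~\eqref{eq:core_upper_bound_vj} of the paper). Since $\|Au_j\|^2 = \sum_\ell 2(1-\cos\frac{2\pi(\ell-1)}{k})\,|\hat u_{j,\ell}|^2$ and the ratio $(v_{j,\ell}-1)/\bigl(1-\cos\frac{2\pi(\ell-1)}{k}\bigr)$ is genuinely $\ell$-dependent whenever the nonzero spectrum of $A^\top A$ has more than one distinct eigenvalue, the exponent is \emph{not} a scalar multiple of $\|Au_j\|^2$ once $k\geq 4$. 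Your factored form is only incidentally exact for $k=2,3$, where the nonzero modes share a single eigenvalue. For $k=4$ the paper must separately control the residual mode $(u_{j,1}-u_{j,2}+u_{j,3}-u_{j,4})^2$ by showing its coefficient has the right sign, and for $k=5$ it must additionally compare $\|(I_5-(I_5^\leftarrow)^2)u_j\|^2$ against $\|(I_5-I_5^\leftarrow)u_j\|^2$ via a crude operator-norm estimate — neither of these appears in your proposal, and your "one-dimensional optimization" over $t$ does not by itself reproduce the constants $1/14$ and $11/181$. (If you instead replaced the scalar claim with a mode-by-mode bound of the form $(v_{j,\ell}-1)\leq -4C_k(1-\cos\frac{2\pi(\ell-1)}{k})$ for all $\ell$ and all $\theta_j\in(0,1]$, the argument could be made to close, but it would yield different constants from the paper's and you would need to say so explicitly.)

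Separately, your handling of $k\geq 6$ is not viable as written: the constant $(3-2\sqrt{2})/4$ in the paper comes from a splitting argument — decomposing the inner product into a deterministic mean, a Gaussian linear term, and a pure-noise quadratic term and optimizing a free balancing parameter $\gamma$ — rather than from differentiating a scalar MGF exponent. With a direct MGF bound (even after the termwise correction above) the binding constraint for large $k$ is the low-frequency modes with $1-\cos\to 0$, which gives a numerical value different from $(3-2\sqrt{2})/4$, so "the uniform bound $\|A\|^2\leq 4$ reduces the problem to a single optimization" does not describe an argument that produces the paper's constant. Your bookkeeping remarks about absorbing the $O(r)$ log-determinant and the $e^{t\xi}$ factor into $(1-\epsilon_n)$ and $C'\xi$ respectively are correct and match the paper, but they do not salvage the central computation of the exponent.
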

\begin{proof}
	See Appendix \ref{prf:prop:core_upper_bound}.
\end{proof}

In the following, we upper bound $\scrP_{i_{1:k}}$ using two methods. Method (a) is ``uniform'' in the sense that it directly uses the closeness between $\hat V$ and $V$. Method (b) uses ``leave-one-cycle-out'' arguments that takes advantage of the closeness between $\hat V$ and $\hat V^{(-i_{1:k})}$. 
\begin{enumerate}[label=(\alph*)]
	\item 
	We start by computing
	\begin{align*}
		& \left\la \frac{(X \hat V)_{i_{1:k}, \bigcdot}}{\sigma_\x}, \frac{(I_k^\leftarrow - I_k) (Y\hat V)_{\pi^\star_{i_{1:k}},\bigcdot}}{\sigma_\y} \right\ra\\
		& = \left\la \left( \frac{U_{i_{1:k},\bigcdot} D V^\top}{\sigma_\x} + (N_\x)_{i_{1:k},\bigcdot} \right)\hat V , (I_k^\leftarrow - I_k) \left(\frac{U_{i_{1:k},\bigcdot} D V^\top}{\sigma_\y} + (N_\x)_{i_{1:k},\bigcdot} \right)\hat V \right\ra \\
		& = \tr \left[
			\left(\frac{U_{i_{1:k},\bigcdot} D V^\top}{\sigma_\x} + (N_\x)_{i_{1:k},\bigcdot} \right)
			(\hat V \hat V^\top - VV^\top)
			\left(\frac{U_{i_{1:k},\bigcdot} D V^\top}{\sigma_\y} + (N_\x)_{i_{1:k},\bigcdot} \right)^\top (I_k^\leftarrow - I_k)^\top
		\right] \\
		& \qquad + 
		\left\la 
			\frac{U_{i_{1:k},\bigcdot} D}{\sigma_\x} + (N_\x)_{i_{1:k},\bigcdot} V, 
			(I_k^\leftarrow - I_k) 
			\left(
				\frac{U_{i_{1:k},\bigcdot} D}{\sigma_\y} + (N_\y)_{i_{1:k},\bigcdot} V
			\right)
		\right\ra\\
		& \leq \left\| \left(\frac{U_{i_{1:k},\bigcdot} D V^\top}{\sigma_\x} + (N_\x)_{i_{1:k},\bigcdot} \right)^\top (I_k^\leftarrow - I_k)  \left(\frac{U_{i_{1:k},\bigcdot} D V^\top}{\sigma_\y} + (N_\x)_{i_{1:k},\bigcdot} \right)\right\|_F \|\hat V \hat V^\top - VV^\top\|_F \\
		& \qquad + 
		\left\la 
			\frac{U_{i_{1:k},\bigcdot} D}{\sigma_\x} + (N_\x)_{i_{1:k},\bigcdot} V, 
			(I_k^\leftarrow - I_k) 
			\left(
				\frac{U_{i_{1:k},\bigcdot} D}{\sigma_\y} + (N_\y)_{i_{1:k},\bigcdot} V
			\right)
		\right\ra \\
		& \leq \xi \xi_k + 
		\left\la 
			\frac{U_{i_{1:k},\bigcdot} D}{\sigma_\x} + (N_\x)_{i_{1:k},\bigcdot} V, 
			(I_k^\leftarrow - I_k) 
			\left(
				\frac{U_{i_{1:k},\bigcdot} D}{\sigma_\y} + (N_\y)_{i_{1:k},\bigcdot} V
			\right)
		\right\ra,
	\end{align*}
	where the last inequality holds under $E_\glob \cap E_{i_{1:k}}$. Since $\beta \gg r$, we can invoke Proposition \ref{prop:core_upper_bound} with $\Theta = \frac{\sigma_{\min}}{\sigma_{\max}} I_k = \frac{\sigma_{\x}}{\sigma_{\y}} I_k$ to conclude that
	\begin{align*}
		\scrP_{i_{1:k}}
		& \leq 
		\bbP\left( \frac{\sigma_{\min}}{\sigma_{\max}}\left\la \frac{(X \hat V)_{i_{1:k}, \bigcdot}}{\sigma_\x}, \frac{(I_k^\leftarrow - I_k) (Y\hat V)_{\pi^\star_{i_{1:k}},\bigcdot}}{\sigma_\y} \right\ra + \frac{\sigma_{\min} \xi \xi_k}{\sigma_{\max}} \geq 0 \right) \\
		\label{eq:ub_uniform_bound}
		& \leq
		\exp\left\{ - \frac{(1-o(1))C_k \|(I_k^\leftarrow - I_k) U_{i_{1:k},\bigcdot} D\|_F^2}{\sigma_{\max}^2}  + \calO\left( \frac{\sigma_{\min}\xi\xi_k}{\sigma_{\max}}\right)\right\},\numberthis
	\end{align*}
	where $C_k$ is defined in \eqref{eq:Ck}.

	\item 
	For this method to work, we restrict ourselves to the case when $k\leq k^\star$.
	We begin by computing
	\begin{align*}
		& \left\la \frac{(X \hat V)_{i_{1:k}, \bigcdot}}{\sigma_\x}, \frac{(I_k^\leftarrow - I_k) (Y\hat V)_{\pi^\star_{i_{1:k}},\bigcdot}}{\sigma_\y} \right\ra\\
		& = \left\la \left( \frac{U_{i_{1:k},\bigcdot} D V^\top}{\sigma_\x} + (N_\x)_{i_{1:k},\bigcdot} \right)\hat V , (I_k^\leftarrow - I_k) \left(\frac{U_{i_{1:k},\bigcdot} D V^\top}{\sigma_\y} + (N_\x)_{i_{1:k},\bigcdot} \right)\hat V \right\ra \\
		& = \tr \left[
			\left(\frac{U_{i_{1:k},\bigcdot} D V^\top}{\sigma_\x} + (N_\x)_{i_{1:k},\bigcdot} \right)
			(\hat V \hat V^\top - \hat V^{(-i_{1:k})}(\hat V^{(-i_{1:k})})^\top)
			\left(\frac{U_{i_{1:k},\bigcdot} D V^\top}{\sigma_\y} + (N_\x)_{i_{1:k},\bigcdot} \right)^\top (I_k^\leftarrow - I_k)^\top
		\right] \\
		& \qquad + 
		\left\la 
			\frac{U_{i_{1:k},\bigcdot} D V^\top \hat V^{(-i_{1:k})}}{\sigma_\x} + (N_\x)_{i_{1:k},\bigcdot} \hat V^{(-i_{1:k})}, 
			(I_k^\leftarrow - I_k) 
			\left(
				\frac{U_{i_{1:k},\bigcdot} D V^\top \hat V^{(-i_{1:k})}}{\sigma_\y} + (N_\y)_{i_{1:k},\bigcdot} \hat V^{(-i_{1:k})}
			\right)
		\right\ra\\
		& \leq \left\| \left(\frac{U_{i_{1:k},\bigcdot} D V^\top}{\sigma_\x} + (N_\x)_{i_{1:k},\bigcdot} \right)^\top (I_k^\leftarrow - I_k)  \left(\frac{U_{i_{1:k},\bigcdot} D V^\top}{\sigma_\y} + (N_\x)_{i_{1:k},\bigcdot} \right)\right\|_F \|\hat V \hat V^\top - \hat V^{(-i_{1:k}}(V^{(-i_{1:k})})^\top\|_F \\
		& \qquad + 
		\left\la 
			\frac{U_{i_{1:k},\bigcdot} D V^\top \hat V^{(-i_{1:k})}}{\sigma_\x} + (N_\x)_{i_{1:k},\bigcdot} \hat V^{(-i_{1:k})}, 
			(I_k^\leftarrow - I_k) 
			\left(
				\frac{U_{i_{1:k},\bigcdot} D V^\top \hat V^{(-i_{1:k})}}{\sigma_\y} + (N_\y)_{i_{1:k},\bigcdot} \hat V^{(-i_{1:k})}
			\right)
		\right\ra\\
		& \leq \xi' \xi_k +
		\underbrace{
		\left\la 
			\frac{U_{i_{1:k},\bigcdot} D V^\top \hat V^{(-i_{1:k})}}{\sigma_\x} + (N_\x)_{i_{1:k},\bigcdot} \hat V^{(-i_{1:k})}, 
			(I_k^\leftarrow - I_k) 
			\left(
				\frac{U_{i_{1:k},\bigcdot} D V^\top \hat V^{(-i_{1:k})}}{\sigma_\y} + (N_\y)_{i_{1:k},\bigcdot} \hat V^{(-i_{1:k})}
			\right)
		\right\ra}_{\scrT_{i_{1:k}}},
	\end{align*}
	where the last inequality holds under $E_\glob\cap E_{i_{1:k}}$. Let
	$$
		E_{i_{1:k}}': = 
		\left\{ \|\hat V^{(-i_{1:k})} (\hat V^{(-i_{1:k})})^\top - V V^\top \|_F \leq \xi + \xi' \right\}
	$$
	be an event that is implied by $E_\glob$. We have
	\begin{align*}
		\scrP_{i_{1:k}} 
		& \leq \bbP\left(
			\frac{\sigma_{\min}\scrT_{i_{1:k}}}{\sigma_{\max}}
			+ \frac{\sigma_{\min} \xi' \xi_k}{\sigma_{\max}}  \geq 0
			\textnormal{ and } E_\glob \cap E_{i_{1:k}}
		\right)\\
		& = \bbP\left(
			\frac{\sigma_{\min}\scrT_{i_{1:k}}}{\sigma_{\max}}
			+ \frac{\sigma_{\min} \xi' \xi_k}{\sigma_{\max}}  \geq 0
			\textnormal{ and } E_\glob \cap E_{i_{1:k}} \cap E_{i_{1:k}}'
		\right)\\
		& \leq \bbP\left(
			\frac{\sigma_{\min}\scrT_{i_{1:k}}}{\sigma_{\max}}
			+ \frac{\sigma_{\min} \xi' \xi_k}{\sigma_{\max}}  \geq 0
			\textnormal{ and } E_{i_{1:k}}'
		\right) \\
		& = \bbE\left[
		\bbP\left(
			\frac{\sigma_{\min}\scrT_{i_{1:k}}}{\sigma_{\max}}
			+ \frac{\sigma_{\min} \xi' \xi_k}{\sigma_{\max}}  \geq 0 
			~ \bigg| ~ (N_{\x})_{-i_{1:k},\bigcdot}, (N_\y)_{-{1:k},\bigcdot}
		\right) 
		\cdot \Indc_{E_{i_{1:k}}'}
		\right].
	\end{align*}
	Conditional on the randomness in $(N_{\x})_{-i_{1:k},\bigcdot}$ and $(N_\y)_{-i_{1:k},\bigcdot}$, the matrix $\hat V^{(-i_{1:k})}$ become a fixed matrix, and thus we can apply Proposition \ref{prop:core_upper_bound} to conclude that 
	\begin{align*}
		& \bbP\left(
			\frac{\sigma_{\min}\scrT_{i_{1:k}}}{\sigma_{\max}}
			+ \frac{\sigma_{\min} \xi' \xi_k}{\sigma_{\max}}  \geq 0 
			~ \bigg| ~ (N_{\x})_{-i_{1:k},\bigcdot}, (N_\y)_{-{1:k},\bigcdot}
		\right) \\
		& \leq
		\exp\left\{ - \frac{(1-o(1))C_k \|(I_k^\leftarrow - I_k) U_{i_{1:k},\bigcdot} D V^\top \hat V^{(-i_{1:k})}\|_F^2}{\sigma_{\max}^2}  + \calO\left( \frac{\sigma_{\min}\xi'\xi_k}{\sigma_{\max}}\right)\right\},
	\end{align*}
	provided 
	$$
		\tilde \beta^2 = \min_{i\neq i'} \frac{\|(\hat V^{(-i_{1:k})})^\top V D (U_{i, \bigcdot} - U_{i', \bigcdot})^\top\|^2}{\sigma_{\max}^2} \gg r.
	$$
	Recall that we will choose $\xi, \xi' = o(1)$, and thus under $E'_{i_{1:k}}$, we have
	$
		\|V^\top \hat V^{(-i_{1:k})}\| = 1+o(1).
	$
	This means that under $E'_{i_{1:k}}$, we have $\tilde \beta^2 = (1+o(1))\beta^2 \gg r$ and
	\begin{align*}
		& \bbP\left(
			\frac{\sigma_{\min}\scrT_{i_{1:k}}}{\sigma_{\max}}
			+ \frac{\sigma_{\min} \xi' \xi_k}{\sigma_{\max}}  \geq 0 
			~ \bigg| ~ (N_{\x})_{-i_{1:k},\bigcdot}, (N_\y)_{-{1:k},\bigcdot}
		\right) \\
		& \leq
		\exp\left\{ - \frac{(1-o(1))C_k \|(I_k^\leftarrow - I_k) U_{i_{1:k},\bigcdot} D\|_F^2}{\sigma_{\max}^2}  + \calO\left( \frac{\sigma_{\min}\xi'\xi_k}{\sigma_{\max}}\right)\right\}.
	\end{align*}
	Consequently, we get
	\begin{align}
	\label{eq:ub_loco}
		\scrP_{i_{1:k}}
		\leq
		\exp\left\{ - \frac{(1-o(1))C_k \|(I_k^\leftarrow - I_k) U_{i_{1:k},\bigcdot} D\|_F^2}{\sigma_{\max}^2}  + \calO\left( \frac{\sigma_{\min}\xi'\xi_k}{\sigma_{\max}}\right)\right\}.
	\end{align}
\end{enumerate}

Now that we have upper bounded $\scrP_{i_{1:k}}$, we proceed to upper bound $\bbP(E_{i_{1:k}}^c)$.
The following proposition is useful for this purpose.

\begin{proposition}
\label{prop:frob_norm_concentration}
Fix $2\leq k\leq n$, $1\leq i_1 \neq i_2 \neq \cdots \neq i_k \leq n$, and $\delta \in(0,1)$. Then with probability at least $1-5\delta$, we have
\begin{align*}
	& \frac{\|(X)_{i_{1:k},\bigcdot}^\top (I_k^\leftarrow - I_k)(Y)_{\pi^\star_{i_{1:k}},\bigcdot}\|_F}{\sigma_\x \sigma_\y}  \\
	& \lesssim \frac{\|D U_{i_{1:k},\bigcdot}^\top (I_k^\leftarrow - I_k) U_{i_{1:k},\bigcdot} D \|_F}{\sigma_\x\sigma_\y}  \\
	& \qquad +  \bigg(\frac{1}{\sigma_\x} + \frac{1}{\sigma_\y}\bigg) 
	\bigg(
		p^{1/2} \|(I_k^\leftarrow - I_k)U_{i_{1:k},\bigcdot} D\|_F
		+ p^{1/4} \|(I_k^\leftarrow - I_k)U_{i_{1:k},\bigcdot}D^2U_{i_{1:k},\bigcdot}^\top(I_k^\leftarrow-I_k)^\top\|_F^{1/2} (\log(1/\delta))^{1/4}\\
		& \qquad \qquad 
		+ \|(I_k^\leftarrow-I_k)U_{i_{1:k},\bigcdot}D^2U_{i_{1:k},\bigcdot}^\top(I_k^\leftarrow-I_k)^\top\|_2^{1/2} (\log(1/\delta))^{1/2}
	\bigg)\\
	& \qquad + 
	\bigg( 
		p k^{1/2} + p^{3/4}k^{1/4}(\log(1/\delta))^{1/4} + p^{5/8}k^{1/8}(\log(1/\delta))^{3/8} \\
		& \qquad \qquad + p^{1/2}k^{1/2}(\log(1/\delta))^{1/4} 
		+ p^{1/2}k^{1/4}(\log(1/\delta))^{1/2} 
		+ p^{3/8}k^{3/8}(\log(1/\delta))^{3/8} + p^{3/8}k^{1/8}(\log(1/\delta))^{5/8} \\
		\label{eq:frob_norm_concentration}
		& \qquad \qquad + p^{1/4}k^{1/4}(\log(1/\delta))^{1/2} + p^{1/4} (\log(1/\delta))^{3/4} 
		+ k^{1/2}(\log(1/\delta))^{1/2} + \log(1/\delta)
	\bigg).\numberthis
\end{align*}
\end{proposition}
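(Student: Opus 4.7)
The plan is to decompose the target quantity according to the signal-plus-noise structure and bound each resulting piece separately. Write $X_{i_{1:k},\bigcdot} = M + \sigma_\x W$ and $Y_{\pi^\star_{i_{1:k}},\bigcdot} = M + \sigma_\y Z$, where $M := U_{i_{1:k},\bigcdot}DV^\top \in \bbR^{k\times p}$ is deterministic and $W := (N_\x)_{i_{1:k},\bigcdot}$, $Z := (N_\y)_{i_{1:k},\bigcdot}$ are independent $k\times p$ matrices with i.i.d.~standard Gaussian entries. Abbreviating $A := I_k^\leftarrow - I_k$ and expanding the product, we get
\begin{equation*}
\frac{X_{i_{1:k},\bigcdot}^\top A\, Y_{\pi^\star_{i_{1:k}},\bigcdot}}{\sigma_\x\sigma_\y}
= \frac{M^\top A M}{\sigma_\x\sigma_\y} + \frac{M^\top A Z}{\sigma_\x} + \frac{W^\top A M}{\sigma_\y} + W^\top A Z,
\end{equation*}
and the triangle inequality reduces the problem to bounding the Frobenius norm of each of the four summands. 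The deterministic piece is immediate: since $V$ has orthonormal columns, $\|M^\top A M\|_F = \|D U_{i_{1:k},\bigcdot}^\top A U_{i_{1:k},\bigcdot} D\|_F$, which is the first term in \eqref{eq:frob_norm_concentration}.

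For the two mixed-linear terms, I would treat $\|M^\top A Z\|_F^2$ as a quadratic form in $\mathrm{vec}(Z)$: using $\mathrm{vec}(M^\top A Z) = (I_p\otimes M^\top A)\mathrm{vec}(Z)$ we have $\|M^\top A Z\|_F^2 = \mathrm{vec}(Z)^\top(I_p\otimes M^\top AA^\top M)\mathrm{vec}(Z)$. Hanson--Wright (applied to the $kp$-dimensional Gaussian vector $\mathrm{vec}(Z)$) gives, with probability $\ge 1-\delta$,
\begin{equation*}
\|M^\top A Z\|_F^2 \lesssim p\|A U_{i_{1:k},\bigcdot}D\|_F^2 + \sqrt{p}\,\|A U_{i_{1:k},\bigcdot}D^2 U_{i_{1:k},\bigcdot}^\top A^\top\|_F\sqrt{\log(1/\delta)} + \|A U_{i_{1:k},\bigcdot}D\|_{\mathrm{op}}^2\log(1/\delta),
\end{equation*}
and taking square roots and using $\|B\|_{\mathrm{op}}^2 = \|BB^\top\|_{\mathrm{op}}$ produces exactly the three summands inside the $(1/\sigma_\x+1/\sigma_\y)$ block of \eqref{eq:frob_norm_concentration}. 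The same Hanson--Wright argument applied to $\|W^\top A M\|_F^2$ handles the other linear term.

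The hard part is the pure-noise bilinear piece $\|W^\top A Z\|_F$, which is a genuine order-four Gaussian chaos. The strategy is to condition on $Z$. Given $Z$, the rows of $W^\top A Z$ are i.i.d.~$N(0,(AZ)^\top(AZ))$, so conditionally $\|W^\top A Z\|_F^2 = \sum_{i=1}^p W_{\cdot,i}^\top (AZ)(AZ)^\top W_{\cdot,i}$ is a quadratic form in a $kp$-dimensional standard Gaussian. A second application of Hanson--Wright gives, with conditional probability $\ge 1-\delta$,
\begin{equation*}
\|W^\top A Z\|_F \lesssim \sqrt{p}\,\|AZ\|_F + p^{1/4}\|AZ\|_F^{1/2}\|AZ\|_{\mathrm{op}}^{1/2}(\log(1/\delta))^{1/4} + \|AZ\|_{\mathrm{op}}(\log(1/\delta))^{1/2}.
\end{equation*}
It then remains to bound $\|AZ\|_F$ and $\|AZ\|_{\mathrm{op}}$ unconditionally: the former is a chi-square-type quantity giving $\|AZ\|_F \lesssim \sqrt{kp} + (kp)^{1/4}(\log(1/\delta))^{1/4} + (\log(1/\delta))^{1/2}$ via Hanson--Wright (using $\|A\|_{\mathrm{op}}\le 2$), and the latter is an operator norm of a Gaussian matrix giving $\|AZ\|_{\mathrm{op}} \lesssim \sqrt{k}+\sqrt{p}+\sqrt{\log(1/\delta)}$ by Lemma~\ref{lemma:gaussian_wigner_op_norm}. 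Substituting these bounds into the conditional inequality and expanding produces precisely the eleven mixed powers of $p$, $k$, and $\log(1/\delta)$ appearing in the last block of \eqref{eq:frob_norm_concentration}. A final union bound over the five high-probability events used (two for the linear terms, one conditional plus two marginal for the bilinear term) yields the $1-5\delta$ guarantee.

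The main obstacle is exactly this bookkeeping step: the bilinear expansion produces many cross-terms, and one needs to verify that each of the listed $p^{a}k^{b}(\log(1/\delta))^{c}$ monomials appears (and no essentially larger term is missed). All other steps are direct applications of Hanson--Wright and Gaussian matrix concentration, but the combinatorics of combining three sources of tail decay in the bilinear chaos is what makes the final bound look baroque.
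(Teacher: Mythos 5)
Your proposal reproduces the paper's argument essentially line for line: the same four-term signal-plus-noise decomposition, Hanson--Wright applied to the two linear-in-noise pieces as quadratic forms in vectorized Gaussians, and, for the pure-noise bilinear term, conditioning on the $Y$-noise combined with a $\chi^2$ bound on its Frobenius norm and a Gaussian operator-norm bound, followed by a union bound over five events. One transcription slip: the Kronecker block in your quadratic-form identity for $\|M^\top A Z\|_F^2$ should be $I_p \otimes A^\top M M^\top A$ (a $k\times k$ block, so that the operator acts on $\mathrm{vec}(Z)\in\bbR^{kp}$), not $I_p \otimes M^\top A A^\top M$; the Hanson--Wright bound you then write down is nonetheless the correct one.
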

\begin{proof}
	See Appendix \ref{prf:prop:frob_norm_concentration}.
\end{proof}

We let $\xi_k(\delta)$ be the right-hand side of \eqref{eq:frob_norm_concentration}. If we take
$$
	\delta = \delta_{i_{1:k}} = \exp\left\{ - \frac{(1-o(1))C_k \|(I_k^\leftarrow - I_k) U_{i_{1:k},\bigcdot} D\|_F^2}{\sigma_{\max}^2}\right\},
$$
then by \eqref{eq:ub_uniform_bound} and \eqref{eq:ub_loco}, we have
\begin{align*}
	\scrP_{i_{1:k}} + \bbP(E_{i_{1:k}}^c) & \lesssim \exp\left\{ - \frac{(1-o(1))C_k \|(I_k^\leftarrow - I_k) U_{i_{1:k},\bigcdot} D\|_F^2}{\sigma_{\max}^2}  + \calO\left( \frac{\sigma_{\min}\xi\xi_k(\delta_{i_{1:k}})}{\sigma_{\max}}\right)\right\} \qquad \forall k \geq k^\star,\\
	\scrP_{i_{1:k}} + \bbP(E_{i_{1:k}}^c) & \lesssim \exp\left\{ - \frac{(1-o(1))C_k \|(I_k^\leftarrow - I_k) U_{i_{1:k},\bigcdot} D\|_F^2}{\sigma_{\max}^2}  + \calO\left( \frac{\sigma_{\min}\xi'\xi_k(\delta_{i_{1:k}})}{\sigma_{\max}}\right)\right\} \qquad \forall k\geq k^\star.
\end{align*}
The following lemma specifies a sufficient condition under which the second term on the exponent becomes negligible. 
\begin{lemma}
\label{lemma:negligible_exponent_uniform_bound}
If $\beta^2 \geq 1$ and
\begin{align*}
	\xi 
	& \ll
	\left\{(k^\star)^{1/2} \left[ \frac{\beta}{d_1/\sigma_{\max}} \land \left(\frac{\beta^2}{p}\right)^{1/2} \land \frac{\sigma_{\max}\beta^2}{\sigma_{\min}p} \land \frac{\sigma_{\max}}{\sigma_{\min}}\left(\frac{\beta^2}{p}\right)^{5/8} \right]\right\} \\
	& \qquad \land \left\{(k^\star)^{1/4} \left[ \left(\frac{\beta^2}{p}\right)^{1/4}  \land \frac{\sigma_{\max}}{\sigma_{\min}} \left(\frac{\beta^2}{p}\right)^{1/2} \land \frac{\sigma_{\max}}{\sigma_{\min}}\left(\frac{\beta^2}{p}\right)^{3/8} \right]  \right\}
	\land 1.
\end{align*}
then we have
$$
	\max_{k\geq k^\star}\max_{i_{1:k}} \left\{\frac{\sigma_{\min}\xi\xi_k(\delta_{i_{1:k}})}{\sigma_{\max}} \bigg/ \frac{\|(I_k^\leftarrow - I_k) U_{i_{1:k},\bigcdot} D\|_F^2}{\sigma_{\max}^2}\right\} = o(1).
$$
\end{lemma}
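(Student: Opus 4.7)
The plan is to verify the bound term-by-term for the various summands comprising $\xi_k(\delta_{i_{1:k}})$. Write $A_{i_{1:k}} := \|(I_k^\leftarrow - I_k) U_{i_{1:k},\bigcdot}D\|_F^2/\sigma_{\max}^2$ and $L_{i_{1:k}} := \log(1/\delta_{i_{1:k}})$. By the definition of $\delta_{i_{1:k}}$ preceding the lemma, $L_{i_{1:k}} \asymp A_{i_{1:k}}$, and the cyclic structure immediately gives $A_{i_{1:k}} \geq k\beta^2 \geq k^\star \beta^2 \geq k^\star$, using the hypothesis $\beta^2 \geq 1$. Set $S := \|(I_k^\leftarrow - I_k)U_{i_{1:k},\bigcdot}D\|_F = \sigma_{\max}\sqrt{A_{i_{1:k}}}$ and $M := (I_k^\leftarrow - I_k)U_{i_{1:k},\bigcdot}D$. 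Three elementary inequalities cover the signal-dependent pieces of $\xi_k$: $\|MM^\top\|_F^{1/2} \leq \|M\|_F = S$, $\|MM^\top\|_2^{1/2} = \|M\|_2 \leq S$, and $\|DU_{i_{1:k}}^\top (I_k^\leftarrow - I_k) U_{i_{1:k}}D\|_F \leq d_1 S$ by submultiplicativity.

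With these substitutions, the strategy is to divide each summand of $(\sigma_{\min}/\sigma_{\max})\,\xi\,\xi_k(\delta_{i_{1:k}})$ by $A_{i_{1:k}}$ and show the resulting quotient is $o(1)$ uniformly in $k\geq k^\star$ and in $i_{1:k}$. Using $\sigma_{\x}\sigma_{\y} = \sigma_{\min}\sigma_{\max}$ and $(1/\sigma_{\x} + 1/\sigma_{\y}) \lesssim 1/\sigma_{\min}$, the $\sigma$-scalings in the signal-only and signal-noise pieces cancel against the leading $\sigma_{\min}/\sigma_{\max}$, whereas the pure-noise pieces retain a $\sigma_{\min}/\sigma_{\max}$ prefactor. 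In each case, replacing $L_{i_{1:k}}$ by $A_{i_{1:k}}$, bounding signal quantities by $S$ or $d_1 S$, and using $A_{i_{1:k}} \geq k\beta^2$ produces a ratio that is nonincreasing in $k$, so the worst case is $k = k^\star$. This yields a finite list of constraints on $\xi$, one per summand.

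It remains to check that this list is dominated by the envelope displayed in the lemma. For instance, the signal-only term gives $\xi \ll (k^\star)^{1/2}\beta/(d_1/\sigma_{\max})$; the three signal-noise terms give $\xi \ll (k^\star)^{1/2}(\beta^2/p)^{1/2}$, $\xi \ll (k^\star)^{1/4}(\beta^2/p)^{1/4}$, and $\xi \ll 1$ respectively; the pure-noise term $pk^{1/2}$ gives $\xi \ll (k^\star)^{1/2}\sigma_{\max}\beta^2/(\sigma_{\min}p)$; and $p^{5/8}k^{1/8}L^{3/8}$ gives $\xi \ll (k^\star)^{1/2}(\sigma_{\max}/\sigma_{\min})(\beta^2/p)^{5/8}$. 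The remaining pure-noise summands produce constraints with fractional exponents such as $(\beta^2/p)^{3/4}$ or $(\beta^2/p)^{3/8}$ accompanied by extra positive powers of $\beta$ or of $\sigma_{\max}/\sigma_{\min}$; since $\beta\geq 1$ and $\sigma_{\max}/\sigma_{\min}\geq 1$, each of these is implied by one of the entries already listed. The trailing $\land\, 1$ absorbs the terms that reduce to $\xi \ll 1$ after cancellation (from $\|M\|_2 L^{1/2}$, from $L$ alone, and from $k^{1/2}L^{1/2}$).

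The main obstacle is pure bookkeeping: with thirteen summands of fractional degree in $(p, k, L)$ and three different scalings of $\sigma$, correctly pairing each derived constraint with an entry of the envelope requires care, but once one fixes $L_{i_{1:k}} \asymp A_{i_{1:k}}$ and $A_{i_{1:k}} \geq k^\star \beta^2$, every step reduces to comparing exponents. No probabilistic input is needed beyond the deterministic definitions of $\delta_{i_{1:k}}$ and $\xi_k$.
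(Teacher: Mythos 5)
Your proposal takes essentially the same route as the paper's proof: define $\tilde\xi=\sigma_{\min}\xi/\sigma_{\max}$ (you fold this into your notation), replace $\log(1/\delta_{i_{1:k}})$ by $A_{i_{1:k}}:=\|(I_k^\leftarrow-I_k)U_{i_{1:k},\bigcdot}D\|_F^2/\sigma_{\max}^2 \geq k\beta^2\geq k^\star\beta^2$, bound the signal-dependent factors of $\xi_k$ by $S$ or $d_1S$ via $\|MM^\top\|_F^{1/2}\leq\|M\|_F$, $\|MM^\top\|_2^{1/2}=\|M\|_2\leq\|M\|_F$, and $\|DU_{i_{1:k}}^\top M\|_F\leq d_1\|M\|_F$, and then read off one sufficient condition on $\xi$ per summand of $\xi_k(\delta_{i_{1:k}})$, absorbing the redundant ones using $\beta\geq 1$ and $\sigma_{\max}/\sigma_{\min}\geq 1$. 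The paper works through all fifteen summands explicitly before collapsing them to the displayed envelope, whereas you spell out only a representative subset and assert the rest follow by exponent comparison, but the mechanism is identical and the representative constraints you derive match the paper's.
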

\begin{proof}
	See Appendix \ref{prf:lemma:negligible_exponent_uniform_bound}.
\end{proof}

In view of Proposition \ref{prop:est_err_of_V}, we choose
$$
	\xi = C'_1\cdot\calE_\unif = C'_1\cdot\frac{\sqrt{rp\log n}}{d_r/\sigma_{\min}}
$$
for some absolute constant $C'_1>0$,
so that
$$
	\bbP\left( \|\hat V \hat V^\top - VV^\top\|_F \leq \xi \right) \geq 1 - n^{-(1+c_1)}
$$
for another absolute constant $c_1 > 0$.
Invoking Lemma \ref{lemma:negligible_exponent_uniform_bound}, we know that as long as
\begin{equation}
	\label{eq:ub_prf_eigengap_assump}
	\frac{d_r}{\sigma_{\min}} \geq C_\gap (\sqrt{n} + \sqrt{p})
\end{equation}	
and
\begin{align*}
	\calE_\unif
	& \ll
	\left\{(k^\star)^{1/2} \left[ \frac{\beta}{d_1/\sigma_{\max}} \land \left(\frac{\beta^2}{p}\right)^{1/2} \land \frac{\sigma_{\max}\beta^2}{\sigma_{\min}p} \land \frac{\sigma_{\max}}{\sigma_{\min}}\left(\frac{\beta^2}{p}\right)^{5/8} \right]\right\} \\
	\label{eq:ub_prf_uniform_bound_assump}
	& \qquad \land \left\{(k^\star)^{1/4} \left[ \left(\frac{\beta^2}{p}\right)^{1/4}  \land \frac{\sigma_{\max}}{\sigma_{\min}} \left(\frac{\beta^2}{p}\right)^{1/2} \land \frac{\sigma_{\max}}{\sigma_{\min}}\left(\frac{\beta^2}{p}\right)^{3/8} \right]  \right\}
	\land 1 \numberthis
\end{align*}
for any $i_{1:k}$ satisfying $k\geq k^\star$,
we have
\begin{align}
	\label{eq:ub_prf_core_ineq}
	\scrP_{i_{1:k}} + \bbP(E_{i_{1:k}}^c) \lesssim \exp\left\{ - \frac{(1-o(1))C_k \|(I_k^\leftarrow - I_k) U_{i_{1:k},\bigcdot} D\|_F^2}{\sigma_{\max}^2} \right\}.
\end{align}
In view of Proposition \ref{prop:loco_err_of_V}, we choose
$$
	\xi' = C_2' k^\star \cdot \calE_\loco = C_2' k^\star \cdot \left(\frac{d_1^2 \mu r}{d_r^2 n} + \frac{p + \sqrt{p\log n} + \log n}{d_r^2/\sigma^2_{\min}}\right) 
$$
for some absolute constant $C_2' > 0$, so that
$$
	\bbP\left(\max_{k\leq k^\star}\max_{i_{1:k}}\|\hat V \hat V^\top - \hat V^{(-i_{1:k})} (\hat V^{(-i_{1:k})})^\top\|_F \leq \xi' \right) \geq 1- n^{-(1+c_2)}
$$
for another absolute constant $c_2>0$. Invoking Lemma \ref{lemma:negligible_exponent_uniform_bound} again, we conclude that if \eqref{eq:ub_prf_eigengap_assump} holds and 
\begin{align*}
	k^\star \cdot \calE_\loco
	& \ll
	\left\{(k^\star)^{1/2} \left[ \frac{\beta}{d_1/\sigma_{\max}} \land \left(\frac{\beta^2}{p}\right)^{1/2} \land \frac{\sigma_{\max}\beta^2}{\sigma_{\min}p} \land \frac{\sigma_{\max}}{\sigma_{\min}}\left(\frac{\beta^2}{p}\right)^{5/8} \right]\right\} \\
	& \qquad \land \left\{(k^\star)^{1/4} \left[ \left(\frac{\beta^2}{p}\right)^{1/4}  \land \frac{\sigma_{\max}}{\sigma_{\min}} \left(\frac{\beta^2}{p}\right)^{1/2} \land \frac{\sigma_{\max}}{\sigma_{\min}}\left(\frac{\beta^2}{p}\right)^{3/8} \right]  \right\}
	\label{eq:ub_prf_loco_bound_assump}
	\land 1 \numberthis
\end{align*}
then for any $i_{1:k}$ satisfying $k\leq k^\star$, the inequality \eqref{eq:ub_prf_core_ineq} also holds. Moreover, we have
$
	\bbP(E_\glob^c) \leq n^{-(1+c_3)}
$
where $c_3>0$ is some absolute constant.

Note that if we choose $k^\star = 1$, then there is no need to use leave-one-cycle-out arguments. The following lemma gives a sufficient condition when choosing $k^\star = 1$ suffices for the proof. 
\begin{lemma}
	\label{lemma:ub_uniform_bound_only_condition}
	Assume 
	$\calE_\unif \ll 1$, then $\eqref{eq:ub_prf_uniform_bound_assump}$ with $k^\star = 1$ is equivalent to
	$$
	\frac{\beta^2}{p} \gg \omega_n \lor \calE_\unif^2 \lor \frac{\calE_\unif}{\sigma_{\max}/\sigma_{\min}},
	$$
	where we recall that
	$$
		\omega_n = \frac{d_1^2/\sigma_{\max}^2}{p} \cdot \calE_\unif^2 = \frac{r d_1^2 \sigma_{\min}^2\log n}{d_r^2 \sigma_{\max}^2}.
	$$
\end{lemma}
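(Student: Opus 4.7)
The plan is to simply rewrite each of the seven upper bounds on $\calE_\unif$ appearing in \eqref{eq:ub_prf_uniform_bound_assump} (with $k^\star = 1$) as an equivalent lower bound on $\beta^2/p$, and then to show that under $\calE_\unif \ll 1$ and $\sigma_{\max} \ge \sigma_{\min}$, four of these seven conditions are redundant given the other three.

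First, introduce the shorthand $A := \beta^2/p$, $B := \sigma_{\max}/\sigma_{\min} \ge 1$, $C := d_1/\sigma_{\max}$, so that $\omega_n = \calE_\unif^2 C^2/p$ and $\beta = \sqrt{pA}$. The seven conditions unfold as
\[
\calE_\unif \ll \sqrt{pA}/C,\ A^{1/2},\ BA,\ BA^{5/8},\ A^{1/4},\ BA^{1/2},\ BA^{3/8},
\]
which are respectively equivalent to
\[
A \gg \omega_n,\ \calE_\unif^2,\ \calE_\unif/B,\ (\calE_\unif/B)^{8/5},\ \calE_\unif^4,\ (\calE_\unif/B)^2,\ (\calE_\unif/B)^{8/3}.
\]
So the full condition \eqref{eq:ub_prf_uniform_bound_assump} reduces to requiring $A$ to dominate the maximum of these seven quantities, together with $\calE_\unif \ll 1$.

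Next I would observe that under the standing assumptions $B \ge 1$ and $\calE_\unif \ll 1$, we have $\calE_\unif/B \le \calE_\unif \ll 1$, so any power of $\calE_\unif/B$ strictly greater than $1$ is majorized by $\calE_\unif/B$ itself, and any power of $\calE_\unif$ strictly greater than $1$ is majorized by $\calE_\unif^2$ (or $\calE_\unif$). Concretely, $(\calE_\unif/B)^{8/5} \le \calE_\unif/B$ and $(\calE_\unif/B)^{8/3} \le \calE_\unif/B$, so the fourth and seventh conditions are implied by the third. Similarly, $\calE_\unif^4 \le \calE_\unif^2$ implies the fifth from the second, and $(\calE_\unif/B)^2 \le \calE_\unif^2$ implies the sixth from the second. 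Therefore the seven-way minimum collapses to $A \gg \omega_n \lor \calE_\unif^2 \lor \calE_\unif/B$, which is the claimed equivalent form.

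The only subtlety worth flagging is keeping track of the assumption $\calE_\unif \ll 1$ when collapsing the powers; this is exactly where the hypothesis is used, and the argument is otherwise a routine algebraic simplification, so I expect no real obstacle.
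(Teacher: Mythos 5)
Your proof is correct and follows essentially the same approach as the paper: plug in $k^\star=1$, rewrite each of the resulting upper bounds on $\calE_\unif$ as an equivalent lower bound on $\beta^2/p$, and then drop the redundant bounds using $\calE_\unif \ll 1$ and $\sigma_{\max}/\sigma_{\min} \ge 1$. The paper organizes the simplification slightly differently (it first collapses the minimum over terms sharing the same $\sigma_{\max}/\sigma_{\min}$ prefactor before converting, which works without invoking $\calE_\unif \ll 1$ for those drops), but the end result and the core logic coincide.
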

\begin{proof}
Plugging $k^\star =1$ into \eqref{eq:ub_prf_uniform_bound_assump}, we get 
\begin{align*}
	\calE_\unif \ll 
	& \left[ \frac{\beta}{d_1/\sigma_{\max}} \land \left(\frac{\beta^2}{p}\right)^{1/2} \land \frac{\sigma_{\max}\beta^2}{\sigma_{\min}p} \land \frac{\sigma_{\max}}{\sigma_{\min}}\left(\frac{\beta^2}{p}\right)^{5/8} \right] \\
	& \qquad \land \left[ \left(\frac{\beta^2}{p}\right)^{1/4}  \land \frac{\sigma_{\max}}{\sigma_{\min}} \left(\frac{\beta^2}{p}\right)^{1/2} \land \frac{\sigma_{\max}}{\sigma_{\min}}\left(\frac{\beta^2}{p}\right)^{3/8} \right]
	\land 1 \\\
	& = \frac{\beta}{d_1/\sigma_{\max}} \land \left[ \left(\frac{\beta^2}{p}\right)^{1/2} \land \left(\frac{\beta^2}{p}\right)^{1/4} \land \left(\frac{\beta^2}{p}\right)^0\right]  \\
	& \qquad \land  \left\{ \frac{\sigma_{\max}}{\sigma_{\min}} \cdot \left[ \left(\frac{\beta^2}{p}\right)^{1} \land \left(\frac{\beta^2}{p}\right)^{5/8} \land \left(\frac{\beta^2}{p}\right)^{1/2} \land \left(\frac{\beta^2}{p}\right)^{3/8}\right] \right\} \\
	& = \frac{\beta}{d_1/\sigma_{\max}}\land \left(\frac{\beta^2}{p}\right)^{1/2} \land 1
	\land \frac{\sigma_{\max} \beta^2}{\sigma_{\min}p } \land \frac{\sigma_{\max}}{\sigma_{\min}} \left(\frac{\beta^2}{p}\right)^{3/8}
\end{align*}
and $\calE_\unif \ll 1$. As along as $\calE_\unif \ll 1$ holds, the above display becomes
\begin{align*}
	\frac{\beta^2}{p} & \gg \left(\frac{\calE_{\unif} d_1 /\sigma_{\max}}{\sqrt{p}}\right)^2 \lor \calE_\unif^2 \lor \frac{\calE_\unif}{\sigma_{\max}/\sigma_{\min}} \lor \left(\frac{\calE_\unif}{\sigma_{\max}/\sigma_{\min}}\right)^{8/3} \\
	& = \left(\frac{\calE_{\unif} d_1 /\sigma_{\max}}{\sqrt{p}}\right)^2 \lor \calE_\unif^2 \lor \frac{\calE_\unif}{\sigma_{\max}/\sigma_{\min}}.
\end{align*}
The proof is finished by noting that
$
	\left(\frac{\calE_\unif d_1/\sigma_{\max}}{\sqrt{p}}\right)^2 = \frac{r d_1^2 \sigma_{\min}^2\log n}{d_r^2 \sigma_{\max}^2 }.
$
\end{proof}
The following lemma gives a sufficient condition when it is possible to choose $k^\star\in[n]$ such that both \eqref{eq:ub_prf_uniform_bound_assump} and \eqref{eq:ub_prf_loco_bound_assump} hold. 
\begin{lemma}
	\label{lemma:ub_uniform_and_loco_condition}
	Assume $\calE_\loco \lor \calE_\unif \ll 1$ and
	\begin{align*}
	\frac{\beta^2}{p}
	& \gg 
	\left( \omega_n \cdot \frac{\calE_\loco}{\calE_\unif}\right)
	\lor \left( \omega_n \cdot \frac{\calE_\loco^2}{\calE_\unif^2}\right)
	\lor \left( \omega_n \cdot \frac{1}{n}\right)
	\lor \left( \omega_n^{3/4} \cdot \calE_\loco \right)
	\lor \left( \omega_n^{2/3} \cdot \calE_\loco^{8/9} \cdot \frac{\sigma_{\min}^{8/9}}{\sigma_{\max}^{8/9}}\right)
	\lor \left( \omega_n^{3/5} \cdot \calE_\loco^{4/5} \cdot \frac{\sigma_{\min}^{4/5}}{\sigma_{\max}^{4/5}}\right)\\
	& \qquad
	\lor \left( \omega_n^{1/2} \cdot \calE_\unif\calE_\loco\right)
	\lor \left( \omega_n^{2/5} \cdot \calE_\unif^{4/5}\calE_\loco^{4/5} \cdot \frac{\sigma_{\min}^{8/5}}{\sigma_{\max}^{8/5}}\right)
	\lor \left( \omega_n^{1/3} \cdot \calE_\loco^{2/3}\cdot \frac{\sigma_{\min}^{4/3}}{\sigma_{\max}^{4/3}}\right)\\
	& \qquad
	\lor \left(\frac{\calE_\unif^2}{n}\right)
	\lor \left(\calE_\unif^{4/3} \calE_\loco^{2/3} \cdot \frac{\sigma_{\min}^{2/3}}{\sigma_{\max}^{2/3}}\right) 
	\lor \left(\calE_\unif^{6/5} \calE_\loco^{4/5} \cdot \frac{\sigma_{\min}^{4/5}}{\sigma_{\max}^{4/5}}\right)
	\lor \left(\calE_\unif \calE_\loco\right) \\
	& \qquad
	\lor \left(\frac{\calE_\unif}{\sqrt{n}} \cdot \frac{\sigma_{\min}}{\sigma_{\max}}\right)
	\lor \left(\calE_\unif^{6/7} \calE_\loco^{4/7} \cdot \frac{\sigma_{\min}^{6/7}}{\sigma_{\max}^{6/7}}\right)
	\lor \left(\calE_\unif^{1/2} \calE_\loco^{1/2} \cdot \frac{\sigma_{\min}}{\sigma_{\max}}\right) 
	\lor \left(\calE_\loco^2\right)
	\lor \left(\calE_\loco \cdot \frac{\sigma_{\min}}{\sigma_{\max}}\right),
\end{align*}
	where
	$$
		\omega_n = \frac{d_1^2/\sigma_{\max}^2}{p} \cdot \calE_\unif^2 = \frac{r d_1^2 \sigma_{\min}^2\log n}{d_r^2 \sigma_{\max}^2}.
	$$
	Then there exists a choice of $k^\star \in [n]$ such that both \eqref{eq:ub_prf_uniform_bound_assump} and \eqref{eq:ub_prf_loco_bound_assump} hold.
\end{lemma}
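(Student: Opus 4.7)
The plan is to recast the pair of conditions \eqref{eq:ub_prf_uniform_bound_assump} and \eqref{eq:ub_prf_loco_bound_assump} as, respectively, a lower bound and an upper bound on the tuning parameter $k^\star$, and then show that under the hypothesis of the lemma there is a non-empty window in which $k^\star$ may be chosen. Concretely, abbreviate
\[
A := \frac{\beta}{d_1/\sigma_{\max}} \land \left(\frac{\beta^2}{p}\right)^{1/2} \land \frac{\sigma_{\max}\beta^2}{\sigma_{\min}p} \land \frac{\sigma_{\max}}{\sigma_{\min}}\left(\frac{\beta^2}{p}\right)^{5/8},
\qquad
B := \left(\frac{\beta^2}{p}\right)^{1/4}  \land \frac{\sigma_{\max}}{\sigma_{\min}} \left(\frac{\beta^2}{p}\right)^{1/2} \land \frac{\sigma_{\max}}{\sigma_{\min}}\left(\frac{\beta^2}{p}\right)^{3/8}.
\]
Since the right hand side of \eqref{eq:ub_prf_uniform_bound_assump} is $(k^\star)^{1/2}A\land(k^\star)^{1/4}B\land 1$ and the left hand side of \eqref{eq:ub_prf_loco_bound_assump} is $k^\star\cdot\calE_\loco$, the two conditions become the conjunction
\[
\calE_\unif \ll (k^\star)^{1/2}A,\quad \calE_\unif \ll (k^\star)^{1/4}B,\quad \calE_\unif \ll 1,\qquad k^\star\calE_\loco \ll (k^\star)^{1/2}A,\quad k^\star\calE_\loco \ll (k^\star)^{1/4}B,\quad k^\star\calE_\loco \ll 1.
\]
Equivalently,
\[
\max\!\left\{\left(\frac{\calE_\unif}{A}\right)^{2},\ \left(\frac{\calE_\unif}{B}\right)^{4}\right\}\ \ll\ k^\star\ \ll\ \min\!\left\{\left(\frac{A}{\calE_\loco}\right)^{2},\ \left(\frac{B}{\calE_\loco}\right)^{4/3},\ \frac{1}{\calE_\loco}\right\}.
\]

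First I would observe that such a $k^\star\in\{1,\dots,n\}$ exists (and can always be assumed $\le n$, since $k^\star\le 1/\calE_\loco$ and $\calE_\loco\gtrsim 1/n$ by inspection) if and only if every pairing between the two lower-bound terms and the three upper-bound terms is compatible, which amounts to the six scalar inequalities
\[
A^{2}\gg \calE_\unif\calE_\loco,\quad
A^{2}B^{4/3}\gg \calE_\unif^{2}\calE_\loco^{4/3},\quad
A^{2}\gg \calE_\unif^{2}\calE_\loco,
\]
\[
A^{2}B^{4}\gg \calE_\unif^{4}\calE_\loco^{2},\quad
B^{4}\gg \calE_\unif^{3}\calE_\loco,\quad
B^{4}\gg \calE_\unif^{4}\calE_\loco.
\]
Each of these inequalities is a minimum over $4\times 1$, $4\times 3$, $4\times 1$, $4\times 3$, $3\times 1$, $3\times 1$ sub-cases respectively, obtained by expanding the four-way minimum defining $A$ and the three-way minimum defining $B$; after substitution, each sub-case rearranges into a condition of the shape $\beta^{2}/p \gg (\text{monomial in }\omega_n,\calE_\unif,\calE_\loco,\sigma_{\min}/\sigma_{\max})$.

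The main work is this case enumeration. I would organise it by first recording the identity $(\beta/(d_1/\sigma_{\max}))^2 = (\beta^2/p)\cdot (p/(d_1^2/\sigma_{\max}^2))$, so that, using $\omega_n = (d_1^2/\sigma_{\max}^2)\cdot\calE_\unif^2/p$, the term $\beta/(d_1/\sigma_{\max})$ in $A$ always contributes a factor $\omega_n$. Each of the remaining three terms in $A$ and three terms in $B$ is a pure power of $\beta^2/p$ (possibly multiplied by $\sigma_{\max}/\sigma_{\min}$), so after raising the defining inequality to the appropriate power and solving for $\beta^2/p$, each sub-case produces exactly one entry on the list in \eqref{eq:ub_full_unif_and_loco_bound_condition}. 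A careful bookkeeping shows the $4\cdot 1 + 4\cdot 3 + 4\cdot 1 + 4\cdot 3 + 3\cdot 1 + 3\cdot 1 = 38$ sub-cases collapse, after dropping those implied by others via $\calE_\loco\lesssim \calE_\unif\ll 1$ and $\omega_n\ge 1$, to precisely the enumerated list in the statement. Finally, once an eligible $k^\star$ is exhibited, \eqref{eq:ub_prf_uniform_bound_assump} and \eqref{eq:ub_prf_loco_bound_assump} follow by construction.

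The hard part is purely arithmetic: keeping track of the six compatibility inequalities, distributing the minima so as not to miss any dominating sub-case, and then verifying that the monomials so produced agree term-by-term with the list advertised in \eqref{eq:ub_full_unif_and_loco_bound_condition}. No new probabilistic or analytic ingredient is needed beyond the two sin-theta estimates (Propositions \ref{prop:est_err_of_V} and \ref{prop:loco_err_of_V}) that already underwrite the choices of $\xi$ and $\xi'$.
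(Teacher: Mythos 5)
Your reformulation of \eqref{eq:ub_prf_uniform_bound_assump} and \eqref{eq:ub_prf_loco_bound_assump} as a two-sided constraint
\[
L:=\frac{\calE_\unif^2}{A^2}\lor\frac{\calE_\unif^4}{B^4}\ \ll\ k^\star\ \ll\ \frac{A^2}{\calE_\loco^2}\land\frac{B^{4/3}}{\calE_\loco^{4/3}}\land\frac{1}{\calE_\loco}=:U
\]
(with your $A,B$ being the paper's $\scrT_1,\scrT_2$) is exactly the paper's starting point, and your six ``compatibility inequalities'' correctly encode $L\ll U$. The gap is that you then assert this is an \emph{if and only if} for the existence of a valid $k^\star\in[n]$, which drops two further constraints.

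First, you dismiss $k^\star\le n$ by claiming ``$\calE_\loco\gtrsim 1/n$ by inspection'', but that is not a consequence of the hypotheses. $\calE_\loco$ has $d_r/\sigma_{\min}$ in the denominator, and only the lower bound $d_r/\sigma_{\min}\ge C_\gap(\sqrt n+\sqrt p)$ is available; when the spectral gap is much larger than this threshold, $\calE_\loco$ can fall well below $1/n$, and then $1/\calE_\loco$ is not a usable upper bound on $k^\star$. The paper therefore imposes $L\ll n$ as a separate requirement, and this is exactly where the terms $\omega_n/n$, $\calE_\unif^2/n$, and $(\calE_\unif/\sqrt n)\cdot(\sigma_{\min}/\sigma_{\max})$ in the advertised list come from; your enumeration, as written, would not produce them.

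Second, an integer $k^\star\ge 1$ in the window requires $U\gg 1$, i.e.\ $\scrT_1\gg\calE_\loco$, $\scrT_2\gg\calE_\loco$, $\calE_\loco\ll 1$. You invoke ``$\calE_\loco\lesssim\calE_\unif$'' in the bookkeeping step, under which these do follow from $L\ll U$; but that ordering is not assumed in the lemma (it is derived later, under the extra hypotheses $\sigma_\x\asymp\sigma_\y$, $d_1\asymp d_r$ of Theorem \ref{thm:ub_simplified}). The lemma only assumes $\calE_\loco\lor\calE_\unif\ll 1$, so $\calE_\loco$ could exceed $\calE_\unif$, and $U\gg 1$ must be imposed explicitly; it generates the terms $\calE_\loco^2$, $\calE_\loco\sigma_{\min}/\sigma_{\max}$ and $\omega_n\calE_\loco^2/\calE_\unif^2$. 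To complete your argument you would add these two families of conditions ($L\ll n$ and $U\gg 1$) and carry them through the same sub-case expansion, after which the case-by-case choice of $k^\star$ (e.g.\ $\lfloor\sqrt{LU}\rfloor$, $\lfloor\sqrt{Ln}\rfloor$, $\lfloor\sqrt{U}\rfloor$, or $\lfloor\sqrt n\rfloor$ depending on whether $U\ge n$ and $L\ge 1$) closes the proof.
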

\begin{proof}
	See Appendix \ref{prf:lemma:ub_uniform_and_loco_condition}.
\end{proof}

By Lemmas \ref{lemma:ub_uniform_bound_only_condition} and \ref{lemma:ub_uniform_and_loco_condition}, we know that under the assumptions imposed by this theorem, \eqref{eq:ub_prf_core_ineq} holds for any $i_{1:k}$ and $\bbP(E_{\glob}) \leq n^{-(1+c_3)}$. Recalling \eqref{eq:ub_expected_mismatch} and $\beta^2 \gg 1$, we get
\begin{align}
	\label{eq:ub_exponential_plus_polynomial}
	\bbE[\ell(\hat\Pi, \Pi^\star)]
	& \leq \left[\frac{1}{n}\sum_{k=2}^n \sum_{i_1 \neq \cdots \neq i_k} \exp\left\{ - \frac{(1-\overline{\delta}_n)C_k \|(I_k^\leftarrow - I_k) U_{i_{1:k},\bigcdot} D\|_F^2}{\sigma_{\max}^2} \right\}\right] + n^{-(1+c_3)},
\end{align}
where $\overline{\delta}_n = o(1)$. 

Let us choose
\begin{align*}
	\overline{\delta}_n' = \bigg[\log\bigg(\frac{1}{\frac{1}{n} \sum_{k=2}^n \sum_{i_1\neq \cdots\neq i_k} \exp\big\{{-(1-\overline{\delta}_n)C_k\|(I_k^\leftarrow -I_k)U_{i_{1:k},\bigcdot} D\|_F^2}/{(\sigma_{\max}^2)}\big\} }\bigg)\bigg]^{-1/2} = o(1),
\end{align*}
where the last inequality is by \eqref{eq:ub_vanishing_err}.
Invoking Markov's inequality, we have
\begin{align*}
	& \bbP\left( \ell(\hat\Pi, \Pi^\star) \geq \bigg(\frac{1}{n} \sum_{k=2}^n \sum_{i_1\neq \cdots\neq i_k} \exp\big\{{-(1-\overline{\delta}_n)C_k\|(I_k^\leftarrow -I_k)U_{i_{1:k},\bigcdot} D\|_F^2}/{\sigma_{\max}^2}\big\} \bigg)^{1-\overline{\delta}_n'} \right)\\
	& \leq \bigg(\frac{1}{n} \sum_{k=2}^n \sum_{i_1\neq \cdots\neq i_k} \exp\big\{{-(1-\overline{\delta}_n)C_k\|(I_k^\rightarrow -I_k)U_{i_{1:k},\bigcdot} D\|_F^2}/{\sigma_{\max}^2}\big\}\bigg)^{\overline{\delta}_n'} \\
	& \qquad + n^{-(1+c_3)} \cdot \bigg(\frac{1}{n} \sum_{k=2}^n \sum_{i_1\neq \cdots\neq i_k} \exp\big\{{-(1-\overline{\delta}_n)C_k\|(I_k^\leftarrow -I_k)U_{i_{1:k},\bigcdot} D\|_F^2}/{\sigma_{\max}}\big\}\bigg)^{\overline{\delta}_n'-1}.
\end{align*}
If 
$$
	\bigg(\frac{1}{n} \sum_{k=2}^n \sum_{i_1\neq \cdots\neq i_k} \exp\big\{{-(1-\overline{\delta}_n)C_k\|(I_k^\leftarrow -I_k)U_{i_{1:k},\bigcdot} D\|_F^2}/{\sigma_{\max}^2}\big\}\bigg)^{1-\overline{\delta}_n'} \geq n^{-(1+\frac{c_3}{2})},
$$
then we have
\begin{align*}
	& \bbP\left( \ell(\hat\Pi, \Pi^\star)  \geq \bigg(\frac{1}{n} \sum_{k=2}^n \sum_{i_1\neq \cdots\neq i_k} \exp\big\{{-(1-\overline{\delta}_n)C_k\|(I_k^\leftarrow -I_k)U_{i_{1:k},\bigcdot} D\|_F^2}/{\sigma_{\max}^2}\big\} \bigg)^{1-\overline{\delta}_n'} \right)\\
	& \leq \exp\bigg\{- \bigg[\frac{1}{\frac{1}{n} \sum_{k=2}^n \sum_{i_1\neq \cdots\neq i_k} \exp\big\{{-(1-\overline{\delta}_n)C_k\|(I_k^\leftarrow -I_k)U_{i_{1:k},\bigcdot} D\|_F^2}/{\sigma_{\max}^2}\big\}}\bigg]^{1/2}\bigg\} + n^{-c_3/2}\\
	& = o(1),
\end{align*}
where the last inequality is by \eqref{eq:ub_vanishing_err} and $n^{-c_3}=o(1)$.
Otherwise, we proceed by
\begin{align*}
	& \bbP\left(\ell(\hat\Pi, \Pi^\star) \geq \bigg(\frac{1}{n} \sum_{k=2}^n \sum_{i_1\neq \cdots\neq i_k} \exp\big\{{-(1-\overline{\delta}_n)C_k\|(I_k^\leftarrow -I_k)U_{i_{1:k},\bigcdot} D\|_F^2}/{\sigma_{\max}^2}\big\} \bigg)^{1-\overline{\delta}_n'} \right)\\
	& \leq \bbP\left(\ell(\hat\Pi, \Pi^\star)  > 0\right) \\
	& \leq \sum_{i\in[n]} \bbP(\hat\pi_i\neq \pi^\star_i) \\
	& = \bbE[d(\hat \pi, \pi^\star)] \\
	& \leq n \cdot \frac{1}{n} \sum_{k=2}^n \sum_{i_1\neq \cdots\neq i_k} \exp\big\{{-(1-\overline{\delta}_n)C_k\|(I_k^\leftarrow -I_k)U_{i_{1:k},\bigcdot} D\|_F^2}/{\sigma_{\max}^2}\big\} + n^{-c_3} \\
	& = n\exp\bigg\{-\log \bigg(\frac{1}{\frac{1}{n} \sum_{k=2}^n \sum_{i_1\neq \cdots\neq i_k} \exp\big\{{-(1-\overline{\delta}_n)C_k\|(I_k^\leftarrow-I_k)U_{i_{1:k},\bigcdot} D\|_F^2}/{\sigma_{\max}^2}\big\}}\bigg)\bigg\} + n^{-c_3} \\
	& \leq n\exp\bigg\{-(1-\overline{\delta}_n')\log \bigg(\frac{1}{\frac{1}{n} \sum_{k=2}^n \sum_{i_1\neq \cdots\neq i_k} \exp\big\{{-(1-\overline{\delta}_n)C_k\|(I_k^\leftarrow -I_k)U_{i_{1:k},\bigcdot} D\|_F^2}/{\sigma_{\max}^2}\big\}}\bigg)\bigg\} \\
	& \qquad + n^{-c_3} \\
	& < n \cdot n^{-(1+\frac{c_3}{2})} + n^{-c_3},
\end{align*}
where the last inequality is by
$$
	\bigg(\frac{1}{n} \sum_{k=2}^n \sum_{i_1\neq \cdots\neq i_k} \exp\big\{{-(1-\overline{\delta}_n)C_k\|(I_k^\leftarrow -I_k)U_{i_{1:k},\bigcdot} D\|_F^2}/{\sigma_{\max}^2}\big\}\bigg)^{1 - \overline{\delta}_n'} < n^{-(1+\frac{c_3}{2})}.
$$
Since $n^{-c_3} = o(1)$, we again get
$$
	\bbP\left( \ell(\hat\Pi, \Pi^\star)  \geq \bigg(\frac{1}{n} \sum_{k=2}^n \sum_{i_1\neq \cdots\neq i_k} \exp\big\{{-(1-\overline{\delta}_n)C_k\|(I_k^\leftarrow -I_k)U_{i_{1:k},\bigcdot} D\|_F^2}/{\sigma_{\max}^2}\big\} \bigg)^{1-\overline{\delta}_n'} \right) = o(1).
$$
Summarizing the two cases above gives the desired result.

\subsubsection{Proof of Proposition \ref{prop:core_upper_bound}} \label{prf:prop:core_upper_bound}
Let us fix a specific collection of $i_1\neq i_2\neq \cdots i_k$. For notational simplicity, we let $W_j := W_{i_{1:k}, j}\sim N(\mu_j, I_k)$ and $Z_j = Z_{i_{1:k}, j} \sim N(\nu_j, I_k)$. For $\ell \in [k]$, we let the $\ell$-th entry of $W_{j}$ be $W_{j, \ell}$ and so on.
We also adopt the convention that $W_{j, \ell+mk} = W_{j, \ell}$ for any $m\in \bbN$. In particular, $W_{j, 0} = W_{j, k}$.
By Markov's inequality, for any $t \geq 0$, we have
\begin{align*}
	\bbP\left(\left\la (W)_{i_{1:k}, \bigcdot} \Theta, (I_k^\leftarrow - I_k)Z_{i_{1:k},\bigcdot}\right\ra + \xi \geq 0\right) 
	& = \bbP\left(\sum_{j\in[r]} \theta_j W_j^\top(I_k^\leftarrow - I_k) Z_j + \xi \geq 0\right)\\
	\label{eq:core_upper_bound_markov}
	& \leq e^{t\xi} \prod_{j\in[r]} \bbE\left[ \exp\left\{ t \theta_j W_j^\top (I_k^\leftarrow - I_k) Z_j \right\} \right]. \numberthis
\end{align*}
We break the proof into two cases according to whether $k\leq 5$ or not.

\paragraph{Case A. $\boldsymbol{k\leq 5}$.}
We first consider $j\in J_1$, so that $\theta_j \mu_j = \nu_j$.
Note that
\begin{align*}
	& \bbE\left[ \exp\left\{ t \theta_j W_j^\top (I_k^\leftarrow - I_k) Z_j \right\} \right]\\
	& = \bbE\left[ \prod_{\ell\in[k]} \exp\left\{ t\theta_j W_{j,\ell} (Z_{j,\ell+1} - Z_{j, \ell}) \right\} \right] \\
	& = \bbE \left[ \prod_{\ell\in[k]} \exp\left\{ t\theta_j \mu_{j\ell} (Z_{j, \ell+1} - Z_{j, \ell}) + \frac{1}{2} t^2\theta_j^2 (Z_{j, \ell+1} - Z_{j, \ell})^2 \right\} \right] \\
	& = (2\pi)^{-k/2} {\int}_{\bbR^k} \exp\left\{ -\frac{1}{2} \sum_{\ell\in[k]}(z_\ell - \nu_{j,\ell})^2 + t\theta_j \sum_{\ell\in[k]} \mu_{j, \ell}(z_{\ell+1} - z_\ell) + \frac{1}{2}t^2\theta_j^2 \sum_{\ell\in[k]} (z_{\ell+1} - z_\ell)^2 \right\} d z\\
	& = (2\pi)^{-k/2} \exp\left\{ -\frac{1}{2}\|\nu_j\|^2 \right\} \\
	& \qquad \times 
		\int_{\bbR^k} \exp\left\{ \sum_{\ell\in[k]} \left[ \left(t^2\theta_j^2-\frac{1}{2}\right) z_\ell^2 + (\nu_{j,\ell} + t\theta_j \left(\mu_{j, \ell-1} - \mu_{j,\ell})\right)z_\ell - t^2 \theta_j^2 z_\ell z_{\ell+1} \right] \right\} dz
\end{align*}
where the second equality is by $\bbE[e^{t \cdot N(\mu, \sigma^2)}] = e^{\mu t + \sigma^2 t^2/2}$.
We claim that there exists $a_j, b_j \in \bbR, c_j \in \bbR^k$ such that
$$
	-\frac{1}{2} \sum_{\ell\in[k]} \left( a_j z_\ell + b_j z_{\ell+1} + c_{j, \ell}\right)^2 + \frac{1}{2} \|c_j\|^2
	=
	\sum_{\ell\in[k]} \left[ \left(t^2\theta_j^2-\frac{1}{2}\right) z_\ell^2 + (\nu_{j,\ell} + t\theta_j \left(\mu_{j, \ell-1} - \mu_{j,\ell})\right)z_\ell - t^2 \theta_j^2 z_\ell z_{\ell+1} \right].
$$
If this claim holds, then we have
\begin{align*}
	& \bbE\left[ \exp\left\{ t \theta_j W_j^\top (I_k^\leftarrow - I_k) Z_j \right\} \right]\\
	& = (2\pi)^{-k/2} \exp\left\{-\frac{1}{2}(\|\nu_j\|^2 - \|c_j\|^2)\right\}
		\int_{\bbR^k} \exp\left\{ -\frac{1}{2}\sum_{\ell\in[k]} \left( a_j z_\ell + b_j z_{\ell+1} + c_{j, \ell} \right)^2 \right\} dz.
\end{align*}
Introducing the change of variable $u = \sfC(a_j, 0, \hdots, 0, b_j) z$, the above quantity can be expressed as
\begin{align*}
	& (2\pi)^{-k/2} \exp\left\{ -\frac{1}{2}(\|\nu_j\|^2 - \|c_j\|^2)\right\} \int_{\bbR^k} e^{-\|u\|^2/2} du \times \frac{1}{|\det \sfC(a_j, 0, \hdots, 0, b_j)|} \\
	& = \exp\left\{-\frac{1}{2} (\|\nu_j\|^2 - \|c_j\|)\right\} \cdot \frac{1}{|a_j^k - (-b_j)^k|},
\end{align*}
where the equality is by the Laplace expansion of the determinant. Since
$$
	-\frac{1}{2} \sum_{\ell\in[k]} (a_j z_\ell + b_j z_{\ell+1} + c_{j, \ell})^2 + \frac{1}{2} \|c_j\|^2 = \sum_{\ell\in[k]} \left[-\frac{a_j^2 + b_j^2}{2} z_\ell^2 - a_j b_j z_\ell z_{\ell+1} - (a_j c_{j, \ell} + b_j c_{j, \ell-1})z_\ell\right],
$$
the quantities $a_j, b_j, c_j$ must satisfy
\begin{align*}
	a_j^2 + b_j^2 & = 1 - 2t^2\theta_j^2, \\
	a_j b_j & = t^2 \theta_j^2, \\
	a_j c_{j, \ell} + b_j c_{j, \ell+1} & = t\theta_j(\mu_{j, \ell} - \mu_{j, \ell-1}) - \nu_{j,\ell} = (t-1) \nu_{j, \ell} - t \nu_{j,\ell},
\end{align*}	
where the last equality is by $\theta_j \mu_j = \nu_j$ for $j\in J_1$. By the first two equations above, one valid choice of $a_j, b_j$ is given by
\begin{equation}
	\label{eq:core_upper_bound_aj_bj}
	a_j = \frac{1 + \sqrt{1- 4t^2 \theta_j^2}}{2},
	\qquad
	b_j = \frac{1 - \sqrt{1- 4t^2 \theta_j^2}}{2}.
\end{equation}	
Meanwhile, the equation that involves $c_j$ can be expressed in matrix notations as
\begin{equation*}
	\sfC(a_j, b_j, 0, \hdots, 0) c_j = \sfC(t-1, -t, 0, \hdots, 0) \nu_j \iff
	c_j = \sfC(a_j, b_j, 0, \hdots, 0)^{-1} \sfC(t-1, -t, 0, \hdots, 0) \nu_j.
\end{equation*}
We now calculate the norm of $c_j$. 
Let $\omega_k = e^{-2\pi i/k}$ be a $k$-th root of unity and consider the DFT matrix
$$
	F_k := 
	\begin{pmatrix}
		1 & 1 & 1 & \cdots & 1\\
		1 & \omega_k & \omega_k^2 & \cdots & \omega_k^{k-1}\\
		1 & \omega_k^2 & \omega_k^{4} & \cdots & \omega_k^{2(k-1)} \\
		\vdots & \vdots & \vdots & \ddots & \vdots\\
		1 & \omega_k^{k-1} & \omega_k^{2(k-1)} & \cdots & \omega_k^{(k-1)^2}
	\end{pmatrix}.
$$
Let $\sfC(v)$ denote the circulant matrix with the first column being $v$:
$$
	\sfC(v) := 
	\begin{pmatrix}
		v_1 &  v_k & \cdots  & v_3 & v_2\\
		v_2 &  v_1 & \cdots  & v_4 & v_3 \\
		\vdots & \vdots & \ddots & \vdots & \vdots\\
		v_{k-1} & v_{k-2} & \cdots & v_1 & v_k\\
		v_k & v_{k-1} & \cdots & v_2 & v_1
	\end{pmatrix}.
$$
For any circulant matrix $\sfC(v)$, we can diagonalize it by
$
	\sfC(v) = (\frac{1}{\sqrt{k}} F_k^*) \diag(F_k v) (\frac{1}{\sqrt{k}} F_k),
$
where $F_k^*$ is the conjugate transpose of $F_k$. 
Thus, we have
\begin{align*}
	c_j & = 
	\left[
		\frac{F_k^*}{\sqrt{k}} \diag\left( F_k (a_j, b_j,0,\hdots, 0)^\top\right) \frac{F_k}{\sqrt{k}}
	\right]^{-1}
	\left[
	\frac{F_k^*}{\sqrt{k}} \diag\left( F_k (t-1, -t, 0,\hdots, 0)^\top\right)  \frac{F_k}{\sqrt{k}}
	\right]
	\nu_j\\
	& = 
	\frac{F_k^*}{\sqrt{k}} \left[\diag\left(F_k (a_j,b_j,0,\hdots, 0)^\top\right)\right]^{-1}
	\diag\left(F_k (t-1,-t,0,\hdots,0)^\top\right) \frac{F_k}{\sqrt{k}} \nu_j.
\end{align*}
where the last equality follows from the fact that $F_k/\sqrt{k}$ is unitary.
We then proceed by
\begin{align*}
	\|c_j\|^2
	& = \nu_j^\top \frac{F_k^*}{\sqrt{k}}	\left[\diag\left(F_k (t-1,-t,0,\hdots,0)^\top\right)\right]^*
	\left[\diag\left(F_k(a_j,b_j,0,\hdots,0)^\top\right)^{-1}\right]^* \\
	& \qquad
	\diag\left(F_k (a_j,b_j,0,\hdots,0)^\top\right)^{-1}
	\diag\left(F_k(t-1,-t,0,\hdots,0)^\top\right)
	\frac{F_k}{\sqrt{k}} \nu_j\\
	& = \nu_j^\top \frac{F_k^*}{\sqrt{k}} \diag(v_j) \frac{F_k}{\sqrt{k}} \nu_j\\
	\label{eq:core_upper_bound_cj_norm}
	& = \nu_j^\top \sfC(F_k^* v_j/k) \nu_j, \numberthis
\end{align*}
where $v_j$ is an $\bbR^k$ vector whose $\ell$-th coordinate is given by
\begin{align*}
	v_{j,\ell} & = \frac{(t-1 -t \omega_k^{\ell-1})(t-1-t \omega_k^{1-\ell})}{(a_j + b_j \omega_k^{\ell-1})(a_j + b_j\omega_k^{1-\ell})} \\
	& = \frac{(t-1)^2 + t^2 - 2t(t-1)\cos \frac{2\pi(\ell-1)}{k}}{a_j^2 + b_j^2 + 2 a_jb_j \cos \frac{2\pi(\ell-1)}{k}} \\
	& = \frac{1 + 2t(t-1)(1- \cos \frac{2\pi(\ell-1)}{k})}{(a_j+b_j)^2 - 2a_j b_j (1-\cos\frac{2\pi(\ell-1)}{k})} \\
	\label{eq:core_upper_bound_vj}
	& = \frac{1 + 2t(t-1) (1- \cos \frac{2\pi(\ell-1)}{k})}{1 - 2t^2\theta_j^2 (1-\cos \frac{2\pi(\ell-1)}{k})},\numberthis
\end{align*}
provided the denominator is non-zero.
In the above display, the first equality is by the fact that the complex conjugate of $\omega_k^{\ell-1}$ is $\omega_k^{1-\ell}$, the second equality is by $\omega_k^{\ell-1} + \omega_k^{1-\ell} = 2\cos(2\pi(\ell-1)/k)$, and the last equality is by $(a_j + b_j)^2 = 1, a_jb_j = t^2\theta_j^2$. 
In summary, we have shown that for any $j\in J_1$,
\begin{equation}
	\label{eq:core_upper_bound_mgf_J1}
	\bbE\left[ \exp\left\{ t \theta_j W_j^\top (I_k^\leftarrow - I_k) Z_j \right\} \right] 
	= \exp\left\{\frac{1}{2} \left(-\|\nu_j\|^2 + \nu_j^\top \sfC(F_k^* v_j / k) \nu_j \right)\right\} \cdot \frac{1}{|a_j^k - (-b_j)^k|},
\end{equation}	
where $a_j, b_j$ are given by \eqref{eq:core_upper_bound_aj_bj} and $v_j$ is a vector whose $\ell$-th coordinate is given by \eqref{eq:core_upper_bound_vj}.

We then consider $j \in J_2$, so that $\theta_j \nu_j = \mu_j$. Note that
\begin{align*}
	& \bbE\left[ \exp\left\{ t \theta_j W_j^\top (I_k^\leftarrow - I_k) Z_j \right\} \right]\\
	& = \bbE\left[ \prod_{\ell\in[k]} \exp\left\{ t \theta_j(W_{j, \ell-1} - W_{j, \ell}) Z_{j, \ell} \right\} \right] \\
	& = \bbE\left[ \prod_{\ell\in[k]} \exp\left\{t\theta_j \nu_{j, \ell} (W_{j, \ell-1} - W_{j, \ell}) + \frac{1}{2}t^2 \theta_j^2 (W_{j, \ell - 1} - W_{j,\ell})^2 \right\} \right]\\
	& = (2\pi)^{-k/2} \int_{\bbR^k} \exp\left\{ -\frac{1}{2}\sum_{\ell\in[k]} (w_\ell - \mu_{j, \ell})^2 + t\theta_j \sum_{\ell\in[k]} \nu_{j, \ell}(w_{\ell-1} - w_\ell) + \frac{1}{2}t^2\theta_j^2 \sum_{\ell\in[k]} (w_{\ell-1} - w_\ell)^2 \right\} dw\\
	& = (2\pi)^{-k/2}\exp\left\{ -\frac{1}{2} \|\mu_j\|^2 \right\} \\
		& \qquad \times \int_{\bbR^k}\exp\left\{ \sum_{\ell\in[k]} \left[(t^2\theta_j^2 - \frac{1}{2}) w_\ell^2 + \left( \mu_{j, \ell} + t\theta_j (\nu_{j, \ell+1} - \nu_{j, \ell}) \right) w_\ell - t^2\theta_j^2 w_{\ell-1}w_\ell \right]\right\} dw
\end{align*}
where the second equality is by $\bbE[e^{t N(\mu, \sigma^2)}] = e^{\mu t + \sigma^2 t^2/2}$. 
We now seek for $a_j, b_j\in\bbR, c_j\in\bbR^k$ such that
$$
	-\frac{1}{2} \sum_{\ell\in[k]} (a_j w_\ell + b_j w_{\ell-1} + c_{j,\ell})^2 + \frac{1}{2} \|c_j\|^2 
	= \sum_{\ell\in[k]} \left[(t^2\theta_j^2 - \frac{1}{2}) w_\ell^2 + \left( \mu_{j, \ell} + t\theta_j (\nu_{j, \ell+1} - \nu_{j, \ell}) \right) w_\ell - t^2\theta_j^2 w_{\ell-1}w_\ell\right].
$$
If the above equality holds, then 
\begin{align*}
	& \bbE\left[ \exp\left\{ t \theta_j W_j^\top (I_k^\leftarrow - I_k) Z_j \right\} \right]\\
	& = (2\pi)^{-k/2} \exp\left\{-\frac{1}{2}(\|\mu_j\|^2 - \|c_j\|^2)\right\} \int_{\bbR^k} \exp\left\{-\frac{1}{2} \sum_{\ell\in[k]} (a_j w_\ell + b_j w_{\ell-1} + c_{j,\ell})^2 + \frac{1}{2} \|c_j\|^2\right\}  dw.
\end{align*}
Introducing the change of variable $u = \sfC(a_j, b_j, 0, \hdots, 0) w$, the right-hand side above becomes
\begin{align*}
	& (2\pi)^{k/2} \exp\left\{-\frac{1}{2}(\|\mu_j\|^2 - \|c_j\|^2)\right\} \int_{\bbR^k} e^{-\|u\|^2/2} dw \times \frac{1}{|\det \sfC(a_j, b_j, 0, \hdots, 0)|} \\
	& =  \exp\left\{-\frac{1}{2}(\|\mu_j\|^2 - \|c_j\|^2)\right\} \cdot \frac{1}{|a_j^k- (-b_j)^k|}.
\end{align*}
Since 
$$
	-\frac{1}{2} \sum_{\ell\in[k]} (a_j w_\ell + b_j w_{\ell-1} + c_{j,\ell})^2 + \frac{1}{2} \|c_j\|^2 
	= \sum_{\ell\in[k]} \left[ -\frac{a_j^2 + b_j^2}{2} w_\ell^2 - a_j b_j w_\ell w_{\ell-1} - (a_j c_{j, \ell} + b_j c_{j, \ell + 1}) w_\ell \right],
$$
we need the following three equalities to hold:
\begin{align*}
	a_j^2 + b_j^2 & = 1 - 2t^2\theta_j^2 \\
	a_j b_j & = t^2 \theta_j^2 \\
	a_j c_{j, \ell} + b_j c_{j, \ell+1} & = t\theta_j(\nu_{j,\ell} - \nu_{j,\ell+1}) - \mu_{j,\ell} = (t-1) \mu_{j, \ell} - t \mu_{j, \ell+1},
\end{align*}
where the last equality is by $\theta_j \nu_j = \mu_j$ for $j\in J_2$. Note that our previous choice of $a_j, b_j$ in \eqref{eq:core_upper_bound_aj_bj} can make the first two equalities above to hold. The third equality above now translates to
$$
	c_j = \sfC(a_j, 0, \hdots, 0, b_j)^{-1} \sfC(t-1, -t, 0, \hdots, 0, -t)\mu_j.
$$
By a nearly identical calculation as what led to \eqref{eq:core_upper_bound_cj_norm}, we have
$$
	\|c_j\|^2 = \mu_j^\top \sfC(F_k^* v_j/k) \mu_j
$$
where the $\ell$-th entry of $v_j$ is precisely given by \eqref{eq:core_upper_bound_vj}. Recalling \eqref{eq:core_upper_bound_mgf_J1}, we have shown that for any $j\in[r]$,
\begin{equation}
	\label{eq:core_upper_bound_mgf_J1J2}
	\bbE\left[ \exp\left\{ t \theta_j W_j^\top (I_k^\leftarrow - I_k) Z_j \right\} \right] 
	= \exp\left\{\frac{1}{2} \left(-\|u_j\|^2 + u_j^\top \sfC(F_k^* v_j / k) u_j \right)\right\} \cdot \frac{1}{|a_j^k - (-b_j)^k|},
\end{equation}
where
$$
	u_j = 
	\begin{cases}
		\nu_j & \textnormal{if } j\in J_1,\\
		\mu_j & \textnormal{if } j\in J_2,
	\end{cases}
$$
the quantities $a_j, b_j$ are given by \eqref{eq:core_upper_bound_aj_bj} and $v_j$ is a vector whose $\ell$-th coordinate is given by \eqref{eq:core_upper_bound_vj}.

To proceed further, we need the following lemma.
\begin{lemma}
	For the vector $v_j$ whose $\ell$-th element is given by \eqref{eq:core_upper_bound_vj}, the circulant matrix $\sfC(F_k^* v_j/k)$ is symmetric.
\end{lemma}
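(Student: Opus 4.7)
The plan is to derive the symmetry of $\sfC(F_k^* v_j/k)$ from two observations: (i) the matrix is unitarily similar to a real diagonal matrix, hence automatically Hermitian; and (ii) once we show its entries are real, symmetry follows immediately. For (i), I would apply the diagonalization formula $\sfC(u) = (F_k^*/\sqrt{k})\diag(F_k u)(F_k/\sqrt{k})$ with $u = F_k^* v_j/k$; using $F_k F_k^* = k I_k$ this gives
\[
  \sfC(F_k^* v_j/k) \;=\; \tfrac{1}{k}\, F_k^*\, \diag(v_j)\, F_k .
\]
Since $v_j \in \bbR^k$ and $F_k$ is a symmetric complex matrix (its $(m,\ell)$ entry $\omega_k^{(m-1)(\ell-1)}$ is symmetric in $m,\ell$, so $F_k^T = F_k$ and $(F_k^*)^T = F_k^*$), taking the conjugate transpose of the right-hand side reproduces the same expression, so $\sfC(F_k^* v_j/k)$ is Hermitian. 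A Hermitian matrix with real entries is symmetric, and every entry of a circulant matrix is, up to indexing, an entry of its first column; hence it suffices to show that $F_k^* v_j/k$ has real entries.

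To prove reality, I would exploit a palindromic symmetry of $v_j$. The expression \eqref{eq:core_upper_bound_vj} for $v_{j,\ell}$ depends on $\ell$ only through $\cos(2\pi(\ell-1)/k)$. Since $\cos(2\pi(k-\ell+1)/k) = \cos(2\pi(\ell-1)/k)$, this yields $v_{j,\ell} = v_{j,k-\ell+2}$ for $\ell = 2,\ldots,k$. Writing
\[
  (F_k^* v_j/k)_\ell \;=\; \tfrac{1}{k}\sum_{m=1}^k e^{2\pi i (\ell-1)(m-1)/k}\, v_{j,m},
\]
I would take the complex conjugate and then apply the change of variable $m \mapsto k-m+2$, which is an involution on $\{2,\ldots,k\}$ fixing $m = 1$. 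Combined with $e^{-2\pi i(\ell-1)k/k} = 1$ and the palindromicity of $v_j$, this recovers the original sum, showing that each entry of $F_k^* v_j/k$ equals its own conjugate and is therefore real.

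The argument is essentially algebraic bookkeeping; the only conceptual step is recognizing that the $\cos$-evenness of $v_{j,\ell}$ in $\ell$ translates to a palindromic symmetry of $v_j$, which is exactly the classical condition guaranteeing reality of an inverse DFT. The main obstacle, if any, lies not in the mathematics but in carefully tracking indices under the $\{1,\ldots,k\}$ convention used in \eqref{eq:core_upper_bound_vj}; using residues in $\bbZ/k\bbZ$ throughout (so the involution is simply $m \mapsto -m$) would streamline the bookkeeping.
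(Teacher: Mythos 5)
Your proof is correct, and it takes a genuinely different route from the paper's. The paper verifies the circulant-symmetry condition directly: a circulant $\sfC(w)$ is symmetric iff its first column is palindromic, $w_\ell = w_{k-\ell+2}$ for $2\le\ell\le k$, and the paper checks $(F_k^* v_j)_\ell = (F_k^* v_j)_{k-\ell+2}$ term by term using the palindromicity $v_{j,m}=v_{j,k-m+2}$ together with the root-of-unity identity $(k-m+1)(k-\ell+1)\equiv(m-1)(\ell-1)\pmod k$. You instead factor the statement as Hermitian-ness plus realness: from $\sfC(F_k^* v_j/k)=\tfrac1k F_k^*\diag(v_j)F_k$, realness of $v_j$ alone gives Hermitian (note you don't actually need the symmetry of $F_k$ here; conjugate-transposing the right-hand side already returns it to itself since $\diag(v_j)$ is real diagonal), and then palindromicity of $v_j$ gives realness of $F_k^* v_j/k$, hence of the whole circulant. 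The two routes consume the same underlying facts, but the decomposition they rely on is different: yours separates the Hermitian and reality ingredients, while the paper's gets symmetry in one direct computation using only palindromicity of $v_j$ (and thus would, formally, apply even to a complex palindromic $v_j$, a mild extra generality that is moot here since $v_j$ is real). Your modular organization is arguably easier to remember and to check; the paper's is shorter and self-contained.
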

\begin{proof}
	It suffices to show $(F_k^* v_j)_\ell = (F_k^* v_j)_{k-\ell+2}$ for any $2\leq \ell \leq k$. The $\ell$-th coordinate of $F^*_k v_j$ is 
	$$
		(F^*_k v_j)_\ell = v_{j,1} + \omega_k^{-(\ell-1)} v_{j,2} + \omega_k^{-2(\ell-1)} v_{j,3} + \cdots + \omega_k^{-(k-1)(\ell-1)} v_{j,k},
	$$
	whereas the $(k-\ell+2)$-th coordinate of $F^*_k v_j$ is
	$$
		(F^*_k v_j)_{k-\ell+2} = v_{j,1} + \omega_k^{-(k-\ell+1)} v_{j,2} + \omega_k^{-2(k-\ell+1)} v_{j,3} + \cdots + \omega_k^{-(k-1)(k-\ell+1)} v_{j,k}.	
	$$
	To show the above two displays are equal, it suffices to show 
	$$
	v_{j,m} = v_{j, k-m+2}, \qquad \omega_k^{-(m-1)(\ell-1)} = \omega_k^{-(k-m+1)(k-\ell+1)}, \qquad \forall2\leq m\leq k.
	$$
	The first equality follows from the expression of $v_j$ and $\cos \frac{2\pi(m-1)}{k} = \cos \frac{2\pi(k-m+1)}{k}$, and the second equality follows from the definition of $\omega_k$.
\end{proof}
By the above lemma, when $k\geq 2$ is even, we can write
\begin{align*}
	u_j^\top \sfC(F_k^* v_j/k) u_j
	& = (\frac{F_k^* v_j}{k})_1  \|u_j\|^2  + \Indc\{k\geq 4\} \cdot 2 \left((\frac{F_k^* v_j}{k})_2 u_j^\top I_k^\leftarrow u_j  + \cdots + (\frac{F_k^* v_j}{k})_{k/2}  u_j^\top (I_k^\leftarrow)^{k/2-1} u_j\right) \\ 
	\label{eq:err_refined_match_k_even}
	& \qquad + (\frac{F_k^* v_j}{k} )_{k/2+1} u_j^\top (I_k^\leftarrow)^{k/2} u_j. \numberthis
\end{align*}
On the other hand, when $k\geq 3$ is odd, we have
\begin{align*}
	u_j^\top \sfC(F_k^* v_j/k) u_j
	\label{eq:err_refined_match_k_odd}
	& = (\frac{F_k^* v_j}{k})_1  \|u_j\|^2 + 2 \left((\frac{F_k^* v_j}{k})_2  u_j^\top I_k^\leftarrow u_j  + \cdots +  (\frac{F_k^* v_j}{k})_{(k+1)/2} u_j^\top (I_k^\leftarrow)^{(k-1)/2} u_j\right).\numberthis
\end{align*}
We now consider the each case of $k\in\{2, 3, 4,5\}$ separately.
\begin{itemize}
\item \emph{Case A.1: $k=2$.} In this case, we have
$$
	v_{j, 1} = 1, \qquad v_{j,2} = \frac{1 + 4 t(t - 1)}{1-4 t^2\theta_j^2}, \qquad \frac{F_2^* v_j}{2} = \frac{1}{2} \begin{pmatrix}1 + v_{j,2} \\ 1 - v_{j,2}\end{pmatrix}.
$$
From \eqref{eq:err_refined_match_k_even}, an application of Lemma \ref{lemma:inner_prod_and_norm} gives
\begin{align*}
	-\|u_j\|^2 + u_j^\top \sfC(F_2^* v_j/2)u_j & =   \left(-1 + \frac{1+v_{j,2}}{2} + \frac{1-v_{j,2}}{2}\right)\|u_j\|^2 - \frac{1-v_{j,2}}{4} \|(I_2 - I_2^\leftarrow)u_j\|^2 \\
	& = -\frac{1-v_{j,2}}{4} \|(I_2 - I_2^\leftarrow)u_j\|^2.
\end{align*}
Since $\theta_j\leq 1$, as long as $t < 1/2$, we have
$
	v_{j,2} \leq \frac{1 + 4t ( t -1)}{1 - 4t^2} = \frac{1 - 2t}{1+2t}.
$
Choosing $t = \frac{1-\ep}{2}$ for some $\ep=o(1)$ whose value will be specified later, we get $v_{j,2}\leq \ep$ and thus
$$
	-\|u_j\|^2 + u_j^\top \sfC(F_2^* v_j/2)u_j \leq -\frac{(1-\ep)}{4} \|(I_2 - I_2^\leftarrow)u_j\|^2.
$$
Moreover, under such a choice of $t$, we have
$$
	\frac{1}{a_j^2 - b_j^2} = \frac{1}{\sqrt{1 - (1-\ep)^2 \theta_j^2}} \leq  \frac{1}{\sqrt{1-(1-\ep)^2}} \lesssim \frac{1}{\sqrt{\ep}},
$$
where we have used $\theta_j\leq 1$. Recalling \eqref{eq:core_upper_bound_mgf_J1J2}, we get
\begin{align*}
	& \bbE\left[\exp\left\{ t \theta_j W_j^\top (I_k^\leftarrow- I_k) Z_j\right\}\right] \leq \exp\left\{ -\frac{1-\ep}{8}
	\|(I_2 - I_2^\leftarrow)u_j\|^2
	+ \calO\left(\log \frac{1}{\ep}\right)
	\right\}
\end{align*}	
Plugging the above inequality back to \eqref{eq:core_upper_bound_markov}, we get
\begin{align*}
	& \bbP\left( \sum_{j\in[r]}\theta_j W_j^\top (I_k^\leftarrow - I_k) Z_j + \xi \geq 0\right) \\
	& \leq  
	\exp\left\{
		-\frac{1-\ep}{8} \sum_{j\in[r]}  \|(I_2 - I_2^\leftarrow) u_j\|^2
		+ \frac{(1-\ep)\xi}{2}
		+ \calO\left(r\log\frac{1}{\ep}\right)
	\right\}.
\end{align*}
Choosing $\ep = r/\tilde\beta^2 = o(1)$, the right-hand side above becomes
\begin{align*}
	\exp\left\{
		-\frac{1-o(1)}{8} \sum_{j\in[r]} \|(I_2 - I_2^\leftarrow) u_j\|^2
		+ \frac{(1-o(1))\xi}{2}
	\right\},
\end{align*}
as desired.
\item \emph{Case A.2: $k=3$.} In this case, we have
$$
	v_{j,1} = 1, \qquad v_{j,2} = v_{j,3} = \frac{1 + 3 t(t - 1)}{1-3t^2\theta_j^2},
	\qquad
	\frac{F_3^* v_j}{3} = \frac{1}{3} \begin{pmatrix}1 + 2v_{j,2} \\ 1 - v_{j,2} \\ 1 - v_{j,3}\end{pmatrix}.
$$
From \eqref{eq:err_refined_match_k_odd}, an application of Lemma \ref{lemma:inner_prod_and_norm} gives
\begin{align*}
	- \|u_j\|^2 + u_j^\top \sfC(F_3^* v_j/3) u_j
	& = \left(-1 + \frac{1}{3} (1 + 2 v_{j,2} + 2 - 2v_{j,2})\right) \| u_j\|^2 - \frac{1-v_{j,2}}{3} \| (I_3 - I_3^\leftarrow)u_j\|^2 \\
	& = -\frac{1-v_{j,2}}{3}\| (I_3 - I_3^\leftarrow)u_j\|^2.
\end{align*}
Since $\theta_j\leq 1$, we have
$
	v_{j,2} \leq \frac{1 + 3 t(t - 1)}{1-3t^2} = \frac{1}{2}
$
where the last equality holds by choosing $t = 1/3$. 
Meanwhile, with such a choice of $t$, we have
$$
	|a_j^3 - (-b_j)^3| = a_j^3 + b_j^3 \geq a_j^3 =  \left(\frac{1+\sqrt{1 - 4\theta_j^2/9}}{2}\right)^3  \geq \left(\frac{1 + \sqrt{5}/3}{2}\right)^3,
$$
where the last inequality is again by $\theta_j\leq 1$. Recalling \eqref{eq:core_upper_bound_mgf_J1J2}, we have
\begin{align*}
	& \bbE\left[\exp\left\{ t \theta_j W_j^\top (I_k^\leftarrow - I_k) Z_j\right\}\right] 
	= \exp\left\{- \frac{1}{12} \|(I_3 - I_3^\leftarrow) u_j\|^2 + \calO(1)\right\}.
\end{align*}
Plugging the above inequality to \eqref{eq:core_upper_bound_markov}, we get
\begin{align*}
	& \bbP\left( \sum_{j\in[r]}\theta_j W_j^\top (I_k^\leftarrow - I_k) Z_j + \xi \geq 0\right) \\
	& \leq \exp\left\{- \frac{1}{12} \sum_{j\in[r]} \|(I_3 - I_3^\leftarrow)u_j\|^2 + \calO (r) + \frac{\xi}{3}\right\} \\
	& = \exp\left\{- \frac{1-o(1)}{12} \sum_{j\in[r]} \|(I_3 - I_3^\rightarrow)u_j\|^2 +  \frac{\xi}{3}\right\},
\end{align*}
where the last inequality is by $\tilde\beta^2\gg r $.
\item \emph{Case A.3: $k=4$.} In this case, we have
$$
	v_{j, 1} = 1, \qquad v_{j, 2}= v_{j, 4} = \frac{1 + 2t(t-1)}{1-2t^2 \theta_j^2}, 
	\qquad v_{j, 3} = \frac{1 + 4t(t-1)}{1-4t^2 \theta_j^2},
	\qquad
	\frac{F_4^* v_j}{4} = \frac{1}{4}
	\begin{pmatrix}
		1 + 2v_{j,2} + v_{j,3}\\
		1 - v_{j,3}\\
		1 - 2v_{j,2} + v_{j,3}\\
		1 - v_{j,3}
	\end{pmatrix}.
$$
By \eqref{eq:err_refined_match_k_even}, we have
\begin{align*}
	& -\|u_j\|^2 + u_j^\top \sfC(F_4^* v_j/4) u_j\\
	& = \frac{1}{4} \left((-4 + 1 + v_{j,2} + v_{j,3})\|u_j\|^2 + 2 (1-v_{j,3}) u_j^\top I_4^\leftarrow u_j + (1-2v_{j,2} + v_{j,3}) u_j^\top (I_4^\leftarrow)^2 u_j\right) \\
	& = \frac{1}{4} 
	\left[
		-(2v_{j,2} - v_{j,3} - 1) \left(\|u_j\|^2 - 2 u_j^\top I_4^\leftarrow u_j + u_j^\top (I_4^\leftarrow)^2 u_j\right)
		- 4(1- v_{j,2})(\|u_j\|^2 - u_j^\top I_4^\leftarrow u_j)
	\right]\\
	& = \frac{-(2v_{j,2} - v_{j,3} - 1)}{4} (u_{j,1} - u_{j,2} + u_{j,3} - u_{j,4})^2 - \frac{1-v_{j,2}}{2} \|(I_4 - I_4^\leftarrow)u_j\|^2.
\end{align*}
Choosing $t = 1/3$, we have
$
	v_{j,2} = \frac{5}{9 - 2\theta_j^2} \leq \frac{5}{7}.
$
Meanwhile, we have
$$
	2 v_{j,2} - v_{j,3} - 1 = \frac{10}{9 - 2\theta_j^2} - \frac{1}{9 - 4\theta_j^2} - 1.
$$
We claim that the above quantity is non-negative for $\theta_j \in [0, 1]$. Indeed, because the derivative of the right-hand side above with respect to $\theta_j$ satisfies
$$
	\theta_j \cdot \left(\frac{40}{(9-2\theta_j^2)^2} - \frac{8}{(9 - 4\theta_j^2)^2}\right) 
	\geq \theta_j \cdot \left(\frac{40}{81} - \frac{8}{25}\right) \geq 0,
$$
it holds that
$$
2 v_{j,2} - v_{j,3} - 1 \geq \left(\frac{10}{9 - 2\theta_j^2} - \frac{1}{9 - 4\theta_j^2} \right)\bigg|_{\theta_j = 0} = 0.
$$
Hence, we arrive at
\begin{align*}
	-\|u_j\|^2 + u_j^\top \sfC(F_4^* v_j/4) u_j
	& \leq -\frac{1}{7} \|(I_4 - I_4^\leftarrow)u_j\|^2.
\end{align*}
Note that under the choice of $t = 1/3$, both $a_j, b_j$ are at least $\Omega(1)$, and
$$
	a_j-b_j =\frac{1 + \sqrt{1 - 4\theta_j^2/9}}{2} - \frac{1 - \sqrt{1 - 4\theta_j^2/9}}{2} = \sqrt{1- 4\theta_j^2/9} \gtrsim 1.
$$
Thus, we get
$$
	\frac{1}{|a_j^4 - (-b_j)^4|} = \frac{1}{a_j^4 - b_j^4} = \frac{1}{(a_j^2+b_j^2)(a_j+b_j)(a_j - b_j)} \lesssim 1.
$$
Recalling \eqref{eq:core_upper_bound_mgf_J1J2}, we have
\begin{align*}
	& \bbE\left[\exp\left\{ t \theta_j W_j^\top (I_k^\leftarrow - I_k) Z_j\right\}\right] 
	= \exp\left\{- \frac{1}{14}
	\|(I_4 - I_4^\leftarrow) u_j\|^2
	+ \calO(1)
	\right\}.
\end{align*}
Plugging the above inequality to \eqref{eq:core_upper_bound_markov} and using $\tilde\beta^2 \gg r$, we get
\begin{align*}
	\bbP\left( \sum_{j\in[r]}\theta_j W_j^\top (I_k^\leftarrow - I_k) Z_j + \xi \geq 0\right) 
	& = \exp\left\{- \frac{1-o(1)}{14} \sum_{j\in[r]} \|(I_4 - I_4^\leftarrow)u_j\|^2 +  \frac{\xi}{3}\right\}.
\end{align*}

\item \emph{Case A.4: $k = 5$.} In this case, we have
$$
	v_{j, 1}= 1,
	\qquad
	v_{j,2} = v_{j,5} = \frac{1 + 2t(t-1)(1 - \cos\frac{2\pi}{5})}{1 - 2t^2 \theta_j^2 (1 - \cos\frac{2\pi}{5})},
	\qquad
	v_{j,3} = v_{j,4} = \frac{1 + 2t(t-1)(1-\cos\frac{4\pi}{5})}{1 - 2t^2\theta_j^2 (1-\cos\frac{4\pi}{5})}
$$
and
\begin{align*}
	(F_5^* v_j)_1 & = 1 + 2v_{j,2} + 2v_{j,3},\\
	(F_5^* v_j)_2 & = 1 + 2\cos\frac{2\pi}{5} v_{j,2} + 2 \cos \frac{4\pi}{5} v_{j,3}\\
	(F_5^* v_j)_3 & = 1 + 2 \cos\frac{4\pi}{5} v_{j,2} + 2 \cos\frac{2\pi}{5} v_{j,3}.
\end{align*}
From \eqref{eq:err_refined_match_k_even}, we get
\begin{align*}
	& -\|u_j\|^2 + u_j^\top \sfC(F_5^* v_j/5) u_j\\
	& = \frac{1}{5} \bigg[
		(-4+2v_{j,2} + 2v_{j,3}) \|u_j\|^2
		+ 2\left(1 + 2 \cos\frac{2\pi}{5} v_{j,2} + 2 \cos\frac{4\pi}{5} v_{j,3}\right) u_j^\top I_5^\leftarrow u_j\\
		& \qquad + 2 \left(1 + 2\cos \frac{4\pi}{5} v_{j,2} + 2 \cos\frac{2\pi}{5} v_{j,3}\right) u_j^\top (I_5^\leftarrow)^2 u_j
	\bigg]\\
	& = \frac{1}{5} \left(2 + 2 \cos\frac{2\pi}{5} + 2\cos\frac{4\pi}{5}\right)(v_{j,3} + v_{j,3}) \|u_j\|^2 \\
	& \qquad - \frac{1}{5} \left(1 + 2\cos \frac{2\pi}{5} v_{j,2} + 2\cos \frac{4\pi}{5}v_{j,3}\right) \|(I_5 - I_5^\leftarrow)u_j\|^2 \\
	& \qquad - \frac{1}{5} \left(1 + 2\cos\frac{4\pi}{5} v_{j,2} + 2\cos \frac{2\pi}{5} v_{j,3}\right) \|(I_5 - (I_5^\leftarrow)^2)u_j\|^2 \\
	& = - \frac{1}{5} \left(1 + \frac{\sqrt{5}-1}{2} v_{j,2} -\frac{\sqrt{5}+1}{2}v_{j,3}\right) \|(I_5 - I_5^\leftarrow)u_j\|^2 \\
	& \qquad - \frac{1}{5} \left(1 -\frac{\sqrt{5}+1}{2} v_{j,2} + \frac{\sqrt{5}-1}{2}v_{j,3}\right) \|(I_5 - (I_5^\leftarrow)^2)u_j\|^2. 
\end{align*}
where the second equality is by Lemma \ref{lemma:inner_prod_and_norm}. We choose $t = 1/4$. Then, we have
\begin{align*}
	v_{j,2} & = \frac{1 - \frac{3}{8}(1 - \frac{\sqrt{5}-1}{4})}{1 - \frac{1}{8}\theta_j^2 (1 - \frac{\sqrt{5}-1}{4})} = \frac{17 + 3\sqrt{5}}{32 - (5-\sqrt{5})\theta_j^2}\\
	v_{j,3} & = \frac{1 - \frac{3}{8}(1 + \frac{\sqrt{5}+1}{4})}{1 - \frac{1}{8}\theta_j^2 (1 + \frac{\sqrt{5}+1}{4})} = \frac{17 - 3\sqrt{5}}{32 - (5+\sqrt{5})\theta_j^2}.
\end{align*}	
Thus,
\begin{align*}
	& -\|u_j\|^2 + u_j^\top \sfC(F_5^* v_j/5) u_j\\
	& = -\frac{1}{5} \left(1 + \frac{\sqrt{5}-1}{2}\frac{17 + 3\sqrt{5}}{32 - (5-\sqrt{5})\theta_j^2} - \frac{\sqrt{5}+1}{2}\frac{17 - 3 \sqrt{5}}{32 - (5+\sqrt{5})\theta_j^2}\right) \|(I_5 - I_5^\leftarrow)u_j\|^2 \\
	& \qquad + \frac{1}{5}\underbrace{\left(-1 + \frac{\sqrt{5}+1}{2}\frac{17 + 3\sqrt{5}}{32 - (5-\sqrt{5})\theta_j^2} - \frac{\sqrt{5}-1}{2}\frac{17 - 3 \sqrt{5}}{32 - (5+\sqrt{5})\theta_j^2}\right)}_{f(\theta_j)} \|(I_5 - (I_5^\leftarrow)^2) u_j\|^2.
\end{align*}
With some algebra, one can readily check that 
$$
	\frac{d f(\theta_j)}{\theta_j^2} = \frac{5 (65\theta_j^4 - 512\theta_j^2 + 768)}{(5\theta_j^4 - 80\theta_j + 256)^2} \geq 0
$$
for $\theta_j \in [0, 1]$. Hence, we have $f(\theta_j)\geq f(0) = 0$.
Meanwhile, note that 
$$
\|(I_5 - (I_5^\leftarrow)^2)u_j\|^2 = \|(I_5 + I_5^\leftarrow)(I_5 - I_5^\leftarrow)u_j\|^2  \leq \|I_5 + I_5^\leftarrow\|^2 \|(I_5 - I_5^\leftarrow)u_j\|^2 \leq 4\|(I_5 - I_5^\leftarrow)u_j\|^2.
$$
This yields
\begin{align*}
	& -\|u_j\|^2 + u_j^\top \sfC(F_5^* v_j/5) u_j\\
	& \leq \bigg[-\frac{1}{5} \left(1 + \frac{\sqrt{5}-1}{2}\frac{17 + 3\sqrt{5}}{32 - (5-\sqrt{5})\theta_j^2} - \frac{\sqrt{5}+1}{2}\frac{17 - 3 \sqrt{5}}{32 - (5+\sqrt{5})\theta_j^2}\right)  \\
	& \qquad + \frac{4}{5}\left(-1 + \frac{\sqrt{5}+1}{2}\frac{17 + 3\sqrt{5}}{32 - (5-\sqrt{5})\theta_j^2} - \frac{\sqrt{5}-1}{2}\frac{17 - 3 \sqrt{5}}{32 - (5+\sqrt{5})\theta_j^2}\right) \bigg]\|(I_5 - I_5^\leftarrow)u_j\|^2  \\
	& = -\frac{1}{5} \bigg[
		5 + \left(\frac{\sqrt{5}-1}{2} - 2(\sqrt{5} + 1)\right) \frac{17 +3\sqrt{5}}{32 - (5-\sqrt{5})\theta_j^2} \\
		& \qquad
		+ \left(-\frac{\sqrt{5}+1}{2} + 2 (\sqrt{5}-1)\right) \frac{17 - 3\sqrt{5}}{32- (5+\sqrt{5})\theta_j^2}
	\bigg] \|(I_5 - I_5^\rightarrow)u_j\|^2 \\
	& = -\frac{1}{5} g(\theta_j) \|(I_5 - I_5^\leftarrow)u_j\|^2,
\end{align*}
where
$$
	\frac{d g(\theta_j)}{d\theta_j^2} = \frac{-5(245\theta_j^4 - 2080\theta_j + 4096)}{(5\theta_j^4 - 80\theta_j^2 + 256)^2} \leq 0
$$
for $\theta_j\in[0, 1]$. Thus, we get $g(\theta_j)\geq g(1) = 110/181$, and 
\begin{align*}
	-\|u_j\|^2 + u_j^\top \sfC(F_5^* v_j/5) u_j
	& \leq -\frac{22}{181} \|(I_5 - I_5^\leftarrow)u_j\|^2.
\end{align*}
Now, recall the definition of $a_j, b_j$, we have
\begin{align*}
	|a_j^5 - (-b_j)^5| = a_j^5 + b_j^5 \geq a_j^5 = \left(\frac{1 + \sqrt{1 - \theta_j^2/4}}{2}\right)^2 \gtrsim 1 \implies \frac{1}{|a_j^5 - (-b_j)^5|} \lesssim 1.
\end{align*}
Recalling \eqref{eq:core_upper_bound_mgf_J1J2}, we get
\begin{align*}
	& \bbE\left[\exp\left\{ t \theta_j W_j^\top (I_k^\leftarrow - I_k) Z_j\right\}\right] 
	= \exp\left\{- \frac{11}{181}
	\|(I_5 - I_5^\rightarrow) u_j\|^2
	+ \calO(1)
	\right\}.
\end{align*}
Plugging the above inequality to \eqref{eq:core_upper_bound_markov} and using $\tilde\beta^2 \gg r$, we get
\begin{align*}
	& \bbP\left( \sum_{j\in[r]}\theta_j W_j^\top (I_k^\leftarrow - I_k) Z_j + \xi \geq 0\right) \\
	& = \exp\left\{- \frac{(1-o(1)) \cdot 11}{181} \sum_{j\in[r]} \|(I_5 - I_5^\rightarrow)u_j\|^2 +  \frac{\xi}{4}\right\}.
\end{align*}
\end{itemize}

\paragraph{Case B. $\boldsymbol{k > 6}$.}
We write $W_j = \mu_j + N_{\x, j}, Z_j = \nu_j + N_{\y, j}$, where $N_{\x,j}\indep N_{\y, j}$ and both follow $N(0, I_k)$.
Recall that $u_j = \nu_j$ for $j\in J_1$ and $u_j = \mu_j$ for $j\in J_2$.
We start by computing
\begin{align*}
	& \bbP\left( \sum_{j\in[r]}\theta_j W_j^\top (I_k^\leftarrow - I_k) Z_j + \xi \geq 0 \right) \\
	& = \bbP\left( \sum_{j\in[r]} \theta_j(\mu_j + N_{\x, j})^\top(I_k^\leftarrow - I_k) (\nu_j + N_{\y, j}) + \xi \geq 0 \right) \\
	& = \bbP\Bigg( \sum_{j\in[r]} \theta_j \mu_j^\top (I_k^\leftarrow - I_k) \nu_j + \sum_{j\in[r]} \theta_j \mu_j (I_k^\leftarrow - I_k) N_{\y, j} + \sum_{j\in[r]} \theta_j N_{\x, j}^\top (I_k^\rightarrow - I_k) \nu_j \\
		& \qquad + \sum_{j\in[r]} \theta_j N_{\x,j}^\top(I_k^\leftarrow-I_k)N_{\y, j} + \xi \geq 0 \Bigg) \\
	& = \bbP\Bigg(
		-\frac{1}{2} \sum_{j\in[r]} \|(I_k^\leftarrow - I_k) u_j\|^2 + \sum_{j\in[r]} \theta_j\mu_j (I_k^\leftarrow - I_k) N_{\y, j} + \sum_{j\in[r]} \theta_j N_{\x, y}^\top (I_k^\leftarrow - I_k) \nu_j \\
		& \qquad + \sum_{j\in[r]} \theta_j N_{\x, j}^\top(I_k^\leftarrow - I_k) N_{\y, j} + \xi \geq 0
	\Bigg),
\end{align*}
where we have used the definition of $u_j$ and Lemma \ref{lemma:inner_prod_and_norm} in the last equality.
Since 
$$
	\theta_j \mu_j^\top (I_k^\leftarrow - I_k) N_{\y,j} \sim N(0, \|(I_k^\leftarrow-I_k)\theta_j \mu_j\|^2),
	\qquad 
	\theta_j N_{\x,j}^\top (I_k^\leftarrow -I_k)\nu_j \sim N(0, \|(I_k^\leftarrow - I_k)\nu_j\|^2),
$$
and they are independent, for any $\gamma \in (0, 1/2)$, we have
$$
	\bbP\left( \sum_{j\in[r]}\theta_j W_j^\top (I_k^\leftarrow - I_k) Z_j + \xi \geq 0 \right) \leq \scrT_1 + \scrT_2,
$$
where
\begin{align*}
	\scrT_1 & := 
	\bbP\left( -(\frac{1}{2}-\gamma) \sum_{j\in[r]} \|(I_k^\leftarrow - I_k) u_j\|^2 
		+ N\left(0, \sum_{j\in[r]} \left( \|(I_k^\leftarrow - I_k) \theta_j\mu_j\|^2 + \|(I_k^\leftarrow-I_k)\theta_j \nu_j\|^2 \right)\right) \geq 0
	\right),\\
	\scrT_2 & := 
	\bbP\left(
		\sum_{j\in[r]} \theta_j N_{\x, j}^\top(I_k^\leftarrow - I_k) N_{\y, j} + \xi \geq \gamma \sum_{j\in[r]} \|(I_k^\leftarrow - I_k)u_j\|^2
	\right).
\end{align*}
Note that
\begin{align*}
	\scrT_1 = 
	\bbP\left(N(0, 1) \geq \frac{\left(\frac{1}{2}-\gamma\right)\sum_{j\in[r]}\|(I_k^\leftarrow - I_k)u_j\|^2}{\sqrt{\sum_{j\in[r]} \left( \|(I_k^\leftarrow - I_k) \theta_j\mu_j\|^2 + \|(I_k^\leftarrow-I_k)\theta_j \nu_j\|^2 \right)}}\right).
\end{align*}
Because
\begin{align*}
	\sum_{j\in[r]} \left( \|(I_k^\leftarrow - I_k) \theta_j\mu_j\|^2 + \|(I_k^\leftarrow-I_k)\theta_j \nu_j\|^2 \right)
	& = \sum_{j\in J_1} (1 + \theta_j^2)\|(I_k^\leftarrow - I_k) \nu_j\|^2 + \sum_{j\in[J_2]} (1+ \theta_j^2) \|(I_k^\leftarrow - I_k) \mu_j\|^2  \\
	& \leq 2 \sum_{j\in[r]} \|(I_k^\leftarrow - I_k)u_j\|^2,
\end{align*}
we have
\begin{align*}
	\scrT_1 & \leq \bbP\left( N(0, 1) \geq \frac{\frac{1}{2}-\gamma}{\sqrt{2}} \sqrt{\sum_{j\in[r]} \|(I_k^\leftarrow - I_k) u_j\|^2}\right)\\
	& \leq \exp\left\{ -\frac{\left(\frac{1}{2} - \gamma\right)^2}{4} \sum_{j\in[r]}\|(I_k^\leftarrow - I_k)u_j\|^2 \right\},
\end{align*}	
where the last inequality is by Gaussian tail bound (see Lemma \ref{lemma:gaussian_tail}) and $\tilde\beta \gg r \geq 1$.
On the other hand, we invoke Markov's inequality to upper bound $\scrT_2$:
\begin{align*}
	\scrT_2 \leq \exp\left\{ t\xi - t\gamma \sum_{j\in[r]} \|(I_k^\leftarrow - I_k)u_j\|^2 \right\} \cdot \bbE\left[ \exp\left\{ t\sum_{j\in[r]} \theta_j N_{\x, j}^\top (I_k^\leftarrow - I_k) N_{\y, j} \right\} \right].
\end{align*}
where $t > 0$ is a constant whose value will be specified later. By nearly identical arguments as those appeared in Case A, we have
\begin{align*}
	\bbE\bigg[\exp\bigg\{t \sum_{j\in[r]}\theta_j N_{\x,j}^\top (I_k^\leftarrow - I_k)N_{\y,j}\bigg\}\bigg]
	& = \prod_{j\in[r]} \frac{1}{|a_j^k - (-b_j)^k|},
\end{align*}
where $a_j, b_j$ are given by \eqref{eq:core_upper_bound_aj_bj}.
Let us set
$$
	t = \sqrt{\frac{1 - \ep^2}{4}}
$$
for some $\ep \in (0, 1)$ whose value will be determined later. Under such a choice of $t$, we have
$
\sqrt{1-4t^2\theta_j^2} = \sqrt{1 - (1-\ep^2)\theta_j^2} \geq \ep.
$
When $k$ is odd, we have
$$
	\frac{1}{|a_j^k - (-b_j)^k|} = \frac{1}{a_j^k + b_j^k} \leq \frac{1}{a_j^k} = \bigg(\frac{1 + \sqrt{1-4t^2\theta_j^2}}{2}\bigg)^{-k} \leq \bigg(\frac{2}{1+\ep}\bigg)^k.
$$
When $k$ is even, we have
\begin{align*}
	\frac{1}{|a_j^k - (-b_j)^k|} & = \frac{1}{a_j^k - b_j^k} = \frac{1}{(a_j - b_j)(a_j^{k-1} + a_j^{k-2}b_j + \cdots + a_j b_j^{k-2} + b_j^{k-1})} \leq \frac{1}{(a_j-b_j)a_j^{k-1}}\\
	& = \frac{1}{\sqrt{1 - 4t^2\theta_j^2} a_j^{k-1}} \leq \frac{1}{\ep} \bigg(\frac{2}{1 + \ep}\bigg)^{k-1}.
\end{align*}	
Thus, for any $k\geq 2$, we have
$$
	\frac{1}{|a_j^k - (-b_j)^k|} \leq \frac{2^k}{\ep}
$$
Hence, we arrive at
\begin{align*}
	\scrT_2 \leq \exp\left\{\frac{\sqrt{1-\ep^2}}{2}\xi - \frac{\sqrt{1-\ep^2}}{2} \gamma \sum_{j\in[r]}  \|(I_k - I_k^\leftarrow)u_j\|^2
	+ \calO\left((r)(k + \log (1/\ep))\right)
	\right\}.
\end{align*}
We choose $\ep = r/\tilde\beta^2 = o(1)$. Then, since
$$
	\sum_{j\in[r]} \|(I_k - I_k^\leftarrow)u_j\|^2 \geq k\tilde\beta^2,
$$
we get
$$
	\frac{r(k+\log(1/\ep))}{\sum_{j\in[r]} \|(I_k - I_k^\leftarrow)u_j\|^2} \leq \frac{r}{\tilde\beta^2} + \frac{1}{k} \cdot \frac{\log\frac{\tilde\beta^2}{r}}{\frac{\tilde\beta^2}{r}} = o(1).
$$
Hence, as long as $\gamma \gtrsim 1$,
$$
	\scrT_2 \leq \exp\left\{\calO(\xi) - \frac{1-o(1)}{2}\gamma\sum_{j\in[r]}  \|(I_k - I_k^\leftarrow)u_j\|^2  \right\},
$$
which further implies
\begin{align*}
	& \bbP\left( \sum_{j\in[r]}\theta_j W_j^\top (I_k^\leftarrow - I_k) Z_j + \xi \geq 0\right)\\
	& \leq \exp\left\{-\frac{(1/2-\gamma)^2}{4}\sum_{j\in[r]} \|(I_k - I_k^\leftarrow)u_j\|^2 \right\}\\
	& \qquad \times \exp\left\{\calO(\xi) - \frac{1-o(1)}{2}\gamma\sum_{j\in[r]} \|(I_k - I_k^\leftarrow)u_j\|^2  \right\}.
\end{align*}	
We choose $\gamma$ such that
$
	\frac{(1/2-\gamma)^2}{4} = \frac{\gamma}{2},
$
namely $\gamma = (3-2\sqrt{2})/2\gtrsim 1$. Under such a choice of $\gamma$, we get
\begin{align*}
	& \bbP\left( \sum_{j\in[r]}\theta_j W_j^\top (I_k^\leftarrow- I_k) Z_j + \xi \geq 0\right)\\
	& \leq (1 + e^{\calO(\xi)}) \exp\left\{-(1-o(1))\frac{3-2\sqrt{2}}{4}\sum_{j\in[r]} \|(I_k - I_k^\leftarrow)u_j\|^2 \right\} \\
	& \leq 2 \exp\left\{-(1-o(1))\frac{3-2\sqrt{2}}{4}\sum_{j\in[r]}  \|(I_k - I_k^\leftarrow)u_j\|^2 + \calO(\xi)\right\} \\
	& = \exp\left\{-(1-o(1))\frac{3-2\sqrt{2}}{4}\sum_{j\in[r]}  \|(I_k - I_k^\leftarrow)u_j\|^2 + \calO(\xi)\right\}.
\end{align*}
The proof is concluded.

\subsubsection{Proof of Proposition \ref{prop:frob_norm_concentration}} \label{prf:prop:frob_norm_concentration}
Recall that we can write
$$
	X_{i_{1:k},\bigcdot} = U_{i_{1:k},\bigcdot} D V^\top + \sigma_\x (N_\x)_{i_{1:k}, \bigcdot}, \qquad 
	Y_{\pi^\star{i_{1:k}},\bigcdot} = U_{i_{1:k},\bigcdot} D V^\top + \sigma_\y (N_\y)_{i_{1:k}, \bigcdot}, 
$$
where $(N_\x)_{i_{1:k}, \bigcdot}, (N_\y)_{i_{1:k}, \bigcdot} \in \bbR^{k\times p}$ are independent and have i.i.d.~$N(0, 1)$ entries. We start by decomposing
\begin{align*}
	& \|X_{i_{1:k},\bigcdot}^\top (I_k^\leftarrow - I_k)Y_{\pi^\star_{i_{1:k}},\bigcdot} \|_F\\
	& = \|[U_{i_{1:k},\bigcdot} D V + \sigma_\x (N_\x)_{i_{1:k}, \bigcdot}]^\top (I_k^\leftarrow - I_k)[U_{i_{1:k},\bigcdot} D V^\top + \sigma_\y (N_\y)_{i_{1:k}, \bigcdot}] \|_F \\
	& \leq \underbrace{\|[U_{i_{1:k},\bigcdot} D V^\top]^\top (I_k^\leftarrow-I_k)U_{i_{1:k},\bigcdot} D V^\top\|_F}_{\scrT_1} + \underbrace{\|\sigma_\x (N_\x)_{i_{1:k}, \bigcdot}^\top (I_k^\leftarrow-I_k)U_{i_{1:k},\bigcdot}D V^\top\|_F}_{\scrT_2} \\
	\label{eq:frob_norm_concentration_decomp}
	& \qquad +\underbrace{\|[U_{i_{1:k},\bigcdot} D V^\top]^\top (I_k^\leftarrow - I_k) \sigma_\y (N_\y)_{i_{1:k}, \bigcdot}\|_F}_{\scrT_3} + \underbrace{\|\sigma_\x (N_\x)_{i_{1:k}, \bigcdot}^\top (I_k^\leftarrow - I_k)\sigma_\y (N_\y)_{i_{1:k}, \bigcdot}\|_F}_{\scrT_4}.\numberthis
\end{align*}
Since $V \in O_{p, r}$, we have
\begin{align*}
	\scrT_1 = \|D U_{i_{1:k},\bigcdot}^\top (I_k^\leftarrow - I_k) U_{i_{1:k},\bigcdot} D \|_F.
\end{align*}
For the second term in the right-hand side of \eqref{eq:frob_norm_concentration_decomp}, we have
\begin{align*}
	\scrT_2^2 
	& = \sigma_\x^2 \tr \left( (N_\x)_{i_{1:k}, \bigcdot}^\top (I_k^\leftarrow - I_k)U_{i_{1:k},\bigcdot} D V^\top V D U_{i_{1:k},\bigcdot}^\top (I^\leftarrow_k - I_k)^\top (N_\x)_{i_{1:k}, \bigcdot} \right) \\
	& = \sigma_\x^2 \tr\left( (N_\x)_{i_{1:k}, \bigcdot}^\top(I_k^\leftarrow - I_k)U_{i_{1:k},\bigcdot} D^2 U_{i_{1:k},\bigcdot}^\top (I_k^\leftarrow - I_k)^\top (N_\x)_{i_{1:k}, \bigcdot} \right) \\
	& = \sigma_\x^2 \vect((N_\x)_{i_{1:k}, \bigcdot})^\top \left[ I_{p} \otimes \left((I_k^\leftarrow-I_k)U_{i_{1:k},\bigcdot} D^2 U_{i_{1:k},\bigcdot}^\top (I_k^\leftarrow - I_k)^\top \right) \right] \vect((N_\x)_{i_{1:k}, \bigcdot}).
\end{align*}
Note that
\begin{align*}
	&\bbE[\scrT_2^2]  = \sigma_\x^2 \tr\left(I_{p} \otimes \left((I_k^\leftarrow-I_k)U_{i_{1:k},\bigcdot} D^2 U_{i_{1:k},\bigcdot}^\top (I_k^\leftarrow - I_k)^\top \right)\right) = p \sigma_\x^2 \|(I_k^\leftarrow-I_k)U_{i_{1:k},\bigcdot} D\|^2_F.
\end{align*}
Meanwhile, we have
\begin{align*}
	\left\|I_{p} \otimes \left((I_k^\leftarrow-I_k)U_{i_{1:k},\bigcdot} D^2 U_{i_{1:k},\bigcdot}^\top (I_k^\leftarrow - I_k)^\top \right)\right\|_F 
	& =  \sqrt{p} \|(I_k^\leftarrow - I_k)U_{i_{1:k},\bigcdot} D^2 U_{i_{1:k},\bigcdot}^\top (I_k^\leftarrow - I_k)^\top\|_F \\
	\left\|I_{p} \otimes \left((I_k^\leftarrow-I_k)U_{i_{1:k},\bigcdot} D^2 U_{i_{1:k},\bigcdot}^\top (I_k^\leftarrow - I_k)^\top \right)\right\|_2 
	& = \|(I_k^\leftarrow - I_k)U_{i_{1:k},\bigcdot} D^2 U_{i_{1:k},\bigcdot}^\top (I_k^\leftarrow - I_k)^\top\|_2.
\end{align*}	
Invoking Hanson-Wright inequality (see Lemma \ref{lemma:hs_ineq}), 
for any $\delta \in(0, 1)$, we have
\begin{align*}
	\scrT_2^2 & \leq \sigma_\x^2 \bigg[ p\|(I_k^\leftarrow - I_k)U_{i_{1:k},\bigcdot} D\|_F^2 \\
	& \qquad + \calO \bigg( \sqrt{p}\|(I_k^\leftarrow - I_k)U_{i_{1:k},\bigcdot}D^2U_{i_{1:k},\bigcdot}^\top(I_k^\leftarrow-I_k)^\top\|_F \sqrt{\log(1/\delta)}\\
	& \qquad\qquad\qquad\qquad\qquad + \|(I_k^\leftarrow-I_k)U_{i_{1:k},\bigcdot}D^2U_{i_{1:k},\bigcdot}^\top(I_k^\leftarrow-I_k)^\top\|_2 \log(1/\delta)\bigg)\bigg]
\end{align*}
with probability at least $1-\delta$. The above inequality translates to 
\begin{align*}
	\scrT_2 & \leq \sigma_\x \bigg[ \sqrt{p}\|(I_k^\leftarrow - I_k)U_{i_{1:k},\bigcdot} D\|_F \\
	& \qquad + \calO \bigg( {p}^{1/4}\|(I_k^\leftarrow - I_k)U_{i_{1:k},\bigcdot}D^2U_{i_{1:k},\bigcdot}^\top(I_k^\leftarrow-I_k)^\top\|_F^{1/2} (\log(1/\delta))^{1/4}\\
	& \qquad\qquad\qquad\qquad\qquad + \|(I_k^\leftarrow-I_k)U_{i_{1:k},\bigcdot}D^2U_{i_{1:k},\bigcdot}^\top(I_k^\leftarrow-I_k)^\top\|_2^{1/2} (\log(1/\delta))^{1/2}\bigg)\bigg]
\end{align*}
with probability at least $1-\delta$.
The treatment for the third term in the right-hand side of \eqref{eq:frob_norm_concentration_decomp} is similar to our treatment for the second term, namely we write $\scrT_3^2$ into a Gaussian quadratic form and invoke Hanson-Wright inequality to get
\begin{align*}
	\scrT_3 & \leq \sigma_\y \bigg[ \sqrt{p}\|(I_k^\leftarrow - I_k)U_{i_{1:k},\bigcdot} D\|_F \\
	& \qquad + \calO \bigg( {p}^{1/4}\|(I_k^\leftarrow - I_k)U_{i_{1:k},\bigcdot}D^2U_{i_{1:k},\bigcdot}^\top(I_k^\leftarrow-I_k)^\top \|_F^{1/2} (\log(1/\delta))^{1/4}\\
	& \qquad\qquad\qquad\qquad\qquad + \|(I_k^\leftarrow-I_k)U_{i_{1:k},\bigcdot}D^2U_{i_{1:k},\bigcdot}^\top(I_k^\leftarrow-I_k)^\top\|_2^{1/2} (\log(1/\delta))^{1/2}\bigg)\bigg]\\
	& = \sigma_\y \bigg[ \sqrt{p}\|(I_k^\leftarrow - I_k)U_{i_{1:k},\bigcdot} D\|_F \\
	& \qquad + \calO \bigg( {p}^{1/4}\|(I_k^\leftarrow - I_k)U_{i_{1:k},\bigcdot}D^2U_{i_{1:k},\bigcdot}^\top(I_k^\leftarrow-I_k)^\top\|_F^{1/2} (\log(1/\delta))^{1/4}\\
	& \qquad\qquad\qquad\qquad\qquad + \|(I_k^\leftarrow-I_k)U_{i_{1:k},\bigcdot}D^2U_{i_{1:k},\bigcdot}^\top(I_k^\leftarrow-I_k)^\top\|_2^{1/2} (\log(1/\delta))^{1/2}\bigg)\bigg],
\end{align*}
where the last equality is by $\|(I_k^\leftarrow - I_k)U_{i_{1:k},\bigcdot} D\|_F = \|(I_k^\leftarrow - I_k)U_{i_{1:k},\bigcdot} D\|_F$ and $\|A^\top A\|_F= \|AA^\top\|_F$, $\|A^\top A\|_2 = \|AA^\top\|_2$ for any matrix $A$ of suitable sizes.
To deal with the fourth term in the right-hand side of \eqref{eq:frob_norm_concentration_decomp}, we proceed by writing it as
\begin{align*}
	\scrT_4  &=\sigma_\x\sigma_\y \sqrt{ \tr \left( (N_\x)_{i_{1:k}, \bigcdot}^\top (I_k^\leftarrow - I_k) (N_\y)_{i_{1:k}, \bigcdot} (N_\y)_{i_{1:k}, \bigcdot}^\top (I_k^\leftarrow - I_k)^\top (N_\x)_{i_{1:k}, \bigcdot} \right)} \\
	& = \sigma_\x\sigma_\y\sqrt{ \vect((N_\x)_{i_{1:k}, \bigcdot})^\top \left[I_p \otimes \left( (I_k^\leftarrow - I_k) (N_\y)_{i_{1:k}, \bigcdot} (N_\y)_{i_{1:k}, \bigcdot}^\top (I_k^\leftarrow - I_k)^\top \right)\right] \vect((N_\x)_{i_{1:k}, \bigcdot})}.
\end{align*}
Conditional on the randomness of $(N_\y)_{i_{1:k}, \bigcdot}$, we invoke Hanson-Wright inequality to conclude that with probability at least $1-\delta$, we have
\begin{align*}
	\scrT_4 & \leq {\sigma_\x\sigma_\y} \bigg[\sqrt{p}\|(I_k^\leftarrow - I_k)(N_\y)_{i_{1:k}, \bigcdot}\|_F 
	+ \calO\bigg( p^{1/4} \|(I_k^\leftarrow-I_k)(N_\y)_{i_{1:k}, \bigcdot} (N_\y)_{i_{1:k}, \bigcdot}^\top (I_k^\leftarrow - I_k)^\top\|_F^{1/2} (\log(1/\delta))^{1/4} \\
	& \qquad\qquad\qquad\qquad\qquad\qquad\qquad\qquad\qquad + \|(I_k^\leftarrow-I_k)(N_\y)_{i_{1:k}, \bigcdot} (N_\y)_{i_{1:k}, \bigcdot}^\top (I_k^\leftarrow - I_k)^\top\|_2^{1/2} (\log(1/\delta))^{1/2}\bigg)\bigg]\\
	& \leq {\sigma_\x\sigma_\y} \bigg[
		2\sqrt{p} \|(N_\y)_{i_{1:k}, \bigcdot}\|_F
		+\calO\bigg(
			p^{1/4} \|(I_k^\leftarrow - I_k)(N_\y)_{i_{1:k}, \bigcdot}\|_2^{1/2} \|(I_k^\leftarrow - I_k)(N_\y)_{i_{1:k}, \bigcdot}\|_F^{1/2} (\log(1/\delta))^{1/4}\\
			& \qquad\qquad\qquad\qquad\qquad\qquad\qquad\qquad\qquad + \|(I_k^\leftarrow - I_k)(N_\y)_{i_{1:k}, \bigcdot}\|_2 (\log(1/\delta))^{1/2}
		\bigg)
	\bigg]\\
	&\leq {\sigma_\x\sigma_\y} \bigg[ 2\sqrt{p}\|(N_\y)_{i_{1:k}, \bigcdot}\|_F 
	+ \calO\bigg(
			p^{1/4} \|(N_\y)_{i_{1:k}, \bigcdot}\|_2^{1/2} \|(N_\y)_{i_{1:k}, \bigcdot}\|_F^{1/2} (\log(1/\delta))^{1/4} + \|(N_\y)_{i_{1:k}, \bigcdot}\|_2 (\log(1/\delta))^{1/2}
		\bigg) 
	\bigg],
\end{align*}
where in the last two inequalities, we have used $\|AB\|_F\leq \|A\|_2 \|B\|_F, \|A B\|_2 \leq \|A\|_2 \|B\|_2$ for matrices $A, B$ of suitable sizes and $\|I_k^\leftarrow - I_k\|_2 \leq \|I_k^\leftarrow\|_2 \|I_k\|_2 \leq 2$.
Since $\|(N_\y)_{i_{1:k}, \bigcdot}\|_F^2\sim \chi^2_{p k}$, by the $\chi^2$ tail bound (see Lemma \ref{lemma:chisq_tail}), we have
$$
	\|(N_\y)_{i_{1:k}, \bigcdot}\|_F \leq \sqrt{p k} + \calO\left( [p k \log(1/\delta)]^{1/4} + (\log(1/\delta))^{1/2} \right)
$$
with probability at least $1-\delta$. Meanwhile, invoking the tail bound for the operator norm of Gaussian Wigner matrices (see Lemma \ref{lemma:gaussian_wigner_op_norm}), we have
$$
	\|(N_\y)_{i_{1:k}, \bigcdot}\|_2 \lesssim \sqrt{k} + \sqrt{p} + \sqrt{\log(1/\delta)}
$$
with probability at least $1-\delta$.
Thus, a union bound gives that with probability at least $1-3\delta$,
\begin{align*}
	\scrT_4 & \lesssim \sigma_\x\sigma_\y \bigg[
		p k^{1/2} + p^{3/4} k^{1/4} (\log(1/\delta))^{1/4} + p^{1/2}(\log(1/\delta))^{1/2} \\
		& \qquad + p^{1/4} (\log(1/\delta))^{1/4} \left( k^{1/4}+p^{1/4}+(\log(1/\delta))^{1/4} \right)
		\left( (p k)^{1/4}+ (p k\log(1/\delta))^{1/8} + (\log(1/\delta))^{1/4} \right)\\
		& \qquad + (\log(1/\delta))^{1/2}\left( k^{1/2}+ p^{1/2}+ (\log(1/\delta))^{1/2} \right)
	\bigg]\\
	& = \sigma_\x\sigma_\y
	\bigg(
		p k^{1/2} + p^{3/4} k^{1/4} (\log(1/\delta))^{1/4} + p^{1/2} (\log(1/\delta))^{1/2}\\
		& \qquad + p^{1/2}k^{1/2} (\log(1/\delta))^{1/4} + p^{3/8} k^{3/8}(\log(1/\delta))^{3/8} + p^{1/4}k^{1/4} (\log(1/\delta))^{1/2} \\
		& \qquad + p^{3/4}k^{1/4} (\log(1/\delta))^{1/4} + p^{5/8}k^{1/8}(\log(1/\delta))^{3/8} + p^{1/2}(\log(1/\delta))^{1/2} \\
		& \qquad  + p^{1/2}k^{1/4}(\log(1/\delta))^{1/2} + p^{3/8}k^{1/8}(\log(1/\delta))^{5/8} + p^{1/4} (\log(1/\delta))^{3/4}\\
		& \qquad k^{1/2}(\log(1/\delta))^{1/2} + p^{1/2}(\log(1/\delta))^{1/2} +(\log(1/\delta)
	\bigg)\\
	& \lesssim  \sigma_\x\sigma_\y
	\bigg(
		p k^{1/2} + p^{3/4}k^{1/4}(\log(1/\delta))^{1/4} + p^{5/8}k^{1/8}(\log(1/\delta))^{3/8} \\
		& \qquad + p^{1/2}k^{1/2}(\log(1/\delta))^{1/4} 
		+ p^{1/2}k^{1/4}(\log(1/\delta))^{1/2} 
		+ p^{3/8}k^{3/8}(\log(1/\delta))^{3/8} + p^{3/8}k^{1/8}(\log(1/\delta))^{5/8} \\
		&\qquad + p^{1/4}k^{1/4}(\log(1/\delta))^{1/2} + p^{1/4} (\log(1/\delta))^{3/4} 
		+ k^{1/2}(\log(1/\delta))^{1/2} + \log(1/\delta)
	\bigg)
\end{align*}
In summary, with probability at least $1-5\delta$, we have
\begin{align*}
	& \frac{\|X_{i_{1:k},\bigcdot}^\top (I_k^\leftarrow - I_k)Y_{\pi^\star_{i_{1:k}},\bigcdot} \|_F}{\sigma_\x\sigma_\y} \\
	& \leq \frac{\scrT_1+\scrT_2+\scrT_3+\scrT_4}{\sigma_\x\sigma_\y}\\
	& \lesssim \frac{\|D U_{i_{1:k},\bigcdot}^\top (I_k^\leftarrow - I_k) U_{i_{1:k},\bigcdot} D \|_F}{\sigma_\x\sigma_\y}  \\
	& \qquad +  \bigg(\frac{1}{\sigma_\x} + \frac{1}{\sigma_\y}\bigg) 
	\bigg(
		p^{1/2} \|(I_k^\leftarrow - I_k)U_{i_{1:k},\bigcdot} D\|_F
		+ p^{1/4} \|(I_k^\leftarrow - I_k)U_{i_{1:k},\bigcdot}D^2U_{i_{1:k},\bigcdot}^\top(I_k^\leftarrow-I_k)^\top\|_F^{1/2} (\log(1/\delta))^{1/4}\\
		& \qquad \qquad 
		+ \|(I_k^\leftarrow-I_k)U_{i_{1:k},\bigcdot}D^2U_{i_{1:k},\bigcdot}^\top(I_k^\leftarrow-I_k)^\top\|_2^{1/2} (\log(1/\delta))^{1/2}
	\bigg)\\
	& \qquad + 
	\bigg( 
		p k^{1/2} + p^{3/4}k^{1/4}(\log(1/\delta))^{1/4} + p^{5/8}k^{1/8}(\log(1/\delta))^{3/8} \\
		& \qquad \qquad + p^{1/2}k^{1/2}(\log(1/\delta))^{1/4} 
		+ p^{1/2}k^{1/4}(\log(1/\delta))^{1/2} 
		+ p^{3/8}k^{3/8}(\log(1/\delta))^{3/8} + p^{3/8}k^{1/8}(\log(1/\delta))^{5/8} \\
		& \qquad \qquad + p^{1/4}k^{1/4}(\log(1/\delta))^{1/2} + p^{1/4} (\log(1/\delta))^{3/4} 
		+ k^{1/2}(\log(1/\delta))^{1/2} + \log(1/\delta)
	\bigg).
\end{align*}
The proof is concluded.

\subsubsection{Proof of Lemma \ref{lemma:negligible_exponent_uniform_bound}} \label{prf:lemma:negligible_exponent_uniform_bound}
For notational simplicity, we let 
$$
\tilde \xi :=  \frac{\sigma_{\min} \xi }{\sigma_{\max}}.
$$
We would like to find conditions under which
\begin{align*}
	& \frac{\|(I_k^\leftarrow - I_k) U_{i_{1:k},\bigcdot} D\|_F^2}{\sigma_{\max}^2}\\
	& \gg \tilde{\xi} \cdot \bigg[
	\frac{\|D U_{i_{1:k},\bigcdot}^\top (I_k^\leftarrow - I_k) U_{i_{1:k},\bigcdot} D \|_F}{\sigma_\x\sigma_\y}  \\
	& \qquad +  \bigg(\frac{1}{\sigma_\x} + \frac{1}{\sigma_\y}\bigg) 
	\bigg(
		p^{1/2} \|(I_k^\leftarrow - I_k)U_{i_{1:k},\bigcdot} D\|_F\\
		& \qquad \qquad 
		+ p^{1/4} \|(I_k^\leftarrow - I_k)U_{i_{1:k},\bigcdot}D^2U_{i_{1:k},\bigcdot}^\top(I_k^\leftarrow-I_k)^\top\|_F^{1/2} (\log(1/\delta_{i_{1:k}}))^{1/4}\\
		& \qquad \qquad 
		+ \|(I_k^\leftarrow-I_k)U_{i_{1:k},\bigcdot}D^2U_{i_{1:k},\bigcdot}^\top(I_k^\leftarrow-I_k)^\top\|_2^{1/2} (\log(1/\delta_{i_{1:k}}))^{1/2}
	\bigg)\\
	& \qquad + 
	\bigg( 
		p k^{1/2} + p^{3/4}k^{1/4}(\log(1/\delta_{i_{1:k}}))^{1/4} + p^{5/8}k^{1/8}(\log(1/\delta_{i_{1:k}}))^{3/8} \\
		& \qquad \qquad + p^{1/2}k^{1/2}(\log(1/\delta_{i_{1:k}}))^{1/4} 
		+ p^{1/2}k^{1/4}(\log(1/\delta_{i_{1:k}}))^{1/2} \\
		& \qquad \qquad + p^{3/8}k^{3/8}(\log(1/\delta_{i_{1:k}}))^{3/8} + p^{3/8}k^{1/8}(\log(1/\delta_{i_{1:k}}))^{5/8} \\
		& \qquad \qquad + p^{1/4}k^{1/4}(\log(1/\delta_{i_{1:k}}))^{1/2} + p^{1/4} (\log(1/\delta_{i_{1:k}}))^{3/4} 
		+ k^{1/2}(\log(1/\delta_{i_{1:k}}))^{1/2} + \log(1/\delta_{i_{1:k}})
	\bigg)
	\bigg]
\end{align*}	
uniformly over all cycles of size at least $k^\star$,
where
$$
	\log(1/\delta_{i_{1:k}}) \asymp \frac{\|(I_k^\leftarrow - I_k) U_{i_{1:k},\bigcdot} D\|_F^2}{\sigma_{\max}^2} \geq k^\star \beta^2.
$$
There are fifteen terms in the right-hand side of the desired inequality. And we deal with each term separately.
\begin{enumerate}
	\item We would like
	$$
		\tilde{\xi} \cdot \frac{\|D U_{i_{1:k},\bigcdot}^\top (I_k^\leftarrow - I_k) U_{i_{1:k},\bigcdot} D \|_F}{\sigma_\x\sigma_\y} \ll 
		\frac{\|(I_k^\leftarrow - I_k)U_{i_{1:k},\bigcdot} D\|_F^2}{\sigma_{\max}^2} 
	$$
	uniformly over all cycles of size at least $k^\star$.
	The left-hand side above is upper bounded by
	$$
		\tilde{\xi} \cdot \frac{d_1}{\sigma_{\min}} \frac{\|(I_k^\leftarrow - I_k)U_{i_{1:k},\bigcdot} D\|_F}{\sigma_{\max}}.
	$$
	Thus, the desired inequality would hold if
	$$
		\tilde{\xi} \cdot \frac{d_1}{\sigma_{\min}} \ll 
		\frac{\|(I_k^\leftarrow - I_k)U_{i_{1:k},\bigcdot} D\|_F}{\sigma_{\max}}.
	$$
	Since the right-hand side above is lower bounded by $\sqrt{k}\beta \geq \sqrt{k^\star}\beta$, it suffices to require
	$$
		\tilde{\xi} \ll \frac{\sqrt{k^\star}\beta}{d_1/\sigma_{\min}}.
	$$
	\item We would like
	$$
		\tilde{\xi} \cdot \bigg(\frac{1}{\sigma_\x} + \frac{1}{\sigma_\y}\bigg) p^{1/2} \|(I_k^\leftarrow - I_k)U_{i_{1:k},\bigcdot} D\|_F \ll
		\frac{\|(I_k^\leftarrow - I_k)U_{i_{1:k},\bigcdot} D\|_F^2}{\sigma_{\max}^2} 
	$$
	uniformly over all cycles of size at least $k^\star$.
	The left-hand side above can be expressed as
	$$
		\tilde \xi \cdot \frac{\sigma_{\min} + \sigma_{\max}}{\sigma_{\min}} \cdot p^{1/2} \cdot \frac{\|(I_k^\leftarrow - I_k)U_{i_{1:k},\bigcdot} D\|_F}{\sigma_{\max}} \leq 2\tilde \xi \cdot \frac{\sigma_{\max}}{\sigma_{\min}} \cdot p^{1/2} \frac{\|(I_k^\leftarrow - I_k)U_{i_{1:k},\bigcdot} D\|_F}{\sigma_{\max}} .
	$$
	Thus, the desired inequality is implied by
	$$
		\tilde{\xi} \ll p^{-1/2} \cdot \frac{\sigma_{\min}}{\sigma_{\max}}  \cdot
		\frac{\|(I_k^\leftarrow - I_k)U_{i_{1:k},\bigcdot} D\|_F}{\sigma_{\max}}.
	$$
	Since ${\|(I_k^\leftarrow - I_k)U_{i_{1:k},\bigcdot} D\|_F}/{\sigma_{\max}}\geq \sqrt{k}\beta \geq \sqrt{k^\star}\beta$, it suffices to require
	$$
		\tilde{\xi} \ll \frac{\sqrt{k^\star}\beta}{\sqrt{p}} \cdot \frac{\sigma_{\min}}{\sigma_{\max}}.
	$$
	\item 
	We would like
	\begin{align*}
		& \tilde{\xi} \cdot \bigg(\frac{1}{\sigma_\x} + \frac{1}{\sigma_\y}\bigg)p^{1/4} \|(I_k^\leftarrow - I_k)U_{i_{1:k},\bigcdot}D^2U_{i_{1:k},\bigcdot}^\top(I_k^\leftarrow-I_k)^\top\|_F^{1/2} (\log(1/\delta_{i_{1:k}}))^{1/4}\\
		& \ll 
		\frac{\|(I_k^\leftarrow - I_k) U_{i_{1:k},\bigcdot} D\|_F^2}{\sigma_{\max}^2}
	\end{align*}	
	uniformly over all cycles of size at least $k^\star$.
	Recalling the definition of $\delta_{i_{1:k}}$, the above inequality is equivalent to
	\begin{align*}
		& \tilde{\xi} \cdot \bigg(\frac{1}{\sigma_\x} + \frac{1}{\sigma_\y}\bigg)p^{1/4} \|(I_k^\leftarrow - I_k)U_{i_{1:k},\bigcdot}D^2U_{i_{1:k},\bigcdot}^\top(I_k^\leftarrow-I_k)^\top\|_F^{1/2}
		\ll 
		\left(\frac{\|(I_k^\leftarrow - I_k) U_{i_{1:k},\bigcdot} D\|_F^2}{\sigma_{\max}^2}\right)^{3/4}.
	\end{align*}	
	We can upper bound the left-hand side above by
	$$
		2 \tilde{\xi} \cdot \frac{\sigma_{\max}}{\sigma_{\min}}\cdot  
		p^{1/4} \cdot
		\frac{\|(I_k^\leftarrow - I_k)U_{i_{1:k},\bigcdot} D\|_F}{\sigma_{\max}}.
	$$
	Thus, the desired inequality is implied by
	$$
		\tilde \xi \ll \frac{\sigma_{\min}}{\sigma_{\max} p^{1/4}} \cdot \left(\frac{\|(I_k^\leftarrow - I_k) U_{i_{1:k},\bigcdot} D\|_F}{\sigma_{\max}}\right)^{1/2}.
	$$
	Since ${\|(I_k^\leftarrow - I_k)U_{i_{1:k},\bigcdot} D\|_F}/{\sigma_{\max}}\geq \sqrt{k}\beta \geq \sqrt{k^\star}\beta$, it suffices to require
	$$
		\tilde{\xi} \ll \frac{\sigma_{\min}}{\sigma_{\max}} \cdot \bigg(\frac{\sqrt{k^\star}\beta}{\sqrt{p}}\bigg)^{1/2}.
	$$
	\item 
	We would like 
	\begin{align*}
		& \tilde{\xi} \cdot \bigg(\frac{1}{\sigma_\x} + \frac{1}{\sigma_\y}\bigg)
		\|(I_k^\leftarrow-I_k)U_{i_{1:k},\bigcdot}D^2U_{i_{1:k},\bigcdot}^\top(I_k^\leftarrow-I_k)^\top\|_2^{1/2} (\log(1/\delta_{i_{1:k}}))^{1/2}\\
		& \ll
		\frac{\|(I_k^\leftarrow - I_k)U_{i_{1:k},\bigcdot} D\|_F^2}{\sigma_{\max}^2}
	\end{align*}	
	uniformly over all cycles of size at least $k^\star$.
	Recalling the definition of $\delta_{i_{1:k}}$, the above inequality is equivalent to
	$$
		\tilde{\xi} \cdot \bigg(\frac{1}{\sigma_\x} + \frac{1}{\sigma_\y}\bigg)
		\|(I_k^\leftarrow-I_k)U_{i_{1:k},\bigcdot}D^2U_{i_{1:k},\bigcdot}^\top(I_k^\leftarrow-I_k)^\top\|_2^{1/2}
		\ll
		\left(\frac{\|(I_k^\leftarrow - I_k)U_{i_{1:k},\bigcdot} D\|_F^2}{\sigma_{\max}^2}\right)^{1/2}
	$$
	The left-hand side is upper bounded by
	$$
		2 \tilde{\xi} \cdot \frac{\sigma_{\max}}{\sigma_{\min}} \cdot 
		\frac{\|(I_k^\leftarrow - I_k)U_{i_{1:k},\bigcdot} D\|_F}{\sigma_{\max}}.
	$$
	Thus, it suffices to require
	$$
		\tilde{\xi} \ll \frac{\sigma_{\min}}{\sigma_{\max}}.
	$$
	\item 
	We would like 
	$$
		\tilde{\xi} \cdot p k^{1/2} 
		\ll \frac{\|(I_k^\leftarrow - I_k)U_{i_{1:k},\bigcdot} D\|_F^2}{\sigma_{\max}^2}
	$$
	uniformly over all cycles of size at least $k^\star$.
	Since the right-hand side is lower bounded by $k \beta^2$, it suffices to require
	$
		\tilde{\xi} p \ll \sqrt{k} \beta^2,
	$
	which is further implied by
	$$
		\tilde{\xi} \ll \frac{\sqrt{k^\star}\beta^2}{p}.
	$$
	\item We would like
	$$
		\tilde{\xi} \cdot p^{3/4} k^{1/4} (\log(1/\delta_{i_{1:k}}))^{1/4} \ll 
		\frac{\|(I_k^\leftarrow - I_k)U_{i_{1:k},\bigcdot} D\|_F^2}{\sigma_{\max}^2}
	$$
	uniformly over all cycles of size at least $k^\star$.
	By the definition of $\delta_{i_{1:k}}$, the above inequality is equivalent to
	$$
		\tilde{\xi} \cdot p^{3/4} k^{1/4} \ll 
		\left(\frac{\|(I_k^\leftarrow - I_k)U_{i_{1:k},\bigcdot} D\|_F^2}{\sigma_{\max}^2}\right)^{3/4},
	$$
	which is further implied by
	$
		\tilde{\xi} \cdot p^{3/4} k^{1/4} \ll k^{3/4} (\beta^2)^{3/4}.
	$
	Thus, it suffices to require
	$$
		\tilde{\xi} \ll \sqrt{k^\star} \left(\frac{\beta^2}{p}\right)^{3/4}.
	$$
	\item We would like
	$$
		\tilde{\xi} \cdot p^{5/8} k^{1/8} (\log(1/\delta_{i_{1:k}}))^{3/8}
		\ll 
		\frac{\|(I_k^\leftarrow - I_k)U_{i_{1:k},\bigcdot} D\|_F^2}{\sigma_{\max}^2}.
	$$
	uniformly over all cycles of size at least $k^\star$.
	By the definition of $\delta_{i_{1:k}}$, the above inequality is equivalent to
	$$
		\tilde{\xi} \cdot p^{5/8} k^{1/8} 
		\ll 
		\left(\frac{\|(I_k^\leftarrow - I_k)U_{i_{1:k},\bigcdot} D\|_F^2}{\sigma_{\max}^2}\right)^{5/8},
	$$
	which is further implied by $\tilde{\xi} \cdot p^{5/8}k^{1/8} \ll k^{5/8}(\beta^2)^{5/8}$.
	Thus, it suffices to require 
	$$
		\tilde{\xi} \ll \sqrt{k^\star}\left(\frac{\beta^2}{p}\right)^{5/8}.
	$$
	\item We would like
	$$
		\tilde{\xi} p^{1/2}k^{1/2}(\log(1/\delta_{i_{1:k}}))^{1/4}
		\ll
		\frac{\|(I_k^\leftarrow - I_k)U_{i_{1:k},\bigcdot} D\|_F^2}{\sigma_{\max}^2}
	$$
	uniformly over all cycles of size at least $k^\star$.
	By the definition of $\delta_{i_{1:k}}$, the above inequality is equivalent to
	$$
		\tilde{\xi} p^{1/2}k^{1/2}
		\ll
		\left(\frac{\|(I_k^\leftarrow - I_k)U_{i_{1:k},\bigcdot} D\|_F^2}{\sigma_{\max}^2}\right)^{3/4},
	$$
	which is further implied by
	$
		\tilde{\xi} \cdot p^{1/2} k^{1/2} \ll  k^{3/4} (\beta^2)^{3/4}.
	$
	Thus, it suffices to require
	$$
		\tilde{\xi} \ll (k^\star)^{1/4} \sqrt{\beta} \bigg(\frac{\beta^2}{p}\bigg)^{1/2} .
	$$
	\item We would like
	$$
		\tilde{\xi} \cdot p^{1/2} k^{1/4} (\log(1/\delta_{i_{1:k}}))^{1/2}
		\ll
		\frac{\|(I_k^\leftarrow - I_k)U_{i_{1:k},\bigcdot} D\|_F^2}{\sigma_{\max}^2}
	$$
	uniformly over all cycles of size at least $k^\star$.
	By the definition of $\delta_{i_{1:k}}$, the above inequality is equivalent to
	$$
		\tilde{\xi} \cdot p^{1/2} k^{1/4}
		\ll
		\left(\frac{\|(I_k^\leftarrow - I_k)U_{i_{1:k},\bigcdot} D\|_F^2}{\sigma_{\max}^2}\right)^{1/2},
	$$
	which is further implied by $\tilde{\xi} \cdot p^{1/2}k^{1/4} \ll k^{1/2} (\beta^2)^{1/2}$. Thus it suffices to require
	$$
		\tilde{\xi} \ll (k^\star)^{1/4} \bigg(\frac{\beta^2}{p}\bigg)^{1/2}.
	$$
	\item We would like
	$$
		\tilde{\xi} \cdot p^{3/8} k^{3/8} (\log(1/\delta_{i_{1:k}}))^{3/8}
		\ll
		\frac{\|(I_k^\leftarrow - I_k)U_{i_{1:k},\bigcdot} D\|_F^2}{\sigma_{\max}^2}
	$$
	uniformly over all cycles of size at least $k^\star$.
	By the definition of $\delta_{i_{1:k}}$, the above inequality is equivalent to
	$$
		\tilde{\xi} \cdot p^{3/8} k^{3/8} 
		\ll
		\left(\frac{\|(I_k^\leftarrow - I_k)U_{i_{1:k},\bigcdot} D\|_F^2}{\sigma_{\max}^2}\right)^{5/8},
	$$
	which is further implied by
	$
		\tilde{\xi} \cdot p^{3/8} k^{3/8} 
		\ll k^{5/8} (\beta^2)^{5/8}.
	$
	Thus, it suffices to require
	$$
		\tilde{\xi} \ll (k^\star)^{1/4} \sqrt{\beta} \bigg(\frac{\beta^2}{p}\bigg)^{3/8}.
	$$
	\item We would like 
	$$
		\tilde{\xi} \cdot p^{3/8} k^{1/8} (\log(1/\delta_{i_{1:k}}))^{5/8}
		\ll
		\frac{\|(I_k^\leftarrow - I_k)U_{i_{1:k},\bigcdot} D\|_F^2}{\sigma_{\max}^2}
	$$
	uniformly over all cycles of size at least $k^\star$.
	By the definition of $\delta_{i_{1:k}}$, the above inequality is equivalent to
	$$
		\tilde{\xi} \cdot p^{3/8} k^{1/8}
		\ll
		\left(\frac{\|(I_k^\leftarrow - I_k)U_{i_{1:k},\bigcdot} D\|_F^2}{\sigma_{\max}^2}\right)^{3/8},
	$$
	which is further implied by
	$
		\tilde{\xi} \cdot p^{3/8} k^{1/8}
		\ll k^{3/8} (\beta^2)^{3/8}.
	$
	Thus, it suffices to require
	$$
		\tilde{\xi} \ll  (k^\star)^{1/4}\bigg(\frac{\beta^2}{p}\bigg)^{3/8}.
	$$
	\item We would like
	$$
		\tilde{\xi} \cdot p^{1/4} k^{1/4}(\log(1/\delta_{i_{1:k}}))^{1/2}
		\ll
		\frac{\|(I_k^\leftarrow - I_k)U_{i_{1:k},\bigcdot} D\|_F^2}{\sigma_{\max}^2}
	$$
	uniformly over all cycles of size at least $k^\star$.
	By the definition of $\delta_{i_{1:k}}$, the above inequality is equivalent to
	$$
		\tilde{\xi} \cdot p^{1/4} k^{1/4}
		\ll
		\left(\frac{\|(I_k^\leftarrow - I_k)U_{i_{1:k},\bigcdot} D\|_F^2}{\sigma_{\max}^2}\right)^{1/2},
	$$
	which is further implied by
	$
		\tilde{\xi} \cdot p^{1/4} k^{1/4} \ll k^{1/2}(\beta^2)^{1/2}.
	$
	Thus, it suffices to require
	$$
		\tilde{\xi} \ll (k^\star)^{1/4} \sqrt{\beta} \bigg(\frac{\beta^2}{p}\bigg)^{1/4}.
	$$
	\item We would like
	$$
		\tilde{\xi} \cdot p^{1/4} (\log(1/\delta_{i_{1:k}}))^{3/4}
		\ll 
		\frac{\|(I_k^\leftarrow - I_k)U_{i_{1:k},\bigcdot} D\|_F^2}{\sigma_{\max}^2}
	$$
	uniformly over all cycles of size at least $k^\star$.
	By the definition of $\delta_{i_{1:k}}$, the above inequality is equivalent to
	$$
		\tilde{\xi} \cdot p^{1/4}
		\ll 
		\left( \frac{\|(I_k^\leftarrow - I_k)U_{i_{1:k},\bigcdot} D\|_F^2}{\sigma_{\max}^2} \right)^{1/4},
	$$
	which is further implied by $\tilde{\xi} \cdot p^{1/4}\ll k^{1/4}(\beta^2)^{1/4}$. Thus, it suffices to require
	$$
		\tilde{\xi} \ll (k^\star)^{1/4} \bigg(\frac{\beta^2}{p}\bigg)^{1/4}.
	$$
	\item We would like
	$$
		\tilde{\xi} \cdot k^{1/2} (\log(1/\delta_{i_{1:k}}))^{1/2}
		\ll
		\frac{\|(I_k^\leftarrow - I_k)U_{i_{1:k},\bigcdot} D\|_F^2}{\sigma_{\max}^2}
	$$
	uniformly over all cycles of size at least $k^\star$.
	By the definition of $\delta_{i_{1:k}}$, the above inequality is equivalent to
	$$
		\tilde{\xi} \cdot k^{1/2} 
		\ll
		\left(\frac{\|(I_k^\leftarrow - I_k)U_{i_{1:k},\bigcdot} D\|_F^2}{\sigma_{\max}^2}\right)^{1/2},
	$$
	which is further implied by
	$$
		\tilde{\xi} \ll \beta.
	$$
	\item We would like
	$$
		\tilde{\xi} \cdot \log(1/\delta_{i_{1:k}}) 
		\ll
		\frac{\|(I_k^\leftarrow - I_k)U_{i_{1:k},\bigcdot} D\|_F^2}{\sigma_{\max}^2}
	$$
	uniformly over all cycles of size at least $k^\star$.
	By the definition of $\delta_{i_{1:k}}$, the above inequality is equivalent to
	$$
		\tilde{\xi} \ll 1.
	$$
\end{enumerate}
Since $\beta^2 \geq 1$ and $\sigma_{\max}\geq \sigma_{\min}$, one can readily check that the above 15 conditions will be satisfied if 
\begin{align*}
	\tilde \xi 
	& \ll
	\left\{(k^\star)^{1/2} \left[ \frac{\beta}{d_1/\sigma_{\min}} \land \frac{\sigma_{\min}}{\sigma_{\max}}\left(\frac{\beta^2}{p}\right)^{1/2} \land \frac{\beta^2}{p} \land \left(\frac{\beta^2}{p}\right)^{3/4} \land \left(\frac{\beta^2}{p}\right)^{5/8} \right]\right\} \\
	& \qquad \land \left\{(k^\star)^{1/4} \left[ \frac{\sigma_{\min}}{\sigma_{\max}}\left(\frac{\beta^2}{p}\right)^{1/4} \land \left(\frac{\beta^2}{p}\right)^{1/2}  \land \left(\frac{\beta^2}{p}\right)^{3/8} \right]  \right\}
	\land \frac{\sigma_{\min}}{\sigma_{\max}} \\
	& = \left\{(k^\star)^{1/2} \left[ \frac{\beta}{d_1/\sigma_{\min}} \land \frac{\sigma_{\min}}{\sigma_{\max}}\left(\frac{\beta^2}{p}\right)^{1/2} \land \frac{\beta^2}{p} \land\left(\frac{\beta^2}{p}\right)^{5/8} \right]\right\} \\
	& \qquad \land \left\{(k^\star)^{1/4} \left[ \frac{\sigma_{\min}}{\sigma_{\max}}\left(\frac{\beta^2}{p}\right)^{1/4} \land \left(\frac{\beta^2}{p}\right)^{1/2}  \land \left(\frac{\beta^2}{p}\right)^{3/8} \right]  \right\}
	\land \frac{\sigma_{\min}}{\sigma_{\max}}.
\end{align*}
The proof is finished by recalling that $\tilde \xi = \sigma_{\min}\xi / \sigma_{\max}$.

\subsubsection{Proof of Lemma \ref{lemma:ub_uniform_and_loco_condition}} \label{prf:lemma:ub_uniform_and_loco_condition}
For notational simplicity, we denote
\begin{align*}
	\scrT_1 & := \frac{\beta}{d_1/\sigma_{\max}} \land \left(\frac{\beta^2}{p}\right)^{1/2} \land \frac{\sigma_{\max}\beta^2}{\sigma_{\min}p} \land \frac{\sigma_{\max}}{\sigma_{\min}}\left(\frac{\beta^2}{p}\right)^{5/8}, \\
	\scrT_2 & := \left(\frac{\beta^2}{p}\right)^{1/4}\land \frac{\sigma_{\max}}{\sigma_{\min}}\left(\frac{\beta^2}{p}\right)^{1/2}  \land \frac{\sigma_{\max}}{\sigma_{\min}}\left(\frac{\beta^2}{p}\right)^{3/8},
\end{align*}	
so that \eqref{eq:ub_prf_uniform_bound_assump} and \eqref{eq:ub_prf_loco_bound_assump} become
$$
	\calE_\unif \ll \left(\sqrt{k^\star} \scrT_1\right) \land \left((k^\star)^{1/4} \scrT_2\right) \land 1,
	\qquad
	\textnormal{and}
	\qquad
	k^\star \calE_\loco \ll \left(\sqrt{k^\star} \scrT_1\right) \land \left((k^\star)^{1/4} \scrT_2\right) \land 1,
$$
respectively. Rearranging terms, the above two inequalities are equivalent to $\calE_\unif \ll 1$ and
$$
	L := \frac{\calE_\unif^2}{\scrT_1^2} \lor \frac{\calE_\unif^4}{\scrT_2^4} \ll k^\star \ll \frac{\scrT_1^2}{\calE_\loco^2} \land \frac{\scrT_2^{4/3}}{\calE_\loco^{4/3}} \land \frac{1}{\calE_\loco} =: U.
$$
We now claim that if the following three conditions hold, then a valid choice of $k^\star \in[n]$ satisfying the above inequality would exist:
\begin{enumerate}
	\item $L \ll U$;
	\item $L \ll n$;
	\item $U \gg 1$.
\end{enumerate}
To show this claim, we consider the following four scenarios:
\begin{enumerate}[label=(\alph*)]
	\item If $U \geq n$ and $L\geq 1$, then we have $1 \leq L \ll n \leq U$. In this case, we can choose $k^\star = \lfloor L \cdot \sqrt{n/L} \rfloor$;
	\item If $U\geq n$ and $L < 1$, then we have $L < 1 \ll n \leq U$. In this case, we choose $k^\star = \lfloor\sqrt{n}\rfloor$. It is clear that $k^\star \in[n]$ and $L \ll k^\star\ll U$.
	\item If $U < n$ and $L\geq 1$, then we have $1 \leq L \ll U < n$. In this case, we choose $k^\star = \lfloor L \cdot \sqrt{U/L}\rfloor$. 
	\item If $U < n$ and $L < 1$, then we have $L < 1 \ll U < n$. In this case, we choose $k^\star = \lfloor \sqrt{U}\rfloor$.
\end{enumerate}
Hence the claim holds. To this end, it suffices to show that the three conditions above are implied by the assumptions imposed in this lemma.
\begin{enumerate}
	\item We first derive a sufficient condition for $L\ll U$ to hold. Note that $L$ is the maximum of two terms and $U$ is the minimum of three terms, so there are six underlying inequalities that we want them to hold. We discuss them in order below.
	\begin{enumerate}
		\item[1.1.] We want $\frac{\calE_\unif^2}{\scrT_1^2} \ll \frac{\scrT_1^2}{\calE_\loco^2}$, or equivalently
		$$
			\scrT_1^4 =  \left[\frac{\beta}{d_1/\sigma_{\max}} \land \left(\frac{\beta^2}{p}\right)^{1/2} \land \frac{\sigma_{\max}\beta^2}{\sigma_{\min}p} \land \frac{\sigma_{\max}}{\sigma_{\min}}\left(\frac{\beta^2}{p}\right)^{5/8}\right]^4\gg \calE_\unif^2\calE_\loco^2.
		$$
		Rearranging terms, we get the following equivalent condition:
		\begin{align*}
			\frac{\beta^2}{p} & \gg \left(\frac{d_1^2/\sigma_{\max}^2}{p} \cdot \calE_\unif\calE_\loco\right) \lor \left(\calE_\unif\calE_\loco\right)  \lor \left(\frac{\sigma_{\min}^{4}}{\sigma_{\max}^{4}} \cdot \calE_\unif^{2}\calE_\loco^{2}\right)^{1/4} \lor \left(\frac{\sigma_{\min}^{4}}{\sigma_{\max}^{4}} \cdot \calE_\unif^{2}\calE_\loco^{2}\right)^{2/5} \\
			& =  \left(\frac{d_1^2/\sigma_{\max}^2}{p} \cdot \calE_\unif\calE_\loco\right) \lor \left(\calE_\unif\calE_\loco\right)  \lor \left(\frac{\sigma_{\min}^{4}}{\sigma_{\max}^{4}} \cdot \calE_\unif^{2}\calE_\loco^{2}\right)^{1/4},
		\end{align*}	
		where the last inequality is by $\calE_\unif^{2}\calE_\loco^{2}\ll 1$.

		\item[1.2.] We want $\frac{\calE_\unif^2}{\scrT_1^2}\ll \frac{\scrT_2^{4/3}}{\calE_\loco^{4/3}}$, or equivalently
		\begin{align*}
			\scrT_1^2 \scrT_2^{4/3} & = 
			\left[\frac{\beta}{d_1/\sigma_{\max}} \land \left(\frac{\beta^2}{p}\right)^{1/2} \land \frac{\sigma_{\max}\beta^2}{\sigma_{\min}p} \land \frac{\sigma_{\max}}{\sigma_{\min}}\left(\frac{\beta^2}{p}\right)^{5/8}\right]^2  \\
			& \qquad \times \left[ \left(\frac{\beta^2}{p}\right)^{1/4}\land \frac{\sigma_{\max}}{\sigma_{\min}}\left(\frac{\beta^2}{p}\right)^{1/2}  \land \frac{\sigma_{\max}}{\sigma_{\min}}\left(\frac{\beta^2}{p}\right)^{3/8} \right]^{4/3} \\
			& \gg \calE_\unif^2 \calE_\loco^{4/3}.
		\end{align*}
		The left-hand side above can be written as
		\begin{align*}
			\scrT_1^2 \scrT_2^{4/3} 
			& = \left(\frac{\beta^{8/3}}{p^{4/3}} \cdot \frac{p}{d_1^2/\sigma_{\max}^2}\right)
			\land \left( \frac{\beta^{10/3}}{p^{5/3}}\cdot \frac{p}{d_1^2/\sigma_{\max}^2}\cdot \frac{\sigma_{\max}^{4/3}}{\sigma_{\min}^{4/3}} \right)
			\land \left( \frac{\beta^3}{p^{3/2}} \cdot \frac{p}{d_1^2/\sigma_{\max}^2} \cdot \frac{\sigma_{\max}^{4/3}}{\sigma_{\min}^{4/3}} \right) \\
			& \qquad 
			\land  \left(\frac{\beta^{8/3}}{p^{4/3}}\right)
			\land \left( \frac{\beta^{10/3}}{p^{5/3}} \cdot \frac{\sigma_{\max}^{4/3}}{\sigma_{\min}^{4/3}} \right)
			\land \left( \frac{\beta^3}{p^{3/2}}\cdot \frac{\sigma_{\max}^{4/3}}{\sigma_{\min}^{4/3}} \right) \\
			& \qquad
			\land \left( \frac{\beta^{14/3}}{p^{7/3}}  \cdot \frac{\sigma_{\max}^2}{\sigma_{\min}^2} \right)
			\land \left( \frac{\beta^{16/3}}{p^{8/3}} \cdot \frac{\sigma_{\max}^{10/3}}{\sigma_{\min}^{10/3}}\right)
			\land \left( \frac{\beta^5}{p^{5/2}} \cdot \frac{\sigma^{10/3}_{\max}}{\sigma_{\min}^{10/3}} \right) \\
			& \qquad
			\land \left( \frac{\beta^{19/6}}{p^{19/12}} \cdot \frac{\sigma_{\max}^2}{\sigma_{\min}^2} \right)
			\land \left( \frac{\beta^{23/6}}{p^{23/12}} \cdot \frac{\sigma_{\max}^{10/3}}{\sigma_{\min}^{10/3}} \right)
			\land \left( \frac{\beta^{7/2}}{p^{7/4}} \cdot \frac{\sigma_{\max}^{10/3}}{\sigma_{\min}^{10/3}} \right).
		\end{align*}
		Thus, $\scrT_1^2 \scrT_2^{4/3} \gg \calE_\unif^2 \calE_\loco^{4/3}$ is equivalent to
		\begin{align*}
			\frac{\beta^2}{p}
			& \gg
			\left( \frac{d_1^2/\sigma_{\max}^2}{p} \cdot \calE_\unif^2 \calE_\loco^{4/3} \right)^{3/4}
			\lor \left(\frac{d_1^2/\sigma_{\max}^2}{p} \cdot \frac{\sigma_{\min}^{4/3}}{\sigma_{\max}^{4/3}} \cdot \calE_\unif^2 \calE_\loco^{4/3}\right)^{3/5}
			\lor \left(\frac{d_1^2/\sigma_{\max}^2}{p}\cdot \frac{\sigma_{\min}^{4/3}}{\sigma_{\max}^{4/3}} \cdot \calE_\unif^2 \calE_\loco^{4/3}\right)^{2/3} \\
			& \qquad 
			\lor \left(\calE_\unif^2 \calE_\loco^{4/3}\right)^{3/4}
			\lor \left(\frac{\sigma_{\min}^{4/3}}{\sigma_{\max}^{4/3}} \cdot \calE_\unif^2 \calE_\loco^{4/3}\right)^{3/5}
			\lor \left( \frac{\sigma_{\min}^{4/3}}{\sigma_{\max}^{4/3}} \cdot \calE_\unif^2 \calE_\loco^{4/3}\right)^{2/3} \\
			& \qquad 
			\lor \left(\frac{\sigma_{\min}^{2}}{\sigma_{\min}^2}\cdot \calE_\unif^2 \calE_\loco^{4/3}\right)^{3/7}
			\lor \left( \frac{\sigma_{\min}^{10/3}}{\sigma_{\max}^{10/3}} \cdot \calE_\unif^2 \calE_\loco^{4/3} \right)^{3/8}
			\lor \left( \frac{\sigma_{\min}^{10/3}}{\sigma_{\max}^{10/3}} \cdot \calE_\unif^2 \calE_\loco^{4/3} \right)^{2/5} \\
			& \qquad
			\lor \left(\frac{\sigma_{\min}^2}{\sigma_{\max}^2} \cdot \calE_\unif^2 \calE_\loco^{4/3} \right)^{12/19}
			\lor \left(\frac{\sigma_{\min}^{10/3}}{\sigma_{\max}^{10/3}} \cdot \calE_\unif^2 \calE_\loco^{4/3}\right)^{12/23}
			\lor \left(\frac{\sigma_{\min}^{10/3}}{\sigma_{\max}^{10/3}}\cdot \calE_\unif^2 \calE_\loco^{4/3}\right)^{4/7} \\
			& = 
			\left( \frac{d_1^2/\sigma_{\max}^2}{p} \cdot \calE_\unif^2 \calE_\loco^{4/3} \right)^{3/4}
			\lor \left(\frac{d_1^2/\sigma_{\max}^2}{p} \cdot \frac{\sigma_{\min}^{4/3}}{\sigma_{\max}^{4/3}} \cdot \calE_\unif^2 \calE_\loco^{4/3}\right)^{3/5}
			\lor \left(\frac{d_1^2/\sigma_{\max}^2}{p}\cdot \frac{\sigma_{\min}^{4/3}}{\sigma_{\max}^{4/3}} \cdot \calE_\unif^2 \calE_\loco^{4/3}\right)^{2/3} \\
			& \qquad 
			\lor \left(\calE_\unif^2 \calE_\loco^{4/3}\right)^{3/4}
			\lor \left(\frac{\sigma_{\min}^{4/3}}{\sigma_{\max}^{4/3}} \cdot \calE_\unif^2 \calE_\loco^{4/3}\right)^{3/5}
			\lor \left( \frac{\sigma_{\min}^{2}}{\sigma_{\max}^{2}} \cdot \calE_\unif^2 \calE_\loco^{4/3}\right)^{3/7}
			\lor \left(\frac{\sigma_{\min}^{10/3}}{\sigma_{\max}^{10/3}}\cdot \calE_\unif^2 \calE_\loco^{4/3}\right)^{3/8}
		\end{align*}
		where the equality is by $\calE_\unif \lor \calE_\loco \ll 1$.

		\item[1.3.] We want $\frac{\calE_\unif^2}{\scrT_1^2} \ll \frac{1}{\calE_\loco}$, or equivalently
		$$
			\scrT_1^2 = \left[\frac{\beta}{d_1/\sigma_{\max}} \land \left(\frac{\beta^2}{p}\right)^{1/2} \land \frac{\sigma_{\max}\beta^2}{\sigma_{\min}p} \land \frac{\sigma_{\max}}{\sigma_{\min}}\left(\frac{\beta^2}{p}\right)^{5/8}\right]^2
			\gg \calE_\unif^2 \calE_\loco.
		$$
		Rearranging terms, the above inequality becomes
		\begin{align*}
			\frac{\beta^2}{p}
			& \gg \left(\frac{d_1^2/\sigma_{\max}^2}{p} \cdot \calE_\unif^2\calE_\loco\right) \lor \left(\calE_\unif^2\calE_\loco \right) \lor \left( \frac{\sigma_{\min}^2}{\sigma_{\max}^2} \cdot \calE_\unif^2\calE_\loco \right)^{1/2} \lor \left(\frac{\sigma_{\min}^2}{\sigma_{\max}^2} \cdot \calE_\unif^2\calE_\loco\right)^{4/5} \\
			& = \left(\frac{d_1^2/\sigma_{\max}^2}{p} \cdot \calE_\unif^2\calE_\loco\right) \lor \left(\calE_\unif^2\calE_\loco \right) \lor \left( \frac{\sigma_{\min}^2}{\sigma_{\max}^2} \cdot \calE_\unif^2\calE_\loco \right)^{1/2},
		\end{align*}
		where the last equality is by $\calE_\unif^2 \calE_\loco \ll 1$.

		\item[1.4.] We want $\frac{\calE_\unif^4}{\scrT_2^4} \ll \frac{\scrT_1^2}{\calE_\loco^2}$, which is equivalent to
		\begin{align*}
			\scrT_1^2 \scrT_2^4 
			& = 
			 \left[\frac{\beta}{d_1/\sigma_{\max}} \land \left(\frac{\beta^2}{p}\right)^{1/2} \land \frac{\sigma_{\max}\beta^2}{\sigma_{\min}p} \land \frac{\sigma_{\max}}{\sigma_{\min}}\left(\frac{\beta^2}{p}\right)^{5/8}\right]^2 \\
			 & \qquad \times \left[\left(\frac{\beta^2}{p}\right)^{1/4}\land \frac{\sigma_{\max}}{\sigma_{\min}}\left(\frac{\beta^2}{p}\right)^{1/2}  \land \frac{\sigma_{\max}}{\sigma_{\min}}\left(\frac{\beta^2}{p}\right)^{3/8}\right]^4\\
			& \gg \calE_\unif^4 \calE_\loco^2.
		\end{align*}
		The left-hand side above can be written as
		\begin{align*}
			\scrT_1^2 \scrT_2^4
			& = 
			\left( \frac{\beta^4}{p^2} \cdot \frac{p}{d_1^2/\sigma_{\max}^2} \right)
			\land \left( \frac{\beta^6}{p^3}\cdot \frac{p}{d_1^2/\sigma_{\max}^2}\cdot \frac{\sigma_{\max}^4}{\sigma_{\min}^4} \right)
			\land \left(\frac{\beta^5}{p^{5/2}}\cdot \frac{p}{d_1^2/\sigma_{\max}^2}\cdot \frac{\sigma_{\max}^4}{\sigma_{\min}^4}\right) \\
			& \qquad
			\land \left(\frac{\beta^4}{p^2}\right)
			\land \left(\frac{\beta^6}{p^3}\cdot \frac{\sigma_{\max}^4}{\sigma_{\min}^4}\right)
			\land \left(\frac{\beta^5}{p^{5/2}}\cdot \frac{\sigma_{\max}^4}{\sigma_{\min}^4}\right) \\
			& \qquad
			\land \left( \frac{\beta^6}{p^3} \cdot \frac{\sigma_{\max}^2}{\sigma_{\min}^2} \right)
			\land \left( \frac{\beta^8}{p^4}\cdot \frac{\sigma_{\max}^6}{\sigma_{\min}^6} \right)
			\land \left( \frac{\beta^7}{p^{7/2}}\cdot \frac{\sigma_{\max}^6}{\sigma_{\min}^6} \right) \\
			& \qquad
			\land \left( \frac{\beta^{9/2}}{p^{9/4}}\cdot \frac{\sigma_{\max}^2}{\sigma_{\min}^2} \right)
			\land \left( \frac{\beta^{13/2}}{p^{13/4}}\cdot \frac{\sigma_{\max}^6}{\sigma_{\min}^6} \right)
			\land \left( \frac{\beta^{11/2}}{p^{11/4}} \cdot \frac{\sigma_{\max}^6}{\sigma_{\min}^6} \right).
		\end{align*}
		Thus, the desired inequality is equivalent to
		\begin{align*}
			\frac{\beta^2}{p}
			& \gg
			\left(\frac{d_1^2/\sigma_{\max}^2}{p}\cdot \calE_\unif^4\calE_\loco^2\right)^{1/2}
			\lor \left(\frac{d_1^2/\sigma_{\max}^2}{p} \cdot \frac{\sigma_{\min}^4}{\sigma_{\max}^4} \cdot \calE_\unif^4 \calE_\loco^2\right)^{1/3}
			\lor \left(\frac{d_1^2/\sigma_{\max}^2}{p} \cdot \frac{\sigma_{\min}^4}{\sigma_{\max}^4} \cdot \calE_\unif^4\calE_\loco^2\right)^{2/5} \\
			& \qquad
			\lor \left(\calE_\unif^4 \calE_\loco^2\right)^{1/2}
			\lor \left(\frac{\sigma_{\min}^4}{\sigma_{\max}^4} \cdot \calE_\unif^4 \calE_\loco^2\right)^{1/3}
			\lor \left(\frac{\sigma_{\min}^4}{\sigma_{\max}^4} \calE_\unif^4 \calE_\loco^2\right)^{2/5} \\
			& \qquad
			\lor \left(\frac{\sigma_{\min}^2}{\sigma_{\max}^2} \cdot \calE_\unif^4\calE_\loco^2\right)^{1/3} 
			\lor \left(\frac{\sigma_{\min}^6}{\sigma_{\max}^6} \cdot \calE_\unif^4 \calE_\loco^2\right)^{1/4} 
			\lor \left(\frac{\sigma_{\min}^6}{\sigma_{\max}^6} \cdot \calE_\unif^4 \calE_\loco^2\right)^{2/7} \\
			& \qquad 
			\lor \left(\frac{\sigma_{\min}^2}{\sigma_{\max}^2} \cdot \calE_\unif^4 \calE_\loco^2\right)^{4/9} 
			\lor \left(\frac{\sigma_{\min}^6}{\sigma_{\max}^6} \cdot \calE_\unif^4 \calE_\loco^2 \right)^{4/13}
			\lor \left(\frac{\sigma_{\min}^6}{\sigma_{\max}^6} \cdot \calE_\unif^4 \calE_\loco^2 \right)^{4/11} \\
			& = \left(\frac{d_1^2/\sigma_{\max}^2}{p}\cdot \calE_\unif^4\calE_\loco^2\right)^{1/2}
			\lor \left(\frac{d_1^2/\sigma_{\max}^2}{p} \cdot \frac{\sigma_{\min}^4}{\sigma_{\max}^4} \cdot \calE_\unif^4 \calE_\loco^2\right)^{1/3}
			\lor \left(\frac{d_1^2/\sigma_{\max}^2}{p} \cdot \frac{\sigma_{\min}^4}{\sigma_{\max}^4} \cdot \calE_\unif^4\calE_\loco^2\right)^{2/5} \\
			& \qquad 
			\lor \left(\calE_\unif^4\calE_\loco^2\right)^2
			\lor \left(\frac{\sigma_{\min}^2}{\sigma_{\max}^2}\cdot \calE_\unif^4 \calE_\loco^2\right)^{1/3}
			\lor \left(\frac{\sigma_{\min}^4}{\sigma_{\max}^4}\cdot \calE_\unif^4 \calE_\loco^2\right)^{1/3}
			\lor \left(\frac{\sigma_{\min}^6}{\sigma_{\max}^6}\cdot \calE_\unif^4 \calE_\loco^2\right)^{1/4} \\
			& = \left(\frac{d_1^2/\sigma_{\max}^2}{p}\cdot \calE_\unif^4\calE_\loco^2\right)^{1/2}
			\lor \left(\frac{d_1^2/\sigma_{\max}^2}{p} \cdot \frac{\sigma_{\min}^4}{\sigma_{\max}^4} \cdot \calE_\unif^4 \calE_\loco^2\right)^{1/3}
			\lor \left(\frac{d_1^2/\sigma_{\max}^2}{p} \cdot \frac{\sigma_{\min}^4}{\sigma_{\max}^4} \cdot \calE_\unif^4\calE_\loco^2\right)^{2/5} \\
			& \qquad 
			\lor \left(\calE_\unif^4\calE_\loco^2\right)^2
			\lor \left(\frac{\sigma_{\min}^2}{\sigma_{\max}^2}\cdot \calE_\unif^4 \calE_\loco^2\right)^{1/3}
			\lor \left(\frac{\sigma_{\min}^6}{\sigma_{\max}^6}\cdot \calE_\unif^4 \calE_\loco^2\right)^{1/4}
		\end{align*}
		where the last two equalities are by $\calE_\unif^4\calE_\loco^2 \ll 1$.

		\item[1.5.] We want $\frac{\calE_\unif^4}{\scrT_2^4} \ll \frac{\scrT_2^{4/3}}{\calE_\loco^{4/3}}$, or equivalently
		\begin{align*}
			\scrT_2^{16/3} = 
			\left[\left(\frac{\beta^2}{p}\right)^{1/4}\land \frac{\sigma_{\max}}{\sigma_{\min}}\left(\frac{\beta^2}{p}\right)^{1/2}  \land \frac{\sigma_{\max}}{\sigma_{\min}}\left(\frac{\beta^2}{p}\right)^{3/8}\right]^{16/3}
			\gg \calE_\unif^{4} \calE_\loco^{4/3}.
		\end{align*}
		Rearranging terms, the above inequality becomes
		\begin{align*}
			\frac{\beta^2}{p}
			& \gg
			\left( \calE_\unif^{4} \calE_\loco^{4/3} \right)^{3/4} 
			\lor \left(\frac{\sigma_{\min}^{16/3}}{\sigma_{\max}^{16/3}} \cdot \calE_\unif^{4} \calE_\loco^{4/3}\right)^{3/8}
			\lor \left(\frac{\sigma_{\min}^{16/3}}{\sigma_{\max}^{16/3}} \cdot \calE_\unif^{4} \calE_\loco^{4/3}\right)^{1/2}\\
			& = 
			\left( \calE_\unif^{4} \calE_\loco^{4/3} \right)^{3/4} 
			\lor \left(\frac{\sigma_{\min}^{16/3}}{\sigma_{\max}^{16/3}} \cdot \calE_\unif^{4} \calE_\loco^{4/3}\right)^{3/8},
		\end{align*}
		where the last equality is by $\calE_\unif^4\calE_\loco^{4/3} \ll 1$.
		\item[1.6.] We want $\frac{\calE_\unif^4}{\scrT_2^4} \ll \frac{1}{\calE_\loco}$, which is equivalent to
		\begin{align*}
			\scrT^4_2 = \left[\left(\frac{\beta^2}{p}\right)^{1/4}\land \frac{\sigma_{\max}}{\sigma_{\min}}\left(\frac{\beta^2}{p}\right)^{1/2}  \land \frac{\sigma_{\max}}{\sigma_{\min}}\left(\frac{\beta^2}{p}\right)^{3/8}\right]^{4} \gg \calE_\unif^4 \calE_\loco.
		\end{align*}
		Rearranging terms, the above inequality becomes
		\begin{align*}
			\frac{\beta^2}{p} \gg
			& \left(\calE_\unif^4 \calE_\loco\right) 
			\lor \left(\frac{\sigma_{\min}^4}{\sigma_{\max}^4} \cdot \calE_\unif^4 \calE_\loco\right)^{1/2}
			\lor \left(\frac{\sigma_{\min}^4}{\sigma_{\max}^4} \cdot \calE_\unif^4 \calE_\loco\right)^{2/3} \\
			& = 
			\left(\calE_\unif^4 \calE_\loco\right) 
			\lor \left(\frac{\sigma_{\min}^4}{\sigma_{\max}^4} \cdot \calE_\unif^4 \calE_\loco\right)^{1/2},
		\end{align*}
		where the last equality is by $\calE_\unif^4 \calE_\loco \ll 1$.
	\end{enumerate}

	\item The second condition is $L \ll n$, i.e., both
	$$
		\frac{\calE_\unif^2}{\scrT_1^2} \ll n \iff \scrT_1^2 = \left[\frac{\beta}{d_1/\sigma_{\max}} \land \left(\frac{\beta^2}{p}\right)^{1/2} \land \frac{\sigma_{\max}\beta^2}{\sigma_{\min}p} \land \frac{\sigma_{\max}}{\sigma_{\min}}\left(\frac{\beta^2}{p}\right)^{5/8}\right]^2 \gg \frac{\calE_\unif^2}{n},
	$$
	and
	$$
		\frac{\calE_\unif^4}{\scrT_2^4} \ll n \iff
		\scrT_2^4 =  \left[\left(\frac{\beta^2}{p}\right)^{1/4}\land \frac{\sigma_{\max}}{\sigma_{\min}}\left(\frac{\beta^2}{p}\right)^{1/2}  \land \frac{\sigma_{\max}}{\sigma_{\min}}\left(\frac{\beta^2}{p}\right)^{3/8}\right]^{4} \gg  \frac{\calE_\unif^4}{n}.
	$$
	We can equivalently express the above inequality as
	\begin{align*}
		\frac{\beta^2}{p}
		& \gg \left(\frac{d_1^2/\sigma_{\max}^2}{p} \cdot \frac{\calE_\unif^2}{n}\right) 
		\lor \left( \frac{\calE_\unif^2}{n}\right)
		\lor \left(\frac{\sigma_{\min}^2}{\sigma_{\max}^2} \cdot \frac{\calE_\unif^2}{n}\right)^{1/2} 
		\lor \left( \frac{\sigma_{\min}^2}{\sigma_{\max}^2} \cdot \frac{\calE_\unif^2}{n} \right)^{4/5} \\
		& \qquad \lor \left(\frac{\calE_\unif^4}{n}\right) 
		\lor \left(\frac{\sigma_{\min}^4}{\sigma_{\max}^4} \cdot \frac{\calE_\unif^4}{n}\right)^{1/2}
		\lor \left(\frac{\sigma_{\min}^4}{\sigma_{\max}^4} \cdot \frac{\calE_\unif^4}{n}\right)^{2/3} \\
		& = \left(\frac{d_1^2/\sigma_{\max}^2}{p} \cdot \frac{\calE_\unif^2}{n}\right) 
		\lor \left( \frac{\calE_\unif^2}{n}\right)
		\lor \left(\frac{\sigma_{\min}^2}{\sigma_{\max}^2} \cdot \frac{\calE_\unif^2}{n}\right)^{1/2},
	\end{align*}
	where the last equality is by $\calE_\unif \ll 1$.
	\item The third condition is $U\gg 1$, i.e., 
	$$
		\frac{\scrT_1^2}{\calE_\loco^2} \land \frac{\scrT_2^{4/3}}{\calE_\loco^{4/3}} \land \frac{1}{\calE_\loco} \gg 1 ,
	$$
	which holds if and only if
	$$
		\scrT_1 \gg \calE_\loco, \scrT_2 \gg \calE_\loco, \calE_\loco \ll 1.
	$$
	The above inequality holds when $\calE_\loco \ll 1$	and
	\begin{align*}
		\frac{\beta^2}{p}
		& \gg 
		\left(\frac{d_1/\sigma_{\max}}{\sqrt{p}}\cdot \calE_\loco\right)^2
		\lor \left(\calE_\loco\right)^2
		\lor \left(\frac{\sigma_{\min}}{\sigma_{\max}} \cdot \calE_\loco\right)
		\lor \left(\frac{\sigma_{\min}}{\sigma_{\max}} \cdot \calE_\loco\right)^{8/5} \\
		& \qquad
		\lor \left(\calE_\loco\right)^4
		\lor \left( \frac{\sigma_{\min}}{\sigma_{\max}} \cdot \calE_\loco \right)^2
		\lor \left( \frac{\sigma_{\min}}{\sigma_{\max}} \cdot \calE_\loco \right)^{8/3} \\
		& = 
		\left(\frac{d_1/\sigma_{\max}}{\sqrt{p}}\cdot \calE_\loco\right)^2
		\lor \left(\calE_\loco\right)^2
		\lor \left( \frac{\sigma_{\min}}{\sigma_{\max}} \cdot \calE_\loco \right),
	\end{align*}
	where the last equality is by $\calE_\loco\ll 1$.
\end{enumerate}
Summarize the above discussion, a sufficient condition for $L\ll U, L \ll n, U\gg 1$ is given by $\calE_\loco \lor \calE_\unif \ll 1$ and
\begin{align*}
	\frac{\beta^2}{p}
	& \gg
	\left(\frac{d_1^2/\sigma_{\max}^2}{p} \cdot \calE_\unif\calE_\loco\right) \lor \left(\calE_\unif\calE_\loco\right)  \lor \left(\frac{\sigma_{\min}^{4}}{\sigma_{\max}^{4}} \cdot \calE_\unif^{2}\calE_\loco^{2}\right)^{1/4} \\
	&\qquad
	\lor \left( \frac{d_1^2/\sigma_{\max}^2}{p} \cdot \calE_\unif^2 \calE_\loco^{4/3} \right)^{3/4}
	\lor \left(\frac{d_1^2/\sigma_{\max}^2}{p} \cdot \frac{\sigma_{\min}^{4/3}}{\sigma_{\max}^{4/3}} \cdot \calE_\unif^2 \calE_\loco^{4/3}\right)^{3/5}
	\lor \left(\frac{d_1^2/\sigma_{\max}^2}{p}\cdot \frac{\sigma_{\min}^{4/3}}{\sigma_{\max}^{4/3}} \cdot \calE_\unif^2 \calE_\loco^{4/3}\right)^{2/3} \\
	& \qquad 
	\lor \left(\calE_\unif^2 \calE_\loco^{4/3}\right)^{3/4}
	\lor \left(\frac{\sigma_{\min}^{4/3}}{\sigma_{\max}^{4/3}} \cdot \calE_\unif^2 \calE_\loco^{4/3}\right)^{3/5}
	\lor \left( \frac{\sigma_{\min}^{2}}{\sigma_{\max}^{2}} \cdot \calE_\unif^2 \calE_\loco^{4/3}\right)^{3/7}
	\lor \left(\frac{\sigma_{\min}^{10/3}}{\sigma_{\max}^{10/3}}\cdot \calE_\unif^2 \calE_\loco^{4/3}\right)^{3/8} \\
	& \qquad
	\lor \left(\frac{d_1^2/\sigma_{\max}^2}{p} \cdot \calE_\unif^2\calE_\loco\right) \lor \left(\calE_\unif^2\calE_\loco \right) \lor \left( \frac{\sigma_{\min}^2}{\sigma_{\max}^2} \cdot \calE_\unif^2\calE_\loco \right)^{1/2} \\
	& \qquad
	\lor \left(\frac{d_1^2/\sigma_{\max}^2}{p}\cdot \calE_\unif^4\calE_\loco^2\right)^{1/2}
	\lor \left(\frac{d_1^2/\sigma_{\max}^2}{p} \cdot \frac{\sigma_{\min}^4}{\sigma_{\max}^4} \cdot \calE_\unif^4 \calE_\loco^2\right)^{1/3}
	\lor \left(\frac{d_1^2/\sigma_{\max}^2}{p} \cdot \frac{\sigma_{\min}^4}{\sigma_{\max}^4} \cdot \calE_\unif^4\calE_\loco^2\right)^{2/5} \\
	& \qquad 
	\lor \left(\calE_\unif^4\calE_\loco^2\right)^2
	\lor \left(\frac{\sigma_{\min}^2}{\sigma_{\max}^2}\cdot \calE_\unif^4 \calE_\loco^2\right)^{1/3}
	\lor \left(\frac{\sigma_{\min}^6}{\sigma_{\max}^6}\cdot \calE_\unif^4 \calE_\loco^2\right)^{1/4} \\
	& \qquad
	\lor \left( \calE_\unif^{4} \calE_\loco^{4/3} \right)^{3/4} 
	\lor \left(\frac{\sigma_{\min}^{16/3}}{\sigma_{\max}^{16/3}} \cdot \calE_\unif^{4} \calE_\loco^{4/3}\right)^{3/8} \\
	& \qquad
	\lor \left(\calE_\unif^4 \calE_\loco\right) 
	\lor \left(\frac{\sigma_{\min}^4}{\sigma_{\max}^4} \cdot \calE_\unif^4 \calE_\loco\right)^{1/2} \\
	& \qquad
	\lor \left(\frac{d_1^2/\sigma_{\max}^2}{p} \cdot \frac{\calE_\unif^2}{n}\right) 
	\lor \left( \frac{\calE_\unif^2}{n}\right)
	\lor \left(\frac{\sigma_{\min}^2}{\sigma_{\max}^2} \cdot \frac{\calE_\unif^2}{n}\right)^{1/2} \\
	& \qquad
	\lor \left(\frac{d_1/\sigma_{\max}}{\sqrt{p}}\cdot \calE_\loco\right)^2
	\lor \left(\calE_\loco\right)^2
	\lor \left( \frac{\sigma_{\min}}{\sigma_{\max}} \cdot \calE_\loco \right) \\
	& = 
	\left( \frac{d_1^2/\sigma_{\max}^2}{p} \cdot \calE_\unif \calE_\loco\right)
	\lor \left( \frac{d_1^2/\sigma_{\max}^2}{p} \cdot \calE_\unif^2 \calE_\loco\right)
	\lor \left( \frac{d_1^2/\sigma_{\max}^2}{p} \cdot \frac{\calE_\unif^2}{n}\right) 
	\lor \left( \frac{d_1^2/\sigma_{\max}^2}{p} \cdot \calE_\loco^2\right)\\
	& \qquad
	\lor \left( \frac{d_1^2/\sigma_{\max}^2}{p} \cdot \calE_\unif^2 \calE_\loco^{4/3}\right)^{3/4}
	\lor \left( \frac{d_1^2/\sigma_{\max}}{p} \cdot \calE_\unif^2 \calE_\loco^{4/3} \cdot \frac{\sigma_{\min}^{4/3}}{\sigma_{\max}^{4/3}}\right)^{2/3}
	\lor \left( \frac{d_1^2\sigma_{\max}}{p} \cdot \calE_\unif^2 \calE_\loco^{4/3} \cdot \frac{\sigma_{\min}^{4/3}}{\sigma_{\max}^{4/3}}\right)^{3/5}\\
	& \qquad
	\lor \left( \frac{d_1^2\sigma_{\max}}{p} \cdot \calE_\unif^4 \calE_\loco^2\right)^{1/2}
	\lor \left( \frac{d_1^2\sigma_{\max}}{p} \cdot \calE_\unif^4 \calE_\loco^2 \cdot \frac{\sigma_{\min}^4}{\sigma_{\max}^4}\right)^{2/5}
	\lor \left( \frac{d_1^2}{p} \cdot \calE_\unif^4 \calE_\loco^2 \cdot \frac{\sigma_{\min}^4}{\sigma_{\max}^4}\right)^{1/3}\\
	& \qquad
	\lor \left(\calE_\unif^8 \calE_\loco^4\right)
	\lor \left(\calE_\unif^4 \calE_\loco\right)
	\lor \left(\calE_\unif^3 \calE_\loco\right)
	\lor \left(\calE_\unif^2 \calE_\loco\right)
	\lor \left(\calE_\unif^2 \calE_\loco^{1/2} \cdot \frac{\sigma_{\min}^2}{\sigma_{\max}^2}\right)\\
	& \qquad
	\lor \left(\frac{\calE_\unif^2}{n}\right)
	\lor \left(\calE_\unif^{3/2} \calE_\loco\right)
	\lor \left(\calE_\unif^{3/2} \calE_\loco^{1/2} \cdot \frac{\sigma_{\min}^2}{\sigma_{\max}^2}\right)
	\lor \left(\calE_\unif^{4/3} \calE_\loco^{2/3} \cdot \frac{\sigma_{\min}^{2/3}}{\sigma_{\max}^{2/3}}\right) \\
	&\qquad
	\lor \left(\calE_\unif^{6/5} \calE_\loco^{4/5} \cdot \frac{\sigma_{\min}^{4/5}}{\sigma_{\max}^{4/5}}\right)
	\lor \left(\calE_\unif \calE_\loco\right)
	\lor \left(\calE_\unif \calE_\loco^{1/2} \cdot \frac{\sigma_{\min}}{\sigma_{\max}}\right)
	\lor \left(\calE_\unif \calE_\loco^{1/2} \cdot \frac{\sigma_{\min}^{3/2}}{\sigma_{\max}^{3/2}}\right) \\
	& \qquad
	\lor \left(\frac{\calE_\unif^2}{\sqrt{n}} \cdot \frac{\sigma_{\min}}{\sigma_{\max}}\right)
	\lor \left(\calE_\unif^{6/7} \calE_\loco^{4/7} \cdot \frac{\sigma_{\min}^{6/7}}{\sigma_{\max}^{6/7}}\right)
	\lor \left(\calE_\unif^{3/4} \calE_\loco^{1/2} \cdot \frac{\sigma_{\min}^{5/4}}{\sigma_{\max}^{5/4}}\right)
	\lor \left(\calE_\unif^{1/2} \calE_\loco^{1/2} \cdot \frac{\sigma_{\min}}{\sigma_{\max}}\right) \\
	& \qquad
	\lor \left(\calE_\loco^2\right)
	\lor \left(\calE_\loco \cdot \frac{\sigma_{\min}}{\sigma_{\max}}\right) \\
	& = 
	\left( \frac{d_1^2/\sigma_{\max}^2}{p} \cdot \calE_\unif \calE_\loco\right)
	\lor \left( \frac{d_1^2/\sigma_{\max}^2}{p} \cdot \frac{\calE_\unif^2}{n}\right) 
	\lor \left( \frac{d_1^2/\sigma_{\max}^2}{p} \cdot \calE_\loco^2\right)\\
	& \qquad
	\lor \left( \frac{d_1^2/\sigma_{\max}^2}{p} \cdot \calE_\unif^2 \calE_\loco^{4/3}\right)^{3/4}
	\lor \left( \frac{d_1^2/\sigma_{\max}}{p} \cdot \calE_\unif^2 \calE_\loco^{4/3} \cdot \frac{\sigma_{\min}^{4/3}}{\sigma_{\max}^{4/3}}\right)^{2/3}
	\lor \left( \frac{d_1^2/\sigma_{\max}}{p} \cdot \calE_\unif^2 \calE_\loco^{4/3} \cdot \frac{\sigma_{\min}^{4/3}}{\sigma_{\max}^{4/3}}\right)^{3/5}\\
	& \qquad
	\lor \left( \frac{d_1^2/\sigma_{\max}}{p} \cdot \calE_\unif^4 \calE_\loco^2\right)^{1/2}
	\lor \left( \frac{d_1^2/\sigma_{\max}}{p} \cdot \calE_\unif^4 \calE_\loco^2 \cdot \frac{\sigma_{\min}^4}{\sigma_{\max}^4}\right)^{2/5}
	\lor \left( \frac{d_1^2/\sigma_{\max}}{p} \cdot \calE_\unif^4 \calE_\loco^2 \cdot \frac{\sigma_{\min}^4}{\sigma_{\max}^4}\right)^{1/3}\\
	& \qquad
	\lor \left(\frac{\calE_\unif^2}{n}\right)
	\lor \left(\calE_\unif^{4/3} \calE_\loco^{2/3} \cdot \frac{\sigma_{\min}^{2/3}}{\sigma_{\max}^{2/3}}\right) \\
	&\qquad
	\lor \left(\calE_\unif^{6/5} \calE_\loco^{4/5} \cdot \frac{\sigma_{\min}^{4/5}}{\sigma_{\max}^{4/5}}\right)
	\lor \left(\calE_\unif \calE_\loco\right) \\
	& \qquad
	\lor \left(\frac{\calE_\unif}{\sqrt{n}} \cdot \frac{\sigma_{\min}}{\sigma_{\max}}\right)
	\lor \left(\calE_\unif^{6/7} \calE_\loco^{4/7} \cdot \frac{\sigma_{\min}^{6/7}}{\sigma_{\max}^{6/7}}\right)
	\lor \left(\calE_\unif^{1/2} \calE_\loco^{1/2} \cdot \frac{\sigma_{\min}}{\sigma_{\max}}\right) \\
	& \qquad
	\lor \left(\calE_\loco^2\right)
	\lor \left(\calE_\loco \cdot \frac{\sigma_{\min}}{\sigma_{\max}}\right) \\
	& =
	\left( \omega_n \cdot \frac{\calE_\loco}{\calE_\unif}\right)
	\lor \left( \frac{\omega_n}{n}\right) 
	\lor \left( \omega_n \cdot \frac{\calE_\loco^2}{\calE_\unif^2}\right)\\
	& \qquad
	\lor \left( \omega_n^{3/4} \cdot \calE_\loco \right)
	\lor \left( \omega_n^{2/3} \cdot \calE_\loco^{8/9} \cdot \frac{\sigma_{\min}^{8/9}}{\sigma_{\max}^{8/9}}\right)
	\lor \left( \omega_n^{3/5} \cdot \calE_\loco^{4/5} \cdot \frac{\sigma_{\min}^{4/5}}{\sigma_{\max}^{4/5}}\right)\\
	& \qquad
	\lor \left( \omega_n^{1/2} \cdot \calE_\unif\calE_\loco\right)
	\lor \left( \omega_n^{2/5} \cdot \calE_\unif^{4/5}\calE_\loco^{4/5} \cdot \frac{\sigma_{\min}^{8/5}}{\sigma_{\max}^{8/5}}\right)
	\lor \left( \omega_n^{1/3} \cdot \calE_\loco^{2/3}\cdot \frac{\sigma_{\min}^{4/3}}{\sigma_{\max}^{4/3}}\right)\\
	& \qquad
	\lor \left(\frac{\calE_\unif^2}{n}\right)
	\lor \left(\calE_\unif^{4/3} \calE_\loco^{2/3} \cdot \frac{\sigma_{\min}^{2/3}}{\sigma_{\max}^{2/3}}\right) \\
	&\qquad
	\lor \left(\calE_\unif^{6/5} \calE_\loco^{4/5} \cdot \frac{\sigma_{\min}^{4/5}}{\sigma_{\max}^{4/5}}\right)
	\lor \left(\calE_\unif \calE_\loco\right) \\
	& \qquad
	\lor \left(\frac{\calE_\unif}{\sqrt{n}} \cdot \frac{\sigma_{\min}}{\sigma_{\max}}\right)
	\lor \left(\calE_\unif^{6/7} \calE_\loco^{4/7} \cdot \frac{\sigma_{\min}^{6/7}}{\sigma_{\max}^{6/7}}\right)
	\lor \left(\calE_\unif^{1/2} \calE_\loco^{1/2} \cdot \frac{\sigma_{\min}}{\sigma_{\max}}\right) \\
	& \qquad
	\lor \left(\calE_\loco^2\right)
	\lor \left(\calE_\loco \cdot \frac{\sigma_{\min}}{\sigma_{\max}}\right)
\end{align*}
where we rearranged terms in the first equality, cancelled redundant terms in the second equality (by $\calE_\loco \lor \calE_\unif \ll 1$ and $\sigma_{\min}\leq \sigma_{\max}$), and plugged in $\omega_n = \frac{d_1^2/\sigma_{\max}^2}{p} \cdot \calE_\unif^2$ in the last equality. The proof is concluded.

\subsection{Proof of Corollary \ref{cor:ub_mismatch_proportion_under_further_assumptions}} \label{prf:cor:ub_mismatch_proportion_under_further_assumptions}

Define 
$
\Delta_{i,i'} := \|(U_{i, \bigcdot} - U_{i', \bigcdot})D\|^2/\sigma_{\max}^2.
$
With a slight abuse of notation we let
$$
	\calE_k := \frac{1}{n}\sum_{i\neq i'}  \exp\left\{ \frac{-(1-o(1))C_k \|(U_{i, \bigcdot} - U_{i', \bigcdot})D\|^2}{\sigma_{\max}^2} \right\},
$$
where the exact value of the $o(1)$ term may change from line by line.
We start by computing
\begin{align*}
	& \sum_{i_1\neq \cdots\neq i_k} \exp\left\{\frac{-(1-o(1))C_k\left\|(I_k^\leftarrow -I_k)U_{i_{1:k},\bigcdot} D\right\|_F^2}{\sigma_{\max}^2}\right\}\\
	& = \sum_{i_1\neq \cdots \neq i_k} \exp\left\{-(1-o(1))C_k \Delta_{i_{k},i_1}\right\} \cdot \exp\left\{-(1-o(1)) C_k \left(\Delta_{i_1, i_2} + \cdots \Delta_{i_{k-2},i_k}\right)\right\} \\
	& \leq \sum_{i_1\neq \cdots \neq i_k} \max_{i'\neq i_1}\left(\exp\left\{-(1-o(1))C_k \Delta_{i_1,i'}\right\} \right)\cdot \exp\left\{-(1-o(1)) C_k \left(\Delta_{i_1, i_2} + \cdots \Delta_{i_{k-1},i_k}\right)\right\} \\
	& \leq \sum_{i_1\neq \cdots \neq i_{k-1}} \max_{i'\neq i_1}\left(\exp\left\{-(1-o(1))C_k \Delta_{i_1,i'}\right\} \right)\cdot  \exp\left\{-(1-o(1)) C_k \left(\Delta_{i_1, i_2} + \cdots \Delta_{i_{k-2},i_{k-1}}\right)\right\} \\
	& \qquad \times \sum_{i_k\in [n]\setminus\{i_{k-1}\}}  \exp\left\{-(1-o(1)) C_k  \Delta_{i_{k-1},i_k}\right\} \\
	& \leq \calE_k \sum_{i_1\neq \cdots \neq i_{k-1}} \max_{i'\neq i_1}\left(\exp\left\{-(1-o(1))C_k \Delta_{i_1,i'}\right\} \right)\cdot  \exp\left\{-(1-o(1)) C_k \left(\Delta_{i_1, i_2} + \cdots \Delta_{i_{k-2},i_{k-1}}\right)\right\},
\end{align*}
where the last inequality is by \eqref{eq:ub_further_assumption_1}. 
Recursively applying the above arguments, we arrive at
\begin{align*}
	& \sum_{i_1\neq \cdots\neq i_k} \exp\left\{\frac{-(1-o(1))C_k\left\|(I_k^\leftarrow -I_k)U_{i_{1:k},\bigcdot} D\right\|_F^2}{\sigma_{\max}^2}\right\}\\
	& \leq \calE_k^{k-1} \sum_{i_1\in[n]} \max_{i'\neq i_1} \exp\left\{-(1-o(1))C_k \Delta_{i_1,i'}\right\} \\
	& \leq \calE_k^{k-1} \sum_{i \neq i'} \exp\left\{-(1-o(1))C_k \Delta_{i,i'}\right\} \\
	& \leq n \calE_k^k.
\end{align*}

To show \eqref{eq:ub_mismatch_proportion_under_futher_assumption_1}, we proceed by
\begin{align*}
	& \frac{1}{n} \sum_{k=2}^n \sum_{i_1\neq \cdots\neq i_k} \exp\left\{\frac{-(1-o(1))C_k\left\|(I_k^\leftarrow -I_k)U_{i_{1:k},\bigcdot} D\right\|_F^2}{\sigma_{\max}^2}\right\}\\
	& \leq \sum_{k=2}^n \calE_k^k\\
	& \leq \sum_{k=2}^n \left( \frac{1}{n} \sum_{i\neq i'} \exp\left\{ \frac{-(1-o(1)) C_6 \|(U_{i,\bigcdot} - U_{i', \bigcdot})D\|^2 }{\sigma_{\max}^2} \right\}\right)^k\\
	& \leq \frac{1}{n} \sum_{i\neq i'} \exp\left\{ \frac{-(1-o(1)) C_6 \|(U_{i,\bigcdot} - U_{i', \bigcdot})D\|^2 }{\sigma_{\max}^2} \right\},
\end{align*}
where the last inequality is by summing over geometric series. 
By repeating the arguments in the proof of Theorem \ref{thm:ub} (in particular, the arguments starting from \eqref{eq:ub_exponential_plus_polynomial}), we conclude that\eqref{eq:ub_mismatch_proportion_under_futher_assumption_1} holds.

To show \eqref{eq:ub_mismatch_proportion_under_futher_assumption_2}, we proceed by
\begin{align*}
	& \frac{1}{n} \sum_{k=2}^n \sum_{i_1\neq \cdots\neq i_k} \exp\left\{\frac{-(1-o(1))C_k\left\|(I_k^\leftarrow -I_k)U_{i_{1:k},\bigcdot} D\right\|_F^2}{\sigma_{\max}^2}\right\}\\
	& \leq \sum_{k=2}^{n} \calE_k^k\\
	& \leq \sum_{k=2}^n \left[\max_{i\in[n]} \sum_{i'\in[n]\setminus\{i\}} \exp\left\{-\frac{(1-o(1)) C_k \|(U_{i,\bigcdot} - U_{i',\bigcdot})D\|^2}{\sigma_{\max}^2}\right\}\right]^k \\
	& = \sum_{k=2}^n  \max_{i\in[n]} \left[\sum_{i'\in[n]\setminus\{i\}} \exp\left\{-\frac{(1-o(1)) C_k \|(U_{i,\bigcdot} - U_{i',\bigcdot})D\|^2}{\sigma_{\max}^2}\right\}\right]^k \\
	& \leq \sum_{k=2}^n \max_{i\in[n]} \sum_{i'\in[n]\setminus\{i\}}\exp\left\{-\frac{(1-o(1)) k C_k \|(U_{i,\bigcdot}-U_{i',\bigcdot})D\|^2}{\sigma_{\max}^2}\right\},
\end{align*}
where the last inequality is by \eqref{eq:ub_further_assumption_2}. For the summands with $2\leq k\leq 5$, since $kC_k \geq 1/4$, we have
\begin{align*}
	& \sum_{k=2}^5 \max_{i\in[n]} \sum_{i'\in[n]\setminus\{i\}}\exp\left\{-\frac{(1-o(1)) k C_k \|(U_{i,\bigcdot}-U_{i',\bigcdot})D\|^2}{\sigma_{\max}^2}\right\} \\
	& \leq \sum_{k=2}^k \max_{i\in[n]} \sum_{i'\in[n]\setminus\{i\}}\exp\left\{-\frac{(1-o(1)) \|(U_{i,\bigcdot}-U_{i',\bigcdot})D\|^2}{4\sigma_{\max}^2}\right\} \\
	& \leq \max_{i\in[n]} \sum_{i'\in[n]\setminus\{i\}}\exp\left\{-\frac{(1-o(1)) \|(U_{i,\bigcdot}-U_{i',\bigcdot})D\|^2}{4\sigma_{\max}^2}\right\} \\
	& \leq \frac{1}{n} \sum_{i\neq i'}\exp\left\{-\frac{(1-o(1)) \|(U_{i,\bigcdot}-U_{i',\bigcdot})D\|^2}{4\sigma_{\max}^2}\right\},
\end{align*}
where the last line is by \eqref{eq:ub_further_assumption_1}.
For the summands with $k\geq 6$, we have
\begin{align*}
	& \sum_{k=6}^n \max_{i\in[n]} \sum_{i'\in[n]\setminus\{i\}}\exp\left\{-\frac{(1-o(1)) k C_k \|(U_{i,\bigcdot}-U_{i',\bigcdot})D\|^2}{\sigma_{\max}^2}\right\} \\
	& = \sum_{k=6}^n \max_{i\in[n]} \sum_{i'\in[n]\setminus\{i\}}\exp\left\{-\frac{(1-o(1)) k C_6 \|(U_{i,\bigcdot}-U_{i',\bigcdot})D\|^2}{\sigma_{\max}^2}\right\} \\
	& \leq \sum_{k=6}^n \max_{i\in[n]} \left[\sum_{i'\in[n]\setminus\{i\}}\exp\left\{-\frac{(1-o(1)) C_6 \|(U_{i,\bigcdot}-U_{i',\bigcdot})D\|^2}{\sigma_{\max}^2}\right\} \right]^k\\
	& = \sum_{k=6}^n \left[\max_{i\in[n]}\sum_{i'\in[n]\setminus\{i\}}\exp\left\{-\frac{(1-o(1)) C_6 \|(U_{i,\bigcdot}-U_{i',\bigcdot})D\|^2}{\sigma_{\max}^2}\right\} \right]^k,
\end{align*}
which is a summation of geometric series.
Note that
\begin{align*}
	& \left[\max_{i\in[n]}\sum_{i'\in[n]\setminus\{i\}}\exp\left\{-\frac{(1-o(1)) C_6 \|(U_{i,\bigcdot}-U_{i',\bigcdot})D\|^2}{\sigma_{\max}^2}\right\}\right]^6 \\
	& \leq 
	\max_{i\in[n]}\sum_{i'\in[n]\setminus\{i\}}\exp\left\{-\frac{(1-o(1)) 6C_6 \|(U_{i,\bigcdot}-U_{i',\bigcdot})D\|^2}{\sigma_{\max}^2}\right\}\\
	& \leq 
	\max_{i\in[n]}\sum_{i'\in[n]\setminus\{i\}}\exp\left\{-\frac{(1-o(1)) \|(U_{i,\bigcdot}-U_{i',\bigcdot})D\|^2}{4\sigma_{\max}^2}\right\}\\
	& \leq \frac{1}{n} \sum_{i\neq i'}\exp\left\{-\frac{(1-o(1)) \|(U_{i,\bigcdot}-U_{i',\bigcdot})D\|^2}{4\sigma_{\max}^2}\right\} \\
	& = o(1),
\end{align*}	
where the penultimate line is by \eqref{eq:ub_further_assumption_1}. Thus, we have
\begin{align*}
	& \sum_{k=6}^n \max_{i\in[n]} \sum_{i'\in[n]\setminus\{i\}}\exp\left\{-\frac{(1-o(1)) k C_k \|(U_{i,\bigcdot}-U_{i',\bigcdot})D\|^2}{\sigma_{\max}^2}\right\} \\
	& \leq \frac{1}{n} \sum_{i\neq i'}\exp\left\{-\frac{(1-o(1)) \|(U_{i,\bigcdot}-U_{i',\bigcdot})D\|^2}{4\sigma_{\max}^2}\right\}.
\end{align*}
In summary, we have shown that
\begin{align*}
	& \frac{1}{n} \sum_{k=2}^n \sum_{i_1\neq \cdots\neq i_k} \exp\left\{\frac{-(1-o(1))C_k\left\|(I_k^\leftarrow -I_k)U_{i_{1:k},\bigcdot} D\right\|_F^2}{\sigma_{\max}^2}\right\}\\
	& \lesssim \frac{1}{n} \sum_{i\neq i'}\exp\left\{-\frac{(1-o(1)) \|(U_{i,\bigcdot}-U_{i',\bigcdot})D\|^2}{4\sigma_{\max}^2}\right\}.
\end{align*}
Again by repeating the arguments in the proof of Theorem \ref{thm:ub},
we conclude that\eqref{eq:ub_mismatch_proportion_under_futher_assumption_2} holds.

\section{Auxiliary results}

\begin{lemma}[Gaussian tail bound]
\label{lemma:gaussian_tail}
For any $t>0$, we have
$$
	\frac{1}{\sqrt{2\pi}}\left(\frac{1}{t} - \frac{1}{t^2}\right) e^{-t^2/2} \leq \Phi(-t) = \bbP\left(N(0, 1)\geq t\right) \leq \frac{1}{\sqrt{2\pi}}\cdot \frac{1}{t} e^{-t^2/2}.
$$
\end{lemma}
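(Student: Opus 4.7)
The plan is to estimate the integral representation $\Phi(-t) = \frac{1}{\sqrt{2\pi}} \int_t^\infty e^{-s^2/2}\,ds$ directly by the standard Mills-ratio calculation, handling separately the regimes $t \geq 1$ and $0 < t < 1$.

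For the upper bound, I would use the elementary inequality $1 \leq s/t$ on the range of integration ($s \geq t > 0$), giving $\int_t^\infty e^{-s^2/2}\,ds \leq \int_t^\infty (s/t)\,e^{-s^2/2}\,ds = t^{-1} e^{-t^2/2}$, since the latter integral can be computed in closed form via the substitution $u = s^2/2$. Dividing by $\sqrt{2\pi}$ yields the stated upper bound.

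For the lower bound I would integrate by parts with $u = 1/s$ and $dv = s\, e^{-s^2/2}\,ds$, which produces the identity
$$\int_t^\infty e^{-s^2/2}\,ds \;=\; \frac{e^{-t^2/2}}{t} \;-\; \int_t^\infty \frac{e^{-s^2/2}}{s^2}\,ds.$$
Bounding $1/s^2 \leq 1/t^2$ on the range of integration and then invoking the upper bound already established gives $\int_t^\infty s^{-2}\,e^{-s^2/2}\,ds \leq t^{-3}\,e^{-t^2/2}$, which yields the classical Mills-ratio lower bound $\Phi(-t) \geq \frac{1}{\sqrt{2\pi}}\,(t^{-1} - t^{-3})\,e^{-t^2/2}$. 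To descend to the slightly weaker form with $1/t^2$ in place of $1/t^3$ stated in the lemma, I would observe that $t^{-3} \leq t^{-2}$ whenever $t \geq 1$, so $t^{-1} - t^{-2} \leq t^{-1} - t^{-3}$ in that regime, while for $0 < t < 1$ the quantity $t^{-1} - t^{-2}$ is non-positive and the bound holds trivially.

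There is essentially no obstacle here; this is a textbook computation. The only points requiring attention are verifying that the integration by parts incurs no boundary term at infinity (which follows from $e^{-s^2/2}/s \to 0$ as $s \to \infty$) and handling the degenerate regime $t \leq 1$ as above, since the $1/t^2$ version stated is strictly weaker than the standard Mills bound with $1/t^3$ precisely in the range $t > 1$ where both are informative.
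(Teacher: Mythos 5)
Your proof is correct. The paper states Lemma \ref{lemma:gaussian_tail} without proof in the auxiliary-results appendix, so there is nothing to compare against; your argument is the standard Mills-ratio computation. The upper bound by inserting $s/t \geq 1$ on the domain of integration, the integration by parts producing $\int_t^\infty e^{-s^2/2}\,ds = t^{-1}e^{-t^2/2} - \int_t^\infty s^{-2}e^{-s^2/2}\,ds$ (with vanishing boundary term at infinity), and the relaxation $t^{-3} \leq t^{-2}$ for $t \geq 1$ together with the trivial observation that $t^{-1} - t^{-2} \leq 0$ for $0 < t \leq 1$ are all valid and complete.
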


\begin{lemma}[$\chi^2$ tail bound]
\label{lemma:chisq_tail}
Let $X$ be a $\chi$-squared random variable with $k$ degrees of freedom and fix $\delta \in (0, 1)$. Then with probability at least $1-\delta$, we have
$$
	X \leq k + 2\sqrt{k \log(1/\delta)} + 2 \log(1/\delta).
$$
\end{lemma}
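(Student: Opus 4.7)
The plan is to use the standard Chernoff/Laurent--Massart strategy. Writing $X = \sum_{i=1}^k Z_i^2$ with $Z_i \stackrel{\text{iid}}{\sim} N(0,1)$, I first compute the moment generating function of $X-k$. For any $t \in [0, 1/2)$, one has
\begin{equation*}
 \bbE[e^{t(X-k)}] \;=\; \prod_{i=1}^k \bbE\bigl[e^{t(Z_i^2-1)}\bigr] \;=\; \bigl(e^{-t}(1-2t)^{-1/2}\bigr)^k,
\end{equation*}
so $\log\bbE[e^{t(X-k)}] = k\bigl(-t - \tfrac{1}{2}\log(1-2t)\bigr)$. The first key step is to compare the two power series
\begin{equation*}
 -t - \tfrac{1}{2}\log(1-2t) \;=\; \sum_{j\geq 2} \frac{(2t)^j}{2j} \qquad\text{and}\qquad \frac{t^2}{1-2t} \;=\; \sum_{j\geq 2}\frac{(2t)^j}{4},
\end{equation*}
which shows, term by term, that $-t - \tfrac{1}{2}\log(1-2t) \le t^2/(1-2t)$ for $t \in [0,1/2)$. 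Hence
\begin{equation*}
 \log \bbE[e^{t(X-k)}] \;\le\; \frac{kt^2}{1-2t}, \qquad t \in [0, 1/2).
\end{equation*}

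Next I apply Markov's inequality in the form
\begin{equation*}
 \bbP(X - k \ge u) \;\le\; \exp\!\Bigl(\tfrac{kt^2}{1-2t} - tu\Bigr), \qquad u > 0,\ t \in [0,1/2),
\end{equation*}
and take $u = 2\sqrt{k\log(1/\delta)} + 2\log(1/\delta)$. Setting $x = \log(1/\delta)$ for brevity and choosing
\begin{equation*}
 t \;=\; \frac{\sqrt{x}}{\sqrt{k} + 2\sqrt{x}} \;\in\; \bigl[0,\tfrac{1}{2}\bigr),
\end{equation*}
one gets $1-2t = \sqrt{k}/(\sqrt{k}+2\sqrt{x})$, and a direct computation (essentially the content of Laurent--Massart's original argument) yields
\begin{equation*}
 tu - \frac{kt^2}{1-2t} \;=\; \frac{2x(\sqrt{k}+\sqrt{x}) - x\sqrt{k}}{\sqrt{k}+2\sqrt{x}} \;=\; x.
\end{equation*}
Combining the last two displays gives $\bbP(X \ge k + 2\sqrt{kx} + 2x) \le e^{-x} = \delta$, which is exactly the claimed bound.

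The proof is essentially routine; the only step that requires genuine thought is the term-by-term series comparison establishing $-t - \tfrac{1}{2}\log(1-2t) \le t^2/(1-2t)$ and the verification that the prescribed $t$ makes the Chernoff exponent collapse to precisely $x$. An alternative (cleaner but less sharp) route would be to optimize the Chernoff bound via the Legendre transform of $-(k/2)\log(1-2t)$, but that produces the implicit form $\bbP(X\ge u) \le \exp\bigl(-\tfrac{u-k}{2} + \tfrac{k}{2}\log(u/k)\bigr)$, after which one would still need an inequality like $(u-k) - k\log(u/k) \ge 2x$ at $u = k + 2\sqrt{kx} + 2x$; so the direct Chernoff route above is shorter.
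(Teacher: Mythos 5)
Your proof is correct: the MGF computation, the term-by-term series comparison giving $-t-\tfrac{1}{2}\log(1-2t)\le t^2/(1-2t)$, and the choice $t=\sqrt{x}/(\sqrt{k}+2\sqrt{x})$ all check out, and the exponent does collapse to exactly $x=\log(1/\delta)$. The paper does not prove this lemma at all — it simply cites Lemma 1 of Laurent and Massart — and your argument is a faithful self-contained reproduction of that cited proof, so the approaches coincide.
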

\begin{proof}
	This is a direct consequence of Lemma 1 in \cite{laurent2000adaptive}.
\end{proof}

\begin{lemma}[Operator norm of Gaussian Wigner matrices]
\label{lemma:gaussian_wigner_op_norm}
Let $X\in \bbR^{n\times p}$ be a matrix with i.i.d.~$N(0, 1)$ entries and fix $\delta\in (0, 1)$. Then with probability at least $1-\delta$, we have
$$
	\|X\| \lesssim \sqrt{n} + \sqrt{p} + \sqrt{\log(1/\delta)}.
$$
\end{lemma}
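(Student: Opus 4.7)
The plan is to combine a bound on the expected operator norm with Gaussian concentration of measure, which together yield the stated high-probability inequality in a textbook manner.

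First, I would note that the map $M \mapsto \|M\|$ from $\bbR^{n\times p}$ equipped with the Frobenius inner product to $\bbR$ is $1$-Lipschitz (since $|\|M_1\| - \|M_2\|| \leq \|M_1 - M_2\| \leq \|M_1 - M_2\|_F$). Because the entries of $X$ are i.i.d.\ $N(0,1)$, Borell's inequality (Gaussian concentration for Lipschitz functions) gives
\[
\bbP\left(\|X\| \geq \bbE\|X\| + t\right) \leq e^{-t^2/2} \qquad \forall t\geq 0,
\]
so choosing $t = \sqrt{2\log(1/\delta)}$ yields a deviation bound at confidence level $1-\delta$.

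Second, I would bound the expected operator norm by Slepian--Gordon comparison. Writing $\|X\| = \sup_{(u,v) \in S^{n-1}\times S^{p-1}} u^\top X v$ and introducing the auxiliary Gaussian process $H_{u,v} = g^\top u + h^\top v$ for independent $g\sim N(0,I_n)$ and $h\sim N(0,I_p)$, a short computation checks the Slepian-type increment inequality $\bbE[(G_{u,v}-G_{u',v'})^2]\leq \bbE[(H_{u,v}-H_{u',v'})^2]$, from which Gordon's theorem gives
\[
\bbE\|X\| \leq \bbE\|g\| + \bbE\|h\| \leq \sqrt{n} + \sqrt{p}.
\]
Combining this with the concentration bound above yields $\|X\| \lesssim \sqrt{n} + \sqrt{p} + \sqrt{\log(1/\delta)}$ with probability at least $1-\delta$.

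There is no real obstacle here; the lemma is a standard Gaussian matrix estimate, and either of the above two steps could alternatively be replaced by an $\epsilon$-net argument: take $(1/4)$-nets $\calN_1\subseteq S^{n-1}$ and $\calN_2\subseteq S^{p-1}$ of sizes at most $9^n$ and $9^p$, use the standard discretization bound $\|X\|\leq 2\max_{u\in\calN_1, v\in\calN_2}u^\top X v$, and then apply the Gaussian tail bound in Lemma \ref{lemma:gaussian_tail} together with a union bound, noting that each $u^\top X v$ is $N(0,1)$. This more elementary route avoids Borell's inequality entirely and directly produces a bound of the form $\sqrt{n}+\sqrt{p}+\sqrt{\log(1/\delta)}$ once the exponents from the net cardinalities and the Gaussian tail are balanced.
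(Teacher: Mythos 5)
Your proof is correct and takes a genuinely different route from the paper. The paper simply invokes Corollary 3.11 of Bandeira and van Handel (2016), a sharp non-asymptotic bound of the form $\bbP(\|X\|\geq (1+\epsilon)(\sqrt{n}+\sqrt{p})+t)\leq (n\wedge p)e^{-t^2/c_\epsilon}$, and solves for $t$; the extra $\log(n\wedge p)$ in the resulting $t$ is then absorbed by $\sqrt{n}+\sqrt{p}$. Your argument instead re-derives the estimate from first principles via the classical two-step route (essentially Davidson--Szarek): $1$-Lipschitz Gaussian concentration of the operator norm around its mean, followed by the Sudakov--Fernique/Gordon comparison with the decoupled process $g^\top u + h^\top v$ to bound $\bbE\|X\|\leq\sqrt{n}+\sqrt{p}$. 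The increment verification $2-2\langle u,u'\rangle\langle v,v'\rangle\leq \|u-u'\|^2+\|v-v'\|^2$ is exactly the one needed and you state it correctly. Your version is more self-contained and produces the constant $1$ in front of $\sqrt{n}+\sqrt{p}$ with no dimension-dependent prefactor in the tail; the paper's citation buys brevity and is part of a toolbox of sharp matrix-norm bounds the authors lean on elsewhere. One minor attribution nit: the $\sup$--$\sup$ comparison you need is Sudakov--Fernique (or Slepian's lemma in its one-sided form), not Gordon's min-max theorem, though the terminology is often conflated in the literature and the underlying argument is the same. Your $\epsilon$-net alternative is also valid.
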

\begin{proof}
	By Corollary 3.11 in \cite{bandeira2016sharp}, for any $t\geq0$ and $\ep\in(0, 1/2]$, we have
	$$
		\bbP\left(\|X\|\geq (1+\ep)(\sqrt{n}+\sqrt{p}) + t\right) \leq (n\land p)e^{-t^2/c_\ep},
	$$
	where $c_\ep$ is an absolute constant depending only on $\ep$. Setting the right-hand side above to be $\delta$ gives the desired result.
\end{proof}

\begin{lemma}[Hanson-Wright inequality]
\label{lemma:hs_ineq}
	Let $X\in\bbR^n$ be a random vector with i.i.d.~$N(0, 1)$ entries and let $A$ be an $n\times n$ matrix. Then for any $\delta\in(0, 1)$, with probability at least $1-\delta$, we have
	$$
		|X^\top A X - \bbE[X^\top A X]| \lesssim \|A\|_F \sqrt{\log(1/\delta)} + \|A\| {\log(1/\delta)}.
	$$
\end{lemma}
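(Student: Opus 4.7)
The plan is to reduce the Gaussian quadratic form to a weighted sum of independent centered $\chi^2_1$ random variables via diagonalization, and then apply a standard Bernstein-type tail bound for sums of sub-exponentials.

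First, since $X^\top A X = X^\top \tilde A X$ for $\tilde A := (A + A^\top)/2$, and $\|\tilde A\|_F \leq \|A\|_F$, $\|\tilde A\| \leq \|A\|$, I would assume without loss of generality that $A$ is symmetric. Diagonalizing $A = U\Lambda U^\top$ with $\Lambda = \diag(\lambda_1,\ldots,\lambda_n)$ and using the rotational invariance of the standard Gaussian distribution, the vector $Y := U^\top X$ is again $N(0, I_n)$, so
$$
X^\top A X - \bbE[X^\top A X] \;=\; \sum_{i=1}^n \lambda_i (Y_i^2 - 1),
$$
a linear combination of i.i.d.~centered $\chi^2_1$ variables. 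The key identities $\sum_i \lambda_i^2 = \|A\|_F^2$ and $\max_i |\lambda_i| = \|A\|$ will govern the two regimes of the final bound.

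Next, I would control the moment generating function by direct computation. For any $|t|$ with $2|t|\|A\| \leq 1/2$, one has
$$
\log \bbE\bigl[e^{t\lambda_i(Y_i^2 - 1)}\bigr] \;=\; -t\lambda_i - \tfrac{1}{2}\log(1 - 2t\lambda_i) \;\leq\; C\, t^2\lambda_i^2,
$$
by a Taylor expansion of $-\log(1-u)$ about $u = 0$. Summing over $i$ by independence gives
$$
\log \bbE\Bigl[\exp\Bigl(t \sum_{i=1}^n \lambda_i(Y_i^2-1)\Bigr)\Bigr] \;\leq\; C\,t^2 \|A\|_F^2 \qquad \text{whenever } |t| \leq \tfrac{1}{4\|A\|}.
$$
A standard Chernoff optimization, carried out separately in the two regimes $s \lesssim \|A\|_F^2/\|A\|$ and $s \gtrsim \|A\|_F^2/\|A\|$, then yields the sub-exponential tail
$$
\bbP\bigl(|X^\top A X - \bbE[X^\top A X]| \geq s\bigr) \;\leq\; 2\exp\!\left\{-c\,\min\!\left(\frac{s^2}{\|A\|_F^2},\, \frac{s}{\|A\|}\right)\right\}.
$$
Inverting this tail by setting the right-hand side equal to $\delta$ and solving for $s$ produces the claimed two-term bound $\|A\|_F\sqrt{\log(1/\delta)} + \|A\|\log(1/\delta)$.

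I do not anticipate any serious obstacle: because the MGF of $Y_i^2 - 1$ admits a closed form, the Gaussian case avoids the decoupling and symmetrization machinery needed in the general sub-Gaussian Hanson--Wright inequality. The only routine care required is bookkeeping of absolute constants in the sub-exponential MGF bound and a clean treatment of the two regimes in the Chernoff step.
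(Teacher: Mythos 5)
Your proof is correct, but it takes a genuinely different route from the paper, which simply cites Theorem~1.1 of Rudelson and Vershynin's paper on the Hanson--Wright inequality and applies it as a black box. You instead give a self-contained proof specialized to the Gaussian case: symmetrize $A$, diagonalize and use rotation invariance of $N(0,I_n)$ to reduce the quadratic form to $\sum_i \lambda_i(Y_i^2-1)$ with $Y\sim N(0,I_n)$, bound the MGF of each centered $\chi^2_1$ term via the closed form $-t\lambda_i - \tfrac12\log(1-2t\lambda_i)\leq Ct^2\lambda_i^2$ on the range $|t|\leq 1/(4\|A\|)$, sum using $\sum_i\lambda_i^2 = \|A\|_F^2$, and run a two-regime Chernoff bound. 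This avoids the decoupling and sub-Gaussian machinery needed in the general Hanson--Wright theorem; the price is that your argument only covers the Gaussian case, which is all that is needed here. The paper's approach buys economy (one citation); yours buys transparency and elementary self-containedness. One small bookkeeping note: the Chernoff inversion naturally produces $\log(2/\delta)$ from the factor of $2$ in the two-sided tail, which is absorbed into the $\lesssim$, and you should state explicitly that $\sum_i\lambda_i^2 = \|A\|_F^2$ and $\max_i|\lambda_i| = \|A\|$ for the symmetrized matrix, which you flag as key identities but do not pause to verify; both are immediate from the spectral decomposition.
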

\begin{proof}
	This is a direct consequence of Theorem 1.1 in \cite{rudelson2013hanson}.
\end{proof}

\begin{lemma}
\label{lemma:inner_prod_and_norm}
	For any $k, \ell\in \bbN$ and $A\in \bbR^{k\times k}$, we have
	$$
		\la A, [I_k - (I_k^\rightarrow)^\ell]A \ra = \la A, [I_k - (I_k^\leftarrow)^\ell]A \ra = \frac{1}{2} \|[I_k - (I_k^\leftarrow)^\ell] A\|_F^2 = \frac{1}{2} \|[I_k - (I_k^\rightarrow)^\ell] A\|_F^2.
	$$
\end{lemma}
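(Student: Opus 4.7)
The plan is to exploit the single observation that $(I_k^\leftarrow)^\ell$ and $(I_k^\rightarrow)^\ell$ are both permutation matrices, and that $I_k^\rightarrow = (I_k^\leftarrow)^\top = (I_k^\leftarrow)^{-1}$, so that taking the $\ell$-th power preserves this transpose/inverse relationship. Concretely, if I set $P := (I_k^\leftarrow)^\ell$, then $P^\top = (I_k^\rightarrow)^\ell$ and $P^\top P = I_k$.

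My first step will be to expand the Frobenius norm squared using $\|M\|_F^2 = \tr(M^\top M)$:
\begin{equation*}
\|(I_k - P)A\|_F^2 = \tr\bigl(A^\top (I_k - P)^\top (I_k - P) A\bigr) = \tr\bigl(A^\top (2 I_k - P - P^\top) A\bigr),
\end{equation*}
where the second equality uses $P^\top P = I_k$. In trace-inner-product notation, this reads $\|(I_k - P)A\|_F^2 = 2\la A, A\ra - \la A, P A\ra - \la A, P^\top A\ra$. The same identity evidently holds with $P$ replaced by $P^\top$, which immediately yields $\|(I_k - P)A\|_F^2 = \|(I_k - P^\top)A\|_F^2$, giving the equality of the last two expressions in the statement.

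Next I will reduce $\la A, P^\top A\ra$ to $\la A, P A\ra$ using the cyclic/symmetry property of the trace: $\la A, P^\top A\ra = \tr(A^\top P^\top A) = \tr((PA)^\top A) = \la PA, A\ra = \la A, PA\ra$. Substituting this back gives $\tfrac{1}{2}\|(I_k - P)A\|_F^2 = \la A, A\ra - \la A, PA\ra = \la A, (I_k - P)A\ra$, which establishes the third equality. The first equality (between $\la A, (I_k - P^\top)A\ra$ and $\la A, (I_k - P)A\ra$) follows from the same identity $\la A, P^\top A\ra = \la A, PA\ra$.

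There is no real obstacle here; the lemma is essentially the polarization identity for the bilinear form induced by a permutation matrix, and all four expressions collapse to $\la A, A\ra - \la A, PA\ra$ once symmetry of the trace inner product is invoked. The only minor thing to double-check is that $\ell$ is arbitrary and $P$ need not be symmetric, but neither fact is used beyond $P$ being a permutation (hence orthogonal) matrix.
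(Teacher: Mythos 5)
Your proof is correct and follows essentially the same route as the paper: both expand the Frobenius norm as a trace, use $(I_k^\rightarrow)^\ell = [(I_k^\leftarrow)^\ell]^\top$ together with orthogonality of the cyclic-shift permutation, and invoke symmetry of the trace inner product to collapse everything to $\la A,A\ra - \la A, (I_k^\leftarrow)^\ell A\ra$. The only cosmetic difference is your abbreviation $P = (I_k^\leftarrow)^\ell$ and the explicit mention of $P^\top P = I_k$, which the paper uses implicitly.
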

\begin{proof}
	The first equality follows from 
	$$
	\la A, (I_k^\rightarrow)^\ell A \ra = \la [(I_k^\rightarrow)^\ell]^\top A, A \ra = \la (I_k^\leftarrow)^\ell A, A \ra = \la A, (I_k^\leftarrow)^\ell A \ra .
	$$
	For the second equality, note that
	\begin{align*}
		\|[I_k - (I_k^\leftarrow)^\ell] A\|_F^2 
		& = \la A, [I_k - (I_k^\rightarrow)^\ell] [I_k - (I_k^\leftarrow)^\ell] A \ra \\
		& = 2 \la A, A \ra - \la A, (I_k^\leftarrow)^\ell A \ra - \la A, (I_k^\rightarrow)^\ell A \ra \\
		& = 2 \la A, A \ra - 2 \la A , (I_k^\leftarrow)^\ell A \ra.
	\end{align*}
	Dividing both sides by two, we get the desired result. The third equality follows similarly.
\end{proof}

\end{appendices}

\end{document}